\DeclareMathOperator{\diag}{diag} 
\newcommand{\er}{\mathbb{R}}
\newcommand{\cee}{\mathbb{C}}
\newcommand{\enn}{\mathbb{N}}
\newcommand{\zet}{\mathbb{Z}}
\newcommand{\lam}{\lambda}
\newcommand{\bol}{\hfill\square\\}
\newcommand{\wtil}{\widetilde}
\newcommand{\what}{\widehat}
\newcommand{\til}{\tilde}
\renewcommand{\Re}{\mathrm{Re}\,}
\renewcommand{\Im}{\mathrm{Im}\,}
\newcommand{\vece}{\mathbf{e}}
\newcommand{\vecQ}{\mathbf{Q}}
\newcommand{\vecv}{\mathbf{v}}
\newcommand{\vecPsi}{\mathbf{\Psi}}
\newcommand{\om}{\omega}
\newcommand{\ud}{\,\mathrm{d}}
\newcommand{\ir}{\text{\rm{i}}}
\newcommand{\inter}{\text{\rm{int}}}
\newtheorem{theorem}{Theorem}[section]
\newtheorem{lemma}[theorem]{Lemma}
\newtheorem{proposition}[theorem]{Proposition}
\newtheorem{corollary}[theorem]{Corollary}
\theoremstyle{definition}
\newtheorem{definition}[theorem]{Definition}
\theoremstyle{remark}
\newtheorem{remark}[theorem]{Remark}
\numberwithin{equation}{section}
\title{High order three-term recursions, Riemann-Hilbert minors and Nikishin systems on star-like sets}
\date{\today}
\author{Steven Delvaux\footnotemark[1]\,  and Abey L\'opez\footnotemark[1]}
\begin{document}


\maketitle
\renewcommand{\thefootnote}{\fnsymbol{footnote}}
\footnotetext[1]{Department of Mathematics, University of Leuven (KU Leuven),
Celestijnenlaan 200B, B-3001 Leuven, Belgium. email: \{steven.delvaux,
abey.lopezgarcia\}\symbol{'100}wis.kuleuven.be. The authors are Postdoctoral
Fellows of the Fund for Scientific Research-Flanders (FWO), Belgium.}

\begin{abstract}
We study monic polynomials $Q_n(x)$ generated by a high order three-term
recursion $xQ_n(x)=Q_{n+1}(x)+a_{n-p} Q_{n-p}(x)$ with arbitrary $p\geq 1$ and
$a_n>0$ for all $n$. The recursion is encoded by a two-diagonal Hessenberg
operator $H$. One of our main results is that, for periodic coefficients $a_n$
and under certain conditions, the $Q_n$ are multiple orthogonal polynomials
with respect to a Nikishin system of orthogonality measures supported on
star-like sets in the complex plane. This improves a recent result of
Aptekarev-Kalyagin-Saff where a formal connection with Nikishin systems was
obtained in the case when $\sum_{n=0}^{\infty}|a_n-a|<\infty$ for some $a>0$.

An important tool in this paper is the study of \lq Riemann-Hilbert minors\rq,
or equivalently, the \lq generalized eigenvalues\rq\ of the Hessenberg matrix
$H$. We prove interlacing relations for the generalized eigenvalues by using
totally positive matrices. In the case of asymptotically
periodic coefficients $a_n$, we find weak and ratio asymptotics for
the Riemann-Hilbert minors and we obtain a connection with a vector equilibrium
problem.
We anticipate that in the future, the study of Riemann-Hilbert
minors may prove useful for more general classes of multiple orthogonal
polynomials.\smallskip

\textbf{Keywords:} Multiple orthogonal polynomial, Nikishin system, banded
Hessenberg matrix, block Toeplitz matrix, Riemann-Hilbert matrix, generalized
Poincar\'{e} theorem, ratio asymptotics, vector equilibrium problem,
interlacing, totally positive matrix.\smallskip

\textbf{MSC 2010:} Primary $42C05$; Secondary $15B05$,
$15B48$.

\end{abstract}

\setcounter{tocdepth}{2} \tableofcontents

\section{Introduction}


Let $(Q_n)_{n=0}^{\infty}$ be the sequence of monic polynomials generated by
the recurrence relation
\begin{equation}\label{recurrencerel} xQ_n(x)=Q_{n+1}(x)+a_{n-p}
Q_{n-p}(x),\qquad n\geq 0,\end{equation} for a fixed integer
$p\in\enn:=\{1,2,3,\ldots\}$, with
initial conditions
\begin{equation}\label{initialcond} Q_0(x)\equiv 1,\qquad Q_{-1}(x)\equiv\cdots\equiv Q_{-p}(x)\equiv 0.\end{equation}
The recurrence coefficients $a_n$ are assumed to be positive real numbers:
\begin{equation}\label{assumption:anpos}a_n>0,\qquad n\geq 0.\end{equation}
Note that for $p=1$, \eqref{recurrencerel} reduces to the standard three-term
recurrence relation for orthogonal polynomials on the real line, in the special
case of an even orthogonality measure. We will be interested in the case where
$p\geq 2$, which we refer to as a \emph{high order three-term recurrence}
\cite{AKS}.

The assumption \eqref{assumption:anpos} implies that the zeros of $Q_n$ are
located on the \emph{star} $S_{+} := \{ x\in\cee\mid x^{p+1}\in\er_{+}\},$ and
that they satisfy certain \emph{interlacing relations}. This was demonstrated
by Eiermann-Varga \cite{EV} and Romdhane \cite{Rom}; see also
Fig.~\ref{fig:plotLimit} and \ref{fig:interlace1} below for the
case $p=2$. In the present paper we will obtain more general interlacing
relations, in the context of so-called Riemann-Hilbert minors.

\smallskip
The polynomials $Q_n$ are studied in the literature under various assumptions
on the recurrence coefficients $a_n$. He and Saff \cite{HeSaff} show that the
Faber polynomials associated with the closed domain bounded by a $(p+1)$-cusped
hypocycloid satisfy the recursion \eqref{recurrencerel} with constant
coefficients $a_n=a=1/p$. Many properties of these Faber polynomials are
obtained in \cite{EV,HeSaff}.

More properties and applications for the polynomials $Q_n$ are obtained by Ben
Cheikh-Douak \cite{BenCheikh}, Douak-Maroni \cite{DouMar1}, Maroni
\cite{Maroni} and others \cite{Milov,Rom}. The polynomials $Q_n$ are often
called \emph{$d$-symmetric $d$-orthogonal polynomials} in these references
(with $d:=p$). An application from the normal matrix model is given in
\cite{BKnormal}.

General considerations \cite{DouMar,Kal1} show that the polynomials $Q_n$
satisfy formal \emph{multiple orthogonality relations} with respect to certain
linear functionals. Aptekarev, Kalyagin and Van Iseghem \cite{AKVI} obtain a
stronger version of this result:

\begin{theorem}\label{theorem:Qn:as:mop:0} (See \cite[Th.~1.1]{AKS}, \cite[Cor.~2]{AKVI}:)
Suppose that $a_n>0$ for all $n$ and the numbers $a_n$ are uniformly bounded.
Then the polynomials $Q_n(x)$ are multiple orthogonal with respect to the
measures $\nu_1,\ldots,\nu_p$ defined in \eqref{nuk:Helly} (see
Section~\ref{section:Nikishin}), in the sense that
\begin{equation}\label{Qn:MOP:0}
\int Q_n(x)x^m \ud\nu_j(x) = 0,
\end{equation}
for any $m\in [0:\lfloor\frac{n-j}{p}\rfloor]$ and $j\in [1:p]$.
\end{theorem}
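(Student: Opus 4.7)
The strategy has two parts: first, establish a purely algebraic \emph{formal} multiple orthogonality that holds for any banded Hessenberg matrix of the shape considered here; second, upgrade it to the integral orthogonality \eqref{Qn:MOP:0} against the measures $\nu_j$ from Section~\ref{section:Nikishin} via a compactness argument using $\sup_n a_n<\infty$. The setup is to rewrite \eqref{recurrencerel}--\eqref{initialcond} as the matrix identity $Q_n(H)e_0=e_n$ for all $n\ge 0$, where $H$ is the banded Hessenberg operator with nonzero entries $H_{n+1,n}=1$ and $H_{n-p,n}=a_{n-p}$, and $\{e_k\}_{k\ge 0}$ is the canonical basis of $\ell^2(\enn)$. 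The identity is proved by induction: using $Q_{k+1}=xQ_k-a_{k-p}Q_{k-p}$ together with $He_k=e_{k+1}+a_{k-p}e_{k-p}$, the two occurrences of $a_{k-p}e_{k-p}$ cancel.

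For each $j\in[1:p]$ introduce the linear functional $\ell_j$ on polynomials by $\ell_j(x^m):=\langle H^m e_0,e_{j-1}\rangle$. Since $Q_n(H)$ commutes with $H^m$,
\[
\ell_j(x^m Q_n)=\langle H^m e_n,e_{j-1}\rangle=(H^m)_{j-1,n}.
\]
A short path count---each factor $H$ shifts the basis index by $+1$ or $-p$---shows that $(H^m)_{j-1,n}\ne 0$ requires a nonnegative integer solution $a\in[0,m]$ to $a(p+1)=mp+(j-1)-n$, which fails precisely when $mp<n-j+1$, i.e.\ when $m\le\lfloor (n-j)/p\rfloor$. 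Hence the desired \emph{formal} orthogonality $\ell_j(x^m Q_n)=0$ holds on exactly the claimed range.

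To pass to integral orthogonality, truncate $H$ to its principal $N\times N$ block $H_N$ and let $\mu_N^{(j)}$ denote the discrete ``generalized spectral'' measure on $\cee$ with moments $\int x^m \ud\mu_N^{(j)}=\langle H_N^m e_0,e_{j-1}\rangle$ (supported, by the Eiermann--Varga/Romdhane result cited in the introduction, on points of the star $S_{+}$). The hypothesis $\sup_n a_n<\infty$ bounds $\|H_N\|$ uniformly in $N$, so all $\mu_N^{(j)}$ live inside a fixed compact $K\subset\cee$, and Helly's selection theorem extracts a weak$^*$-convergent subsequence; its limit is the $\nu_j$ built in Section~\ref{section:Nikishin} (this is precisely the construction \eqref{nuk:Helly} used to define $\nu_j$). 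For $N\ge n+mp$ the formal identity transfers verbatim to $H_N$, so $\int Q_n(x)x^m \ud\mu_N^{(j)}(x)=0$; weak convergence then yields \eqref{Qn:MOP:0}.

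The main technical obstacle lies in the second half: one must confirm that \emph{every} weak$^*$ cluster point agrees with the prescribed $\nu_j$, and that the limiting moments really coincide with the formal moments $\ell_j(x^m)$. Both reduce to determinacy of the compactly supported moment problem on $K$ together with uniform boundedness of moments, both of which are consequences of $\sup_n a_n<\infty$. This is the exact point at which the boundedness hypothesis upgrades the formal multiple orthogonality of Douak--Maroni/Kalyagin to the genuine integral statement in the theorem.
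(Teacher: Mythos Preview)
The paper does not prove this theorem itself; it cites \cite{AKS,AKVI}, and Lemma~\ref{lemma:nuk} records the mechanism used there: the $\nu_l$ arise as Helly limits from the ratios $Q_{(p+1)n,l}/Q_{(p+1)n}$, whose zeros interlace on $S_+$, so that after the reduction $t\mapsto t^{p+1}$ one obtains discrete \emph{positive} measures on $\er_+$ with uniformly bounded mass. That positivity is precisely what makes Helly applicable.

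Your formal-orthogonality half is correct (your $H$ is the transpose of the paper's, but the argument is self-consistent and the path count gives the right range of $m$). The gap is in the compactness step. Helly requires the $\mu_N^{(j)}$ to have uniformly bounded \emph{total variation}; the bound $\sup_N\|H_N\|<\infty$ only controls their supports. For $j\ge 2$ the zeroth moment $\langle e_0,e_{j-1}\rangle=0$, so the weights of $\mu_N^{(j)}$ cannot all be of one sign, and you give no argument bounding the sum of their absolute values as $N\to\infty$. This is exactly what the interlacing in Lemma~\ref{lemma:nuk} delivers and what your route omits. Finally, your parenthetical that the limit ``is precisely the construction \eqref{nuk:Helly}'' is not accurate: \eqref{nuk:Helly} is built from the polynomial ratios $Q_{(p+1)n,l}/Q_{(p+1)n}$, not from matrix elements $(H_N^m)_{j-1,0}$; the two encodings are related through resolvent entries of $H$, but that identification is an additional step you have not carried out.
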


Here $x\mapsto\lfloor x\rfloor$ denotes the \lq floor\rq\ function and we
abbreviate $[i:j]:=\{i,i+1,\ldots,j\}$. This notation will be used throughout
the paper.

The measures $\nu_1,\ldots,\nu_p$ are supported on a compact subset of the star
$S_+$. We will call them the \emph{orthogonality measures}. Aptekarev, Kalyagin
and Saff \cite{AKS} study these measures in the case where
$\sum_{n=0}^{\infty}|a_n-a|<\infty$ for some $a>0$. They obtain a formal link
with Nikishin systems. In the present paper we will extend this link to the
case of periodic $a_n$. In particular, we will obtain conditions guaranteeing
that $\nu_1,\ldots,\nu_p$ form a \emph{true}, rather than a formal, Nikishin
system.

For any $j\in [1:p],$ define the \emph{second kind function} $\Psi_{n}^{(j)}$
by
\begin{equation}\label{secondkind:def}
\Psi_{n}^{(j)}(z) := \int \frac{Q_{n}(t)}{z-t}\ud\nu_j(t),\qquad n\geq 0.
\end{equation}
Define the \emph{Riemann-Hilbert matrix} (briefly \emph{RH matrix}) $Y_n(z)$ by
\begin{equation}\label{def:Y}
Y_{n}(z) = \begin{pmatrix} Q_{n}(z) & \Psi_{n}^{(1)}(z) & \ldots &
\Psi_{n}^{(p)}(z)  \\ Q_{n-1}(z) & \Psi_{n-1}^{(1)}(z) & \ldots &
\Psi_{n-1}^{(p)}(z)
\\
\vdots & \vdots & & \vdots \\ Q_{n-p}(z) & \Psi_{n-p}^{(1)}(z) & \ldots &
\Psi_{n-p}^{(p)}(z)
\end{pmatrix}.
\end{equation}
This definition is a variant of the one in Van Assche, Geronimo and
Kuijlaars~\cite{VAGK}, see also \cite{FIK}. The matrix $Y_n(z)$ satisfies a
certain Riemann-Hilbert problem; but we will not need this here.

Denote the principal $(k+1)\times (k+1)$ minor of $Y_n(z)$ by
\begin{equation}\label{RHminor:principal:def} B_{k,n}(z) = \det\begin{pmatrix} Q_n(z) & \Psi_n^{(1)}(z) &
\ldots &
\Psi_n^{(k)}(z) \\ \vdots & \vdots & & \vdots \\
Q_{n-k}(z) & \Psi_{n-k}^{(1)}(z) & \ldots & \Psi_{n-k}^{(k)}(z)
\end{pmatrix},
\end{equation}
for $k\in [0:p]$. We call this the \emph{$k$th principal Riemann-Hilbert minor}
of $Y_{n}$. For $n<k$ we set $B_{k,n}(z)\equiv 1$. In this paper we will also
work with the determinants of more general submatrices of \eqref{def:Y}, whose
rows are not necessarily consecutive; see Section~\ref{section:geneig} and
following.


\begin{lemma}\label{lemma:RHminor:degree}
For any $k\in [0:p]$, $B_{k,n}(x)$ is a polynomial of degree
\begin{equation*}
\deg B_{k,n}\leq \frac{p-k}{p}(n-k).
\end{equation*}
\end{lemma}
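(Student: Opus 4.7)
My plan is to first establish that $B_{k,n}(z)$ is a polynomial in $z$ and then bound its degree from the growth of its entries at infinity. The convention $B_{k,n}\equiv 1$ handles $n<k$, so I assume $n\geq k$.

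\emph{Polynomiality.} Writing each Cauchy transform as $\Psi_{n-i}^{(j)}(z) = \int Q_{n-i}(t_j)/(z-t_j)\,\ud\nu_j(t_j)$ and using multilinearity of the determinant in columns $1,\ldots,k$, I obtain
\begin{equation*}
B_{k,n}(z) = \int\cdots\int \frac{D(z,t_1,\ldots,t_k)}{\prod_{j=1}^k (z-t_j)}\,\ud\nu_1(t_1)\cdots\ud\nu_k(t_k),
\end{equation*}
where $D$ is the determinant of the $(k+1)\times(k+1)$ matrix whose $(i,0)$-entry is $Q_{n-i}(z)$ and whose $(i,j)$-entry is $Q_{n-i}(t_j)$ for $j\in[1:k]$. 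Since $z=t_j$ makes the $0$-th and $j$-th columns of this matrix coincide, $D$ vanishes on each hyperplane $\{z=t_j\}$, so $D=\prod_{j=1}^k(z-t_j)\,\widetilde D$ for some polynomial $\widetilde D$ in all variables. Substituting back, $B_{k,n}(z)$ becomes the integral of $\widetilde D$ against $\ud\nu_1\cdots\ud\nu_k$, hence is polynomial in $z$.

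\emph{Degree bound.} Expanding $\Psi_{n-i}^{(j)}$ in powers of $1/z$ and invoking the orthogonality relations of Theorem \ref{theorem:Qn:as:mop:0} gives
\begin{equation*}
\Psi_{n-i}^{(j)}(z) = O\bigl(z^{-\lfloor (n-i-j)/p\rfloor - 2}\bigr), \qquad z\to\infty.
\end{equation*}
Applying Leibniz's formula, each term of $B_{k,n}$ is indexed by $\sigma\in S_{k+1}$ and, writing $i_0:=\sigma^{-1}(0)$, has order of growth at most
\begin{equation*}
(n-i_0)-2k-\sum_{i\neq i_0}\lfloor (n-i-\sigma(i))/p\rfloor.
\end{equation*}
Using the elementary estimate $\lfloor m/p\rfloor\geq (m-p+1)/p$ together with the identities $\sum_{i\neq i_0}i=k(k+1)/2-i_0$ and $\sum_{i\neq i_0}\sigma(i)=k(k+1)/2$ (since $\sigma$ restricts to a bijection onto $\{1,\ldots,k\}$), algebraic simplification reduces this bound to $(p-k)(n-k)/p-(p+1)i_0/p$, which is maximized at $i_0=0$ and equals $(p-k)(n-k)/p$. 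Summing over $\sigma$ then yields the claimed bound.

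The main technical obstacle is the algebraic bookkeeping with the floor function in the Leibniz expansion; a minor subtlety is verifying that the Laurent estimate above never contradicts the generic Cauchy bound $O(z^{-1})$ when the orthogonality is vacuous, which amounts to $n-i-j\geq -p$ and is automatic in the range $n\geq k$, $i\in[0:k]$, $j\in[1:k]$, since $n-i\geq 0\geq j-p$.
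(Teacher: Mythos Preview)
Your proof is correct. The polynomiality argument is identical to the paper's. For the degree bound, however, you take a genuinely different route.

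The paper does not prove the degree bound directly; instead it defers to two later results: Proposition~\ref{prop:geneigRH}, which identifies $B_{k,n}$ up to a nonzero constant with the generalized eigenvalue determinant $P_{k,n}$ of the Hessenberg matrix $H_n$, and Lemma~\ref{lemma:geneig:degree}, which bounds $\deg P_{k,n}$ by a combinatorial argument on the submatrix of $H_n-xI_n$. Your approach instead stays entirely on the Riemann--Hilbert side: you exploit the multiple orthogonality of Theorem~\ref{theorem:Qn:as:mop:0} to obtain the decay $\Psi_{n-i}^{(j)}(z)=O(z^{-\lfloor(n-i-j)/p\rfloor-2})$ at infinity, then bound each Leibniz term of the determinant directly. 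The algebra with the floor function and the permutation sums is handled correctly, and the edge case $n-i-j\in[-p,-1]$ is properly accounted for. Your argument is more elementary and self-contained (it does not require building the $P_{k,n}$--$B_{k,n}$ dictionary), but it does rely on the orthogonality input, whereas the paper's route via $P_{k,n}$ is purely linear-algebraic once Proposition~\ref{prop:geneigRH} is in hand and yields other structural information used later. The paper in fact remarks parenthetically that the degree claim ``may be shown in a direct way as well''; your argument is exactly such a direct proof.
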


\begin{proof}
First we prove that $B_{k,n}(x)$ is a polynomial. By the multi-linearity of the
determinant,
\[
B_{k,n}(z)=\int\cdots\int \det\begin{pmatrix} Q_n(z) & Q_{n}(y_{1}) & \ldots &
Q_n(y_{k}) \\ \vdots & \vdots & & \vdots \\
Q_{n-k}(z) & Q_{n-k}(y_{1}) & \ldots & Q_{n-k}(y_{k})
\end{pmatrix}\frac{\ud\nu_{1}(y_{1})\ldots \ud\nu_{k}(y_{k})}{(z-y_{1})\cdots(z-y_{k})}.
\]
The integrand is clearly a polynomial in $z$, hence $B_{k,n}$ is a polynomial.
Finally, the claim about the degree of $B_{k,n}(z)$ will be a consequence of
Prop.~\ref{prop:geneigRH} and Lemma~\ref{lemma:geneig:degree} in what follows.
(This claim may be shown in a direct way as well.)
\end{proof}

Note in particular that $\deg B_{p,n}=0$, i.e., the determinant of the full RH
matrix $Y_n(z)$ is a constant. Prop.~\ref{prop:geneigRH} will imply that this
constant is nonzero.

Define the two complementary \lq stars\rq
\begin{equation}\label{stars}
S_{\pm} := \{ x\in\cee\mid x^{p+1}\in\er_{\pm}\}.
\end{equation}
In this paper we will prove that
the zeros of $B_{k,n}$ (and of more general RH minors) are all located on the
star $S_+$ if $k$ is even and on the star $S_-$ if $k$ is odd. We will also
obtain several kinds of
interlacing relations between the zeros of the different RH minors. \\

The main focus of this paper is on the case where the recurrence coefficients
$a_n$ are asymptotically periodic of period $r\in\enn$. This means that
\begin{equation}\label{periodic:asy:twodiag}
\lim_{n\to\infty} a_{rn+j} =: b_j>0,\qquad j\in [0:r-1],
\end{equation}
for certain limiting values $b_0,\ldots,b_{r-1}>0$.

It turns out that in the asymptotically periodic case, the zeros of $Q_n$ for
$n\to\infty$ are attracted (in the sense of weak convergence) by a certain
rotationally invariant subset $\Gamma_0$ of the star $S_+$. Moreover, the zeros
asymptotically distribute themselves according to a measure $\mu_0$ on
$\Gamma_0$, which appears in the solution to a certain vector equilibrium
problem. An example of the set $\Gamma_0$ is shown in the left picture of
Fig.~\ref{fig:plotLimit}.
Below we will also introduce a family of sets $\Gamma_k$ and measures $\mu_k$,
$k\in [0:p-1]$, which will be the limiting zero distributions of the RH minors
$B_{k,n}$.


\begin{figure}[htbp]
\begin{center}\vspace{-1mm}
        \subfigure{}\includegraphics[scale=0.32]{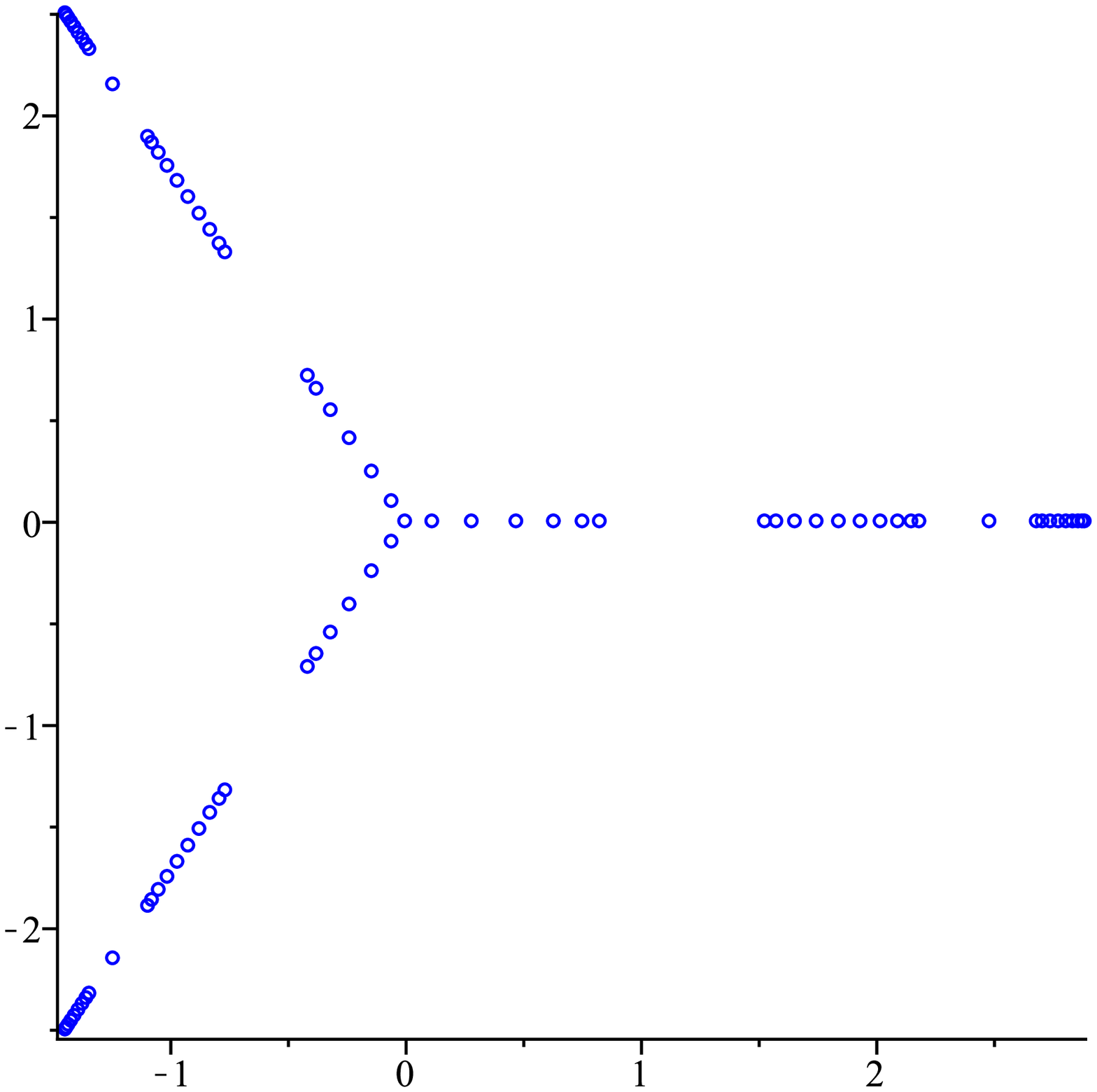}\hspace{15mm}
        \subfigure{}\includegraphics[scale=0.32]{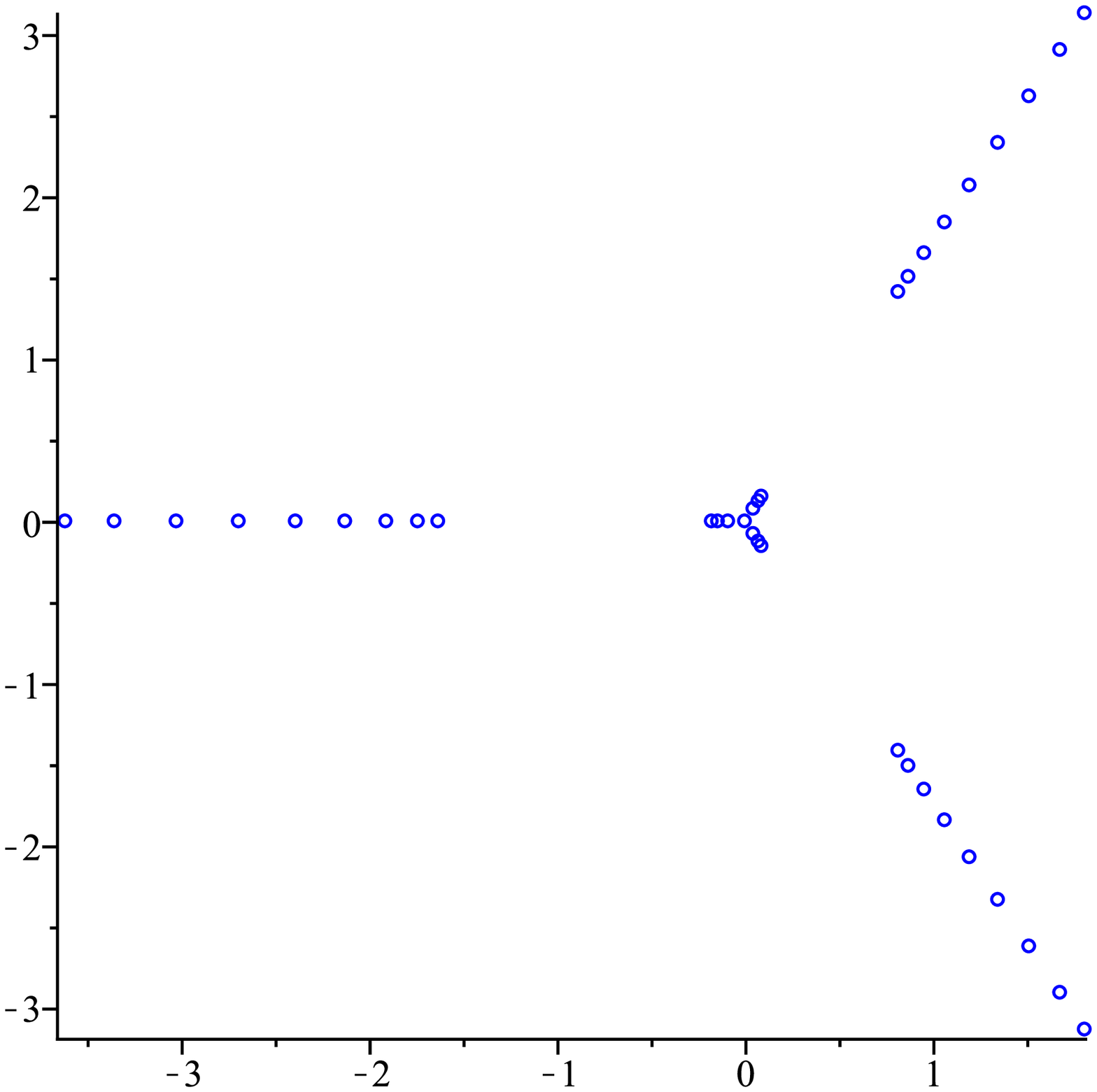}
\end{center}\vspace{-8mm}
\caption{Zeros of $Q_{80}$ (left) and $P_{1,80}$ (right) in the periodic case
with $p=2$ and period $r=8$ and $(a_0,\ldots,a_7)=(3, 1, 5, 2, 2, 9, 6, 1)$.
The zeros of $Q_n$ accumulate on a set $\Gamma_0\subset S_+$ whose intersection
with $\er$ is $[0,0.85]\cup [1.52,2.19]\cup [2.67,2.89]$ (using two digits of
precision). The zeros of $P_{1,n}$ accumulate on a set $\Gamma_1\subset S_-$
whose intersection with $\er$ is $[-3.72,-1.59]\cup [-0.17,0]$. Note that
$Q_{80}$ has an isolated zero between some of the intervals.}
\label{fig:plotLimit}
\end{figure}

Define the matrix
\begin{equation}\label{symbol}
F(z,x) := Z^{-1}+ Z^{p}\diag(b_0,\ldots,b_{r-1})-xI_r,
\end{equation}
and the algebraic curve
\begin{equation}\label{def:algcurve}
0=f(z,x) := \det F(z,x),
\end{equation}
where $Z$ denotes the cyclic shift matrix
\begin{equation}\label{def:Sshift}
Z= \begin{pmatrix}0 & z \\ I_{r-1} & 0 \end{pmatrix},
\end{equation}
and where $I_k$ denotes the identity matrix of size $k$. If $r=1$ then we put
$Z=z$ and $b_0=:b$. In that case, \eqref{def:algcurve} reduces to the algebraic
curve $z^{-1}+ bz^p-x=0$ in \cite{AKS,HeSaff}. The matrix $F(z,x)$ can be
interpreted as the \emph{symbol} of a block Toeplitz matrix. This is explained
in Section~\ref{section:widom}.

The expression $f(z,x)$ can be expanded as a Laurent polynomial in~$z$:
\begin{equation}\label{f:series}
f(z,x) = (-1)^{r-1}z^{-1} + \mathsf{f}_0(x) + \mathsf{f}_1(x)z+\cdots+ \mathsf{f}_{p}(x)z^p,
\end{equation}
where each $\mathsf{f}_k(x)$, $k\in [0:p]$, is a polynomial in $x$, and
\begin{equation}\label{fp} \mathsf{f}_p(x) \equiv \mathsf{f}_p =
(-1)^{p(r-p)}\prod_{k=0}^{r-1}b_k.
\end{equation}

The algebraic equation $f(z,x)=0$ has precisely $p+1$ roots $z_k=z_k(x)$, $k\in
[0:p]$ (counting multiplicities), and we order them by increasing modulus as
\begin{equation}\label{ordering:rootszk}
|z_0(x)|\leq |z_1(x)|\leq \cdots \leq |z_{p}(x)|
\end{equation}
for all $x\in\cee$. If $x\in\cee$ is such that two or more subsequent roots
$z_k(x)$ in \eqref{ordering:rootszk} have the same modulus then we may
arbitrarily label them so that \eqref{ordering:rootszk} is satisfied. It is
easy to see (see e.g.\ \cite[Sec.~4]{Del} or \cite[p.~102]{vMM}) that for
$x\rightarrow\infty$,
\begin{equation}\label{zk:infty:intro}
z_{0}(x)=x^{-r}+O(x^{-r-1}),\qquad z_{k}(x)= O(x^{r/p}), 
\quad k\in [1:p].
\end{equation}
More precisely, for any $x\in\cee$ there is a
permutation $(\til z_k(x))_{k=1}^p$ of the set $(z_k(x))_{k=1}^p$ so that
\begin{equation}\label{zk:infty:intro:bis}\wtil z_k(x)^{p/d} = \left(\prod_{n=0}^{r/d-1}b_{dn+(k-1\!\!\!\!\mod\! d)}\right)^{-1}
x^{r/d} (1+o(1)),\qquad k\in[1:p],
\end{equation}
as $x\to\infty$,
where $d:=\gcd\{p,r\}$. 
See \cite[p.~102]{vMM}.


\smallskip Define the sets $\Gamma_k$ by
\begin{equation}\label{Gammak}
\Gamma_k = \{x\in\cee\mid |z_{k}(x)|=|z_{k+1}(x)|\},\qquad k\in [0:p-1].
\end{equation}
It turns out that $\Gamma_k$ is a finite union of line segments on the star
$S_+$ if $k$ is even and $S_-$ if $k$ is odd: see Fig.~\ref{fig:plotLimit} and
Theorem~\ref{theorem:Gammak:star}. 
The next lemma shows that $\Gamma_k$ is rotationally invariant.

\begin{lemma}\label{lemma:rotsym} (Rotational symmetry:) 
With $\om:=\exp(2\pi\ir/(p+1))$, we have $f(z,\om x)=\om^r f(\om^{r}z,x)$.
Hence, for any $x\in\cee$ the sets $(z_k(\om x))_{k=0}^p$ and
$(\om^{-r}z_k(x))_{k=0}^p$ are equal up to permutation, and each set $\Gamma_k$
is invariant under rotations over $2\pi/(p+1)$.
\end{lemma}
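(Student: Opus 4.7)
My plan is to establish the functional equation $f(z,\om x)=\om^r f(\om^r z,x)$ through a diagonal similarity of the symbol matrix $F$, writing $Z(w)$ temporarily for the matrix of \eqref{def:Sshift} with the $(1,r)$-entry $z$ replaced by $w$; the other two claims are then formal consequences.

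The key ingredient is conjugation by $D:=\diag(1,\om,\om^2,\ldots,\om^{r-1})$. A direct inspection of the nonzero entries of $Z(z)$ gives $D^{-1}Z(z)D=\om^{-1}Z(\om^r z)$: each subdiagonal entry contributes $d_{i+1}^{-1}d_i=\om^{-1}$, while the $(1,r)$-entry becomes $d_1^{-1}z\,d_r=\om^{r-1}z$, which factors as $\om^{-1}(\om^r z)$. Using $\om^{p+1}=1$, raising to the $p$-th power yields $D^{-1}Z(z)^p D=\om^{-p}Z(\om^r z)^p=\om\,Z(\om^r z)^p$, and inverting gives $D^{-1}Z(z)^{-1}D=\om\,Z(\om^r z)^{-1}$. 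Since $D$ commutes with $\diag(b_0,\ldots,b_{r-1})$, these identities combine into $D^{-1}F(z,\om x)D=\om\,F(\om^r z,x)$; taking determinants and using $\det(\om M)=\om^r\det M$ for the $r\times r$ matrix $M$ delivers the claimed relation.

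The remaining two assertions follow at once: the zeros of $f(\cdot,\om x)$ and of $f(\cdot,x)$ are in bijection via $z\leftrightarrow\om^r z$, so the multisets $(z_k(\om x))_{k=0}^p$ and $(\om^{-r}z_k(x))_{k=0}^p$ coincide up to labelling. Because $|\om^{-r}|=1$, the sorted sequences $(|z_k(\om x)|)_{k=0}^p$ and $(|z_k(x)|)_{k=0}^p$ are identical, so the equation $|z_k(x)|=|z_{k+1}(x)|$ defining $\Gamma_k$ is preserved under $x\mapsto\om x$.

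I do not foresee a genuine obstacle; the one choice that must be made carefully is $D$. Any diagonal matrix $\diag(\om^{a_1},\ldots,\om^{a_r})$ with exponents in arithmetic progression induces a conjugation $Z(z)\mapsto\om^c Z(\om^{-rc}z)$, and the value $c=-1$ is singled out by the requirement that the overall scalar factor be exactly $\om$, so as to absorb the translation $x\mapsto\om x$. Once this choice is fixed, the identity $\om^{p+1}=1$ ensures that the two terms $Z^{-1}$ and $Z^p\diag(b_0,\ldots,b_{r-1})$ pick up the same scalar $\om$, which is precisely what the computation requires.
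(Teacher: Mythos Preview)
Your proof is correct and follows exactly the approach of the paper: conjugation by the same diagonal matrix $D=\diag(1,\om,\ldots,\om^{r-1})$ to obtain $D^{-1}F(z,\om x)D=\om F(\om^r z,x)$, followed by taking determinants. The paper's proof simply asserts this similarity as ``easy to see'', while you have written out the entrywise verification on $Z$ and added a helpful remark on why this particular $D$ is forced; otherwise the arguments are identical.
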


\begin{proof} Recalling \eqref{symbol}--\eqref{def:Sshift}, it is easy to
see that $D^{-1}F(z,\om x)D = \om F(\om^{r} z,x)$ where
$D:=\diag(1,\om,\om^2,\ldots,\om^{r-1})$. This implies the lemma.
\end{proof}

For any $k\in [0:p-1]$, define the measure
\begin{equation} \label{measure:k} \ud\mu_k(\lam) = \frac{1}{2\pi \ir}\frac 1r \sum_{j=0}^{k}\left(
\frac{z_{j+}'(\lam)}{z_{j+}(\lam)}-\frac{z_{j-}'(\lam)}{z_{j-}(\lam)}
\right)\ud\lam
\end{equation}
supported on $\Gamma_k$. Here the prime denotes the derivative with respect to
$\lam$, and $\ud\lam$ denotes the complex line element on each line segment of
$\Gamma_k$, according to some fixed orientation of $\Gamma_k$. Moreover,
$z_{j+}(\lam)$ and $z_{j_-}(\lam)$ are the boundary values of $z_j(\lambda)$
obtained from the $+$-side and $-$-side respectively of $\Gamma_k$, where the
$+$-side ($-$-side) is the side that lies on the left (right) when moving
through $\Gamma_k$ according to its orientation. It turns out that $\mu_k$ is a
positive measure (obviously independent of the orientation given to
$\Gamma_{k}$) with total mass \cite[Sec.~4]{Del}
\begin{equation}\label{muk:totalmass}
\mu_k(\Gamma_k) = \frac{p-k}{p},\qquad k\in [0:p-1].
\end{equation}

\smallskip The measures $(\mu_k)_{k}$ are the minimizers to an equilibrium
problem that we now describe. For any measures $\mu,\nu$ on $\cee$ define their
mutual logarithmic energy as
\begin{equation*}\label{defmutualenergybis}
I(\mu,\nu) = \int_{}\int_{}\ \log\frac{1}{|x-y|} \ \ud\mu(x)\ \ud\nu(y).
\end{equation*}
The logarithmic energy of the measure $\mu$ is defined as $I(\mu)=I(\mu,\mu)$.

We call a vector of positive measures $\vec{\nu} = (\nu_{0},\ldots,\nu_{p-1})$
\emph{admissible} if $\nu_k$ has finite logarithmic energy, $\nu_k$ is
supported on $\Gamma_k$, and $\nu_k$ has total mass $\nu_k(\Gamma_k) =
\frac{p-k}{p}$, $k\in [0:p-1]$. The \emph{energy functional} $J$ is defined by
\begin{equation}\label{energyfunctional} J(\vec{\nu}) = \sum_{k=0}^{p-1}
I(\nu_k) - \sum_{k=0}^{p-2} I(\nu_k,\nu_{k+1}).
\end{equation}
The \emph{(vector) equilibrium problem} is to minimize the energy functional
\eqref{energyfunctional} over all admissible vectors of positive measures
$\vec{\nu}$. The equilibrium problem has a unique solution which is given by
the measures $\mu_k$ in \eqref{measure:k}, see \cite{Del}.




\section{Statement of results}

\subsection{Limiting zero distribution of Riemann-Hilbert minors}
\label{subsection:RHlim}

Denote the normalized zero counting measure of $B_{k,n}$, $k\in [0:p-1]$, by
\begin{align} \label{counting:measure}
\mu_{k,n} := \frac{1}{n}\sum_{x\mid B_{k,n}(x)=0}\delta_x,
\end{align}
where $\delta_x$ is the Dirac measure at $x$ and each zero is counted according
to its multiplicity. Lemma~\ref{lemma:RHminor:degree} shows that $\mu_{k,n}$
has total mass at most $(p-k)/p$. Now we state our first main theorem.

\begin{theorem}\label{theorem:muk:RHminor} 
Assume we have asymptotically periodic recurrence coefficients
\eqref{periodic:asy:twodiag}, and define $\mu_{k,n}$, $\mu_k$ as in
\eqref{counting:measure} and \eqref{measure:k}. Then for any $k\in [0:p-1]$,
the measures $\mu_{k,n}$ weakly converge to the measure $\mu_{k}$ on
$\Gamma_{k}$ as $n\to\infty$. This means that
\begin{equation}\label{weakcvg}
\lim_{n\to\infty}\int \phi(x)\ud\mu_{k,n}(x) = \int \phi(x)\ud\mu_k(x)
\end{equation}
for any bounded continuous function $\phi$.
\end{theorem}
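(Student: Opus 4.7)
The plan is to pass from the statement on weak convergence of zero counting measures to a stronger statement about pointwise $n$th-root asymptotics of $B_{k,n}(x)$ off $\Gamma_k$, and then invoke a standard principle-of-descent argument from logarithmic potential theory. Concretely, it suffices to show that
\begin{equation*}
\lim_{n\to\infty} \frac{1}{n}\log|B_{k,n}(x)| = -U^{\mu_k}(x) + C_k
\end{equation*}
uniformly on compact subsets of $\cee\setminus\Gamma_k$, for some constant $C_k$. Combined with $\deg B_{k,n}/n \to (p-k)/p = \mu_k(\Gamma_k)$ (from Lemma~\ref{lemma:RHminor:degree} and the non-vanishing of the leading coefficient, which will be controlled by Prop.~\ref{prop:geneigRH}), the standard weak-$*$ compactness argument then forces $\mu_{k,n} \to \mu_k$, because the logarithmic potential uniquely determines the (compactly supported) measure.

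To produce the asymptotic formula I would proceed in three stages. First, I would use the block Toeplitz interpretation of the Hessenberg operator $H$ with symbol $F(z,x)$ (see Section~\ref{section:widom}) to obtain weak asymptotics for $Q_n(x)$ and for the second kind functions $\Psi_n^{(j)}(x)$ individually. In the asymptotically periodic case, standard Widom-type formulas identify the exponential rate of each entry of $Y_n(x)$ with one of the roots $z_j(x)$ of $f(z,x)=0$: outside $\Gamma_k$ the moduli $|z_0(x)|<\cdots<|z_k(x)|<|z_{k+1}(x)|<\cdots$ are strictly separated, so the rows of $Y_n(x)$ admit an asymptotic decomposition into contributions behaving like $z_j(x)^{-n/r}$. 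Second, I would substitute these expansions into the determinantal formula \eqref{RHminor:principal:def}. Because the $p+1$ rows of $Y_n(x)$ are labelled by consecutive shifts $n, n-1, \dots, n-k$, the determinant has a Vandermonde-like structure in the quantities $z_j(x)^{1/r}$, and the dominant exponential factor after simplification is the product of the $k+1$ smallest roots, giving
\begin{equation*}
\frac{1}{n}\log|B_{k,n}(x)| \longrightarrow \frac{1}{r}\sum_{j=0}^{k}\log|z_j(x)| + \text{const}.
\end{equation*}
Third, I would check that the right-hand side agrees with $-U^{\mu_k}(x)$ up to an additive constant: it is harmonic on $\cee\setminus\Gamma_k$; by \eqref{zk:infty:intro} it has the correct logarithmic behavior $-\frac{p-k}{p}\log|x| + O(1)$ at infinity; and by the jump definition \eqref{measure:k} together with Sokhotski--Plemelj it has exactly the right jump across $\Gamma_k$. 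Uniqueness of a function on $\cee$ with prescribed behavior at infinity and prescribed jump identifies it with the logarithmic potential of $\mu_k$.

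The main obstacle is the second stage: extracting the dominant exponential from the $(k+1)\times(k+1)$ determinant without the risk of silent cancellations among the rows. The polynomial column and the $k$ Cauchy-transform columns scale differently in $n$, and one must certify that the prefactor multiplying the dominant exponential $\prod_{j=0}^k z_j(x)^{-n/r}$ is non-zero and locally bounded away from $0$ on compact subsets of $\cee\setminus\Gamma_k$. This is essentially a Wronskian non-degeneracy statement and should be handled via the generalized-eigenvalue interpretation of $B_{k,n}$ in Prop.~\ref{prop:geneigRH} together with the Vandermonde structure of the $z_j(x)$. A secondary issue is to rule out zeros of $B_{k,n}$ escaping to infinity: this follows from the location of the zeros on the star $S_\pm$ together with the uniform boundedness of $a_n$, which confines the support of $\mu_{k,n}$ to a fixed compact set and hence allows the potential-theoretic descent argument to conclude.
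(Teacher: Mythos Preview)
Your overall architecture matches the paper's: obtain $n$th-root (equivalently, ratio) asymptotics for $B_{k,n}$, identify the limit with $-U^{\mu_k}+C_k$, and then descend to weak convergence via potential theory. Steps~3--4 of your plan are essentially what the paper does in the proof at the end of Section~\ref{section:Poincare}. The genuine gap is in how you propose to produce the asymptotics.

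\medskip
\textbf{The asymptotically periodic case is not covered by Widom.} You write that ``in the asymptotically periodic case, standard Widom-type formulas identify the exponential rate of each entry of $Y_n(x)$.'' This is exactly the hard part, and it is \emph{not} standard. Theorem~\ref{theorem:Widomformula2} is an exact identity for exactly periodic $a_n$; for merely asymptotically periodic $a_n$ there is no such formula, and the individual-entry asymptotics you want must themselves be proven. The paper's solution is to recast the block recursion $U_n=A_n(x)U_{n-1}$ with $A_n(x)\to A(x)$ and apply a multi-column generalized Poincar\'e theorem (Lemma~\ref{lemma:poincaretheorem2}) to obtain Prop.~\ref{prop:ratio}; taking $(k+1)\times(k+1)$ minors then gives the ratio asymptotics \eqref{ratioasy:Bkn}. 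So your Stage~1--2 needs a mechanism that replaces Widom, and Poincar\'e is that mechanism.

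\medskip
\textbf{The non-cancellation and uniformity step needs more than ``Vandermonde.''} You correctly flag the Wronskian non-degeneracy issue, but Prop.~\ref{prop:geneigRH} alone does not resolve it. The paper proves the relevant determinant is nonzero (Lemma~\ref{lemma:nonzero:det}) only \emph{pointwise} off the star $S_k$, by comparing two RH-minor ratios and invoking the normal-family estimate Lemma~\ref{lemma:normalfamily}; that lemma in turn rests on the interlacing Theorems~\ref{theorem:interlace} and~\ref{theorem:interlace:gen} and the combinatorial pattern expansion of Section~\ref{section:normal}. In particular the uniform convergence you need is obtained only on compact subsets of $\cee\setminus S_k$, not of $\cee\setminus\Gamma_k$ as you assert. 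These interlacing/normal-family inputs are the paper's substitute for your ``Vandermonde structure'' and are not bypassable.

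\medskip
\textbf{The zeros need not stay bounded.} Your final paragraph claims that uniform boundedness of $a_n$ ``confines the support of $\mu_{k,n}$ to a fixed compact set.'' This is false for $k\geq 1$: by Theorem~\ref{theorem:Gammak:star}(c), $\wtil\Gamma_k$ may contain $(-1)^k\infty$ whenever $kr/p\notin\enn\cup\{0\}$, and accordingly the zeros of $B_{k,n}$ can spread to infinity along $S_k$. The paper handles this by working in $C_0(S_k)$, extracting a weak-$*$ limit via Banach--Alaoglu, proving the logarithmic-energy bound~\eqref{eq:weakconv3}, and then using tightness (from $\|\mu_{k,n}\|\leq (p-k)/p=\|\mu_k\|$) to upgrade from $C_0$ to bounded continuous test functions. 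Your compactness-based descent argument would need to be replaced by this more careful one.
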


Theorem~\ref{theorem:muk:RHminor} will be proved in
Section~\ref{section:Poincare} with the help of a \lq normal family\rq\
estimate for the ratio of two RH minors (Section~\ref{section:normal}), and
using the generalized Poincar\'e theorem. In fact, we will use a multi-column
version of the generalized Poincar\'e theorem
(Lemma~\ref{lemma:poincaretheorem2}). This approach yields not only weak
asymptotics but also ratio asymptotics for the RH minors,
as we explain in Section~\ref{section:Poincare}, see e.g.\ \eqref{ratioasy:Bkn}
or \eqref{geneig:Nikhier}. Moreover, we will see that
Theorem~\ref{theorem:muk:RHminor} remains valid with $B_{k,n}$ replaced by more
general RH minors (Remark~\ref{remark:weak:RHminors}).

We point out that Theorem~\ref{theorem:muk:RHminor} for $k=0$ could also be
obtained from the normal family arguments in \cite{BDK}, taking into account
the interlacing relations for the zeros of $Q_n$. 

Theorem~\ref{theorem:muk:RHminor} shows that the limiting zero distribution of
each Riemann-Hilbert minor $B_{k,n}$ exists and that the limiting measures are
the minimizers to a vector equilibrium problem. We have reason to believe that
a similar conclusion may hold for more general classes of multiple orthogonal
polynomials. This may be an interesting topic for further research.

\subsection{Star-like structure of $\Gamma_k$}

\begin{theorem}\label{theorem:Gammak:star}
Assume that \eqref{periodic:asy:twodiag} holds. Fix $k\in [0:p-1]$ and define
$\Gamma_k$ \eqref{Gammak} and also
\begin{equation}\label{Gammak:til}
\wtil\Gamma_k = \Gamma_k^{p+1} := \{x^{p+1}\mid x\in\Gamma_k\}.
\end{equation} Then:
\begin{itemize}
\item[(a)] $\wtil\Gamma_{k}$ is part of $\er_+$ (or $\er_-$) if $k$ is
even (or odd respectively). \item[(b)] $\wtil\Gamma_k$ is the union of $n_k$
intervals $I_{j,k}$:
\begin{equation}\label{nk:union}\wtil\Gamma_k = \bigcup_{j=1}^{n_k}
I_{j,k},\qquad\textrm{with}\ n_k= \left\lceil \frac{k+1}{p+1}r \right\rceil
-\left\lfloor \frac{kr}{p} \right\rfloor,
\end{equation}
with $x\mapsto\lceil x\rceil$ and $x\mapsto\lfloor x\rfloor$ denoting the \lq
ceiling\rq\ and \lq floor\rq\ functions. The intervals $I_{j,k}$, $j\in
[1:n_k]$ are pairwise disjoint except maybe for common endpoints.
\item[(c)] The following conditions imply that $\wtil\Gamma_k$ contains $0$ or
$\infty$:
\begin{equation}\label{Gammak:infty}
\frac{k+1}{p+1}r\not\in\enn\Rightarrow 0\in\wtil\Gamma_k,\qquad
\frac{kr}{p}\not\in\enn\cup\{0\}\Rightarrow (-1)^{k}\infty\in\wtil\Gamma_k.
\end{equation}
\end{itemize}
\end{theorem}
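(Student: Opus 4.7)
My plan is to reduce the two-variable algebraic curve $f(z,x)=0$ to a single-variable equation in $\xi := x^{p+1}$ via the substitution $\zeta := zx^r$, and then analyze that equation with Puiseux expansions at $\xi=0$ and $\xi=\infty$.

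From Lemma~\ref{lemma:rotsym} I would first deduce that each coefficient $\mathsf{f}_k(x)$ in \eqref{f:series} satisfies $\mathsf{f}_k(\om x) = \om^{r(k+1)}\mathsf{f}_k(x)$, and hence involves only monomials $x^m$ with $m \equiv r(k+1)\pmod{p+1}$. Substituting $\zeta = zx^r$ and $\xi = x^{p+1}$ collapses every $x$-dependence into powers of $\xi$ and yields
\[
f(z,x) = x^r \til g(\zeta,\xi),
\]
where $\til g$ is a Laurent polynomial in $\zeta$ and $\xi$ with real coefficients (the $b_j$ being real positive). The roots of $f(\cdot,x)$ correspond to roots $\zeta_k(\xi) = z_k(x) x^r$ of $\til g(\cdot,\xi)$, with $|z_k(x)| = |\zeta_k(\xi)|\,|x|^{-r}$, so that
\[
\wtil\Gamma_k = \{\xi \in \cee : |\zeta_k(\xi)| = |\zeta_{k+1}(\xi)|\}
\]
is intrinsic in the $\xi$-plane. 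For real $\xi$, roots of $\til g(\cdot,\xi)$ are either real or come in complex-conjugate pairs with automatically equal modulus, supplying the $\er$-part of $\wtil\Gamma_k$. For part (a) I would then prove $\wtil\Gamma_k \subset \er$ by a Riemann-surface argument showing that the branch cuts of the algebraic function $\zeta(\xi)$ can be placed on $\er$, so that off $\er$ the harmonic functions $\log|\zeta_k(\xi)|$ stay pairwise strictly ordered. The sign refinement ($\er_+$ vs $\er_-$ according to parity of $k$) would be read off from the asymptotic analysis below: on $\er_+$ the outermost roots at infinity are real, forcing conjugate-pair modulus coincidences at small-index positions (even $k$), whereas on $\er_-$ the outermost roots form conjugate-imaginary pairs, producing coincidences at large-index positions (odd $k$).

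For part (b) the interval endpoints of $\wtil\Gamma_k$ are either zeros of $\mathrm{disc}_\zeta(\til g)$ or the boundary values $\xi = 0, \infty$. Using \eqref{zk:infty:intro:bis}, the large roots $\zeta_k(\xi)$ at $\xi \to \infty$ split into $d = \gcd(p,r)$ groups of equal leading modulus; a dual Newton-polygon analysis of $\til g$ near $\xi = 0$ produces $\gcd(p+1,r)$ groups at the other end. The count $n_k = \lceil(k+1)r/(p+1)\rceil - \lfloor kr/p\rfloor$ then measures the number of maximal stretches of $\wtil\Gamma_k$ needed to connect the $k$-th and $(k+1)$-th modulus-groups from one asymptotic regime to the other. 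Part (c) is the sub-case that an endpoint $\xi \in \{0,\infty\}$ belongs to $\wtil\Gamma_k$ precisely when the indices $k$ and $k+1$ lie in the same asymptotic modulus-group at that end; the sign $(-1)^k$ at infinity follows from the parity alternation of leading phases in the Puiseux asymptotics \eqref{zk:infty:intro:bis}.

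The main obstacle is the containment $\wtil\Gamma_k \subset \er$ itself: the reality of the coefficients of $\til g$ only yields symmetry under $\xi \mapsto \bar\xi$. Overcoming this likely requires the global Riemann-surface analysis from \cite{Del}, or alternatively an indirect route via Theorem~\ref{theorem:muk:RHminor}: the zeros of the RH minors $B_{k,n}$ lie on $S_\pm$ by the interlacing/totally-positive-matrix arguments flagged in the Introduction, forcing the support $\wtil\Gamma_k$ of the weak limit $\mu_k$ to be real with the claimed parity.
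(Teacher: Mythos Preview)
Your substitution $\zeta = zx^r$, $\xi = x^{p+1}$ is correct and useful (the paper uses the equivalent variables $y_k(x) = z_k(x^{1/(p+1)})$), and you have correctly identified the main obstacle: showing $\wtil\Gamma_k\subset\er$. However, neither of your two proposed fixes for this obstacle works as stated.

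The ``indirect route via Theorem~\ref{theorem:muk:RHminor}'' is circular. In the paper, the proof of Theorem~\ref{theorem:muk:RHminor} relies on the ratio asymptotics \eqref{ratioasy:Bkn}, which in turn require Proposition~\ref{prop:jk} (that $j_k=k$), and Prop.~\ref{prop:jk} is proved \emph{jointly} with Theorem~\ref{theorem:Gammak:star}(a) in Section~\ref{subsection:Gammak:star}. So you cannot invoke the weak limit of the $B_{k,n}$-zeros as a black box. What you \emph{can} use are the ingredients: the normal-family estimate (Lemma~\ref{lemma:normalfamily}), which guarantees the ratios $B_{k,n}/B_{k,n+r}$ converge to some analytic function $\til z_0\cdots\til z_k$ off $S_k$, where $(\til z_j)$ is some as-yet-unidentified permutation of $(z_j)$. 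The paper then sets up two families of signed measures, one built from the $\til y_k := \til z_k(\,\cdot^{1/(p+1)})$ on $\er_k$ and the other from the $y_k := z_k(\,\cdot^{1/(p+1)})$ on $\wtil\Gamma_k$, proves both have the same total masses $(p-k)/p$, and derives a chain of inequalities (using \eqref{pairing:1}--\eqref{pairing:2} and the positivity of the $\sigma_k$) that forces all inequalities to be equalities. This simultaneously yields $\wtil\Gamma_k\subset\er$, that no cluster of length $\geq 3$ occurs in the modulus ordering \eqref{clusters}, and (by induction on $k$) that $j_k=k$ and $\wtil\Gamma_k\subset\er_k$. Your Riemann-surface sketch does not capture this mass-balancing mechanism; in particular, knowing that branch cuts \emph{can} be placed on $\er$ is far weaker than showing the intrinsic locus $\{|z_k|=|z_{k+1}|\}$ \emph{must} lie in $\er$.

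For parts (b)--(c), your Newton-polygon picture is in the right spirit but the paper's argument is sharper and worth comparing. Rather than tracking modulus-groups, the paper shows that for every connected component $J\subset\Gamma_k$ with $J\cap\{0,\infty\}=\emptyset$ one has $r\mu_k(J)\in\enn$ (an argument-principle/winding computation from \eqref{measure:k}), while $J\ni\infty$ gives $r\mu_k(J)\in\enn/p$ via \eqref{zk:infty:intro:bis}. Passing to $\wtil\Gamma_k$ via \eqref{measuresk:root} turns the interval masses into $\frac{r}{p+1}\sigma_k(I)\in\enn$, $\enn/(p+1)$, or $\enn/p$ according as $I$ avoids, contains $0$, or contains $\infty$. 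The total mass constraint $\sum_I \frac{r}{p+1}\sigma_k(I)=\frac{r(p-k)}{p(p+1)}=\frac{(k+1)r}{p+1}-\frac{kr}{p}$ combined with coprimality of $p$ and $p+1$ then \emph{forces} the values of $b/(p+1)$ and $c/p$ and hence the interval count $n_k$ exactly. This mass-quantization argument both proves the formula and yields (c) for free; your connectivity sketch would still need an additional argument to rule out extra intervals not predicted by the asymptotic groups.
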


Theorem~\ref{theorem:Gammak:star} was formulated for the sets $\wtil\Gamma_k$
in \eqref{Gammak:til}. In terms of the original sets $\Gamma_k$, it implies
that $\Gamma_k$ lies on one of the two stars $S_+$ and $S_-$ in \eqref{stars},
depending on whether $k$ is even or odd respectively. Recall that $\Gamma_{k}$
is rotationally invariant (Lemma~\ref{lemma:rotsym}).

Theorem~\ref{theorem:Gammak:star} will be proved in
Section~\ref{subsection:Gammak:star}. In the case $r=1$ it was already obtained
by Aptekarev-Kalyagin-Saff~\cite{AKS}; note that in that case we have
$n_0=\cdots=n_{p-1}=1$, $\wtil\Gamma_0=[0,c]$ for a certain $c>0$, and
$\wtil\Gamma_k=(-1)^{k}\er_+$ for $k\in [1:p-1]$.

\begin{remark} As mentioned in the statement of the theorem, the intervals $I_{j,k}$, $j\in [1:n_k]$
in \eqref{nk:union} are pairwise disjoint except possibly for common endpoints.
We believe that such common endpoints are rare, in the sense that for a
sufficiently \lq generic\rq\ choice of the parameters $b_k>0$, $k\in [0:r-1]$,
all the endpoints of the intervals are distinct.
\end{remark}

\begin{remark} Suppose $p=2$. Then we have two values $k=0$ and $k=1$, and
\eqref{nk:union} reduces to
$$ n_0 = \left\lceil \frac{r}{3} \right\rceil,\qquad n_1 =
\left\lceil \frac{2r}{3}\right\rceil-\left\lfloor \frac{r}{2} \right\rfloor.
$$
For example, if the period $r=6$ then we have $n_0=2$ and $n_1=1$. Note that
this is the same setting as in~\cite{LopezGarcia}, but in the
latter paper there is an additional structure
on the $b_k>0$ which implies that the two intervals of $\wtil\Gamma_0$ are
tangent (and contain the origin), so that $\wtil\Gamma_0$ consists of a single
contiguous interval in that case. If the period $r=8$ then we have $n_0=3$ and
$n_1=2$: see Fig.~\ref{fig:plotLimit}.
\end{remark}



\subsection{Generalized eigenvalues and interlacing}
\label{subsection:geneig}

To obtain interlacing relations for the zeros of RH minors, we will use an
alternative representation via generalized eigenvalue determinants that we now
describe. To the recurrence \eqref{recurrencerel} we associate the Hessenberg
operator $H=(H_{i,j})_{i,j=0}^{\infty}$ with entries
\begin{equation}\label{H:entries:twodiag}
\left\{\begin{array}{llll} H_{j-1,j} &=& 1,&j\geq 1,\\
H_{j+p,j}&=&a_{j},& j\geq 0,\\
H_{i,j}&=&0,& \textrm{otherwise}.
\end{array}\right.
\end{equation}
We refer to $H$ as a \emph{two-diagonal Hessenberg matrix}. We denote with
$H_n$ its $n\times n$ leading principal submatrix:
\begin{equation}\label{H:matrix:twodiag} H_n =
(H_{i,j})_{i,j=0}^{n-1} =
\begin{pmatrix}
0 &  1  & & &  0 \\
\vdots & \ddots & \ddots & &   \\
a_0 &  & \ddots & \ddots &  \\
 & \ddots & & \ddots & 1 \\
0   & & a_{n-p-1} & \ldots & 0
\end{pmatrix}_{n\times n}.
\end{equation}
The recurrence relation \eqref{recurrencerel} can be written in matrix-vector
form as
\begin{equation}\label{recurrence:mxvector}
x(Q_0(x),Q_1(x),\ldots)^T = H (Q_0(x),Q_1(x),\ldots)^T,
\end{equation}
where the superscript ${}^T$ denotes the transpose. This implies easily that
$Q_n(x)=\det(x I_n-H_n).$
So the eigenvalues of $H_n$ are the zeros of $Q_n$.

For $k\in [0:p]$ we define the polynomial $P_{k,n}(x)$ as the determinant of
the submatrix of $H_n-x I_n$ obtained by skipping the first $k$ rows and the
last $k$ columns. Thus
\begin{equation}\label{geneig:Pkn}
P_{k,n}(x) 
= \det\begin{pmatrix} 0 & \ldots & -x & 1  & & \\
\vdots & \ddots &  & \ddots & \ddots & & \\
a_0 &  & \ddots & & \ddots & \ddots & \\
 & \ddots &  & \ddots & & \ddots & 1 \\
 & & \ddots & & \ddots & & -x \\
 &  &  & \ddots & & \ddots & \vdots \\
 & & & & a_{n-p-1} & \ldots & 0
\end{pmatrix}_{(n-k)\times (n-k)}.
\end{equation}
The $k$th \emph{generalized eigenvalues} of $H_n$ are the numbers $x\in\cee$
such that $P_{k,n}(x)=0$. For $n\leq k$ we set $P_{k,n}(x)\equiv 1$. Note that
for $k=p$ we have $P_{p,n}(x)\equiv a_{0}\cdots a_{n-p-1}>0$.

\begin{lemma}\label{lemma:geneig:degree}
For any $k\in [0:p]$, the polynomial $P_{k,n}(x)$ has degree
\begin{equation*}
\deg P_{k,n}\leq \frac{p-k}{p}(n-k).
\end{equation*}
\end{lemma}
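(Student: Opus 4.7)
The plan is to expand the determinant in \eqref{geneig:Pkn} directly by the Leibniz formula and to bound the degree of the resulting polynomial term by term. First I would record the sparsity pattern of the $(n-k)\times(n-k)$ matrix $M$ obtained from $H_n-xI_n$ by deleting the first $k$ rows and the last $k$ columns. A careful bookkeeping of the shifts shows that $M$ has only three nonzero diagonals: the $k$-th superdiagonal carries the entries $-x$ (the image of the original main diagonal), the $(k+1)$-th superdiagonal carries $1$s (the image of the original superdiagonal), and the $(p-k)$-th subdiagonal carries the coefficients $a_j$ (the image of the original $p$-th subdiagonal). All other entries of $M$ vanish.

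Next I would classify the permutations that contribute a nonzero term. A permutation $\sigma$ of $\{0,1,\ldots,n-k-1\}$ gives a nonzero product $\prod_{i} M_{i,\sigma(i)}$ only if $\sigma(i)-i\in\{k,\,k+1,\,k-p\}$ for every $i$. Letting $u,v,w$ denote the numbers of indices $i$ for which $\sigma(i)-i$ equals $k,\ k+1,\ k-p$ respectively, the degree in $x$ of the associated term in the Leibniz expansion is exactly $u$, since $x$-dependence enters only through the $(-x)$ entries on the $k$-th superdiagonal.

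To bound $u$ I would use two linear constraints on $(u,v,w)$: the count $u+v+w=n-k$, and the vanishing sum $\sum_{i}(\sigma(i)-i)=0$, which holds for every permutation and yields $ku+(k+1)v+(k-p)w=0$. Solving the second for $v$ gives $v=pw-k(n-k)$, and substituting back in the first lets $u$ be expressed as an affine function of $w$ alone. The nonnegativity $v\geq 0$ then forces $w\geq k(n-k)/p$, and maximizing $u$ by minimizing $w$ produces exactly the claimed bound $\deg P_{k,n}\leq \frac{p-k}{p}(n-k)$.

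The only delicate point is step one, identifying the three diagonals and their offsets after the submatrix is formed; once this is settled the degree estimate is a one-variable linear optimization and no analytic input is required. The bound in Lemma~\ref{lemma:RHminor:degree} will then follow by combining this with Prop.~\ref{prop:geneigRH}, as indicated in the text.
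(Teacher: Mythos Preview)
Your argument is correct and is precisely the ``simple combinatorial argument'' the paper invokes (and later spells out in the proof of Prop.~\ref{prop:comb}, where your triple $(u,v,w)$ appears as $(b,c,a)$ and the two linear relations are recorded as \eqref{pattern:two:relations}). Your identification of the three diagonals and the linear-programming step are exactly what the paper has in mind; nothing is missing.
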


\begin{proof}
This follows by a simple combinatorial argument; see e.g.\ \cite[Proof of
Prop.~2.5]{DK}.
\end{proof}

Lemma~\ref{lemma:geneig:degree} could be refined using the combinatorial
formulas in Section~\ref{section:normal}. This leads to an exact formula for
$\deg P_{k,n}$, depending on $n\ \textrm{mod}\ p$. We will not go into this
issue here.


The fact of the matter is the following.

\begin{proposition}\label{prop:geneigRH} (Generalized eigenvalues versus RH minors:)
For any $k\in [0:p]$, 
\begin{equation}\label{relGeneigRHminor}
B_{k,n}(x) = (-1)^{n(k+1)-\binom{k+1}{2}}c_{k}  P_{k,n}(x),
\end{equation}
cf.~\eqref{RHminor:principal:def}, where the constant $c_{k}$ depends only on the first $k$ moments of the
measures $\nu_1,\ldots,\nu_{k}$:
\begin{equation}\label{constant:geneigRH} c_{k}=(-1)^{k}
\left(\int \ud\nu_1(t)\right)\left(\int Q_1(t)\ \ud\nu_2(t)\right)\cdots
\left(\int Q_{k-1}(t)\ \ud\nu_{k}(t)\right).
\end{equation}
\end{proposition}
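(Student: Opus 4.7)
The plan is to establish the identity by packaging the three-term recurrence \eqref{recurrencerel} together with the multiple orthogonality of Theorem~\ref{theorem:Qn:as:mop:0} into a single matrix identity that relates the Hessenberg operator $H_n$ to the Riemann--Hilbert matrix.

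The starting point is a row recurrence for the Cauchy transforms. A direct manipulation of \eqref{secondkind:def} using $t/(z-t) = -1 + z/(z-t)$ together with \eqref{recurrencerel} gives
\[
z\Psi_m^{(j)}(z) = \Psi_{m+1}^{(j)}(z) + a_{m-p}\Psi_{m-p}^{(j)}(z) + \int Q_m(t)\,\ud\nu_j(t).
\]
Thus each $\Psi_m^{(j)}$ satisfies the same recurrence as $Q_m$, modulo a source term $\mu_{m,j} := \int Q_m\,\ud\nu_j$, and by Theorem~\ref{theorem:Qn:as:mop:0} we have $\mu_{m,j} = 0$ whenever $m\geq j$. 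Assembling these recurrences yields the matrix identity
\[
(zI_n - H_n)\,\mathcal{M}_n(z) = \wtil S_n + e_{n-1}\,\mathcal{Y}_{n,*}(z),
\]
where $\mathcal{M}_n(z)$ is the $n\times(p+1)$ matrix whose $m$-th row is $(Q_m(z), \Psi_m^{(1)}(z),\ldots, \Psi_m^{(p)}(z))$ for $m = 0,\ldots, n-1$; $\mathcal{Y}_{n,*}(z)$ is the same row for $m=n$; and $\wtil S_n$ is the source matrix with entries $\mu_{m,j}$ for $j\geq 1$ and zero otherwise, strictly supported on positions $(m,j)$ with $m<j$.

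The key step is to restrict the above identity to rows $[k{:}n{-}1]$ and columns $[0{:}k]$. Orthogonality forces the resulting source block to vanish and leaves the rank-one equation
\[
(zI_n - H_n)[k{:}n{-}1,\,{\cdot}]\,\mathcal{M}_n[{\cdot}\,,0{:}k] = e_{n-k-1}\,\mathcal{Y}_{n,*}(z)[0{:}k].
\]
The left factor splits as $[A\mid B']$ with the square block $A = (zI_n-H_n)[k{:}n{-}1,\,0{:}n{-}k{-}1]$, whose determinant is $(-1)^{n-k}P_{k,n}(z)$. Solving this rank-one equation via Cramer's rule for the bottom $(k{+}1)\times(k{+}1)$ submatrix of $\mathcal{M}_n[{\cdot}\,,0{:}k]$ augmented by the boundary row $\mathcal{Y}_{n,*}(z)[0{:}k]$ (whose determinant, up to the row-reversal sign $(-1)^{\binom{k+1}{2}}$, is $B_{k,n}(z)$) expresses $B_{k,n}(z)$ as a scalar multiple of $P_{k,n}(z)$. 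Iterating the same restriction for $k'<k$ to account for the top $k$ rows of $\mathcal{M}_n$, the strictly triangular structure of $\wtil S_n$ forces the scalar to collapse to the product $\prod_{j=1}^{k}\mu_{j-1,j}$, which, up to the sign $(-1)^k$, is the constant $c_k$ of \eqref{constant:geneigRH}.

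The main technical obstacle is the combinatorial bookkeeping: tracking the overall sign $(-1)^{n(k+1)-\binom{k+1}{2}}$, which combines the sign $(-1)^{n-k}$ from $\det A$, the reversal sign $(-1)^{\binom{k+1}{2}}$ in the definition of $B_{k,n}$, and the Laplace/Cramer cofactor signs; and verifying that the Cramer expansion collapses cleanly so that moments $\mu_{m,j}$ with $m\geq j$ do not reappear as spurious contributions. Both reduce to carefully exploiting the strict upper-triangularity of $\wtil S_n$ enforced by Theorem~\ref{theorem:Qn:as:mop:0}, although a more direct route may be to prove the claim by induction on $k$, expanding $B_{k,n}(z)$ along its last column and using the row recurrence to reduce the surviving Cauchy transforms to the single moment $\mu_{k-1,k}$.
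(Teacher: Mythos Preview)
Your approach via the matrix identity $(zI_n-H_n)\,\mathcal M_n=\wtil S_n+e_{n-1}\,\mathcal Y_{n,*}$ is genuinely different from the paper's. The paper proves the identity by a recurrence in $n$: it shows that $P^{(n_0,\ldots,n_k)}$ (via cofactor expansion along the last row) and $B^{(n_0,\ldots,n_k)}$ (via the row recurrence for $Q_m$ and $\Psi_m^{(j)}$ applied to the top row of the defining determinant) satisfy the \emph{same} linear recursion, and then matches the initial condition at $(n_0,\ldots,n_k)=(0,1,\ldots,k)$. There the constant $c_k$ drops out of a single $(k{+}1)\times(k{+}1)$ calculation using the large-$z$ asymptotics of $\Psi_m^{(j)}$, and as a bonus the argument yields the more general identity \eqref{geneig:RH} for arbitrary index tuples.

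Your rank-one reduction on rows $[k{:}n{-}1]$ is correct, and it does force $B_{k,n}=c(z)\,P_{k,n}$ for \emph{some} scalar $c(z)$. The right mechanism, however, is not Cramer's rule but the Pl\"ucker relation for the kernel of $L:=\bigl[(zI_n-H_n)[k{:}n{-}1,\cdot]\mid -e_{n-k-1}\bigr]$: since $L N=0$ with $N:=\bigl(\begin{smallmatrix}\mathcal M_n[\cdot,0{:}k]\\ \mathcal Y_{n,*}[0{:}k]\end{smallmatrix}\bigr)$, every maximal minor of $N$ is a fixed scalar multiple of the signed complementary maximal minor of $L$. Taking rows $\{n{-}k,\ldots,n\}$ of $N$ against columns $\{0,\ldots,n{-}k{-}1\}$ of $L$ gives $B_{k,n}\propto P_{k,n}$, as you say.

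The gap is the identification of the scalar with $c_k$. Your sentence ``iterating the same restriction for $k'<k$ \ldots forces the scalar to collapse to $\prod\mu_{j-1,j}$'' is not a proof: it is unclear what iteration you intend, and no mechanism is given that singles out precisely the antidiagonal moments $\mu_{j-1,j}$ while suppressing all other $\mu_{m,j}$ with $m<j$. The clean fix within your own framework is to evaluate a \emph{second} Pl\"ucker pair: take rows $\{0,\ldots,k\}$ of $N$, whose minor is the constant $B^{(0,1,\ldots,k)}$ (Lemma~\ref{lemma:RHminor:degree}), and match it against the complementary minor of $L$ on columns $\{k{+}1,\ldots,n\}$, which is lower triangular with $-1$'s on the diagonal and hence equals $(-1)^{n-k}$. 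This pins down $c(z)$ as the $z$-independent constant $c_k$, but computing $B^{(0,1,\ldots,k)}$ explicitly is exactly the initial-condition calculation the paper carries out---so your route does not avoid that step, it merely reaches it by a different path.
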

Note that in \eqref{relGeneigRHminor} and \eqref{constant:geneigRH}, we
should understand $\binom{1}{2}=0$ and $c_{0}=1$.

We point out that Prop.~\ref{prop:geneigRH} remains valid for arbitrary banded
Hessenberg operators, that is, for matrices $H=(H_{i,j})_{i,j=0}^{\infty}$
defined by
\begin{equation}\label{H:entries}
\left\{\begin{array}{ll} H_{j-1,j} = 1,& j\geq 1,\\ H_{j+k,j} = a_j^{(k)},&
j\geq 0,\,\, k\in [0:p],\,\,a_{j}^{(k)}\in\cee,\\
H_{i,j}=0,&\textrm{otherwise,}
\end{array}\right.
\end{equation}
so that
\begin{equation}\label{H:matrix}
H_n = (H_{i,j})_{i,j=0}^{n-1} =
\begin{pmatrix}
a_0^{(0)} & 1  & & & 0 \\
\vdots & \ddots & \ddots &  &  \\
a_0^{(p)} &  & \ddots & \ddots &  \\
  & \ddots &  & \ddots & 1 \\
0   & & a_{n-p-1}^{(p)} & \ldots & a_{n-1}^{(0)}
\end{pmatrix}_{n\times n}.
\end{equation}
We will assume that 
$a_j^{(p)}\neq 0,$ for all $j$, 
so the entries on the $p$th subdiagonal of \eqref{H:matrix} are non-zero. We
associate to $H$ the sequence of monic polynomials $(Q_{n})_{n=0}^{\infty}$
satisfying the $(p+2)$-term recurrence relation \eqref{recurrence:mxvector},
i.e.,
\begin{equation}\label{eq:recurrence}
x Q_{n}(x)=Q_{n+1}(x)+a_{n}^{(0)}Q_{n}(x)+a_{n-1}^{(1)} Q_{n-1}(x)+\cdots+a_{n-p}^{(p)}Q_{n-p}(x),\qquad n\geq 0,
\end{equation}
with initial conditions
\begin{equation}\label{initcondQnH}
Q_{-1}\equiv\cdots\equiv Q_{-p}=0,\quad Q_{0}\equiv 1.
\end{equation}
Prop.~\ref{prop:geneigRH} will be a consequence of a result proved in
Prop.~\ref{prop:geneigRH:bis} for the polynomials $Q_{n}$ satisfying
\eqref{eq:recurrence}--\eqref{initcondQnH}, assuming that these polynomials are
\emph{multiple orthogonal} with respect to a system of $p$ measures, see
Section~\ref{section:geneig} for more details.

Prop.~\ref{prop:geneigRH} shows that RH minors can be alternatively represented
as generalized eigenvalue determinants. We now state interlacing relations for
the latter.

\begin{figure}[tbp]
\begin{center}\vspace{-1mm}
        \subfigure{}\includegraphics[scale=0.32]{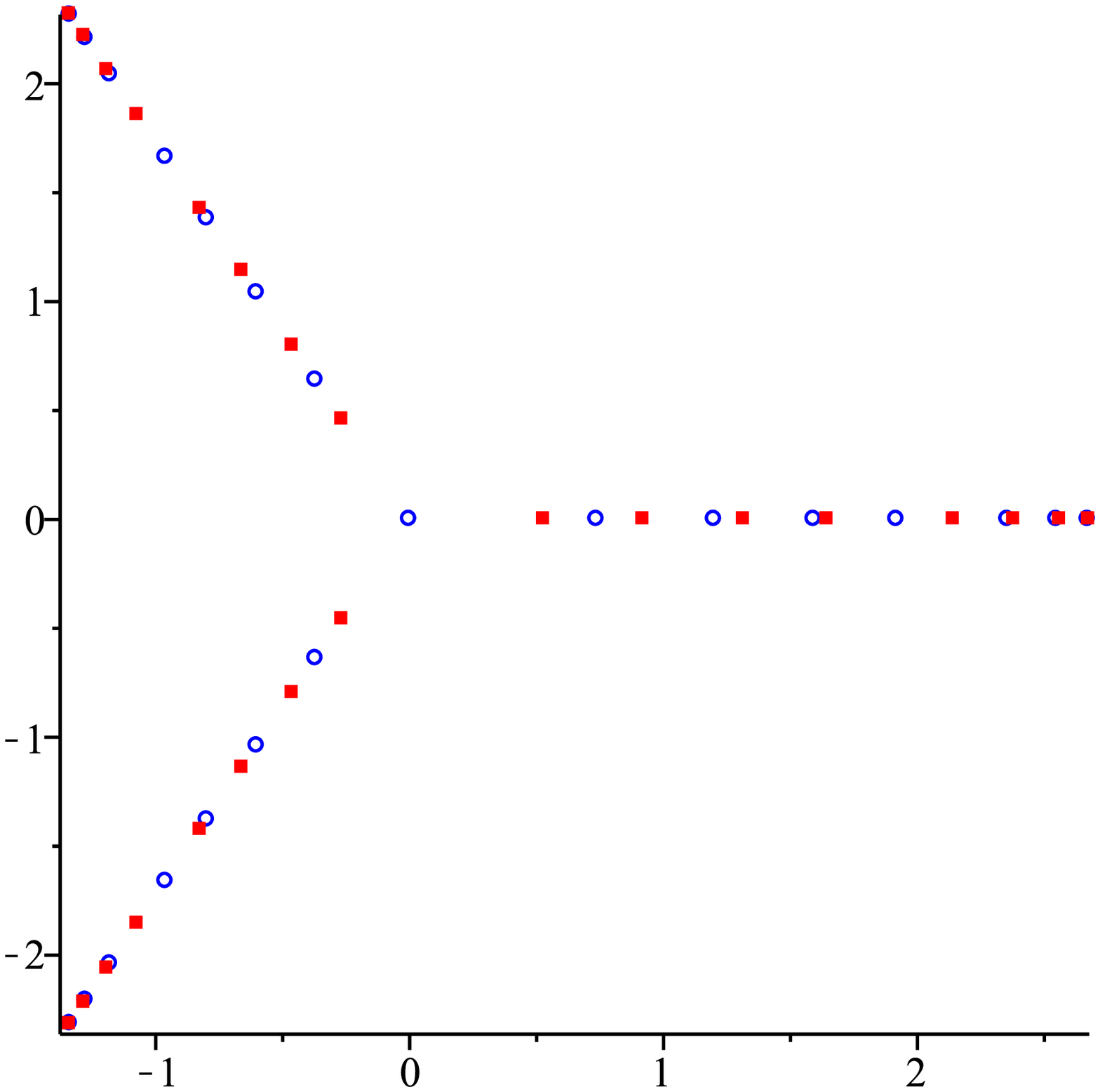}\hspace{15mm}
        \subfigure{}\includegraphics[scale=0.32]{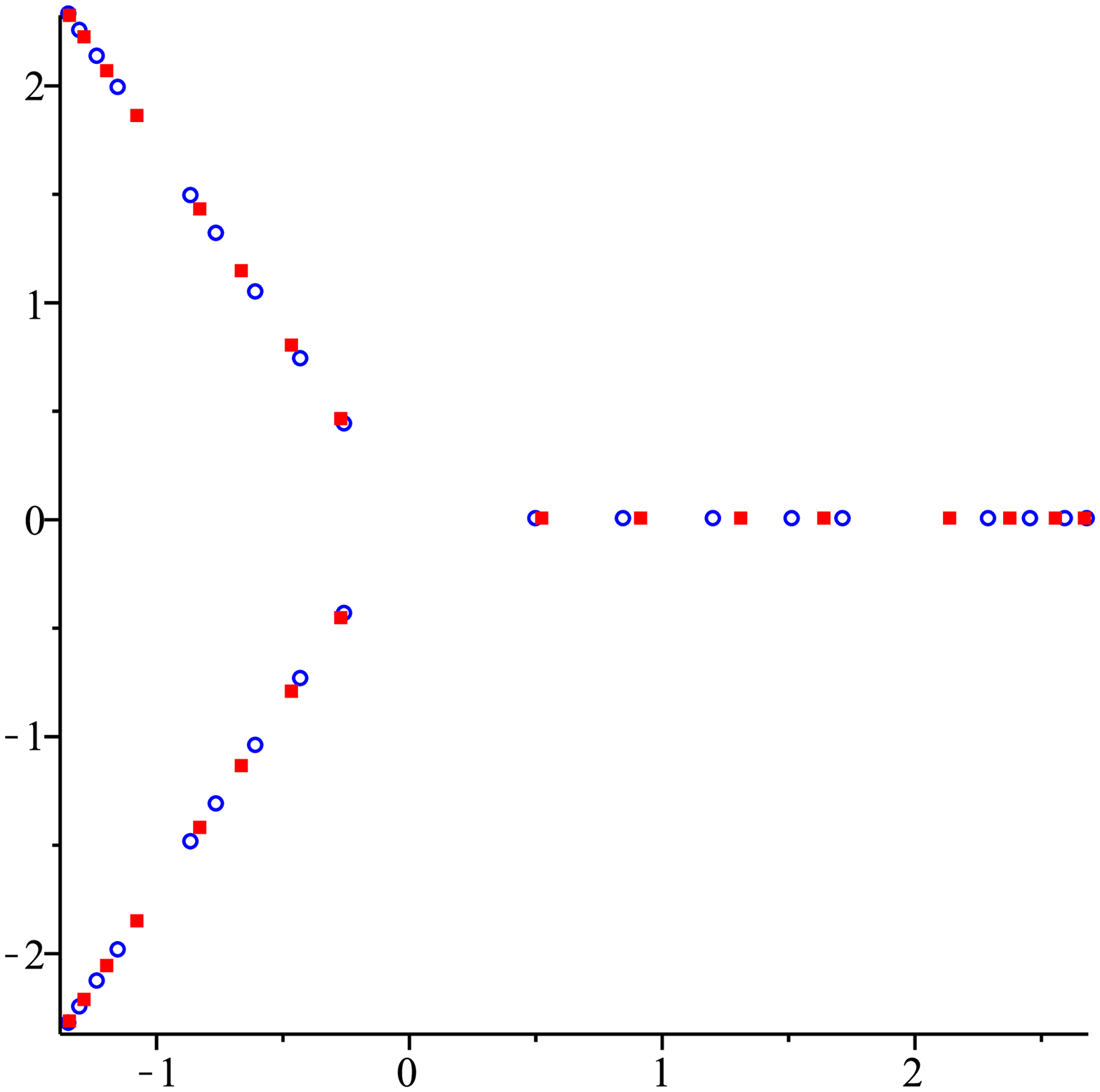}
\end{center}\vspace{-8mm}
\caption{Left picture: zeros of $Q_{23}$ (circles) and $Q_{24}$ (squares).
Right picture: zeros of $Q_{24}$ (squares) and $Q_{27}$ (circles). In these
pictures we have a two-diagonal Hessenberg matrix $H$ as in
\eqref{H:entries:twodiag}--\eqref{H:matrix:twodiag} with $p=2$ and recurrence
coefficients $(a_{0},\ldots,a_{5}) = (3,2,3,5,4,1)$ extended periodically with
periodicity $r=6$.} \label{fig:interlace1}
\end{figure}


\begin{theorem}\label{theorem:interlace} (Interlacing for generalized eigenvalues:)
Let $H$ be a two-diagonal Hessenberg matrix~\eqref{H:entries:twodiag} with
$a_j>0$ for all $j$. Fix $n\in\enn$ and $k\in [0:p-1]$. Then
\begin{itemize}
\item[(a)] We have $P_{k,n}(x) = x^{m_{k,n}}\wtil P_{k,n}(x^{p+1})$, for a
polynomial $\wtil P_{k,n}$ with $\wtil P_{k,n}(0)\neq 0$ and with
\begin{equation}\label{mkn:def} m_{k,n} =\left\{\begin{array}{ll}
(j-k)(k+1),& \textrm{ if } n\equiv j \mod (p+1),\, j\in [k:p],\\[0.3em]
(k-j)(p-k),& \textrm{ if } n\equiv j \mod (p+1),\, j\in [-1:k].
\end{array}\right.
\end{equation}
The zeros of $\wtil P_{k,n}$ all lie in $\er_+$ ($\er_-$) if $k$ is even (odd).

\item[(b)]
Denote the zeros of $\wtil P_{k,n}$ and $\wtil P_{k,n+1}$ as
$(x_i)_{i=1,2,\ldots}$ and $(y_i)_{i=1,2,\ldots}$ respectively, counting
multiplicities and ordered by increasing modulus.
We have the weak interlacing relation
$$ 0<|x_1|\leq |y_{1}|\leq |x_{2}|\leq |y_{2}|\leq \ldots
$$
if $n\equiv j\mod (p+1)$, $j\in [k:p-1]$, and
$$ 0<|y_{1}|\leq |x_{1}|\leq |y_{2}|\leq |x_{2}|\leq \ldots
$$
if $n\equiv j\mod (p+1)$, $j\in [-1:k-1]$.

\item[(c)] 
Let $(x_i)_{i=1,2,\ldots}$ be the zeros of $\wtil P_{k,n}$, as in (b), and let
$(w_i)_{i=1,2,\ldots}$ be the zeros of $\wtil P_{k,n+p+1}$, counting
multiplicities and ordered by increasing modulus. We have
$$ 0<|w_{1}|\leq |x_{1}|\leq |w_{2}|\leq |x_{2}|\leq \ldots.
$$
Note that the moduli can be removed if $k$ is even and replaced by minus signs
if $k$ is odd.
\end{itemize}
\end{theorem}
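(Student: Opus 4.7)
The plan is to handle the three parts in a linked fashion: part~(a) follows from the $(p+1)$-fold rotational symmetry of $H_n$ together with a combinatorial identification of the lowest power of $x$ appearing in $P_{k,n}$, while parts~(b) and~(c) come from the classical Gantmacher-Krein interlacing theorems applied to a totally nonnegative matrix obtained by \lq unfolding\rq\ the generalized eigenvalue problem via the substitution $y=x^{p+1}$.

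For part~(a), the first step is to observe that conjugating $H_n$ by $D_n:=\diag(1,\om,\om^2,\ldots,\om^{n-1})$ with $\om:=\exp(2\pi\ir/(p+1))$ gives $D_n^{-1}H_nD_n=\om H_n$, because the twist on the $1$-superdiagonal equals $\om$ and the twist on the $a_j$-subdiagonal equals $\om^{-p}=\om$. Applying the analogous row/column twists to the rectangular submatrix in \eqref{geneig:Pkn} and tracking the scalar factors produces the functional equation $P_{k,n}(\om x)=\om^{(k+1)(n-k)}P_{k,n}(x)$, which forces a factorization $P_{k,n}(x)=x^{m_{k,n}}\wtil P_{k,n}(x^{p+1})$. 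The precise value of $m_{k,n}$ in the two cases of \eqref{mkn:def} is then the \emph{minimum} number of $-x$ factors appearing in any admissible permutation in the determinantal expansion of \eqref{geneig:Pkn}; carrying out this minimisation by the same path-counting argument used in Lemma~\ref{lemma:geneig:degree} and splitting according to whether $n\bmod(p+1)$ sits in $[k:p]$ or in $[-1:k]$ reproduces the two displayed formulae.

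For parts~(b) and~(c), the plan is to realise $\wtil P_{k,n}(y)$ as the determinant of an appropriate minor of $yI-M_n$, where $M_n$ is a banded matrix with positive entries obtained by folding $H_n$ modulo $p+1$. Concretely, iterating the recursion \eqref{recurrencerel} $(p+1)$ times and restricting to residue classes mod $p+1$ produces a $(p+2)$-term recurrence for $\wtil Q_n$ as a polynomial in $y=x^{p+1}$, all of whose coefficients are positive because the $a_j$ are. This places $M_n$ inside the Gantmacher-Krein class of oscillation matrices, which guarantees real and positive eigenvalues (delivering the $\er_\pm$-statement of part~(a), with an overall sign flip $(-1)^{p-k}$ appearing for odd $k$) together with strict interlacing of eigenvalues under enlargement by one row and column. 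Part~(c) is then immediate: passing from $n$ to $n+p+1$ adds exactly one full block to $M_n$, so the interlacing claim (with the moduli stripped off for even $k$ and replaced by minus signs for odd $k$) reduces to a single application of the classical theorem. Part~(b) is more delicate, because the increment $n\to n+1$ either enlarges the current block of $M_n$ or starts a new one depending on $n\bmod(p+1)$; this dichotomy accounts for both the case split between $j\in[k:p-1]$ and $j\in[-1:k-1]$ and for the direction of the weak interlacing.

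The main obstacle is the folding step itself. Identifying $\wtil P_{k,n}(y)$ with a concrete minor of $yI-M_n$, while keeping track of signs, of the residue of $n$ modulo $p+1$, and of the asymmetric trimming of $k$ rows from the top and $k$ columns from the right in \eqref{geneig:Pkn}, is where the combinatorial bookkeeping concentrates; and verifying that the minors of $M_n$ needed to invoke the oscillation-matrix interlacing are genuinely strictly positive is the technical heart, corresponding precisely to the totally positive matrix machinery advertised in the abstract.
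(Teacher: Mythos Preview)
Your overall strategy---fold modulo $p+1$, land on a totally nonnegative matrix, invoke Gantmacher--Krein interlacing---is exactly the paper's route, and your symmetry argument for the factorization $P_{k,n}(x)=x^{m_{k,n}}\wtil P_{k,n}(x^{p+1})$ together with the combinatorial minimisation for $m_{k,n}$ is a legitimate alternative to the paper's block computation (the paper acknowledges this alternative in Remark~\ref{remark:mkn:combin}).

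However, there is a genuine gap in your treatment of the $\er_\pm$-claim in (a) and of (b)--(c). After the folding step, $\wtil P_{k,n}(y)$ is \emph{not} the characteristic polynomial of the folded TNN matrix: concretely, the paper's Eq.~\eqref{Pkn:cyclic} shows it is (up to a constant) $\det\bigl(Y_0'Y_1'\cdots Y_p'-y\bigl(\begin{smallmatrix}0&I\\0_{k\times k}&0\end{smallmatrix}\bigr)\bigr)$, i.e.\ the $k$th \emph{generalized} eigenvalue determinant of the product $Y_0'\cdots Y_p'$, with $y$ multiplying a shifted partial identity rather than $I$. Classical Gantmacher--Krein covers only ordinary eigenvalues ($k=0$) and says nothing about the roots of such minors, so your appeal to ``oscillation matrices guarantee real positive eigenvalues and interlacing under enlargement'' does not apply for $k\geq 1$. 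The missing ingredient is precisely Proposition~\ref{prop:geneigTP}: it shows, by an iterated row-reduction $N\mapsto N^{(1)}\mapsto\cdots\mapsto N^{(k)}$ with careful sign-tracking, that the $k$th generalized eigenvalues of a TP matrix $M$ are the \emph{ordinary} eigenvalues of a smaller matrix $N^{(k)}$ with $(-1)^kN^{(k)}$ still TP, and that this reduction is compatible with passing to the leading principal submatrix. Only then does classical Gantmacher--Krein deliver the sign and interlacing claims.

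You also misidentify the technical heart. Verifying that $Y_0'\cdots Y_p'$ is TNN is immediate (it is a product of nonnegative bidiagonal matrices). The real work is (i) the generalized-eigenvalue extension of Gantmacher--Krein just described, and (ii) the block-level bookkeeping (Sections~\ref{subsubsection:proof1}--\ref{subsubsection:proof5}) showing that the folded matrices for $P_{k,n+1}$ and $P_{k,n+p+1}$ have the folded matrix for $P_{k,n}$ as a leading principal submatrix---this, not positivity of minors, is what makes Corollary~\ref{prop:geneigTNN} applicable and produces the correct direction of interlacing in each residue class.
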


Theorem~\ref{theorem:interlace} generalizes known results for the standard
eigenvalues $k=0$ \cite{EV,Rom}. The theorem will be proved in
Section~\ref{section:interlacing}, by using the theory of totally positive
matrices and extending the approach of Eiermann-Varga \cite{EV}. See also
Theorems~\ref{theorem:interlace:kl} and \ref{theorem:interlace:gen} below for
related results.

Theorem~\ref{theorem:interlace} is illustrated in Figures~\ref{fig:interlace1}
and \ref{fig:interlace2}.

\begin{figure}[tbp]
\begin{center}\vspace{-1mm}
        \subfigure{}\includegraphics[scale=0.32]{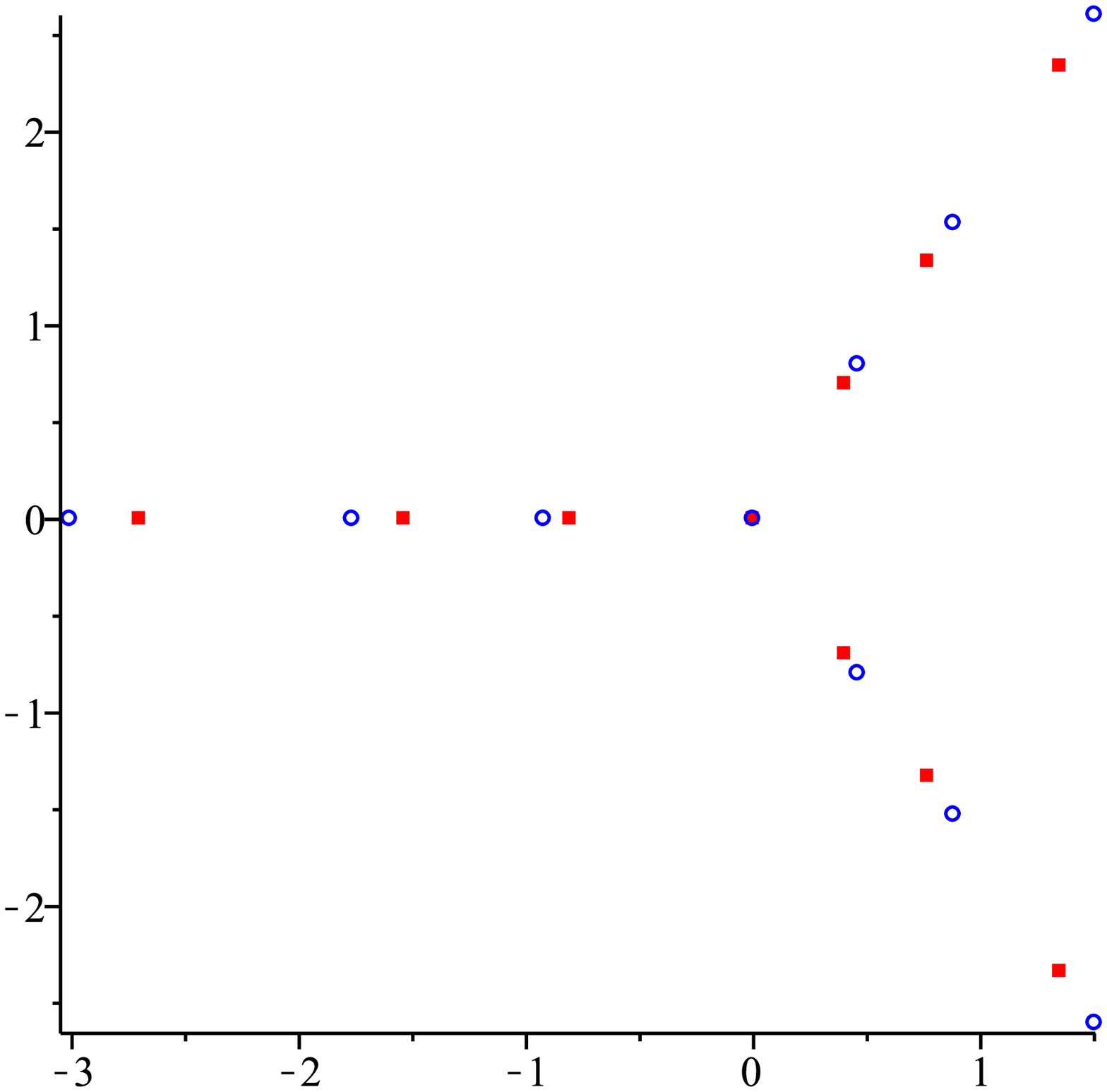}\hspace{15mm}
        \subfigure{}\includegraphics[scale=0.32]{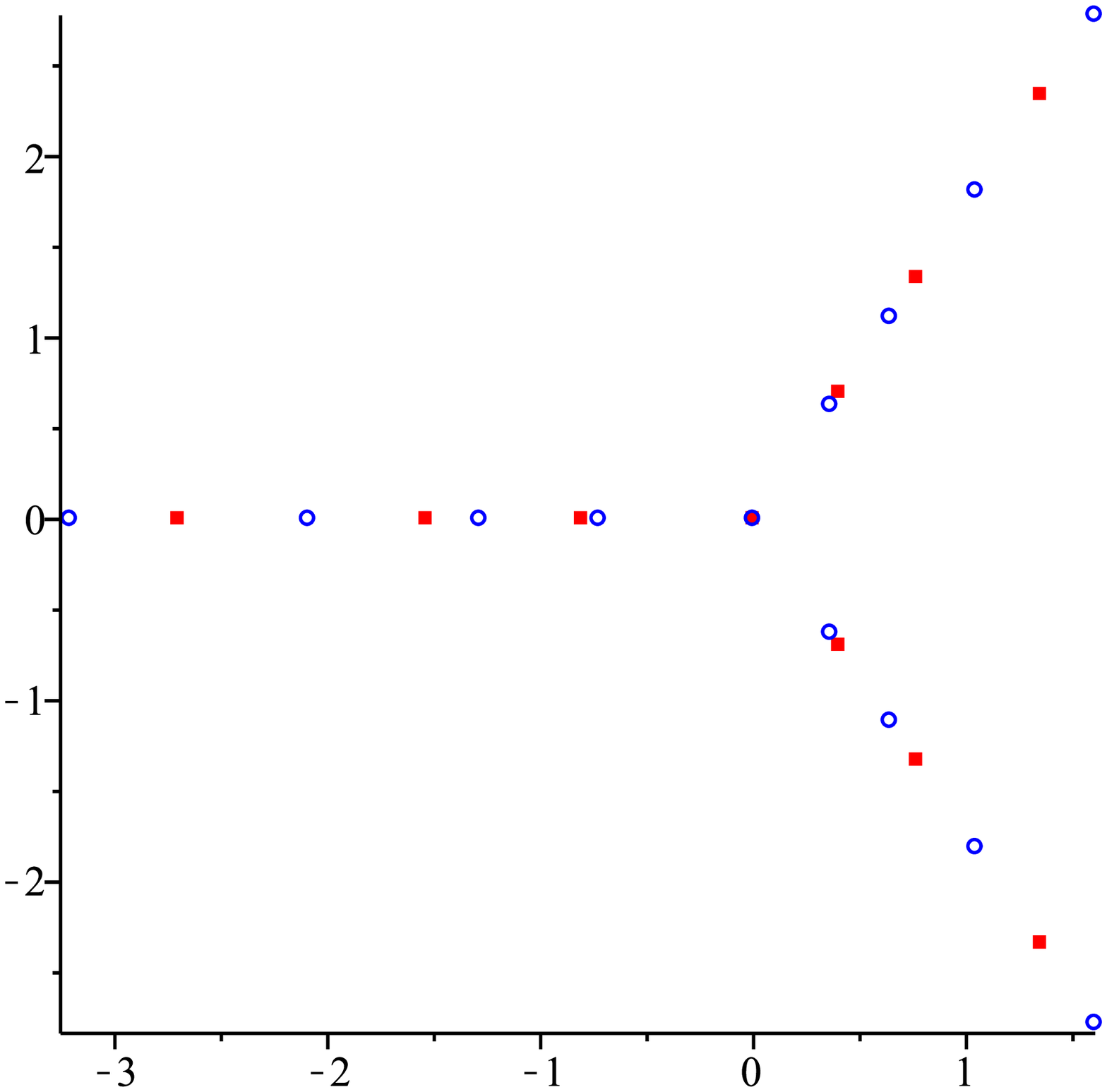}
\end{center}\vspace{-8mm}
\caption{Left picture: zeros of $P_{1,23}$ (circles) and $P_{1,24}$ (squares).
Right picture: zeros of $P_{1,24}$ (squares) and $P_{1,27}$ (circles). The
matrix $H$ is as in Figure~\ref{fig:interlace1}.} \label{fig:interlace2}
\end{figure}


Generalized eigenvalues turn out to be deeply connected to the hierarchy of
functions of the (formal) \emph{Nikishin system} generated by $H$. This will be
the topic of Section~\ref{subsection:Nik}.

\subsection{Connection with Nikishin systems}
\label{subsection:Nik}

Aptekarev-Kalyagin-Saff \cite{AKS} show that, in the trace class
$\sum_{k=0}^{\infty}|a_k-a|<\infty$ and with period $r=1$, the two-diagonal
operator $H$ generates a (formal) \emph{Nikishin system}. These objects are
only formally defined however.

In this paper we will obtain a related result. It will apply to the exactly
periodic case
\begin{equation}\label{periodic:exact:twodiag}
a_{rn+k}=a_k=b_{k},\qquad n\in\enn, \quad k\in [0:r-1].
\end{equation}
We assume without loss of generality that the period $r$ is a multiple of $p$.
We also assume that
\begin{equation}\label{ordering:as}\prod_{n=0}^{r/p-1}
a_{pn}>\prod_{n=0}^{r/p-1} a_{pn+1}>\cdots>\prod_{n=0}^{r/p-1}
a_{pn+(p-1)}.\end{equation} Under these conditions, we will show that the
polynomials $Q_n$ are multiple orthogonal with respect to a \emph{true}
Nikishin system generated by rotationally invariant measures on the stars $S_+$
and $S_-$, coming from measures on $\er_+$ or $\er_-$ with \emph{constant sign}. There
can also be possible point masses at each level of the Nikishin hierarchy.

Nikishin systems formed by measures supported on the real line were introduced
by E.M. Nikishin in~\cite{Nik}. The same definition can be easily adapted to
our context of star-like sets, as we now explain. Compare this definition with
the one given in~\cite[Section 8.1]{AKS}.

\begin{definition}\label{def:Nikishin} Let $\nu_{1},\ldots,\nu_{p}$ be a collection of $p$ complex
measures supported on the set $\Gamma_{0}\cup\mathcal{A}_{0}$, where
$\mathcal{A}_{0}\subset S_{+}\setminus\Gamma_{0}$ is a
discrete set. We say that $(\nu_{1},\ldots,\nu_{p})$ forms a Nikishin system on
$(\Gamma_{0},\ldots,\Gamma_{p-1})$ if for each $k\in [0:p-1],$ there exists a
collection of complex measures $(\nu_{l,k})_{l=k+1}^{p}$ supported on
$\Gamma_{k}\cup\mathcal{A}_{k}$, where $\mathcal{A}_{k}$ is a discrete subset
of $S_{+}\setminus\Gamma_{k}$ (if $k$ is even) or $S_{-}\setminus\Gamma_{k}$
(if $k$ is odd), with the following properties:
\begin{itemize}
\item[(a)] $(\nu_{1},\ldots,\nu_{p})=(\nu_{1,0},\ldots,\nu_{p,0})$.
\item[(b)] If $\ud\nu_{l,k}(x)=g_{l,k}(x) \ud x+\ud \nu_{l,k}^{(s)}(x)$,
$\ud \nu_{l,k}^{(s)}(x)\perp g_{l,k}(x)\ud x$, denotes the Lebesgue
decomposition of $\nu_{l,k}$, $l\in [k+1:p],$ then
    \begin{equation}\label{Nik:property}
    \frac{g_{l,k}(x)}{g_{k+1,k}(x)}=\int\frac{\ud \nu_{l,k+1}(t)}{x-t},\qquad  x\in\Gamma_{k},\quad l\in [k+2:p].
    \end{equation}
\item[(c)] For every $l\in [k+1:p],$ there exists a real measure $\til \nu_{l,k}$
with constant sign (either positive or negative), supported on $\er_{+}$
($\er_{-}$) if $k$ is even (odd), such that
    \begin{equation}\label{nukl:symmetry}
    \ud\nu_{l,k}(t) = t^{k+1-l} \ud\til\nu_{l,k}(t^{p+1}).
    \end{equation}
\end{itemize}
\end{definition}

\begin{remark}
We observe that Nikishin systems possess a hierarchical structure, with the
measures $(\nu_{1},\ldots,\nu_{p})$ forming level $0$ of the hierarchy. The
measure $\nu_{l,k}$ is said to be at the \emph{$k$th level of the Nikishin
hierarchy}. Note that \eqref{nukl:symmetry} implies that for any $k\in
[0:p-1]$, the measure $\nu_{k+1,k}$ is rotationally invariant, and the induced
measure $\til\nu_{k+1,k}$ is real with constant sign. The measures
$\nu_{k+1,k}$ are usually referred to as the \emph{generating measures} of the
Nikishin system. We are implicitly requiring in \eqref{Nik:property} that
$g_{k+1,k}(x)\neq 0$ for all but finitely many $x\in\Gamma_{k}$.
\end{remark}

Our main result is the following.

\begin{theorem}\label{theorem:Nik:property}
Let $H$ be a two-diagonal Hessenberg matrix~\eqref{H:entries:twodiag} with
exactly periodic coefficients $a_{j}>0$ satisfying
\eqref{periodic:exact:twodiag}--\eqref{ordering:as}, where the period $r$ is a
multiple of $p$. Then the orthogonality measures $(\nu_{1},\ldots,\nu_{p})$ in
Theorem~\ref{theorem:Qn:as:mop:0} form a Nikishin system on
$(\Gamma_{0},\ldots,\Gamma_{p-1})$ (Def.~\ref{def:Nikishin}). Moreover, the
star-like sets $(\Gamma_{k})_{k=0}^{p-1}$ are compact and the discrete sets
$(\mathcal{A}_{k})_{k=0}^{p-1}$ are finite.
\end{theorem}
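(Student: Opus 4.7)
The plan is to construct the Nikishin hierarchy $(\nu_{l,k})_{l=k+1}^{p}$, for $k\in[0:p-1]$, by boundary-value analysis of ratios of Riemann--Hilbert minors. At each level $k$ I would introduce second-kind type functions $\Phi_{l,k}(z)$, $l\in[k+1:p]$, as limits (in the ratio-asymptotic sense of Section~\ref{subsection:RHlim}) of quotients of RH minors obtained from $Y_n(z)$ by appending an extra column of type $\Psi_n^{(l)}$ to the block defining $B_{k,n}$. The Poincar\'{e}-type asymptotics identifies each such limit with an explicit rational expression in the sheets $z_0(x),\ldots,z_p(x)$ of the algebraic curve \eqref{def:algcurve}. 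In particular $\Phi_{l,k}$ is meromorphic outside $\Gamma_k$ with controlled decay at infinity, so its Stieltjes inversion defines the candidate measure $\nu_{l,k+1}$ together with a finite discrete component coming from its poles off $\Gamma_k$, which furnishes $\mathcal{A}_{k+1}$.

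The Nikishin identity \eqref{Nik:property} then follows from a Plemelj--Sokhotski computation on $\Gamma_k$. Since $\Gamma_k=\{|z_k|=|z_{k+1}|\}$, across $\Gamma_k$ the sheets $z_k$ and $z_{k+1}$ swap in the ordering \eqref{ordering:rootszk}, and the resulting jump of $\Phi_{l,k}$ splits into an $l$-independent factor (common to all $l\in[k+1:p]$) times an $l$-dependent part. Forming the quotient $g_{l,k}/g_{k+1,k}$ cancels the common factor and exhibits the ratio as the Cauchy transform of a measure supported on $\Gamma_{k+1}$, which is then declared to be $\nu_{l,k+1}$. Property (c) is inherited from Lemma~\ref{lemma:rotsym}: the rational functions in the sheets $z_j$ defining $\Phi_{l,k}$ are covariant under $x\mapsto\om x$, so each $\nu_{l,k}$ is $(p+1)$-fold rotationally invariant, and after pulling off the appropriate power of $t$ one obtains a well-defined induced real measure $\til\nu_{l,k}$ on $\er_+$ or $\er_-$ according to the parity result of Theorem~\ref{theorem:Gammak:star}(a). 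Compactness of each $\Gamma_k$ would be established by ruling out unbounded intervals among the $I_{j,k}$ of Theorem~\ref{theorem:Gammak:star}(b) under the assumption $p\mid r$.

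The main obstacle will be verifying the \emph{constant sign} of each induced measure $\til\nu_{l,k}$ on every interval $I_{j,k}\subset\wtil\Gamma_k$, and here the strict ordering hypothesis \eqref{ordering:as} plays a decisive role. My plan is to represent $\ud\til\nu_{l,k}$ on each arc as a weighted combination of logarithmic-derivative differences $z_{j+}'(\lam)/z_{j+}(\lam)-z_{j-}'(\lam)/z_{j-}(\lam)$, in the spirit of the representation \eqref{measure:k} of $\mu_k$ whose positivity is already known. The strict ordering \eqref{ordering:as} forces strict inequalities in the large-$x$ asymptotics \eqref{zk:infty:intro:bis}, and the total-positivity framework used to prove Theorem~\ref{theorem:interlace} should propagate these strict inequalities across every open sub-arc of $\wtil\Gamma_k$, preventing sheet crossings and hence sign changes of $\til\nu_{l,k}$. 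Finally, finiteness of each discrete set $\mathcal{A}_k$ reduces to showing that poles of $\Phi_{l,k}$ off $\Gamma_k$ form a finite set, which I expect from the ratio-asymptotic control of Theorem~\ref{theorem:muk:RHminor} together with the observation that such poles are isolated zeros of explicit algebraic expressions in $z_0,\ldots,z_p$.
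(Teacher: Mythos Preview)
Your general architecture---define the hierarchy via ratio-asymptotic limits of RH minors, identify each $\Phi_{l,k}$ with an explicit rational expression in the sheets $z_0,\ldots,z_p$, and recover the Nikishin relation \eqref{Nik:property} from a Plemelj--Sokhotski jump across $\Gamma_k$---matches the paper closely. The paper carries this out through the determinantal hierarchy $f_{l,k}$ of Lemma~\ref{lemma:linalgdet}, and the limit in your step is precisely Lemma~\ref{lemma:PklnPk} (formula \eqref{geneig:Nikhier}).

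The real gap is in your plan for the \emph{constant sign} of $\til\nu_{l,k}$. Representing $\ud\til\nu_{l,k}$ as a ``weighted combination of logarithmic-derivative differences'' in the spirit of \eqref{measure:k} does not work: the densities $g_{l,k}$ are jumps of the ratios $f_{l,k}$, not of $\log\prod_j z_j$, and the positivity of $\mu_k$ offers no direct leverage. Your appeal to ``propagating strict inequalities'' from the large-$x$ regime via total positivity is vague; the sheets $z_k,z_{k+1}$ \emph{do} coincide in modulus on $\Gamma_k$ by definition, so ``preventing sheet crossings'' is not the mechanism. The ordering hypothesis \eqref{ordering:as} is used in the paper only to force the constant $\alpha$ in the limit to vanish (via the asymptotics of Lemma~\ref{cor:degree}), not to produce sign information on $\Gamma_k$.

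What the paper actually does---and what your proposal is missing---is to exploit the \emph{finite-$n$} interlacing of Theorem~\ref{theorem:interlace:kl} between the zeros of $\wtil P_{k,l,n}$ and $\wtil P_{k,n}$. That interlacing forces the partial-fraction decomposition of $\wtil P_{k,l,n}(z)/\bigl(z\,\wtil P_{k,n}(z)\bigr)$ to have residues of a single sign for every $n$. The resulting functions therefore map the upper half-plane to a fixed half-plane, uniformly in $n$, and this Herglotz-type property survives the limit (the paper invokes a Krein--Nudel'man representation theorem). Constant sign of $\til\nu_{l,k}$ then comes for free from the limiting measure in that representation. You reference Theorem~\ref{theorem:interlace} but not Theorem~\ref{theorem:interlace:kl}; the latter, together with its partial-fraction consequence, is the missing key lemma.
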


Theorem~\ref{theorem:Nik:property} will be proved in
Section~\ref{section:Nikishin}.

\begin{remark} Theorem~\ref{theorem:Nik:property} was
stated under the condition \eqref{ordering:as}. In general, consider the set
\begin{equation}\label{product:as} \left\{\prod_{n=0}^{r/p-1}
a_{pn},\prod_{n=0}^{r/p-1} a_{pn+1},\cdots,\prod_{n=0}^{r/p-1}
a_{pn+(p-1)}\right\}.\end{equation} Eq.~\eqref{zk:infty:intro:bis} (with $d=p$)
shows that there exists a
permutation $\Pi$ of $[1:p]$ so that
\begin{equation}\label{zk:infty:0}
z_{\Pi(k)}(x) = \left(\prod_{n=0}^{r/p-1}a_{pn+k-1}\right)^{-1}
x^{r/p}(1+o(1)),\qquad k\in [1:p],
\end{equation}
for $x\to\infty$. 
This can also be seen from the derivation of \eqref{zk:infty} in
Section~\ref{section:Nikishin}. As a consequence, if the $p$ numbers in
\eqref{product:as} are pairwise distinct then all the $\Gamma_k$ are bounded.
The converse of the last statement is also true, due to
\cite[Lemma~3.3]{SchmidtSpitzer}. Now if the numbers \eqref{product:as} are
pairwise distinct but ordered in a different way than \eqref{ordering:as}, then
a variant to Theorem~\ref{theorem:Nik:property} holds. We then have an
additional constant or monomial term in the right hand
side of \eqref{Nik:property}. This is due to the fact that 
the constant $\alpha$ in Eq.~\eqref{limit} in Section~\ref{section:Nikishin}
can be nonzero in this case.
\end{remark}

We see that the key to obtaining a \emph{true} (rather than a formal) Nikishin
system is to show that the ratio between the densities \eqref{Nik:property} of
the measures at the different levels of the Nikishin hierarchy are Cauchy
transforms of measures on $S_+$ or $S_-$, associated to real measures with constant
sign on $\er_+$ or $\er_-$. We will establish this requirement via a surprising
connection with RH minors. In particular we will use the interlacing relations
between the zeros of RH minors.

Recall the generalized eigenvalue determinant $P_{k,n}(x)$
from \eqref{geneig:Pkn}. We need a more general definition. For any $1\leq k\leq l\leq p$ we define $P_{k,l,n}(x)$ as
the determinant of the submatrix obtained by skipping rows $0,1,\ldots,k-1$ and
columns $n-l,n-k+1,n-k+2,\ldots,n-1$ of $H_{n}-xI_{n}$. If $l=k$ then we
retrieve our previous definition: $P_{k,k,n}(x)\equiv P_{k,n}(x)$.

In the proof of Theorem~\ref{theorem:Nik:property} we need the following result
on the polynomials $P_{k,l,n}(x)$.

\begin{theorem}\label{theorem:interlace:kl}
(Interlacing for $P_{k,l,n}$ and $P_{k,n}$:) Let $H$ be a two-diagonal
Hessenberg matrix~\eqref{H:entries:twodiag} with $a_j>0$ for all $j$. Fix
$n\in\enn$ and $0\leq k< l\leq p$. Then
\begin{itemize}
\item[(a)] We have $P_{k,l,n}(x) = x^{k-l+m_{k,n}}\wtil P_{k,l,n}(x^{p+1})$,
with $m_{k,n}$ defined in \eqref{mkn:def} and with $\wtil P_{k,l,n}$ a
polynomial whose zeros all lie in $\er_+$ ($\er_-$) if $k$ is even (odd).

\item[(b)] Denote the zeros of $\wtil P_{k,n}(x)$ and $\wtil P_{k,l,n}(x)$ as $(x_{i})_{i=1,2,\ldots}$
and $(y_{i})_{i=1,2,\ldots}$ respectively, ordered by increasing modulus and
counting multiplicities. We have the weak interlacing relation
$$ 0\leq |y_1|\leq |x_1|\leq |y_2|\leq |x_2|\leq
\ldots.
$$
\end{itemize}
\end{theorem}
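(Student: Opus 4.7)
The plan is to adapt the two-step strategy of Theorem~\ref{theorem:interlace} --- a rotational-symmetry reduction in $x$, followed by a totally positive matrix argument --- to the more general minor $P_{k,l,n}$.

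For part~(a), I would use the conjugation identity $D^{-1}(H_n-\omega x I_n)D=\omega(H_n-x I_n)$ with $D=\diag(1,\omega,\ldots,\omega^{n-1})$ and $\omega=\exp(2\pi\ir/(p+1))$, which is immediate from the two-diagonal structure \eqref{H:entries:twodiag} together with $\omega^{-p}=\omega$. Restricting this identity to the submatrix of $H_n-xI_n$ that defines $P_{k,l,n}$ (rows $\{k,\ldots,n-1\}$, columns $\{0,\ldots,n-l-1,n-l+1,\ldots,n-k\}$) and taking determinants, a short bookkeeping of row- and column-index sums gives $P_{k,l,n}(\omega x)=\omega^{k(n-k)+n-l}P_{k,l,n}(x)$. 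Hence every nonzero monomial of $P_{k,l,n}(x)$ has exponent $\equiv k(n-k)+n-l\pmod{p+1}$, so $P_{k,l,n}(x)=x^{c_0}\widetilde P_{k,l,n}(x^{p+1})$ for some polynomial $\widetilde P_{k,l,n}$; comparison of residues against \eqref{mkn:def} then yields $c_0=k-l+m_{k,n}$, with any shortfall absorbed into a vanishing of $\widetilde P_{k,l,n}$ at the origin.

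For part~(b), and for the reality and sign of the zeros of $\widetilde P_{k,l,n}$ claimed in part~(a), I would invoke the totally positive matrix approach of Eiermann-Varga \cite{EV} already used for Theorem~\ref{theorem:interlace}. After the substitution $y=x^{p+1}$ and a suitable diagonal conjugation of $H_n$, one obtains a totally positive matrix $T_n$ whose principal $(n-k)\times(n-k)$ submatrix has characteristic polynomial equal to $\widetilde P_{k,n}(y)$, and whose \emph{adjacent} minor --- obtained by replacing a single column of that principal block by the next earlier column of $T_n$ --- has, up to a monomial in $y$, characteristic polynomial $\widetilde P_{k,l,n}(y)$. Gantmacher-Krein type interlacing for zeros of consecutive minors of totally positive matrices then delivers both the reality of the zeros (in $\er_+$ or $\er_-$ depending on the parity of $k$) and the weak interlacing $0\le|y_1|\le|x_1|\le|y_2|\le|x_2|\le\cdots$ of part~(b).

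The main technical obstacle is the precise identification of $\widetilde P_{k,l,n}(y)$ as an adjacent minor of the same totally positive $T_n$ that produces $\widetilde P_{k,n}(y)$. In the original variable $x$ the replaced column is displaced by $l-k\ge 1$ positions from the principal block, and this displacement has to be tracked through both the $y=x^{p+1}$ substitution and the diagonal similarity that produces $T_n$. When the identification does not yield a single-column adjacency in one step, one iterates through the intermediate minors $P_{k,l',n}$ with $k<l'\le l$; each step is a single-column swap that yields weak interlacing by Gantmacher-Krein, and the composition of these weak interlacings preserves the statement of part~(b).
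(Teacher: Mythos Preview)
Your rotational-symmetry argument for the monomial structure in part~(a) is fine and roughly matches the paper. The difficulties lie entirely in part~(b) and in the sign/location of the zeros, and here your outline has two genuine gaps.

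First, the description of the Eiermann--Varga reduction is not quite right. After the block permutation and Lemma~\ref{lemma:cyclic:product}, $\wtil P_{k,n}$ is \emph{not} the characteristic polynomial of a principal submatrix of a totally positive matrix; it is the $k$th \emph{generalized} eigenvalue polynomial of the TNN product $Y_0'Y_1'\cdots Y_p'$, i.e., one computes $\det\bigl(Y_0'\cdots Y_p'-y\begin{smallmatrix}0&I\\0_{k\times k}&0\end{smallmatrix}\bigr)$ rather than $\det(M-yI)$. The interlacing input the paper uses is therefore not classical Gantmacher--Krein but its extension to generalized eigenvalues (Proposition~\ref{prop:geneigTP}, Corollary~\ref{prop:geneigTNN}). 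The key structural fact proved in Section~\ref{subsubsection:proof4} is that, for \emph{every} $l\in[k+1:p]$, the TNN product $\what Y_0\cdots\what Y_p$ arising from $P_{k,l,n}$ has $Y_0'\cdots Y_p'$ as its \emph{leading principal submatrix} of one size smaller. Thus the column shift by $l-k$ positions in the original $(H_n-xI_n)$-minor becomes, after the block reduction, a single principal-submatrix inclusion, regardless of how large $l-k$ is. This is the step your ``identification'' paragraph is reaching for but does not supply, and it is not a routine bookkeeping exercise: it needs the auxiliary appending of triangular rows and columns carried out in \eqref{EV:tilde}.

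Second, the proposed fallback of iterating through $P_{k,l',n}$ for $k<l'\le l$ and composing the resulting weak interlacings does not work: weak interlacing is not transitive. Concretely, if the zeros of three polynomials are $(z_1,z_2)=(1,3)$, $(y_1,y_2)=(2,5)$, $(x_1,x_2)=(4,6)$, then $z_1\le y_1\le z_2\le y_2$ and $y_1\le x_1\le y_2\le x_2$, yet $x_1=4>3=z_2$, so $(z_i)$ and $(x_i)$ do not interlace. Hence even if each single-column step gave weak interlacing (and you have not argued this either, since a column swap is not a principal-submatrix passage), stringing them together would not yield part~(b). The paper avoids this entirely by proving the nested principal-submatrix relation in one shot for arbitrary $l$.
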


Theorem~\ref{theorem:interlace:kl} is proved in
Section~\ref{section:interlacing}. The precise way how
Theorem~\ref{theorem:interlace:kl} is used in the proof of
Theorem~\ref{theorem:Nik:property} will be explained in
Section~\ref{section:Nikishin}.

\begin{remark} The polynomial $\wtil P_{k,l,n}$ could have one, and at most one, zero at the
origin. This happens precisely when $n\equiv j\mod (p+1)$ for some $j\in
[k:l-1]$.\end{remark}

%

\subsection{Widom-type formula}

In this section we state an exact formula for the polynomials $Q_n$ in the
exactly periodic case~\eqref{periodic:exact:twodiag}. In fact, we prove the
formula for general banded Hessenberg matrices $H$ of the form
\eqref{H:entries}. We say that $H$ is \emph{exactly periodic with period $r$}
if
\begin{equation}\label{periodic:exact}a_{rn+k}^{(j)}=a_k^{(j)}=b_k^{(j)},\qquad n\in\enn,\quad
k\in [0:r-1],\quad j\in [0:p].\end{equation} Recall that we are assuming
$b_{k}^{(p)}\neq 0$ for all $k$. Define the \lq block Toeplitz symbol\rq
\begin{equation}\label{symbol:Hess}
F(z,x) = Z^{-1}+ \sum_{k=0}^p Z^{k}\diag(b_0^{(k)},\ldots,b_{r-1}^{(k)})-xI_r,
\end{equation}
with $Z$ as in \eqref{def:Sshift}. In the case of a two-diagonal Hessenberg
matrix \eqref{H:entries:twodiag} this reduces to \eqref{symbol}. Also define
$f(z,x)=\det F(z,x)$, the roots $z_k(x)$ of the algebraic equation $f(z,x)=0$
as in \eqref{ordering:rootszk}, and the sets $\Gamma_{k}$ as in \eqref{Gammak}.
Prop.~1.1 in \cite{Del} shows that $\Gamma_k$ is a finite union of analytic
arcs. Clearly, \eqref{f:series}--\eqref{fp} remain valid in this setting (with
$b_k$ replaced by $b_k^{(p)}$).


\begin{theorem}\label{theorem:Widomformula2} (Widom-type formula:)
With the above notations, let $x\in\cee$ be such that the solutions $z_{k}(x)$
of the algebraic equation $0=f(z,x)=\det F(z,x)$ are pairwise distinct. Then
for each $n\in\enn\cup\{0\}$ and for each $j\in [0:r-1],$
\begin{equation}\label{Widom:2}
Q_{rn+j}(x) :=\det(xI_{rn+j}-H_{rn+j})=\frac{(-1)^{r+j}}{\mathsf{f}_{p}}\sum_{k=0}^{p}
\frac{\det F^{r-1,j}(z_{k}(x),x)}{\prod_{i=0,i\neq
k}^{p}(z_{k}(x)-z_{i}(x))}\,z_{k}(x)^{-n-1}.
\end{equation}
Here $\mathsf{f}_p$ is defined in \eqref{fp} (with $b_k$ replaced by
$b_k^{(p)}$), and we use the notation $F^{i,j}$ to denote the submatrix of $F$
in \eqref{symbol:Hess} that is obtained by skipping the $i$th row and the $j$th
column, $i,j\in [0:r-1]$. Moreover, for all $i,j,k$, $\det F^{i,j}(z_{k}(x),x)$
is zero for only finitely many $x$.
\end{theorem}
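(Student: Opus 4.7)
The plan is to solve the scalar recurrence \eqref{eq:recurrence} explicitly by exploiting the $r$-periodicity of its coefficients, via a block Bloch-wave ansatz. Fix $x\in\cee$ for which the roots $z_{0}(x),\ldots,z_{p}(x)$ of $f(z,x)=0$ are pairwise distinct (as assumed). I would first look for sequences $(U_n)_{n\in\zet}$ satisfying \eqref{eq:recurrence} of the separated form $U_{rn+j}=v_j\, z^{-n-1}$ for $j\in[0:r-1]$, $n\in\zet$. Substituting into \eqref{eq:recurrence} and collecting coefficients by residue class mod $r$, the exact periodicity $a^{(k)}_{rn+j}=b^{(k)}_{j}$ turns the scalar $(p+2)$-term recurrence into the linear algebraic condition $F(z,x)^{T}\vec v=0$. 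Hence $\det F(z,x)=f(z,x)=0$, so $z$ must be one of the $z_{k}(x)$. Since $z_{k}(x)$ is a simple root of $f(\cdot,x)$, the matrix $F(z_{k},x)$ has rank $r-1$, and its one-dimensional kernel is spanned by any non-zero column of $\mathrm{adj}\,F(z_{k},x)$. In particular the column indexed by $r-1$ gives the basis vector with entries $v^{(k)}_{j}(x)=(-1)^{r-1+j}\det F^{r-1,j}(z_{k}(x),x)$, $j\in[0:r-1]$.

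Having produced $p+1$ linearly independent Bloch solutions and the solution space of the $(p+2)$-term recurrence having dimension exactly $p+1$, there exist scalars $\gamma_{k}(x)$ (independent of $n$ and $j$) such that
\[
Q_{rn+j}(x)=\sum_{k=0}^{p}\gamma_{k}(x)\,v^{(k)}_{j}(x)\,z_{k}(x)^{-n-1},\qquad n\geq 0,\; j\in[0:r-1].
\]
I would then pin down the $\gamma_{k}(x)$ using the $p+1$ initial conditions $Q_{-1}=\cdots=Q_{-p}=0$ and $Q_{0}=1$. Substituting the ansatz (extended to $n=-1$, which is automatic because the $U_n$'s solve the recurrence for all $n\in\zet$) yields a linear system whose coefficient matrix has a generalized Vandermonde structure in $z_{k}(x)$. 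Solving by Cramer's rule produces the factor $1/\prod_{i\neq k}(z_{k}(x)-z_{i}(x))$, and the leading-order normalization~\eqref{fp} of $\mathsf{f}_{p}$, together with the sign tracking from the adjugate identification, gives the overall prefactor $(-1)^{r+j}/\mathsf{f}_{p}$.

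The main obstacle will be the last step: matching initial conditions and tracking signs to recover precisely $(-1)^{r+j}/\mathsf{f}_{p}$. Rather than solving the Vandermonde system by hand, I would prefer a partial-fractions/contour-integral reformulation: viewing the right-hand side of \eqref{Widom:2} as a function of $n$, it equals the $z^{n+1}$-coefficient near $z=0$ of a rational function with simple poles at the $z_{k}(x)$, and this rational function is forced by the recurrence and \eqref{initcondQnH} to have denominator $f(z,x)$ and a numerator determined by initial data. Either route reduces the identity to a finite-dimensional algebraic check. Finally, the statement that $\det F^{i,j}(z_{k}(x),x)$ vanishes for only finitely many $x$ follows because it is an algebraic function of $x$ that cannot vanish identically on any branch of $z_{k}(x)$; a direct check via the asymptotics \eqref{zk:infty:intro}--\eqref{zk:infty:intro:bis} at $x\to\infty$ confirms non-vanishing.
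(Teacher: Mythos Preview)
Your Bloch--wave/Floquet approach is correct and is in fact more self-contained than the paper's argument. Both routes ultimately identify the $p+1$ elementary solutions of the recurrence with the null vectors of $F(z_k,x)$: the paper does this by diagonalizing the block transfer matrix $A(x)=-C(x)^{-1}B(x)$ and observing that its nonzero eigenvalues are $1/z_k(x)$ with eigenvectors given by the cofactor row of $F$. The difference lies in how the expansion coefficients $\gamma_k(x)$ are pinned down. The paper does \emph{not} solve the initial-value system directly; instead it imports Widom's determinant identity for block Toeplitz matrices (Lemma~\ref{lemma:Widomformula}) as a black box to obtain the $j=0$ case \eqref{Widom:1}, computes the contour-integral coefficients $C_k(x)$ explicitly via a rank-one residue argument (Lemma~\ref{lemma:widom:explicit}), and then propagates to general $j$ through the eigenvector structure. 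Your plan to determine $\gamma_k$ straight from $Q_0=1$, $Q_{-1}=\cdots=Q_{-p}=0$ via Cramer/partial fractions avoids the external Widom reference entirely, at the price of the sign bookkeeping you flag.

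Two points you should tighten. First, the adjugate column indexed by $r-1$ can vanish at isolated $x$ even when $F(z_k,x)$ has rank $r-1$; the paper acknowledges this and argues the formula at such points by continuity (cf.\ Remark~\ref{redef:vk} and the paragraph just before it). Second, your finiteness argument via asymptotics at $x\to\infty$ is shakier than you suggest: for some triples $(i,j,k)$ the leading term of $\det F^{i,j}(z_k(x),x)$ does cancel (see \eqref{degree:f0}--\eqref{degree:fj}), so a naive large-$x$ check is not uniform over all branches. The paper's cleaner argument is that the compact Riemann surface of $f(z,x)=0$ is connected, so $\det F^{i,j}(z_k(x),x)$ (viewed as a single meromorphic function on it) is either identically zero on every sheet or has finitely many zeros; identical vanishing is ruled out because it would force $Q_{rn}\equiv 0$.
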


Theorem~\ref{theorem:Widomformula2} will be proved in
Section~\ref{section:widom}, as a consequence of Widom's determinant identity
for block Toeplitz matrices \cite[Section 6]{Widom1}. From \eqref{Widom:2} and
\eqref{ordering:rootszk}--\eqref{Gammak} we also find:

\begin{corollary} \label{cor:strong:exact}
The strong asymptotic formula
\begin{equation*}
\lim_{n\to\infty} Q_{rn+j}(x)z_0(x)^{n+1} =
\frac{(-1)^{r+j}}{\mathsf{f}_p}\frac{\det
F^{r-1,j}(z_0(x),x)}{\prod_{i=1}^{p}(z_0(x)-z_i(x))},\qquad j\in [0:r-1],
\end{equation*}
holds uniformly on compact subsets of $\cee\setminus (\Gamma_0\cup\mathcal A)$
with $\mathcal A$ a finite set.
\end{corollary}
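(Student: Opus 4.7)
The plan is a direct reading of the Widom-type formula \eqref{Widom:2}. Multiplying both sides of \eqref{Widom:2} by $z_{0}(x)^{n+1}$, I would rewrite it as
\begin{equation*}
Q_{rn+j}(x)\,z_{0}(x)^{n+1} \;=\; \frac{(-1)^{r+j}}{\mathsf{f}_{p}}\sum_{k=0}^{p} T_{k}(x)\left(\frac{z_{0}(x)}{z_{k}(x)}\right)^{\!n+1},
\qquad T_{k}(x):=\frac{\det F^{r-1,j}(z_{k}(x),x)}{\prod_{i\neq k}(z_{k}(x)-z_{i}(x))}.
\end{equation*}
The $k=0$ summand equals precisely the claimed limiting expression, so the task reduces to showing that, uniformly on suitable compact sets, each $k\geq 1$ summand tends to zero as $n\to\infty$.

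Next I would take $\mathcal{A}$ to be the discriminant locus of $f(\,\cdot\,,x)$, that is, the set of $x\in\cee$ at which two or more roots $z_{k}(x)$ of $f(z,x)=0$ coincide. The discriminant $\mathrm{Res}_{z}(f,\partial_{z}f)$ is a polynomial in $x$ which is not identically zero, because the $p+1$ roots of $f(z,x)=0$ have pairwise distinct growth rates at infinity (see \eqref{zk:infty:intro}--\eqref{zk:infty:intro:bis}). Hence $\mathcal{A}$ is finite. On $\cee\setminus\mathcal{A}$ the roots are pairwise distinct and locally analytic, and on $\cee\setminus(\Gamma_{0}\cup\mathcal{A})$ the root $z_{0}(x)$ of strictly smallest modulus is well-defined and continuous.

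The decisive quantitative step is that, by definition of $\Gamma_{0}=\{x\mid |z_{0}(x)|=|z_{1}(x)|\}$ together with the ordering \eqref{ordering:rootszk}, we have $|z_{0}(x)/z_{k}(x)|<1$ for every $x\notin\Gamma_{0}$ and every $k\geq 1$. On any compact $K\subset\cee\setminus(\Gamma_{0}\cup\mathcal{A})$, continuity and compactness then deliver a constant $\rho=\rho(K)<1$ such that $|z_{0}(x)/z_{k}(x)|\leq \rho$ on $K$ for all $k\in[1:p]$. Meanwhile $T_{k}(x)$ is continuous on $K$ (no denominator vanishes there), hence bounded. Therefore $T_{k}(x)(z_{0}(x)/z_{k}(x))^{n+1}=O(\rho^{n+1})$ uniformly on $K$ for $k\geq 1$, and taking the limit gives the desired formula.

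The only genuine obstacle is a bookkeeping/labelling one: one needs a consistent choice of $z_{0}(x)$ across $K$. This is straightforward, because on $\cee\setminus(\Gamma_{0}\cup\mathcal{A})$ the root of smallest modulus is simple (it is strictly separated from the next one) and locally analytic. The finiteness claim about the zeros of $\det F^{r-1,j}(z_{k}(x),x)$ appears already as part of Theorem~\ref{theorem:Widomformula2} and need not be revisited for the corollary.
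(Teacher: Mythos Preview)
Your proposal is correct and is precisely the argument the paper has in mind: the paper derives the corollary in one line, stating only that it follows ``from \eqref{Widom:2} and \eqref{ordering:rootszk}--\eqref{Gammak}'', and you have simply unpacked that line. One small imprecision: your claim that the $p+1$ roots have \emph{pairwise} distinct growth rates at infinity is not quite what \eqref{zk:infty:intro}--\eqref{zk:infty:intro:bis} say (the roots $z_{1},\ldots,z_{p}$ may all grow like $x^{r/p}$); nonetheless the discriminant is indeed a nonzero polynomial in $x$, e.g.\ because $zf(z,x)$ is irreducible over $\cee(x)$ (equivalently, the Riemann surface $\mathcal R$ is connected, as used later in Section~\ref{section:widom}), so your finite set $\mathcal A$ is as claimed.
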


Incidentally, Aptekarev et al.~\cite{AKS} obtain strong asymptotics for $Q_n$ in the trace class
$\sum_{k=0}^{\infty}|a_k-a|<\infty$ ($a>0$) with period $r=1$. By using
Theorem~\ref{theorem:Widomformula2}, it is possible to extend these results  to
higher periods $r$. We will not go into this issue here.

\subsection{Outline of the paper}

The rest of this paper is organized as follows. In Section~\ref{section:geneig}
we prove the connection between RH minors and generalized eigenvalue
determinants and we introduce the concept of a general RH minor
$B^{(n_{0},n_{1},\ldots,n_{k})}(z)$. In Section~\ref{section:interlacing} we
prove interlacing relations for generalized eigenvalues.
Section~\ref{section:normal} contains normal family estimates for the ratio
between two RH minors. The remaining sections deal with asymptotically periodic
coefficients $a_n$. The proof of the Widom-type formula
for $Q_{n}$ in the exactly periodic case is given in Section~\ref{section:widom}.
In Section~\ref{section:Poincare} we
obtain weak and ratio asymptotics for RH minors and we prove
Theorem~\ref{theorem:Gammak:star} on the star-like structure of $\Gamma_k$. In
Section~\ref{section:Nikishin} we prove Theorem~\ref{theorem:Nik:property} on
the connection with Nikishin systems.

\section{Riemann-Hilbert minors and generalized eigenvalues}
\label{section:geneig}

In this section we prove Prop.~\ref{prop:geneigRH} in the general context of
banded Hessenberg operators $H=(H_{i,j})_{i,j=0}^{\infty}$ defined in
\eqref{H:entries}.

\smallskip

Let $(Q_{n})_{n=0}^{\infty}$ be the sequence of monic polynomials associated to
the operator $H$, i.e., the sequence satisfying
\eqref{eq:recurrence}--\eqref{initcondQnH}. We will assume in this section that
the polynomials $Q_{n}$ are multiple orthogonal with respect to a system of $p$
complex measures $\nu_{1},\ldots,\nu_{p}$ supported on a compact contour
$\Sigma\subset\cee$. This means that for every $j\in [1:p],$
\begin{equation}\label{orthog:Sigma}
\int_{\Sigma}Q_{n}(t)\,t^{m}\ud\nu_{j}(t)=0,\qquad m\in [0:\left\lfloor
\frac{n-j}{p} \right\rfloor].
\end{equation}
Define the second kind functions $\Psi_{n}^{(j)}$ as in \eqref{secondkind:def}.

For later use, we need a more general definition of generalized eigenvalues.
Let $H_{n}=(H_{i,j})_{i,j=0}^{n-1}$. As in \eqref{geneig:Pkn}, we denote by
$P_{k,n}(x)$ the determinant of the matrix obtained by skipping the first $k$
rows and the last $k$ columns of the matrix $H_n-x I_n$. Similarly we could
skip any set of $k$ different columns, not necessarily consecutive.

Let $k\in [0:p]$ and let $(n_0,n_1,\ldots,n_{k})$ be a $(k+1)$-tuple of
positive integers such that
\begin{equation}\label{geneig:ordering} 0\leq n_0<n_1<\ldots<n_k\leq n_0+p.
\end{equation} We define the \emph{generalized eigenvalue determinant associated to
$(n_0,n_1,\ldots,n_{k})$} as
\begin{equation}\label{def:general:geneig}
P^{(n_0,n_1,\ldots,n_{k})}(x) := \det (H_{n_k}-x I_{n_k})^{(0,1,\ldots,k-1;
n_0,n_1,\ldots,n_{k-1})}.
\end{equation}
That is, the polynomial $P^{(n_0,n_1,\ldots,n_k)}(x)$ is the determinant of the
submatrix obtained by skipping rows $0,1,\ldots,k-1$ and columns
$n_0,n_1,\ldots,n_{k-1}$ of $H_{n_k}-xI_{n_k}$. The \emph{generalized
eigenvalues} associated to $(n_0,n_1,\ldots,n_k)$ are the numbers $x\in\cee$
such that $P^{(n_0,n_1,\ldots,n_k)}(x)=0$. In the case $k=0$ we put $n:=n_0$
and we understand $P^{(n)}(x)=\det(H_{n}-x I_{n})=(-1)^{n}Q_{n}(x)$.

By choosing $(n_0,n_1,\ldots,n_{k})$ to be a sequence of consecutive numbers:
$$(n_0,n_1,\ldots,n_{k}) = (n-k,\ldots,n-1,n),$$ we retrieve our earlier
definition $P^{(n-k,\ldots,n-1,n)}(x) \equiv P_{k,n}(x)$. Similarly we can
retrieve $P_{k,l,n}(x)$.


In this section we prove the following connection with Riemann-Hilbert minors.

\begin{proposition}\label{prop:geneigRH:bis}
Let $H=(H_{i,j})_{i,j=0}^{\infty}$ be the banded Hessenberg matrix
\eqref{H:entries} with $a_{j}^{(p)}\neq 0$ for all $j\geq 0$. Assume that the
monic polynomials $(Q_{n})_{n=0}^{\infty}$
\eqref{eq:recurrence}--\eqref{initcondQnH} associated with $H$ satisfy the
multiple orthogonality relations \eqref{orthog:Sigma}, for some complex
measures $\nu_{1},\ldots,\nu_{p}$ supported on $\Sigma\subset \cee$. Let
$\Psi_{n}^{(j)}$ be the second kind functions \eqref{secondkind:def}. For any
$k\in [0:p]$, we have
\begin{equation}\label{geneig:RH}
c_{k}(-1)^{n_0+\ldots+n_{k}}\, P^{(n_0,n_1,\ldots,n_k)}(z) =
B^{(n_{0},n_{1},\ldots,n_{k})}(z),
\end{equation}
where
\begin{equation}\label{B:gendef}
B^{(n_{0},n_{1},\ldots,n_{k})}(z):=\det\begin{pmatrix}
Q_{n_k}(z) & \Psi_{n_k}^{(1)}(z) & \ldots & \Psi_{n_k}^{(k)}(z) \\
\vdots & \vdots & & \vdots \\
Q_{n_1}(z) & \Psi_{n_1}^{(1)}(z) & \ldots & \Psi_{n_1}^{(k)}(z) \\
Q_{n_0}(z) & \Psi_{n_0}^{(1)}(z) & \ldots & \Psi_{n_0}^{(k)}(z)
\end{pmatrix}_{(k+1)\times(k+1)},\end{equation}
and where the constant $c_{k}$ is given in \eqref{constant:geneigRH}.
\end{proposition}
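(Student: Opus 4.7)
The plan is to apply Jacobi's complementary minor identity to $A = H_{n_k} - zI_{n_k}$. With $I = \{0, \ldots, k-1\}$ (the removed rows) and $J = \{n_0, \ldots, n_{k-1}\}$ (the removed columns), Jacobi gives
\begin{equation*}
P^{(n_0,\ldots,n_k)}(z) = \det A[I^c, J^c] = (-1)^{\binom{k}{2} + n_0 + \cdots + n_{k-1}}\, \det A \cdot \det A^{-1}[J, I],
\end{equation*}
with $\det A = (-1)^{n_k}Q_{n_k}(z)$. This reduces the problem to computing the $k \times k$ minor of the resolvent $(H_{n_k} - zI_{n_k})^{-1}$ at rows $J$ and columns $I$.

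To evaluate this minor I will derive two matrix-vector identities. Setting $q = (Q_0, \ldots, Q_{n_k-1})^T$ and $\psi^{(j)} = (\Psi_0^{(j)}, \ldots, \Psi_{n_k-1}^{(j)})^T$, the recurrence \eqref{eq:recurrence} and the corresponding recurrence for $\Psi^{(j)}$ (obtained from \eqref{secondkind:def} via the expansion $z(z-t)^{-1} = 1 + t(z-t)^{-1}$) yield
\begin{equation*}
(zI_{n_k} - H_{n_k})\, q = Q_{n_k}(z)\, e_{n_k - 1}, \qquad (zI_{n_k} - H_{n_k})\, \psi^{(j)} = \Psi_{n_k}^{(j)}(z)\, e_{n_k - 1} + \mu^{(j)},
\end{equation*}
where $\mu_i^{(j)} = \int Q_i\, \ud\nu_j$ and $e_{n_k-1}$ is the last standard basis vector. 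Crucially, by \eqref{orthog:Sigma} with $m = 0$, $\mu_i^{(j)} = 0$ for $i \geq j$, so the $p \times p$ matrix $M$ with entries $M_{j,l} = \mu_l^{(j)}$ ($j \in [1:p]$, $l \in [0:p-1]$) is lower triangular with diagonal entries $\int Q_{j-1}\, \ud\nu_j$. Rearranging the second identity, the first $p$ columns of $(zI_{n_k} - H_{n_k})^{-1}$ multiplied by $M^T$ form the $n_k \times p$ matrix with $(r, j)$-entry $\Psi_r^{(j)} - \Psi_{n_k}^{(j)}(z)\, Q_r/Q_{n_k}(z)$.

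Applying the Cauchy-Binet formula for the $k \times k$ minor at rows $J$ and columns $\{0, \ldots, k-1\}$: since $M^T$ is upper triangular, the only $k$-subset $S \subseteq [0:p-1]$ giving a nonzero $\det M^T[S, \{0,\ldots,k-1\}]$ is $S = \{0, \ldots, k-1\}$, with value $\prod_{j=1}^{k} \int Q_{j-1}\, \ud\nu_j = (-1)^k c_k$. Hence
\begin{equation*}
(-1)^k c_k \cdot \det[(zI_{n_k} - H_{n_k})^{-1}]_{J, I} = \det\bigl[\Psi_{n_m}^{(j)} - \Psi_{n_k}^{(j)}\, Q_{n_m}/Q_{n_k}\bigr]_{m \in [0:k-1],\, j \in [1:k]}.
\end{equation*}
The right-hand side equals $(-1)^{\binom{k}{2}} B^{(n_0,\ldots,n_k)}(z)/Q_{n_k}(z)$, as one sees by performing row operations on $B^{(n_0,\ldots,n_k)}(z)$ (subtract $Q_{n_m}/Q_{n_k}$ times the top row from the $m$-th row for $m < k$), expanding along the first column, and reversing the row order from $n_{k-1}, \ldots, n_0$ to $n_0, \ldots, n_{k-1}$. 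Substituting back into Jacobi and accounting for $A^{-1} = -(zI_{n_k} - H_{n_k})^{-1}$ (contributing $(-1)^k$), the net exponent of $-1$ becomes $n_0 + \cdots + n_k + k(k+1)$; since $k(k+1)$ is always even, this simplifies to $n_0 + \cdots + n_k$, yielding \eqref{geneig:RH}. The main obstacle is the careful sign bookkeeping, but every sign is determined by one of the five sources above (Jacobi, $\det A$, $A^{-1} = -(zI-H)^{-1}$, the triangular $M$, and the row reversal).
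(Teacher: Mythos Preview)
Your argument is correct and takes a genuinely different route from the paper's proof. The paper proceeds by induction on $n_k$: it expands $P^{(n_0,\ldots,n_k)}(z)$ by cofactors along the last row, observes that $B^{(n_0,\ldots,n_k)}(z)$ satisfies the same recurrence (because each column $Q_\bullet,\Psi_\bullet^{(1)},\ldots,\Psi_\bullet^{(k)}$ obeys the same $(p+2)$-term recurrence for large enough index), and then matches the initial condition at $(n_0,\ldots,n_k)=(0,1,\ldots,k)$ via an asymptotic expansion of the RH minor as $z\to\infty$. Your approach instead identifies $P^{(n_0,\ldots,n_k)}(z)$ directly with a minor of the resolvent $(zI-H_{n_k})^{-1}$ through Jacobi's complementary minor identity, and then reads off that resolvent minor from the closed-form expressions $(zI-H_{n_k})^{-1}e_{n_k-1}=q/Q_{n_k}$ and $(zI-H_{n_k})^{-1}\mu^{(j)}=\psi^{(j)}-(\Psi_{n_k}^{(j)}/Q_{n_k})q$. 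The triangularity of $M$ replaces the paper's asymptotic computation of the initial condition, and Cauchy--Binet replaces the inductive recurrence matching.

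What each approach buys: the paper's recurrence argument never divides by $Q_{n_k}(z)$ and is valid for every $z$ without a density step, but it requires careful tracking of the signs $(-1)^{\sigma_j}$ versus $(-1)^{1+\tau_j}$ in the two recurrences. Your resolvent approach is conceptually sharper---it explains \emph{why} a minor of $H_{n_k}-zI$ should equal a minor of the RH matrix, namely because the columns of the latter are built from the resolvent---and the sign bookkeeping is more mechanical. Two small points you should make explicit: (i) the identity is first established for $z$ with $Q_{n_k}(z)\neq 0$, and then extends to all $z$ since both sides are polynomials; (ii) when you write ``the first $p$ columns of $(zI-H_{n_k})^{-1}$'', only the first $k$ are actually needed (since $\mu^{(j)}$ is supported on $[0:j-1]\subset[0:k-1]$ for $j\leq k$), so the argument goes through for any $n_k\geq k$, not just $n_k\geq p$. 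Also, the column index set for $M^T$ in your Cauchy--Binet line should read $\{1,\ldots,k\}$ rather than $\{0,\ldots,k-1\}$, matching your convention $j\in[1:p]$.
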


The matrix in the right hand side of \eqref{B:gendef} is again a submatrix of
the RH matrix in \eqref{def:Y} (with $n=n_k$), although not necessarily a
principal submatrix. This follows from \eqref{geneig:ordering}.


\begin{proof}[Proof of Prop.~\ref{prop:geneigRH:bis}]
We prove \eqref{geneig:RH} by verifying that both sides of the equation satisfy
the same recurrence relation. Assume that $n_{k}\geq k+1$. If we apply the
cofactor expansion formula to $P^{(n_0,n_1,\ldots,n_k)}(z)$ along the last row,
we obtain
\begin{equation}\label{geneig:rec1} P^{(n_0,n_1,\ldots,n_k)}(z) = \sum_{j=1}^{p+1} (-1)^{\sigma_{j}}(H_{n_k}-z
I_{n_k})_{n_{k}-1,n_{k}-j}\,\widetilde P^{(n_0,\ldots,n_{k-1},n_k-j)}(z),
\end{equation}
where in the right hand side of \eqref{geneig:rec1}, $A_{i,j}$ denotes the
$(i,j)$ entry of a matrix $A$, the function $\widetilde{P}$ is defined in the
following way:
\[
\widetilde P^{(n_0,\ldots,n_{k-1},n_{k}-j)}(z) := \left\{\begin{array}{ll}
P^{(\til n_0,\til n_1,\ldots,\til n_k)}(z) & \qquad
\textrm{if $n_k-j\not\in\{n_0,\ldots,n_{k-1}\}$} ,\\ 0 &
\qquad\textrm{otherwise},
\end{array}\right.
\]
where $(\til n_0,\til n_1,\ldots,\til n_k)$ is obtained by ordering the entries
of $(n_0,\ldots,n_{k-1},n_k-j)$ increasingly, and where $\sigma_{j}$ is the sum
of the row and column coordinates of the entry $(H_{n_k}-z
I_{n_k})_{n_{k}-1,n_{k}-j}$ in the matrix $(H_{n_k}-z
I_{n_k})^{(0,1,\ldots,k-1; n_0,n_1,\ldots,n_{k-1})}$ (the definition of
$\sigma_{j}$ is used only when $n_k-j\not\in\{n_0,\ldots,n_{k-1}\}$). We also
put $(H_{n_k}-z I_{n_k})_{i,j}:=0$ whenever $j<0$.

To prove \eqref{geneig:rec1} we observe that the submatrix of $(H_{n_k}-z
I_{n_k})^{(0,1,\ldots,k-1; n_0,n_1,\ldots,n_{k-1})}$ obtained by skipping the
row and column occupied by the entry $(H_{n_k}-z I_{n_k})_{n_{k}-1,n_{k}-j}$,
takes the form
$$ \begin{pmatrix}(H_{\widetilde{n}_k}-z I_{\widetilde{n}_k})^{(0,1,\ldots,k-1;
\widetilde{n}_0,\widetilde{n}_1,\ldots,\widetilde{n}_{k-1})} & 0 \\ * & L
\end{pmatrix},
$$
where $L$ is a lower triangular square matrix of size $n_k-1-\wtil n_k$ with
$1$'s on the main diagonal. Hence the determinant of this submatrix equals
$P^{(\widetilde{n}_0,\widetilde{n}_1,\ldots,\widetilde{n}_{k})}(z)$, which
yields \eqref{geneig:rec1}.

Note that the recursion \eqref{geneig:rec1} is completely determined from its
initial condition (determinant of an empty matrix)
\begin{equation}\label{geneigRH:initial1} P^{(0,1,\ldots,k)}(z) \equiv 1.
\end{equation}



It is well-known (and easily checked) that the second kind functions
$\Psi_n^{(k)}$ satisfy exactly the same recursion as the polynomials $Q_n$, in
the sense that
\begin{equation}\label{secondkind:rec}
x\Psi_n^{(k)}(z) =\Psi_{n+1}^{(k)}(z)+a_{n}^{(0)}\Psi_n^{(k)}(z)+a_{n-1}^{(1)}
\Psi_{n-1}^{(k)}(z)+\cdots+a_{n-p}^{(p)}\Psi_{n-p}^{(k)}(z),\qquad n\geq k,
\end{equation}
for any $k\in [1:p]$. The recursion for the functions $\Psi_n^{(k)}(z)$ starts
only from the index $n=k$. Assume that $n_{k}\geq k+1$. Applying
\eqref{eq:recurrence} and \eqref{secondkind:rec} (with $n:=n_k-1$) to the first
row of \eqref{B:gendef} and using the linearity of the determinant, we deduce
that
\begin{equation}\label{Bfunc:rec}
B^{(n_0,n_1,\ldots,n_k)}(z) = \sum_{j=1}^{p+1} (-1)^{1+\tau_{j}}(H_{n_k}-z
I_{n_k})_{n_{k}-1,n_{k}-j}\,\widetilde B^{(n_0,\ldots,n_{k-1},n_k-j)}(z)
\end{equation}
where
\[
\widetilde B^{(n_0,\ldots,n_{k-1},n_{k}-j)}(z) := \left\{\begin{array}{ll}
B^{(\til n_0,\til n_1,\ldots,\til n_k)}(z) & \qquad \textrm{if
$n_k-j\not\in\{n_0,\ldots,n_{k-1}\}$},
\\ 0 & \qquad\textrm{otherwise},
\end{array}\right.
\]
and $\tau_{j}$ is the number of adjacent transpositions that are necessary to
order $(n_{0},\ldots,n_{k-1},n_{k}-j)$ increasingly, e.g. for
$(n_{0},n_{1},n_{2},n_{3}-j)=(1,4,5,3)$ we have $\tau_{j}=2$.

If $(\widetilde{n}_{0},\widetilde{n}_{1},\ldots,\widetilde{n}_{k})$ is obtained
by ordering $(n_{0},\ldots,n_{k-1},n_{k}-j)$ increasingly, then obviously
\begin{equation}\label{eq:ordering}
n_{0}+n_{1}+\cdots+n_{k}-(\widetilde{n}_{0}+\widetilde{n}_{1}+\cdots+\widetilde{n}_{k})=j.
\end{equation}

From \eqref{geneig:rec1} and \eqref{eq:ordering} we have
\begin{multline}\label{eq:4}
c_{k} (-1)^{n_{0}+n_{1}+\cdots+n_{k}}\,P^{(n_0,n_1,\ldots,n_k)}(z)
\\
=\sum_{j=1}^{p+1} (H_{n_k}-z
I_{n_k})_{n_{k}-1,n_{k}-j}(-1)^{\sigma_{j}}(-1)^{j}\,(-1)^{\widetilde{n}_{0}
+\widetilde{n}_{1}+\cdots+\widetilde{n}_{k}}\,c_{k}\,\widetilde
P^{(n_0,n_1,\ldots,n_k-j)}(z).
\end{multline}
We claim that
\begin{equation}\label{reltausigma}
(-1)^{1+\tau_{j}}=(-1)^{\sigma_{j}+j}.
\end{equation}
Let $j\geq 1$ and assume that $n_{k-1}<n_{k}-j$. Then $\tau_{j}=0$ so the
left-hand side of \eqref{reltausigma} is $-1$. Now, if $j$ is even then
$\sigma_{j}$ is odd and vice-versa. So \eqref{reltausigma} holds in this case.
Now let $j_1$ be such that $(-1)^{1+\tau_{j_{1}}} =(-1)^{\sigma_{j_{1}}+j_{1}}$
and $n_{k}-j_{1}=n_{l}+1$ for some $l\in [0:k-1]$. Assume further that the next
value of $j$ greater than $j_{1}$ for which $n_{k}-j\neq n_{i}$ for all $i$ is
$j=j_{1}+q+2$, $q\geq 0$. These assumptions imply that
$\tau_{j}=\tau_{j_{1}}+q+1$, $\sigma_{j}=\sigma_{j_{1}}+1$, and therefore
$(-1)^{1+\tau_{j_{1}}} =(-1)^{\sigma_{j_{1}}+j_{1}}$ implies that
\eqref{reltausigma} is valid for $j$. This completes the justification of
\eqref{reltausigma}.

It follows from \eqref{Bfunc:rec}, \eqref{eq:4} and \eqref{reltausigma} that
for each $k$, the functions $B^{(n_{0},n_{1},\ldots,n_{k})}$ and $c_{k}
(-1)^{n_{0}+n_{1}+\cdots+n_{k}}\,P^{(n_0,n_1,\ldots,n_k)}$ satisfy the same
recurrence relations. The recursion \eqref{Bfunc:rec} is also completely
determined from its initial condition $B^{(0,1,\ldots,k)}$, which is
\begin{multline*}B^{(0,1,\ldots,k)}(z)=
\det\begin{pmatrix}
Q_{k}(z) & \Psi_{k}^{(1)}(z) & \ldots & \Psi_{k}^{(k)}(z) \\
\vdots & \vdots & & \vdots \\
Q_{0}(z) & \Psi_{0}^{(1)}(z) & \ldots & \Psi_{0}^{(k)}(z)
\end{pmatrix} \\ = \det\begin{pmatrix}
z^k+O(z^{k-1}) & O(z^{-2}) & O(z^{-2}) & \ldots & O(z^{-2}) \\
O(z^{k-1}) & O(z^{-2}) & O(z^{-2}) & \ldots & C_k z^{-1}+O(z^{-2}) \\
\vdots & \vdots & \vdots & & \vdots \\
O(z^2) & O(z^{-2}) & O(z^{-2}) & \ldots & O(z^{-1}) \\
O(z) & O(z^{-2}) & C_2 z^{-1}+O(z^{-2}) & \ldots & O(z^{-1}) \\
O(1) & C_1 z^{-1}+O(z^{-2}) & O(z^{-1}) & \ldots & O(z^{-1})
\end{pmatrix}
\end{multline*}
with $C_j:=\int Q_{j-1}(t)\ud\nu_j(t)$. Expanding this determinant as a signed
sum over all permutations of $(0,1,\ldots,k)$, we see that all the terms in
this sum are $O(z^{-1})$ except for the one that corresponds to the permutation
$(0,k,\ldots,2,1)$:
$$ B^{(0,1,\ldots,k)}(z)=
(-1)^{\binom{k}{2}} C_1C_2\ldots C_{k}+O(z^{-1}).
$$
Since we already know by Lemma \ref{lemma:RHminor:degree} that the determinant
in the left hand side is a polynomial in $z$, the $O(z^{-1})$ term in the right
hand side vanishes. The value of $c_{k}$ was chosen so that
\[
c_{k} (-1)^{0+1+\cdots+k}\,P^{(0,1,\ldots,k)}(z)=B^{(0,1,\ldots,k)}(z),
\]
so the two initial conditions are the same and this concludes the proof of
\eqref{geneig:RH}.
\end{proof}

\section{Interlacing of generalized eigenvalues}
\label{section:interlacing}

In this section we prove Theorems~\ref{theorem:interlace} and
\ref{theorem:interlace:kl} on the interlacing of generalized eigenvalues. To
this end we use some results on totally positive matrices.

\subsection{Generalized eigenvalues of totally positive matrices}

A matrix $A\in\cee^{n\times m}$ is called \emph{totally positive (TP)} if the
determinant of any square submatrix of $A$ is positive, i.e.,
\begin{equation}\label{def:TNN} \det A(K,L)>0,\end{equation} for any index
sets $K\subset [0:n-1]$, $L\subset [0:m-1]$ of the same cardinality $|K|=|L|$,
where we write $A(K,L)$ for the submatrix of $A$ with rows and columns indexed
by $K$ and $L$, respectively. We emphasize that in the submatrix $A(K,L)$ the
rows and columns are positioned in the same order given in $A$. \emph{Fekete's
criterion} asserts that a sufficient condition for $A$ to be TP is that
\eqref{def:TNN} holds for all index sets $K$ and $L$ formed by
\emph{consecutive} indices, i.e., $K=\{r,r-1,\ldots,r-q+1\}$ and
$L=\{c,c-1,\ldots,c-q+1\}$ with $q:=|K|=|L|$ and for suitable integers $r,c$.

The matrix $A$ is called \emph{totally nonnegative (TNN)} if we have the
inequality $\geq$ in \eqref{def:TNN}:
\begin{equation}\label{def:TP} \det A(K,L)\geq 0,\end{equation} for all index
sets $K\subset [0:n-1]$, $L\subset [0:m-1]$ with $|K|=|L|$. It is well known
that TP matrices are dense in the class of TNN matrices. Moreover, the class of
TP (or TNN) matrices is closed under matrix multiplication.

The theory of eigenvalues for TP matrices was developed by Gantmacher-Krein
\cite{GantmacherKrein}. They showed that the eigenvalues of an $n\times n$ TP
matrix are all positive and distinct and that they strictly interlace with
those of its principal $(n-1)\times (n-1)$ submatrix. We need the following
analogue for \emph{generalized} eigenvalues of TP matrices, which are again
defined as in Section~\ref{subsection:geneig}.

\begin{proposition}\label{prop:geneigTP} (Generalized eigenvalues of TP matrices:)
Fix $0\leq k<n$ and let $M\in\cee^{(n+k)\times (n+k)}$ be a TP matrix. Then the
$k$th generalized eigenvalues of $M$ are simple, lie in $(0,\infty)$ if $k$ is
even and lie in $(-\infty,0)$ if $k$ is odd. The number of $k$th generalized
eigenvalues of $M$ is $n-k$. Moreover, the $k$th generalized eigenvalues of $M$
and its principal leading submatrix $Q\in\cee^{(n+k-1)\times (n+k-1)}$
are strictly interlacing.
\end{proposition}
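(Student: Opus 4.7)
The plan is to reduce the $k$th generalized eigenvalue problem to an ordinary eigenvalue problem for an $(n-k)\times(n-k)$ matrix that is (up to a global sign) totally positive, and then invoke the classical Gantmacher--Krein theory. First, setting $K=\{k,\ldots,n+k-1\}$ and $L=\{0,\ldots,n-1\}$, one has $(M-xI_{n+k})[K;L]=A-xJ$, where $A=M[K;L]$ and $J$ is the $n\times n$ $0$-$1$ matrix whose only nonzero entries are the $n-k$ ones on the $k$th superdiagonal. Splitting rows as $[k:n-1]\cup [n:n+k-1]$ and columns as $[0:k-1]\cup [k:n-1]$, the block $P:=M[n:n+k-1;\,0:k-1]$ is a $k\times k$ TP block (hence invertible) sitting under the shifted-identity part of $J$; a row-block swap contributing the sign $(-1)^{k(n-k)}$ followed by the block-triangular Laplace expansion relative to $P$ gives
\[
P^{(0,\ldots,k-1;\,n,\ldots,n+k-1)}(x)\;=\;(-1)^{k(n-k)}\det P\cdot \det(N-xI_{n-k}),
\]
with $N:=M[k:n-1;\,k:n-1]-M[k:n-1;\,0:k-1]\,P^{-1}\,M[n:n+k-1;\,k:n-1]$ the off-diagonal Schur complement. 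Hence the $k$th generalized eigenvalues of $M$ are exactly the eigenvalues of $N$, and there are precisely $n-k$ of them.

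Next, I would compute an arbitrary minor of $N$ by the Schur-complement identity for minors: for $\alpha,\beta\subseteq[0:n-k-1]$ with $|\alpha|=|\beta|=r$,
\[
\det N[\alpha;\beta]\;=\;(-1)^{kr}\,\frac{\det M\bigl[(\alpha+k)\cup\{n,\ldots,n+k-1\};\,\{0,\ldots,k-1\}\cup(\beta+k)\bigr]}{\det M[\{n,\ldots,n+k-1\};\,\{0,\ldots,k-1\}]}.
\]
Both numerator and denominator are strictly positive minors of the TP matrix $M$, so every $r\times r$ minor of $N$ has sign $(-1)^{kr}$: for $k$ even, $N$ is itself TP; for $k$ odd, $-N$ is TP. The Gantmacher--Krein theorem on TP matrices then gives that the eigenvalues of $N$ (resp.\ $-N$) are $n-k$ simple and strictly positive, which is precisely the simplicity, sign, and count claimed.

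Finally, for strict interlacing with the leading principal submatrix $Q\in\cee^{(n+k-1)\times(n+k-1)}$, the same construction applied to $Q$ yields an $(n-k-1)\times(n-k-1)$ matrix $N_Q$ that is (with the same sign twist) also TP. The main obstacle is that $N_Q$ is \emph{not} a principal submatrix of $N$, since the pivot block used in the Schur complement jumps from $M[n:n+k-1;\,0:k-1]$ to $M[n-1:n+k-2;\,0:k-1]$; thus the textbook Gantmacher--Krein interlacing for principal submatrices does not apply directly. To circumvent this, I would note that the polynomial $P_Q(x)$ coincides with the leading principal $(n-1)\times(n-1)$ minor of the pencil $A-xJ$ itself, and exploit a Sylvester/Jacobi identity between this minor, $P_M(x)=\det(A-xJ)$, and the minors of $A$. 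The strict positivity of the relevant minors (TP of $A$, inherited from TP of $M$) forces the Christoffel--Darboux-type quantity $P_M(x)P_Q(y)-P_M(y)P_Q(x)$ to be divisible by $(x-y)$ with a quotient of definite sign on the appropriate half-line, which implies strict sign alternation of $P_M$ at the zeros of $P_Q$, hence the strict interlacing of the two zero sets.
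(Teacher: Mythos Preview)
Your reduction of the first three assertions via the off-diagonal Schur complement is correct and is a genuinely different route from the paper's. The minor identity you quote is the standard Jacobi formula for Schur complements (after one row-block swap), and it does show that every $r\times r$ minor of $N$ has sign $(-1)^{kr}$, so $(-1)^kN$ is TP; Gantmacher--Krein then gives simplicity, sign, and the count $n-k$ in one stroke. The paper instead performs an iterated Neville-type elimination: it repeatedly kills the first column with row operations $G_j$ (depending only on ratios in that column), applies compensating column operations $\wtil G_j$ to preserve the pencil structure, and trims a row and a column; after $k$ rounds one reaches a matrix $N^{(k)}$ with $(-1)^kN^{(k)}$ TP. For the simplicity/sign/count claims your one-shot block argument is cleaner.

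The genuine gap is in the interlacing. You correctly observe that $N_Q$ is not the leading principal submatrix of $N$, since the pivot block shifts from $M\bigl[[n:n+k-1];[0:k-1]\bigr]$ to $M\bigl[[n-1:n+k-2];[0:k-1]\bigr]$; this is precisely why a one-shot Schur complement does not cooperate with truncation. But the proposed workaround is not a proof: you invoke an unspecified ``Sylvester/Jacobi identity'' and then assert that $(P_M(x)P_Q(y)-P_M(y)P_Q(x))/(x-y)$ has a definite sign by total positivity of $A$, without exhibiting any identity that would force this. The Christoffel--Darboux mechanism you allude to is a feature of three-term recurrences, not of generic TP pencils, and no substitute is supplied here. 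The paper's iterative scheme is designed exactly to make interlacing automatic: since each $G_j$ depends only on the first column (shared by $N$ and its leading submatrix) and the row and column discarded at each step lie outside the leading block, the transform of the leading submatrix equals the leading principal submatrix of the transform at every stage. Hence $Q^{(k)}$ is the leading principal submatrix of $N^{(k)}$, and Gantmacher--Krein interlacing applies directly. To rescue your route you would need either an explicit bilinear identity with a provably sign-definite right-hand side, or to replace the single block pivot by $k$ successive rank-one pivots chosen to commute with truncation --- which is, in effect, the paper's construction.
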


\begin{proof} Let $N$ be the submatrix of $M$ obtained after
skipping the first $k$ rows and the last $k$ columns of $M$. Thus $N$ is of
size $n\times n$. Partition
\begin{equation}\label{def:M0}
N=\begin{pmatrix} A & B \\ C & D \end{pmatrix}
\end{equation}
with $C$ of size $k\times k$. By definition, the $k$th generalized eigenvalues
of $M$ are the numbers $x\in\cee$ such that
\begin{equation}\label{kthgeneigM}
\det\begin{pmatrix} A & B-x I_{n-k} \\
C & D
\end{pmatrix}=0.
\end{equation}
The assumption that $M$ is totally positive implies in particular that all the
entries of $N$ are positive. We bring $N$ to a simpler form by means of
elementary row operations. Denote
$$ G_j := I_n-\frac{N_{j,0}}{N_{j+1,0}}E_{j,j+1},
$$
where for $j,l\in [0:n-1]$, $N_{j,l}$ denotes the $(j,l)$ entry of $N$, and
where $E_{j,l}$ is the elementary matrix of size $n\times n$ whose $(j,l)$
entry is $1$ and which has all its other entries equal to zero. Multiplying $N$
on the left with the matrix $G_j$ amounts to subtracting from row $j$,
$N_{j,0}/N_{j+1,0}$ times row $j+1$. This operation eliminates the $(j,0)$
entry of $N$.

We also define
$$ \wtil G_j := \left\{\begin{array}{ll}
I_n+\frac{N_{j,0}}{N_{j+1,0}}E_{j+k,j+k+1}, & \textrm{if $j+k+1<n$,}\\
I_n, &\textrm{otherwise.}
\end{array}\right.
$$
The matrices $G_{j}$ and $\wtil G_{j}$ satisfy
\begin{equation}\label{propmatGj}
G_{j} \begin{pmatrix} 0 & I_{n-k} \\
0 & 0
\end{pmatrix} \wtil G_{j}=\begin{pmatrix} 0 & I_{n-k} \\
0 & 0
\end{pmatrix},\qquad j\in [0:n-2],
\end{equation}
where we use the same decomposition in blocks as in \eqref{def:M0}.

Consider the transformed matrix
\begin{equation}\label{def:M1tilde}
\wtil N^{(1)}:= G_{n-2}\ldots G_1G_0 N \wtil G_0\wtil G_1\ldots \wtil G_{n-2}.
\end{equation}
The matrix $\wtil N^{(1)}$ has all its entries in the first column equal to
zero except for the last one, which equals $N_{n-1,0}$. Let $N^{(1)}$  be the
matrix obtained by removing the first column and the last row of $\wtil
N^{(1)}$. Using \eqref{propmatGj}, we deduce that the $k$th generalized
eigenvalues of $M$ are the points $x\in\cee$ such that
\begin{equation}\label{kthgeneigM2}
\det\begin{pmatrix} \wtil A & \wtil B-x I_{n-k} \\
\wtil C & \wtil D
\end{pmatrix}=0,
\end{equation}
where
\[
N^{(1)}=\begin{pmatrix} \wtil A & \wtil B \\ \wtil C & \wtil D \end{pmatrix}
\]
with $\wtil C$ of size $(k-1)\times (k-1)$. Observe that compared to
\eqref{kthgeneigM}, the diagonal of $x$'s in \eqref{kthgeneigM2} is closer to
the main diagonal.

We claim that the matrix $-N^{(1)}$ is a TP matrix (note the minus sign). For
convenience we label the rows and columns of $N^{(1)}$ from $0$ to $n-2$
and from $1$ to $n-1$, respectively. 
Let $K=\{r,r-1,\ldots,r-q+1\}\subset [0:n-2]$ and
$L=\{c,c-1,\ldots,c-q+1\}\subset [1:n-1]$ be two index sets. From the fact that
$N$ is TP we have that
$$ \det N(K\cup\{r+1\},L\cup\{0\})>0,$$
i.e., the determinant of the enlarged submatrix obtained by adjoining row $r+1$
and column $0$ to $N(K,L)$ is positive.

Define
$$ \what N :=G_{r}\ldots
G_1G_0N.
$$
It is clear that \begin{equation}\label{interlacing:step2} 0<\det
N(K\cup\{r+1\},L\cup\{0\}) = \det \what
N(K\cup\{r+1\},L\cup\{0\}),\end{equation} where the last equality follows since
the row operations $G_0,\ldots,G_r$ applied to $N$ leave the determinant
invariant.

From the definition of the row operations $G_0,\ldots,G_r$, the submatrix in
the right hand side of \eqref{interlacing:step2} is zero in its first column
except for its last entry. Expanding the determinant along its first column we
therefore see that
$$ \det \what N(K\cup\{r+1\},L\cup\{0\}) = (-1)^{q}N_{r+1,0} \det\what
N(K,L),
$$
which combined with \eqref{interlacing:step2} and the TP property of $N$ yields
\begin{equation}\label{interlacing:step3} (-1)^{q} \det\what N(K,L)>0.
\end{equation}
The property \eqref{interlacing:step3} remains valid with $\what N$ replaced by
the matrix
$$ G_{n-2}\ldots G_{r+1}\what N = G_{n-2}\ldots
G_1G_0N,
$$
since the row operations $G_{r+1},\ldots,G_{n-2}$ applied to $\what N$ leave
the submatrix indexed by rows $K$ and columns $L$ invariant. Since $K$ and $L$
are arbitrary index sets, this implies by Fekete's criterion that the matrix of
size $(n-1)\times(n-1)$,
\begin{equation*}
-(G_{n-2}\ldots G_{1} G_{0} N)([0:n-2],[1:n-1]),
\end{equation*}
is TP. 
This implies in turn that
\[
-\wtil N^{(1)}([0:n-2],[1:n-1])=-N^{(1)}
\]
is also TP, since (cf. \eqref{def:M1tilde}) each of the column operations
$\wtil G_0,\wtil G_1,\ldots,\wtil G_{n-2}$ adds to a column a positive multiple
of the previous column; it is straightforward to verify that such operations
leave the total positivity of a matrix invariant.

By repeating the transformation $N^{(0)}:=N\mapsto N^{(1)}$ $k$ times, we get a
series of matrices $N^{(0)},N^{(1)},N^{(2)},\ldots,N^{(k)}$ so that the $k$th
generalized eigenvalues of $M$ are the (usual) eigenvalues of $N^{(k)}$.
Moreover, the matrix $(-1)^k N^{(k)}$ is TP. One of the assertions of the
Gantmacher-Krein theorem implies then the validity of the first statement of
the Proposition.

Finally, if we apply to the leading principal submatrix $Q$ of $M$ the
operations described above, and denote the resulting series of matrices by
$Q^{(0)},Q^{(1)},Q^{(2)},\ldots,Q^{(k)}$, then $Q^{(j)}$ will be the leading
principal submatrix of $N^{(j)}$ for each $j\in [0:k]$. In particular,
$Q^{(k)}$ is the leading principal submatrix of $N^{(k)}$ and the
Gantmacher-Krein theorem implies the interlacing property we want.
\end{proof}

Since TP matrices are dense in the class of TNN matrices, we obtain

\begin{corollary} \label{prop:geneigTNN} (Generalized eigenvalues of TNN matrices:)
Fix $0\leq k<n$ and let $M\in\cee^{(n+k)\times (n+k)}$ be a TNN matrix. Then
the $k$th generalized eigenvalues of $M$ lie in $[0,\infty)$ if $k$ is even and
lie in $(-\infty,0]$ if $k$ is odd. Denoting the $k$th generalized eigenvalues
of $M$ by $(y_i)_{i=1,2,\ldots}$ and those of its principal leading submatrix
by $(x_i)_{i=1,2,\ldots}$, both of them ordered by increasing modulus and
counting multiplicities, then we have the (weak) interlacing
$$ 0\leq |y_1|\leq |x_1|\leq |y_2|\leq |x_2|\leq \ldots.
$$
Note that the moduli can be removed if $k$ is even and replaced by minus signs
if $k$ is odd.
\end{corollary}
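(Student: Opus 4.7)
The plan is to derive Cor.~\ref{prop:geneigTNN} from Prop.~\ref{prop:geneigTP} by a density-plus-continuity argument, exploiting the standard fact (already invoked in the excerpt) that TP matrices are dense in the TNN class. Fix a TNN matrix $M\in\cee^{(n+k)\times(n+k)}$ and choose TP matrices $M_\epsilon$ with $M_\epsilon\to M$ entrywise as $\epsilon\to 0$. Prop.~\ref{prop:geneigTP} applied to each $M_\epsilon$ yields that its $k$th generalized eigenvalues are simple, lie in $(0,\infty)$ if $k$ is even and in $(-\infty,0)$ if $k$ is odd, and interlace strictly with the corresponding generalized eigenvalues of its leading principal $(n+k-1)\times(n+k-1)$ submatrix $Q_\epsilon$.

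Abbreviate the $k$th generalized eigenvalue polynomial of any matrix $N$ as $P_N(x)$; its coefficients are polynomial functions of the entries of $N$, so $P_{M_\epsilon}(x)\to P_M(x)$ and $P_{Q_\epsilon}(x)\to P_Q(x)$ uniformly on compact subsets of $\cee$. By Hurwitz's theorem (or equivalently by viewing the coefficient vector of a polynomial up to projective equivalence), the roots of $P_{M_\epsilon}$ converge, with multiplicity, to those of $P_M$; any roots whose moduli blow up can simply be discarded. Writing the zeros of $P_{M_\epsilon}$ and $P_{Q_\epsilon}$ in increasing order of modulus as $(y_i^{(\epsilon)})$ and $(x_i^{(\epsilon)})$, for each fixed $i$ the values $y_i^{(\epsilon)}$ and $x_i^{(\epsilon)}$ either converge to zeros $y_i,x_i$ of the limiting polynomials or escape to infinity. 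Because closed conditions survive limits, the strict sign condition ($\in(0,\infty)$ or $\in(-\infty,0)$) passes to the closed half-lines $[0,\infty)$ and $(-\infty,0]$, and the strict interlacing
\[
0<|y_1^{(\epsilon)}|<|x_1^{(\epsilon)}|<|y_2^{(\epsilon)}|<|x_2^{(\epsilon)}|<\cdots
\]
passes to the claimed weak interlacing
\[
0\leq|y_1|\leq|x_1|\leq|y_2|\leq|x_2|\leq\cdots.
\]
The two parenthetical remarks on removing moduli (for even $k$) or substituting minus signs (for odd $k$) are then immediate from the sign condition, since on $[0,\infty)$ the modulus equals the value and on $(-\infty,0]$ it equals minus the value.

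The main technical nuisance is that $P_M$ may have strictly smaller degree than $P_{M_\epsilon}$: the leading coefficient is itself a minor of $M$, which is strictly positive for the TP approximants $M_\epsilon$ but only nonnegative for the TNN limit $M$; equivalently, a bounded number of $\epsilon$-roots may escape to $\infty$. Sorting roots by modulus and counting multiplicities renders this harmless, since the escaping roots simply drop from the finite list while the weak interlacing for the surviving finite roots persists. The density of TP matrices in the class of TNN matrices is the only non-elementary ingredient, and it is already asserted in the text just before the corollary, so no separate argument is needed for it.
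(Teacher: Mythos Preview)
Your proof is correct and follows exactly the approach the paper takes: the paper derives the corollary in one line from Prop.~\ref{prop:geneigTP} via the density of TP matrices in the TNN class, and you have simply supplied the continuity details (polynomial dependence of the generalized-eigenvalue determinant on the entries, convergence of roots, possible escape to infinity) that the paper leaves implicit. The remark following the corollary in the paper even notes the escape-to-infinity phenomenon you identify.
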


\begin{remark}
In the process of approximating a TNN matrix by a sequence of TP matrices, some
of the generalized eigenvalues may escape to infinity. This will always happen
for the kind of banded matrices we are interested in.
\end{remark}

\subsection{The approach of Eiermann-Varga revisited}
\label{subsection:EV}

In the proofs of Theorems~\ref{theorem:interlace} and
\ref{theorem:interlace:kl}, we will use some ideas from Eiermann-Varga
\cite{EV}, which we now review.

Consider $Q_n(x)=(-1)^n\det(H_n-xI_n)$ with $H_n$ the $n\times n$ two-diagonal
Hessenberg matrix in \eqref{H:matrix:twodiag}. Let $P:[0:n-1]\mapsto [0:n-1]$
be the permutation that sorts the indices according to their residue modulo
$p+1$, in the natural way, that is,
$$(P(0),P(1),\ldots, P(n-1))= (0,p+1,2p+2,\ldots;1,p+2,2p+3,\ldots;\ldots;p,2p+1,3p+2,\ldots).$$
Also denote with $P$ the corresponding permutation matrix such that $P\vece_j =
\vece_{P(j)}$ for $j\in [0:n-1]$. Thus $P$ has in its $j$th column the value
$1$ at position $P(j)$ and zero at all other positions. We consider the
permuted matrix $P^{-1}H_nP-xI_n$. It has a block bidiagonal structure:
\begin{equation}\label{EV:permuted}
P^{-1}H_nP-xI_n = \begin{pmatrix}X_0 & Y_0 \\ 0 & X_1 & Y_1 \\ & & \ddots & \ddots \\
 & & & X_{p-1} & Y_{p-1} \\ Y_p & & & & X_p \end{pmatrix}
\end{equation}
where $X_j = -xI_{n_j}$ with $n_j:=\left\lfloor\frac{n+p-j}{p+1}\right\rfloor$,
where $Y_j$ is the principal truncation of size $n_j\times n_{j+1}$ of the
semi-infinite bidiagonal matrix
\begin{equation}\label{bidiagonal:Yj} Y_{j,\infty} = \begin{pmatrix} 1 & \\ a_{j+1} & 1 \\ &
a_{p+j+2} & 1 \\ & & a_{2p+j+3} & 1 \\ & & & \ddots & \ddots
\end{pmatrix}
\end{equation}
for $j\in [0:p-1]$, and where $Y_p$ is the principal truncation of size
$n_p\times n_{0}$ of the matrix \begin{equation}\label{bidiagonal:Yp} Y_{p,\infty} = \begin{pmatrix} a_0 & 1 & \\
& a_{p+1} & 1 \\ & & a_{2p+2} & 1 \\ & & & \ddots & \ddots
\end{pmatrix}.
\end{equation}

\begin{lemma}\label{lemma:cyclic:product}
\begin{itemize}
\item[(a)] Let $A$ be a matrix of size $n\times n$ as in the right hand side of \eqref{EV:permuted}, with
diagonal blocks $X_j = -xI_{n_j}$, $j\in [0:p]$. Then we have
$$ \det A = (-1)^{n-n_0} x^{n-(p+1)n_0}\det(Y_0Y_1\ldots Y_p-x^{p+1}I_{n_0}).
$$
\item[(b)]
Under the same hypotheses, if we replace $X_0$ by an arbitrary square matrix of
size $n_0\times n_0$, then we have
$$ \det A = (-1)^{n-n_0} x^{n-(p+1)n_0}\det(Y_0Y_1\ldots Y_p+x^{p}X_0).
$$
\end{itemize}
\end{lemma}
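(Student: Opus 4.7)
The plan is to reduce $\det A$ by block-row operations, exploiting the fact that the diagonal blocks $X_j = -xI_{n_j}$ for $j \in [1:p]$ are scalar multiples of the identity and hence trivially invertible when $x \neq 0$. Starting from $R_0 = (X_0, Y_0, 0, \ldots, 0)$, the operation $R_0 \mapsto R_0 + x^{-1} Y_0 R_1$ eliminates the $Y_0$ block in block column $1$ and installs the new block $x^{-1} Y_0 Y_1$ in block column $2$. Iterating, the $(j{+}1)$st step uses block row $j{+}1$ (which carries $-xI_{n_{j+1}}$ in block column $j{+}1$) and adds a multiple of it to $R_0$ so as to cancel the off-diagonal content at column $j{+}1$ while creating $x^{-(j+1)} Y_0 Y_1 \cdots Y_{j+1}$ at column $j{+}2$. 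At the final step $j = p{-}1$, block row $p$ has $Y_p$ in block column $0$ and $-xI_{n_p}$ in block column $p$; absorbing it cancels the content at column $p$ and simultaneously contributes $x^{-p} Y_0 \cdots Y_p$ to block column $0$. The cumulative effect is
\[
R_0 = \bigl(X_0 + x^{-p}\, Y_0 Y_1 \cdots Y_p,\ 0,\ \ldots,\ 0\bigr),
\]
while all other block rows remain unchanged.

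After this reduction the matrix $A$ has been replaced by a matrix $A'$ of the same determinant, having block form $\begin{pmatrix} M & 0 \\ C & D \end{pmatrix}$, where $M = X_0 + x^{-p} Y_0 \cdots Y_p$, the bottom-left block $C$ collects the entries of block column $0$ in block rows $1,\ldots,p$ (in particular the $Y_p$ surviving in block row $p$), and $D$ is block upper triangular with diagonal blocks $-xI_{n_j}$, $j \in [1:p]$. The standard block triangular determinant formula therefore gives
\[
\det A = \det(M)\cdot\det(D) = \det\!\bigl(X_0 + x^{-p} Y_0 \cdots Y_p\bigr) \cdot \prod_{j=1}^{p}(-x)^{n_j} = (-1)^{n-n_0}\,x^{n-n_0}\,\det\!\bigl(X_0 + x^{-p} Y_0 \cdots Y_p\bigr),
\]
using $\sum_{j=1}^p n_j = n - n_0$. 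Factoring $x^{-p}$ out of the $n_0 \times n_0$ determinant yields
\[
\det A = (-1)^{n-n_0}\, x^{n-(p+1)n_0}\, \det\!\bigl(Y_0 Y_1 \cdots Y_p + x^p X_0\bigr),
\]
which is precisely part (b); part (a) follows by specializing $X_0 = -xI_{n_0}$, so that $x^p X_0 = -x^{p+1} I_{n_0}$.

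The derivation assumes $x \neq 0$, but the left side is visibly a polynomial in $x$ while the right side is a priori only a Laurent polynomial; the agreement on $\cee\setminus\{0\}$ forces any apparent negative powers on the right to cancel, so the identity extends to $x = 0$ by continuity. I expect the only real obstacle to be bookkeeping with the rectangular dimensions of the $Y_j$ and confirming that when extracting the block triangular form, the single $Y_p$ in block row $p$ is entirely confined to the deleted block column $0$, leaving the residual minor $D$ genuinely block upper triangular.
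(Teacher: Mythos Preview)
Your argument is correct and follows essentially the same approach as the paper's proof: block Gaussian elimination using the invertible diagonal blocks $X_1,\ldots,X_p$ as pivots to sweep the superdiagonal blocks into the top-left corner, yielding $X_0 + x^{-p}Y_0\cdots Y_p$ there. The paper's proof is a one-sentence sketch of exactly this procedure, so your more detailed write-up is a faithful elaboration of it.
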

\begin{proof}
Use Gaussian elimination with the blocks $X_{1},\ldots,X_{p}$ as pivots to
eliminate the blocks above the main diagonal. After these operations, the block
we obtain in the upper left corner is the matrix $X_{0}+x^{-p}\,
Y_{0}Y_{1}\ldots Y_{p}$. The exponent of $x$ is easily determined.
\end{proof}

Note that the zeros of $Q_{n}$ are the points $x$ where $\det(P^{-1}H_{n}P-x
I_{n})$ vanishes. Now we apply Lemma~\ref{lemma:cyclic:product}(a) to this
determinant. Each of the matrices $Y_0,Y_1,\ldots,Y_p$ in
\eqref{bidiagonal:Yj}--\eqref{bidiagonal:Yp} is bidiagonal with positive
entries and hence TNN. Thus also the matrix product $Y_0Y_1\ldots Y_p$ is TNN
(actually it is \emph{oscillatory} \cite{EV}). This already shows that all the
eigenvalues of $Y_0Y_1\ldots Y_p$ are in $[0,\infty)$. Taking into account the
factor $x^{p+1}$ in Lemma~\ref{lemma:cyclic:product}(a), we then see that the
zeros of $Q_n$ are all located on the star $S_+$.

Carrying on this approach a little further and using the Gantmacher-Krein
theory, one obtains the (strict) interlacing relations for the zeros of the
polynomials $Q_n$ and $Q_{n+1}$, and for $Q_n$ and $Q_{n+p+1}$. See
Eiermann-Varga \cite{EV}. Alternative proofs of the interlacing are in
\cite{HeSaff} and \cite{Rom}.

\subsection{Proofs of Theorems~\ref{theorem:interlace} and \ref{theorem:interlace:kl}}

In this section we prove Theorems~\ref{theorem:interlace} and
\ref{theorem:interlace:kl}. To this end we will rely on
Cor.~\ref{prop:geneigTNN} and the ideas in Section~\ref{subsection:EV}.

We always label rows and columns starting from $0$. We will assume throughout
the proof that $n$ is a fixed multiple of $p+1$ and we fix $k\in [0:p-1]$. The
modifications if $n$ is not a multiple of $p+1$ are discussed at the end of
this section.

\subsubsection{Proof of Theorem~\ref{theorem:interlace}(a) ($n$ a multiple of $p+1$)}
\label{subsubsection:proof1}

Recall that $P_{k,n}(x)$ is the determinant of the matrix obtained by skipping
rows $[0:k-1]$ and columns $[n-k:n-1]$ of $H_n-xI_n$. Applying the permutation
$P$ above, this is equivalent to skipping certain rows and columns of the
permuted matrix \eqref{EV:permuted}. More precisely, $P_{k,n}(x)$ is, up to its
sign, equal to the determinant of the submatrix obtained by skipping the first
row of each of the blocks $X_0,Y_0,X_1,Y_1,\ldots,X_{k-1},Y_{k-1}$ in
\eqref{EV:permuted}, and skipping the last column of each of the blocks
$X_{p},Y_{p-1},X_{p-1},Y_{p-2},\ldots,X_{p-k+1},Y_{p-k}$ (here we are using the
fact that $n$ is a multiple of $p+1$). This can be seen as follows: if we write
the submatrix of $H_{n}-xI_{n}$ as $L(H_{n}-xI_{n})R$, with $L$ and $R$
suitable submatrices of the identity matrix $I_n$ (of size $(n-k)\times n$ and
$n\times (n-k)$, respectively), and similarly the submatrix of
$P^{-1}(H_{n}-xI_{n})P$ as $\widetilde{L}P^{-1}(H_{n}-xI_{n})P\widetilde{R}$,
then $\widetilde{L}=P_{1}LP$ and $\widetilde{R}=P^{-1}R P_{2}$, for some
permutation matrices $P_{1}$ and $P_{2}$ of size $n-k$.

\smallskip Due to the above skipping of rows and columns, some of the diagonal
blocks $X_j$ in \eqref{EV:permuted} will become rectangular instead of square.
Thus we cannot apply Lemma~\ref{lemma:cyclic:product} anymore. Our goal is
therefore to make all the diagonal blocks square again. More precisely, our
goal is to get a matrix as in the right hand side of \eqref{EV:permuted} with
diagonal blocks $X_j' = -xI$, for the identity matrix of certain size, $j\in
[1:p]$, and with $X_0' =
\begin{pmatrix}0 & -xI\\ 0_{k\times k} & 0\end{pmatrix}.$ (We write $X_j',Y_j'$
to distinguish from the blocks $X_j,Y_j$ in \eqref{EV:permuted}.) We will then
be able to apply Lemma~\ref{lemma:cyclic:product}(b).
\\

Recall that in the determinantal formula for $P_{k,n}(x)$ we are skipping rows
$[0:k-1]$ of $H_n-xI_n$. Then in the first $k$ columns of this matrix there is
only one non-zero entry left, being $a_0,\ldots,a_{k-1}$ respectively.
Expanding the determinant along the columns $[1:k-1]$ (we do not touch column
$0$) we necessarily have to pick these entries. Then the determinant equals
$\pm a_1\cdots a_{k-1}$ times the determinant of the matrix obtained by
skipping the rows and columns in which the entries $a_1,\ldots,a_{k-1}$ are
standing. These are columns $[1:k-1]$ and rows $[p+1:p+k-1]$.

From the skipping of rows $[p+1:p+k-1]$, we see that in columns $[p+2:p+k-1]$
there is only one non-zero entry left, being $a_{p+2},\ldots,a_{p+k-1}$
respectively. So again the determinant picks up a factor $\pm a_{p+2}\ldots
a_{p+k-1}$, and we can proceed with the determinant of the matrix obtained by
skipping the rows and columns in which the entries $a_{p+2},\ldots,a_{p+k-1}$
are standing. These are columns $[p+2:p+k-1]$ and rows $[2p+2:2p+k-1]$.

From the skipping of rows $[2p+2:2p+k-1]$, we now have only one non-zero entry
left in each of the columns $[2p+3:2p+k-1]$, being $a_{2p+3},\ldots,a_{2p+k-1}$
respectively. We can then make a reduction as in the previous paragraphs.
Carrying on this scheme a few more steps, we are left with the following
submatrix of $H_n-xI_n$: It is obtained by skipping the rows
\begin{equation}\label{skip:1}
[0:k-1]\cup [p+1:p+k-1]\cup [2p+2:2p+k-1]\cup\ldots\cup\{(k-1)p+k-1\}
\end{equation} and the columns
\begin{equation}\label{skip:2}
[1:k-1]\cup [p+2:p+k-1]\cup [2p+3:2p+k-1]\cup\ldots\cup\{(k-2)p+k-1\}
\end{equation}
in the starting matrix $H_n-xI_n$.

We can do similar operations with the last rows and columns of $H_n-xI_n$.
Indeed, recall that in the definition of $P_{k,n}(x)$ we are skipping columns
$[n-k:n-1]$ of $H_n-xI_n$. The determinant can then be further reduced to the
one obtained by skipping the rows
\begin{equation}\label{skip:3}
\{n-(k-1)p-k\}\cup\ldots\cup [n-2p-k:n-2p-3]\cup[n-p-k:n-p-2]\cup [n-k:n-1]
\end{equation}
and the columns
\begin{equation}\label{skip:4} \{n-kp-k\}\cup \ldots\cup [n-2p-k:n-2p-2]\cup
[n-p-k:n-p-1]\cup [n-k:n-1]
\end{equation}
in the matrix $H_n-xI_n$.


Summarizing, we see that $P_{k,n}(x)$ is, up to a constant, equal to the
determinant of the submatrix of $H_{n}-x I_{n}$ obtained by skipping the rows
\eqref{skip:1} and \eqref{skip:3} and the columns \eqref{skip:2} and
\eqref{skip:4}.

The skipping of the indicated rows and columns of $H_{n}-x I_{n}$ is again
equivalent (up to a sign) to removing certain rows and columns of the permuted
matrix $P^{-1}H_nP-xI_n$ in \eqref{EV:permuted}. This leads to the formula
\begin{equation}\label{EV:gen}
P_{k,n}(x) = c\det\begin{pmatrix}X_0' & Y_0' \\ 0 & X_1' & Y_1' \\ & & \ddots & \ddots \\
 & & & X_{p-1}' & Y_{p-1}' \\ Y_p' & & & & X_p'
 \end{pmatrix},
\end{equation}
$c\neq 0$, where $X_0'$ is obtained by skipping the first $k$ rows and last $k$
columns of $X_0$; where $X_j'$, $j\in [1:p]$, is obtained by skipping the first
$\max\{k-j,0\}$ rows and columns and the last $\max\{j-p+k,0\}$ rows and
columns of $X_j$; where $Y_j'$, $j\in [0:p-1]$, is obtained by skipping the
first $\max\{k-j,0\}$ rows and $\max\{k-j-1,0\}$ columns and the last
$\max\{j-p+k,0\}$ rows and $\max\{j-p+k+1,0\}$ columns of $Y_j$; and finally
$Y_p'$ is obtained by skipping the last $k$ rows and columns of $Y_p$.

Note that each of the diagonal blocks $X_j'$, $j\in [1:p]$ in \eqref{EV:gen} is
of the form $-xI$ and moreover
\begin{equation}\label{X0hat} X_0' = \begin{pmatrix} 0 & -x I \\
0_{k\times k} & 0 \end{pmatrix}
\end{equation}
with $k$ zero columns added at the left and $k$ zero rows at the bottom. Hence
we are in a position to apply Lemma~\ref{lemma:cyclic:product}(b): this yields
\begin{equation}\label{Pkn:cyclic} P_{k,n}(x) = c x^{k(p-k)}\det\left(Y_0'Y_1'\ldots Y_p'-x^{p+1}\begin{pmatrix} 0 & I \\
0_{k\times k} & 0 \end{pmatrix}\right),
\end{equation}
$c\neq 0$. Note that each of the matrices $Y_0',Y_1',\ldots,Y_p'$ in
\eqref{EV:gen} is bidiagonal with nonnegative entries and hence TNN. Thus also
the matrix product $Y_0'Y_1'\ldots Y_p'$ is TNN. Cor.~\ref{prop:geneigTNN} and
\eqref{Pkn:cyclic} then imply that all the zeros of $P_{k,n}$ lie on the star
$S_+$ ($S_-$) if $k$ is even (odd). Finally, if we apply the Cauchy-Binet
formula to $\det (Y_{0}'Y_{1}'\ldots Y_p')$ then we see that this determinant
is the sum of a finite number of nonnegative terms with at least one term
strictly positive (for instance, the term obtained by multiplying the
determinants of the principal leading submatrices of $Y_{i}'$, $i\in[0:p]$, is
strictly positive). Noting that $m_{k,n}=k(p-k)$
if $n$ is a multiple of $p+1$, we now obtain
Theorem~\ref{theorem:interlace}(a).

\subsubsection{Proof of Theorem~\ref{theorem:interlace}(b) ($n$ a multiple of $p+1$)}
\label{subsubsection:proof2}

Now we will prove the interlacing between the zeros of $P_{k,n}$ and
$P_{k,n+1}$ in Theorem~\ref{theorem:interlace}(b), still assuming that $n$ is a
multiple of $p+1$.

Recall that in the determinantal formula for $P_{k,n+1}(x)$ we are skipping the
rows $[0:k-1]$ of $H_{n+1}-xI_{n+1}$. In exactly the same way as in
Section~\ref{subsubsection:proof1}, this leads to an iterated skipping process,
allowing us to skip the rows \eqref{skip:1} and the columns \eqref{skip:2} of
$H_{n+1}-xI_{n+1}$.

In the definition of $P_{k,n+1}(x)$ we are skipping the columns $[n-k+1:n]$ of
$H_{n+1}-xI_{n+1}$. This leads again to an iterated skipping process, allowing
us to skip the rows
\begin{equation}\label{skip:5}
\{n-(k-2)p-k+1\}\cup \ldots\cup [n-2p-k+1:n-2p-3]\cup [n-p-k+1:n-p-2]\cup
[n-k+1:n-1]
\end{equation}
and the columns
\begin{equation}\label{skip:6}
\{n-(k-1)p-k+1\}\cup\ldots\cup [n-2p-k+1:n-2p-2]\cup [n-p-k+1:n-p-1]\cup
[n-k+1:n]
\end{equation}
of $H_{n+1}-xI_{n+1}$. Note that we are \emph{not} skipping the rows
$n,n-p-1,n-2p-2,\ldots$ and the columns $n-p,n-2p-1,n-3p-2,\ldots$, although we
are allowed to do that. The reason for not skipping these rows and columns, is
because that would complicate the comparison to the formulas
\eqref{skip:3}--\eqref{skip:4} for $P_{k,n}$.

In terms of the permuted matrix \eqref{EV:gen} we obtain
\begin{equation}\label{EV:gen:2}
P_{k,n+1}(x) = c\det\begin{pmatrix}\wtil X_0 & \wtil Y_0 \\ 0 & \wtil X_1 & \wtil Y_1 \\ & & \ddots & \ddots \\
 & & & \wtil X_{p-1} & \wtil Y_{p-1} \\ \wtil Y_p & & & & \wtil X_p
 \end{pmatrix},
\end{equation}
$c\neq 0$, where the blocks $\wtil X_j,\wtil Y_j$ are obtained from the blocks
$X_j',Y_j'$ for $P_{k,n}$ in \eqref{EV:gen} by the formulas
\begin{eqnarray*}
&\wtil X_0 = \left(\begin{array}{cc} X_0'& -x\mathbf{\til e} \\ 0 &
0\end{array}\right),\\ &\wtil X_j = X_j',&\qquad j\in [1:p-k], \\
&\wtil X_j = \begin{pmatrix} X_j' & 0
\\ 0 & -x\end{pmatrix},&\qquad j\in [p-k+1:p],
\end{eqnarray*}
with $\mathbf{\til e}$ denoting the $k$th last column of the identity matrix,
and \begin{eqnarray*} &\wtil Y_0 = \left(\begin{array}{c} Y_0' \\
a_*\mathbf{e}^T\end{array}\right),\qquad \wtil Y_j = Y_j', \qquad j\in [1:p-k-1],\\
&\wtil Y_{p-k} =\begin{pmatrix}Y_{p-k}' & \vece \end{pmatrix},\qquad \wtil Y_j
=\begin{pmatrix}Y_j' & \mathbf{e} \\ 0 & a_*\end{pmatrix}, \qquad j\in
[p-k+1:p],
\end{eqnarray*} with $\vece$ denoting the last column of the identity
matrix, and with the $a_*$ certain recurrence coefficients.
Lemma~\ref{lemma:cyclic:product}(b) yields
\begin{equation}\label{Pkn:cyclic:2} P_{k,n+1}(x) =
c x^{(k-1)(p-k)}\det\left(\wtil Y_0\wtil Y_1\ldots \wtil Y_p-x^{p+1}\begin{pmatrix} 0 & I \\
0_{k\times k} & 0 \end{pmatrix}\right),
\end{equation}
$c\neq 0$. But the cyclic product $\wtil Y_0\wtil Y_1\ldots\wtil Y_p$ has
$Y_0'Y_1'\ldots Y_p'$ as its leading principal submatrix. Hence
Cor.~\ref{prop:geneigTNN} yields the required interlacing relation in
Theorem~\ref{theorem:interlace}(b). It is also easy to see, as at the end of
Section~\ref{subsubsection:proof1}, that $\det (\wtil Y_{0} \wtil Y_{1}\ldots
\wtil Y_{p})>0$.

\subsubsection{Proof of Theorem~\ref{theorem:interlace}(c) ($n$ a multiple of $p+1$)}
\label{subsubsection:proof3}

Next we prove the interlacing between the zeros of $P_{k,n}$ and $P_{k,n+p+1}$
in Theorem~\ref{theorem:interlace}(c), still assuming that $n$ is a multiple of
$p+1$. Observe that $n+p+1$ is also a multiple of $p+1$. Applying exactly the
same approach as in Section~\ref{subsubsection:proof1}, we find that
$P_{k,n+p+1}$ can be written as in the right hand side of \eqref{EV:gen:2},
where the blocks $\wtil X_j,\wtil Y_j$ are now obtained from the blocks
$X_j',Y_j'$ for $P_{k,n}$ in \eqref{EV:gen} by the formulas
\begin{eqnarray*}
&\wtil X_0 = \left(\begin{array}{cc} X_0'& -x\mathbf{\til e} \\ 0 &
0\end{array}\right),\qquad \wtil X_j = \begin{pmatrix}X_j' & 0 \\ 0 &
-x\end{pmatrix},\qquad j\in [1:p],
\end{eqnarray*}
with again $\mathbf{\til e}$ denoting the $k$th last column of the identity
matrix, and
\begin{eqnarray*}
& \wtil Y_j =\left(\begin{array}{cc}Y_j' & 0 \\ a_*\mathbf{e}^T &
1\end{array}\right),& \qquad j\in [0:p-k-1], \\ & \wtil Y_j =
\left(\begin{array}{cc}  Y_j'& \mathbf{e}\\ 0&a_*
\end{array}\right),&\qquad j\in [p-k:p],
\end{eqnarray*}
with $\mathbf{e}$ denoting the last column of the identity matrix and with
$a_*$ certain recurrence coefficients. We can again apply
Lemma~\ref{lemma:cyclic:product}(b). But the cyclic product $\wtil Y_0\wtil
Y_1\ldots\wtil Y_p$ has $Y_0'Y_1'\ldots Y_p'$ as its leading principal
submatrix. Cor.~\ref{prop:geneigTNN} then yields the required interlacing
relation in Theorem~\ref{theorem:interlace}(c).

\subsubsection{Proof of Theorem~\ref{theorem:interlace:kl} ($n$ a multiple of $p+1$)}
\label{subsubsection:proof4}

Next we prove the interlacing relations between the zeros of $P_{k,n}$ and
$P_{k,l,n}$ in Theorem~\ref{theorem:interlace:kl}, assuming that $1\leq k<l\leq
p$ and $n$ is a multiple of $p+1$. Recall that $P_{k,l,n}$ is the determinant
of the submatrix obtained by skipping the rows $[0:k-1]$ and the columns
$\{n-l\}\cup [n-k+1:n-1]$ of $H_n-xI_n$. This leads to an iterated skipping
process, allowing us to skip the rows \eqref{skip:1} and
$$ \{n-(k-2)p-(k-1)\}\cup\ldots\cup [n-2p-k+1:n-2p-3]\cup [n-p-k+1:n-p-2]\cup [n-k+1:n-1],$$
and the columns \eqref{skip:2} and
\begin{multline*}
\{n-(k-1)p-k+1\}\cup\ldots\cup [n-2p-k+1:n-2p-2]\cup [n-p-k+1:n-p-1]\cup
\{n-l\}\cup [n-k+1:n-1]
\end{multline*}
in the matrix $H_n-xI_n$. Then $P_{k,l,n}$ can be written as in the right hand
side of \eqref{EV:gen:2}, where the blocks $\wtil X_j,\wtil Y_j$ are now
obtained from the blocks $X_j',Y_j'$ for $P_{k,n}$ in \eqref{EV:gen} by the
formulas
\begin{eqnarray*}
&\wtil X_0 = \left(\begin{array}{cc} X_0'& -x\mathbf{\til
e}\end{array}\right),\qquad \wtil X_j = X_j',\qquad j\in [1:p-k],\ j\neq p-l+1, \\
&X_{p-l+1}' = \begin{pmatrix}\wtil X_{p-l+1}& -x\mathbf{e} \end{pmatrix},\qquad
\wtil X_j = \begin{pmatrix} X_j' & 0
\\ 0 & -x\end{pmatrix},\qquad j\in [p-k+1:p],
\end{eqnarray*}
and \begin{eqnarray*} &\wtil Y_j = Y_j',&\qquad j\in [0:p-k-1],\ j\neq p-l,\\
&Y_{p-l}' = \begin{pmatrix}\wtil Y_{p-l}& \mathbf{e} \end{pmatrix},& \\
&\wtil Y_{p-k} =\begin{pmatrix}Y_{p-k}' & \vece \end{pmatrix},& \\
&\wtil Y_j =\begin{pmatrix}Y_j' & \mathbf{e} \\ 0 & a_*\end{pmatrix},& \qquad
j\in [p-k+1:p],\end{eqnarray*} with the notations $\mathbf{e},\mathbf{\til
e},a_*$ as defined before.

There is now a complication for the block $\wtil X_{p-l+1}$: this is a scalar
multiple of the identity matrix but with the last column skipped. Moreover,
$\wtil Y_{p-l}$ is a submatrix of $Y_{p-l}'$ rather than the other way around.
We will resolve both issues by appending an extra row and column at the end of
some of the blocks in the matrix in the right hand side of \eqref{EV:gen:2}.
These extra rows and columns will have a triangular structure and therefore
will not influence the determinant (except for a scalar factor), as we will see
below. Here is the definition: we put
$$ \what  X_{p-l+1} := \begin{pmatrix}\wtil X_{p-l+1}& -x\mathbf{e}\end{pmatrix},
$$
thereby making this block square again. We also put
$$ \what X_{0} :=
\begin{pmatrix}\wtil X_{0}
\\ 0 \end{pmatrix},\qquad \what X_{j} := \begin{pmatrix}\wtil X_{j} & 0 \\ 0 &
-x\end{pmatrix},\qquad j\in [1:p-l],
$$
with the zeros denoting a row or column vector,
$$ \what Y_{j} := \begin{pmatrix} \wtil Y_{j} & 0 \\ 0 &
1\end{pmatrix},\qquad j\in [0:p-l-1], \qquad \what
Y_{p-l}:=\left(\begin{array}{cc}
\wtil Y_{p-l} & \mathbf{e} \\
0& a\end{array}\right),$$ for an arbitrary constant $a\neq 0$, and
\begin{eqnarray*} & \what X_j := \wtil X_j,\qquad j\in [p-l+2:p],
\\ & \what Y_j := \wtil Y_j,\qquad j\in [p-l+1:p].
\end{eqnarray*}
As mentioned, the triangular structure of the added rows and columns implies
that
\begin{equation}\label{EV:tilde}
\det\begin{pmatrix}\what X_0 & \what Y_0 \\ 0 & \what X_1 & \what Y_1 \\ & & \ddots & \ddots \\
 & & & \what X_{p-1} & \what Y_{p-1} \\ \what Y_p & & & & \what X_p
 \end{pmatrix}
= \pm a
\det\begin{pmatrix}\wtil X_0 & \wtil Y_0 \\ 0 & \wtil X_1 & \wtil Y_1 \\ & & \ddots & \ddots \\
 & & & \wtil X_{p-1} & \wtil Y_{p-1} \\ \wtil Y_p & & & & \wtil X_p
 \end{pmatrix}.
\end{equation}
(To see this, expand the determinant on the left-hand side of \eqref{EV:tilde}
along the last row of $\what X_{0}$ and $\what Y_{0}$. This row has only one
nonzero entry. This allows to delete this row and also the last column of
$\what Y_{0}$ and $\what X_{1}$. Next we expand the new determinant along the
last row of $\what X_{1}$ and $\what Y_{1}$, and so on.) So we can work with
$\what X_j,\what Y_j$ instead of $\wtil X_j,\wtil Y_j$.

Summarizing, $P_{k,l,n}$ can be written as a constant times the left hand side
of \eqref{EV:tilde}. Combining the above descriptions, we see that the blocks
$\what X_j,\what Y_j$ are obtained from the blocks $X_j',Y_j'$ for $P_{k,n}$ in
\eqref{EV:gen} by the formulas
\begin{eqnarray*}
&\what X_0 = \left(\begin{array}{cc} X_0'& -x\mathbf{\til e}\\ 0 &
0\end{array}\right),\qquad \what X_j = \begin{pmatrix} X_j' & 0
\\ 0 & -x\end{pmatrix},\qquad j\in [1:p-l]\cup [p-k+1:p],\\
& \what X_j = X_j',\qquad j\in [p-l+1:p-k],
\end{eqnarray*}
and
\begin{eqnarray*}
&\what Y_{j}=\begin{pmatrix} Y_{j}' & 0 \\ 0 & 1\end{pmatrix},&\quad
j\in [0:p-l-1],\\ &\what Y_{p-l}=\left(\begin{array}{c} Y_{p-l}' \\
a\mathbf{e}^T\end{array}\right),& \\
&\what Y_{j}= Y_{j}',&\quad j\in [p-l+1:p-k-1], \\ &\what
Y_{p-k}=\begin{pmatrix}Y_{p-k}' & \mathbf{e}\end{pmatrix},
&\\ &\what Y_{j}=\begin{pmatrix}Y_{j}' & \mathbf{e}\\
0 & a_{*}
\end{pmatrix},&\qquad j\in [p-k+1:p].
\end{eqnarray*}
Lemma~\ref{lemma:cyclic:product}(b) can be applied and yields
\begin{equation}\label{Pkln:cyclic} P_{k,l,n}(x) =
c x^{k(p-k)+k-l}\det\left(\what Y_0\what Y_1\ldots \what Y_p-x^{p+1}\begin{pmatrix} 0 & I \\
0_{k\times k} & 0 \end{pmatrix}\right),
\end{equation}
$c\neq 0$. But the principal leading submatrix of $\what Y_0\what Y_1\ldots
\what Y_p$ is precisely the matrix $Y_0'Y_1'\ldots Y_p'$. Taking into account
that $m_{k,n}=k(p-k)$ if $n$ is a multiple of $p+1$, we then obtain
Theorem~\ref{theorem:interlace:kl}.

\subsubsection{Modifications if $n$ is not a multiple of $p+1$}
\label{subsubsection:proof5}

If $n$ is not a multiple of $p+1$, we can use the same ideas but with a few
modifications. We focus on the construction for $P_{k,n}$ in
Section~\ref{subsubsection:proof1}. We can again write $P_{k,n}$ as in
\eqref{EV:gen}. But now the description of the blocks $X_j',Y_j'$ depends on
the residue $q$ of $n$ modulo $p+1$, $q\in [1:p]$. More precisely, the number
of rows and columns to be skipped at the top and at the left of each block
$X_j',Y_j'$, is exactly the same as in Section~\ref{subsubsection:proof1}. But
for the rows and columns at the bottom and at the right of each block, the
description that was given for $X_j',Y_j'$ in
Section~\ref{subsubsection:proof1} should now be applied to $X_{j+q}',Y_{j+q}'$
(where we view the subscripts modulo $p+1$).

The above description implies in particular that $X_0' =
\begin{pmatrix} 0 & -xI
\end{pmatrix}$, with $k$ zero columns added at its left, and $X_q' = \begin{pmatrix}
-xI\\ 0
\end{pmatrix}$, with $k$ zero rows added at its bottom. All the other $X_j$ are of
the form $-xI$. Thus we are not able to apply Lemma~\ref{lemma:cyclic:product}.

To get around this issue, we append $k$ extra rows and columns in the top
and/or left part of some of the blocks. We set
\begin{eqnarray*}
\wtil X_0 = \begin{pmatrix}-x E_k  \\ X_0' \end{pmatrix},\qquad \wtil X_j =
\begin{pmatrix}-xI_k & 0 \\ 0& X_j'\end{pmatrix},\qquad
j\in [1:q-1],\\
\wtil X_q = \begin{pmatrix}0 & X_q'  \end{pmatrix},\qquad \wtil X_j =
X_j',\qquad j\in [q+1:p],
\end{eqnarray*}
where $E_k$ is the submatrix formed by the first $k$ rows of the identity
matrix, and
\begin{eqnarray*}
\wtil Y_j = \begin{pmatrix}I_k & 0 \\ 0 & Y_j'\end{pmatrix},&\qquad j\in [0:q-1]\\
\wtil Y_j = Y_j',&\qquad j\in [q:p].
\end{eqnarray*}
Due to the triangular structure of the added rows and columns, they leave the
determinant invariant up to its sign. So we can replace the $X_j',Y_j'$ by the
$\wtil X_j,\wtil Y_j$.

The blocks $\wtil X_j$ are now all of the form $-xI$, except for $\wtil X_q$
which is of the form $\wtil X_q=\begin{pmatrix} 0& -xI\\ 0_{k\times k} &
0\end{pmatrix}.$ To bring the matrix to the form required by
Lemma~\ref{lemma:cyclic:product}, it suffices to apply a cyclic block
permutation. More precisely, we move each of the blocks $\wtil X_j,\wtil Y_j$,
$q$ positions to the top and to the left (in a cyclic way, thus reappearing at
the bottom or right of the matrix when crossing the top or left matrix border
respectively). We relabel the blocks in the permuted matrix as $X_j',Y_j'$ in
the usual way. Thus $X_j':=\wtil X_{j+q}$ and $Y_j':=\wtil Y_{j+q}$ where we
view the subscripts modulo $p+1$.

Summarizing, we can write $P_{k,n}$ as in \eqref{EV:gen}, where the blocks
$X_j',Y_j'$ have the form described in Lemma~\ref{lemma:cyclic:product}. The
blocks $X_j',Y_j'$ will play exactly the same role as in
Sections~\ref{subsubsection:proof1}--\ref{subsubsection:proof4}.

\smallskip The above construction for $P_{k,n}$, can also be used for the matrices associated to
$P_{k,n+1}$, $P_{k,n+p+1}$ and $P_{k,l,n}$. The blocks in these matrices differ from those
for $P_{k,n}$ only in their bottom right matrix corner. So the above
modifications, which only affect a fixed number of \emph{top} and \emph{left}
rows and columns in each block, are exactly the same for each of these
matrices. Since the interlacing relations are obtained by comparing the
\emph{bottom} and \emph{right} rows and columns, the proofs in
Sections~\ref{subsubsection:proof2}--\ref{subsubsection:proof4} then carry on
in exactly the same way as before.

The only effect of adding rows and columns at the top or at the left, is that
it changes the sizes of some of the blocks $X_j',Y_j'$. The sizes depend
explicitly on the residue $q$ of $n$ modulo $p+1$. The same then holds for the
exponents of $x$ yielded by Lemma~\ref{lemma:cyclic:product}.  The details are
straightforward.

\begin{remark}\label{remark:mkn:combin} To obtain the values for $m_{k,n}$ in
\eqref{mkn:def} and the fact that $\wtil P_{k,n}(0)\neq 0$ in
Theorem~\ref{theorem:interlace} with the above approach, one has to do a
careful bookkeeping. Another approach for checking these statements is to use
\eqref{pattern:expansion} below. It suffices there to find the patterns $s$
that yield the lowest exponent of $x$ in \eqref{pattern:expansion}, which is a
combinatorial exercise.
\end{remark}

\subsection{Interlacing for arbitrary Riemann-Hilbert minors}

The techniques in Sections~\ref{subsubsection:proof4} and
\ref{subsubsection:proof5} can be used to prove the following result for
arbitrary Riemann-Hilbert minors. It generalizes
Theorem~\ref{theorem:interlace:kl}.

\begin{theorem}\label{theorem:interlace:gen} Consider the two-diagonal Hessenberg matrix $H_n$ in
\eqref{H:matrix:twodiag}, where $n$ is sufficiently large. Let $k\in [0:p-1]$
and $\kappa\in [0:k-1]$ and consider the polynomials $P^{\mathbf{n}_{1}}(x)$
and $P^{\mathbf{n}_{2}}(x)$ $($cf. \eqref{def:general:geneig}$)$, with
\begin{align}
\label{vecn12}& \mathbf{n}_{1}=(n_{0},\ldots,n_{\kappa},n-k+\kappa+1,\ldots,n-1,n),\\
\nonumber & \mathbf{n}_{2}=(n_{0},\ldots,n_{\kappa-1},n-k+\kappa,\ldots,n-1,n),
\end{align}
where
\begin{equation}\label{kappa:assumption}n-p\leq
n_{0}<n_{1}<\cdots<n_{\kappa}<n-k+\kappa,\end{equation}
and the last $k-\kappa$ $($$k-\kappa+1$$)$ components of $\mathbf{n}_{1}$ $($$\mathbf{n}_{2}$$)$ are
taken consecutively.
\begin{itemize}
\item[$(a)$] We have $$
P^{\mathbf{n}_{1}}(x) = x^{m}\wtil P^{\mathbf{n}_{1}}(x^{p+1}),\qquad
P^{\mathbf{n}_{2}}(x)=x^{m+n_{\kappa}-(n-k+\kappa)}\wtil
P^{\mathbf{n}_{2}}(x^{p+1}),$$ for some explicit $m$ depending only on the
residues of the indices $n_0,\ldots,n_{\kappa},n$ modulo $p+1$. The zeros of
the above polynomials $\wtil P$ lie in $\er_+$ ($\er_-$) if $k$ is even (odd).
\item[$(b)$] Denote
by $(y_i)_{i=1,2,\ldots}$ and $(x_i)_{i=1,2,\ldots}$ the roots of $\wtil
P^{\mathbf{n}_{1}}(x)$ and $\wtil
P^{\mathbf{n}_{2}}(x)$ respectively, counting
multiplicities and ordered by increasing modulus. We have the weak interlacing
relation
$$ 0\leq |y_1|\leq |x_{1}|\leq |y_{2}|\leq |x_{2}|\leq \ldots.
$$
\end{itemize}
\end{theorem}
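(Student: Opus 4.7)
The plan is to extend the Eiermann--Varga style argument of Sections~\ref{subsubsection:proof1}--\ref{subsubsection:proof5} to this more general setting. I would first apply the cyclic permutation $P$ from Section~\ref{subsection:EV} to bring $H_n-xI_n$ into the block bidiagonal form \eqref{EV:permuted}. The skipping of rows $[0:k-1]$, common to both $P^{\mathbf{n}_1}$ and $P^{\mathbf{n}_2}$, triggers an iterated cascade of forced row and column eliminations exactly as in \eqref{skip:1}--\eqref{skip:2}, and contributes identical combinatorics in both cases.

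The skipped columns then split into two groups: the consecutive block at the right end (namely $[n-k+\kappa+1:n]$ for $\mathbf{n}_1$ and $[n-k+\kappa:n]$ for $\mathbf{n}_2$, with $\mathbf{n}_2$'s block exactly one column longer), and the isolated indices $n_0,\ldots,n_\kappa$ (resp.\ $n_0,\ldots,n_{\kappa-1}$). The consecutive block triggers a cascade from the right as in \eqref{skip:3}--\eqref{skip:4}. By the window condition \eqref{kappa:assumption}, each isolated index lies among the last $p$ columns and is sent by $P$ to a distinct one of the $p+1$ diagonal blocks of \eqref{EV:permuted}, occupying the last row of that block; each such column is handled exactly as the skipped column $n-l$ is handled in Section~\ref{subsubsection:proof4}, forcing its diagonal block to become rectangular. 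Following the device in \eqref{EV:tilde}, I would append triangular completion rows and columns to restore square blocks of the form $-xI$ (with the single exception $X_0'=\bigl(\begin{smallmatrix}0 & -xI \\ 0_{k\times k} & 0\end{smallmatrix}\bigr)$), leaving the determinant invariant up to a nonzero scalar. For $n$ not divisible by $p+1$, the cyclic block permutation of Section~\ref{subsubsection:proof5} applies unchanged.

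With each matrix in the shape required by Lemma~\ref{lemma:cyclic:product}(b), I obtain
\[
P^{\mathbf{n}_j}(x)=c_j\,x^{m_j}\det\!\left(\what Y_0^{(j)}\what Y_1^{(j)}\cdots\what Y_p^{(j)}-x^{p+1}\begin{pmatrix}0 & I \\ 0_{k\times k} & 0\end{pmatrix}\right),\qquad j\in\{1,2\},
\]
with $c_j\neq 0$ and each $\what Y_i^{(j)}$ bidiagonal with nonnegative entries, so the cyclic product is TNN. This yields the star-like location of zeros claimed in~(a); the exponents $m_1=m$ and $m_2=m+n_\kappa-(n-k+\kappa)$ follow from book-keeping of block sizes after the cascades, depending only on the residues modulo $p+1$ of $n_0,\ldots,n_\kappa$ and $n$, in the spirit of Remark~\ref{remark:mkn:combin}.

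For part~(b) the key structural observation is that $\mathbf{n}_2$ extends the consecutive block one step to the left while simultaneously releasing the column previously held by $n_\kappa$. Tracing these changes through the permutation and the appending scheme, this amounts to enlarging each $\what Y_i$ by one row and one column in a triangular way, exactly as in the $P_{k,n}\!\to\!P_{k,l,n}$ comparison of Section~\ref{subsubsection:proof4}; consequently $\what Y_0^{(1)}\cdots\what Y_p^{(1)}$ appears as the leading principal submatrix of $\what Y_0^{(2)}\cdots\what Y_p^{(2)}$, and Corollary~\ref{prop:geneigTNN} then delivers the weak interlacing $0\le|y_1|\le|x_1|\le|y_2|\le|x_2|\le\cdots$. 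The main technical obstacle is precisely this leading-principal-submatrix verification: one must check case by case, depending on the residues of the indices $n_0,\ldots,n_\kappa$ modulo $p+1$, that the appended rows and columns really sit at the right ends of the correct blocks so that the $\mathbf{n}_1$-product embeds as the principal leading corner of the $\mathbf{n}_2$-product.
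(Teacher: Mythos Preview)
Your proposal is correct and follows precisely the approach the paper itself indicates: the paper does not give a detailed proof of Theorem~\ref{theorem:interlace:gen} but merely states that ``the techniques in Sections~\ref{subsubsection:proof4} and \ref{subsubsection:proof5} can be used,'' and your outline is exactly an implementation of those techniques --- the Eiermann--Varga permutation, the iterated skipping cascades, the triangular block-completions of \eqref{EV:tilde}, Lemma~\ref{lemma:cyclic:product}(b), and finally the leading-principal-submatrix comparison fed into Corollary~\ref{prop:geneigTNN}. Your honest flagging of the case-by-case verification of the principal-submatrix embedding as the main technical chore is also accurate; that bookkeeping is tedious but not conceptually difficult.
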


\section{Normal family estimates}
\label{section:normal}

The goal of this section is to prove the following result.

\begin{lemma}\label{lemma:normalfamily} (Normal family:)
Let $k\in [0:p]$ be fixed, and assume there exists an absolute constant $R>0$
so that
\begin{equation}\label{normal:family:cond}
R^{-1} <a_n< R,\qquad n\geq 0.
\end{equation}
Then for any compact set $\mathcal{K}\subset\cee\setminus S_+$ (if $k$ is even)
or $\mathcal{K}\subset\cee\setminus S_-$ (if $k$ is odd) and for any fixed set
of indices $i_j\in\zet$, $j\in [0:k]$, with
\begin{equation}\label{cond:indices}
i_0<i_1<\ldots<i_{k}\leq i_0+p,
\end{equation}
there exists a constant $M>0$ so that for all $x\in \mathcal{K}$ and for all
$n$ we have
\begin{equation}\label{normal:family:2}
M^{-1}<\left|P^{(n+i_0,n+i_1,\ldots,n+i_k)}(x)/P_{k,n}(x)\right|<M.
\end{equation}
\end{lemma}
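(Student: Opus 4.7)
The plan is to reduce the ratio to a determinantal ratio of two matrix pencils $T - yE$ that share all but a bounded number of boundary rows and columns, and then bound the reduced ratio by total positivity and a resolvent estimate.

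First I apply the factorizations $P_{k,n}(x) = x^{m_{k,n}} \wtil P_{k,n}(x^{p+1})$ and $P^{(n+i_0,\ldots,n+i_k)}(x) = x^{m(\mathbf{i},n)} \wtil P^{(n+i_0,\ldots,n+i_k)}(x^{p+1})$ from Theorems~\ref{theorem:interlace}(a) and~\ref{theorem:interlace:gen}(a). The exponents $m_{k,n}$ and $m(\mathbf{i},n)$ depend only on the residues of the indices modulo $p+1$, so their difference is bounded in $n$ and the monomial $x^{m(\mathbf{i},n) - m_{k,n}}$ is bounded above and below on $\mathcal{K}$ (after shrinking $\mathcal{K}$ away from $0$ if necessary; the case $0 \in \mathcal{K}$ is handled using the explicit local form of the polynomials near the origin). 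Under $y := x^{p+1}$ the set $\mathcal{K}$ is sent to a compact set $\mathcal{K}'$ at positive distance $\delta > 0$ from $\er_+$ if $k$ is even and from $\er_-$ if $k$ is odd. The claim therefore reduces to bounding $\wtil P^{(n+i_0,\ldots,n+i_k)}(y)/\wtil P_{k,n}(y)$ uniformly for $y \in \mathcal{K}'$ and $n \in \enn$.

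Second, I extend the cyclic-product reduction of Sections~\ref{subsubsection:proof1}--\ref{subsubsection:proof5} from the tuple $(n-k,\ldots,n)$ to the general tuple $(n+i_0,\ldots,n+i_k)$, obtaining representations
\begin{equation*}
\wtil P^{(n+i_0,\ldots,n+i_k)}(y) = c_n^{\mathbf{i}} \det(T_n^{\mathbf{i}} - y\, E^{\mathbf{i}}), \qquad \wtil P_{k,n}(y) = c_n \det(T_n - y\, E),
\end{equation*}
in which $T_n$ and $T_n^{\mathbf{i}}$ are totally nonnegative cyclic products of bidiagonal matrices with entries drawn from $\{a_j,1\}$, and $E, E^{\mathbf{i}}$ are sparse $0/1$ matrices whose structure depends only on $p, k$ and on the $i_j$'s. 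Under \eqref{normal:family:cond} the prefactors $c_n, c_n^{\mathbf{i}}$ lie in a fixed compact subinterval of $(0,\infty)$ uniformly in $n$. The crucial observation is that $T_n$ and $T_n^{\mathbf{i}}$, and likewise $E$ and $E^{\mathbf{i}}$, differ only in a bounded number $C = C(p,k,\max_j|i_j|)$ of boundary rows and columns. A Schur-complement identity then expresses the ratio $\det(T_n^{\mathbf{i}} - y E^{\mathbf{i}})/\det(T_n - y E)$ as a ratio of determinants of matrices of size $\leq C$ whose entries are matrix elements of the resolvent $(T_n - yE)^{-1}$ at the boundary indices. By Cor.~\ref{prop:geneigTNN} the finite generalized eigenvalues of the pencil $T_n - yE$ lie in $[0,\infty)$ or $(-\infty,0]$, and $y$ stays at distance $\geq \delta$ from these half-axes, so these resolvent entries are uniformly bounded in $n$; a Cauchy--Binet expansion combined with the total positivity of $T_n$ provides the matching uniform lower bound for the denominator small-determinant.

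The main obstacle will be making the Schur-complement step rigorous: because the pencil $T - yE$ is not of the standard form $T - yI$ (the matrix $E$ is sparse and singular), identifying a pivot submatrix that is invertible with $n$-uniformly bounded inverse likely requires a bounded number of preparatory row and column operations on the pencil, in the spirit of the reductions carried out in Section~\ref{subsection:EV}. An alternative that bypasses the Schur complement is to chain $\wtil P_{k,n}$ to $\wtil P^{(n+i_0,\ldots,n+i_k)}$ through a bounded-length sequence of tuples related by a single-index change, apply Theorems~\ref{theorem:interlace:kl} and~\ref{theorem:interlace:gen} at each step to obtain a weak interlacing of zeros, and exploit that successive polynomials in the chain share all but boundedly many zeros, so that the step ratio is a bounded-complexity M\"obius-type product uniformly controlled on $\mathcal{K}'$.
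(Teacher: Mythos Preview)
Your proposal differs substantially from the paper's argument and has a genuine gap at its central step.

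The paper does not use resolvent estimates or Schur complements at all. Its engine is the combinatorial expansion (Proposition~\ref{prop:comb}) of $P^{(n_0,\ldots,n_k)}(x)$ as a sum over ``patterns'',
\[
P^{(n_0,\ldots,n_k)}(x) = (-x)^{(k+1)(n-k)-q}\sum_{s\in\mathcal{S}} (-1)^{(k+1)|s|} \Big(\prod_j a_j^{s_j}\Big) y^{-|s|},\qquad y=x^{p+1}.
\]
For $y$ real with sign $(-1)^{k+1}$ every term is positive, so no cancellation occurs. Lemma~\ref{lemma:pattern:tech} shows that patterns for two different admissible tuples can be matched outside a window of bounded length; together with \eqref{normal:family:cond} this yields the termwise estimate \eqref{pattern:termwise} and hence a uniform two-sided bound on the ratio \emph{pointwise on the opposite star} $S_{\mp}\setminus\{0\}$. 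Only after this a~priori pointwise bound is available does the paper invoke interlacing (Theorems~\ref{theorem:interlace}, \ref{theorem:interlace:gen}): the resulting partial fraction decomposition \eqref{parfrac:proof} has residues of constant sign, so a bound at a single real point propagates, via a simple distance comparison, to every compact subset of $\cee\setminus S_{\pm}$.

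In your primary route, the claim that ``these resolvent entries are uniformly bounded in $n$'' because the generalized eigenvalues lie on a half-axis at distance $\geq\delta$ from $y$ is not justified. For non-normal pencils the location of the spectrum does not control $(T_n - yE)^{-1}$; this is exactly the pseudospectral obstruction. Total nonnegativity of $T_n$ does not repair it without a specific structural argument that you do not provide. The lower bound on the small denominator determinant via ``Cauchy--Binet combined with total positivity'' is likewise unspecified: total nonnegativity gives sign information on minors of $T_n$, not on minors of $T_n - yE$ for complex $y$.

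Your alternative route is closer to the paper's second stage, but the premise that ``successive polynomials in the chain share all but boundedly many zeros'' is incorrect: weak interlacing does not produce common zeros. What it produces is a partial fraction decomposition with same-sign residues, and controlling that uniformly on a compact set still requires an a~priori bound at one point of $\er_{\pm}$. That single-point bound is precisely what the paper extracts from the positivity in the pattern expansion, and it is the missing ingredient in your outline.
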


Lemma~\ref{lemma:normalfamily} is proved in
Section~\ref{subsection:proof:normal}. In the proof we will need a
combinatorial expansion of the generalized eigenvalue determinant
$P^{(n_0,n_1,\ldots,n_k)}$, to which we turn now.

\subsection{Combinatorial expansion of generalized eigenvalue determinants}
\label{subsection:comb}

We start with a definition.

\begin{definition}\label{def:pattern} Let
$p\in\enn$, $k\in [0:p]$ and $n-p\leq n_0<n_1<\ldots<n_k=n$ be fixed numbers. A
\emph{pattern} is a sequence $s=(s_j)_{j=0}^{n-1}$ such that $s_j\in\{0,1\}$
for all $j$,
with boundary conditions
\begin{equation}\label{pattern:boundary1}
s_0=\ldots = s_{k-1}=1,
\end{equation}
\begin{equation} \label{pattern:boundary2} s_{j}=\left\{\begin{array}{ll} 1,& \quad j=n_0,n_1,\ldots,n_{k-1},
\\ 0,& \quad j\in [n-p:n-1]\setminus\{n_0,n_1,\ldots,n_{k-1}\},\end{array}\right.
\end{equation}
and such that the following rule holds: \begin{itemize}\item[] \emph{Pattern
rule}: For each $j\in [0:n-p-1]$ with $s_j=1$, exactly $k$ out of the $p$
numbers $s_{j+1},\ldots,s_{j+p}$ are equal to $1$.\end{itemize} We denote with
$\mathcal S$ the set of all such patterns $s$.
\end{definition}

For example, if $p=4$, $k=2$ and $(n_0,n_1,n_2=n)=(14,15,16)$ then the
following sequence is a pattern:
$(s_j)_{j=0}^{15}=(1,1,0,1,0,1,1,0,0,1,1,1,0,0,1,1)$. For instance, note that
$s_3=1$ and exactly 2 out of the $4$ numbers $\{s_4,s_5,s_6,s_7\}$ are equal to
1, namely the numbers $s_5$ and $s_6$.

For a pattern $s\in\mathcal S$, write $|s|=\sum_{j=0}^{n-p-1} s_j$. Thus $|s|$
is the number of indices $j\in [0:n-p-1]$ for which $s_j=1$. In the example
above we have $|s|=8$.

\begin{remark}\label{remark:pattern:p}
Let $s$ be a pattern and consider a group of $p$ consecutive numbers
$(s_j)_{j=m-p}^{m-1}$, $p\leq m\leq n$. Def.~\ref{def:pattern} easily implies
that $\#\{j\in [m-p:m-1]\mid s_j=1\} \in\{k,k+1\}$.
\end{remark}

\begin{remark}
In the case $k=0$, we understand that there is no initial condition
\eqref{pattern:boundary1}, and \eqref{pattern:boundary2} reduces to ask
$s_{j}=0$ for all $j\in [n-p:n-1]$.
\end{remark}

\begin{proposition}\label{prop:comb}
The generalized eigenvalue determinant $P^{(n_0,n_1,\ldots,n_k)}$ can be
written as a sum over patterns: \begin{equation}\label{pattern:expansion}
P^{(n_0,n_1,\ldots,n_k)}(x) = \sum_{s\in\mathcal
S}(-1)^{(p-k)|s|}\left(\prod_{j=0}^{n-p-1}
a_j^{s_j}\right)(-x)^{(k+1)(n-k)-(p+1)|s|-q},
\end{equation}
where $q:=\sum_{j=0}^{k-1} (n+j-k-n_j)\geq 0$.
\end{proposition}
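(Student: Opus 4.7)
The plan is to apply the Leibniz expansion to the $(n-k)\times(n-k)$ submatrix $M:=(H_n-xI_n)^{(0,\ldots,k-1;\,n_0,\ldots,n_{k-1})}$ whose determinant equals $P^{(n_0,\ldots,n_k)}(x)$, and to establish a weight- and sign-preserving bijection between its non-vanishing terms and the patterns $s\in\mathcal S$. Write $R:=[k:n-1]$ for the kept rows and $C:=[0:n-1]\setminus\{n_0,\ldots,n_{k-1}\}$ for the kept columns. Each non-zero term corresponds to a bijection $\sigma\colon R\to C$ with $\sigma(r)\in\{r-p,r,r+1\}\cap C$, contributing the factors $a_{r-p}$, $-x$, or $1$ respectively. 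To each such $\sigma$ I attach $s=s(\sigma)\in\{0,1\}^n$ by setting $s_j=1$ iff $j\in\{n_0,\ldots,n_{k-1}\}$ or $\sigma^{-1}(j)=j+p$. The boundary conditions \eqref{pattern:boundary1}--\eqref{pattern:boundary2} are immediate: rows $[0:k-1]$ are absent, so each column $j\in[0:k-1]$ must match to row $j+p$ (forcing $s_j=1$), and for $j\in[n-p:n-1]\setminus\{n_i\}$ the row $j+p$ exceeds $n-1$ (forcing $s_j=0$).

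The heart of the argument is verifying the pattern rule. Fix $j\in[0:n-p-1]$ with $s_j=1$; since $\{n_0,\ldots,n_{k-1}\}\subseteq[n-p:n-1]$, column $j$ is not skipped and $\sigma(j+p)=j$. I count the rows $r\in R$ with $\sigma(r)\leq j+p$ in two ways: on the one hand, by bijectivity this number equals $|C\cap[0:j+p]|$; on the other hand, using $\sigma(r)\in\{r-p,r,r+1\}$ it splits into rows $r\leq j+p-1$, the row $r=j+p$ itself (whose image $j$ is $\leq j+p$), and rows $r\in[j+p+1:j+2p]$ satisfying $\sigma(r)=r-p$, the last being in bijection with indices $i=r-p\in[j+1:j+p]$ with $s_i=1$. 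Equating the two counts yields $\sum_{i=j+1}^{j+p}s_i=k$. Conversely, each $s\in\mathcal S$ arises from a unique $\sigma$: the positions with $s_j=1$ and $j$ not skipped force $\sigma^{-1}(j)=j+p$, and along each maximal run $[j_0:j_1-1]$ of indices with $s_j=0$ the pattern rule ensures that the number of still-free rows in $[j_0-1:j_1-1]$ matches the number of columns, so the constraint $\sigma^{-1}(j)\in\{j-1,j\}$ admits a unique solution along the run.

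Given the bijection, the weight associated with $s$ factors as $\bigl(\prod_{j:\,s_j=1,\,j\notin\{n_i\}}a_j\bigr)(-x)^E\cdot 1^{\,n-k-|s|-E}$, where $E$ is the number of rows $r\in R$ with $\sigma(r)=r$. A direct count, of the same flavour as the pattern-rule verification, yields $E=(k+1)(n-k)-(p+1)|s|-q$. For the sign $\mathrm{sgn}(\pi)$ of the positional permutation, I extend $\sigma$ to a full permutation $\widetilde\sigma$ of $[0:n-1]$ by sending the deleted row $i\in[0:k-1]$ to the deleted column $n_i$ in order. A standard computation gives $\mathrm{sgn}(\pi)=\mathrm{sgn}(\widetilde\sigma)\cdot\mathrm{sgn}(\iota)$, where $\iota$ is the same extension applied to the identity on $R$. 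A direct inversion count gives $\mathrm{sgn}(\iota)=(-1)^{k(n-k)-q}$, while the cycles of $\widetilde\sigma$ (each containing prescribed numbers of $(+1)$- and $(-p)$-steps of $H_n-xI_n$ together with at most one extension step per deleted row contained in it) yield $\mathrm{sgn}(\widetilde\sigma)=(-1)^{(p-k)|s|+k(n-k)-q}$; multiplying the two produces the factor $(-1)^{(p-k)|s|}$ of \eqref{pattern:expansion}.

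The principal obstacle is the sign bookkeeping, since the cycle structure of $\widetilde\sigma$ depends globally on the pattern $s$ and on the positions of the skipped columns. To avoid a brute-force cycle count, I plan to first establish \eqref{pattern:expansion} in the consecutive case $(n_0,\ldots,n_k)=(n-k,\ldots,n)$ by comparing the Leibniz expansion with the block-bidiagonal decomposition of Section~\ref{subsection:EV}, and then verify that both sides transform consistently under an adjacent transposition of the skipping pattern. Since such a transposition alters $q$ by one unit and $\mathrm{sgn}(\iota)$ by a single sign, this reduces the global sign calculation to a single local check and finishes the proof.
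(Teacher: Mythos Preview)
Your overall strategy---Leibniz expansion of $M$ followed by a bijection between nonvanishing terms and patterns---is exactly the paper's. Your double-counting verification of the pattern rule is a clean alternative to the paper's more hands-on inductive block analysis of the matrix \eqref{pattern:submx}; both arguments work, and yours is arguably more direct (though the boundary case where $[j+1:j+p]$ meets the skipped columns deserves a sentence). The formula for the exponent of $-x$ is obtained in the same way.

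Where you diverge is the sign, and there you are working much harder than necessary. There is no need to extend $\sigma$ to a permutation $\widetilde\sigma$ of $[0:n-1]$, to analyse its cycles, or to set up an induction over adjacent transpositions of the skipped columns. One counts inversions of $\sigma\colon R\to C$ directly: since $\sigma^{-1}(c)\in\{c-1,c,c+p\}$ for every $c\in C$, a pair $j<i$ in $C$ with $\sigma^{-1}(j)>\sigma^{-1}(i)$ must have $\sigma^{-1}(j)=j+p$ (i.e.\ $s_j=1$), $\sigma^{-1}(i)\in\{i-1,i\}$ (i.e.\ $s_i=0$), and $i\in[j+1:j+p]$. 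But the pattern rule you just established says that for each $j\in[0:n-p-1]$ with $s_j=1$ exactly $p-k$ of $s_{j+1},\ldots,s_{j+p}$ vanish, and each such $i$ automatically lies in $C$ because skipped columns carry $s=1$. Hence the inversion count is $(p-k)|s|$, and $\mathrm{sgn}(\pi)=(-1)^{(p-k)|s|}$ in one line. This is the paper's argument, and it replaces your extension $\widetilde\sigma$, the cycle bookkeeping, and the transposition reduction entirely.
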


\begin{proof} In the proof we assume that $k\geq 1$. Simpler arguments can be
applied to prove \eqref{pattern:expansion} in the case $k=0$. By definition, $P^{(n_0,n_1,\ldots,n_k)}(x)$ is the determinant
of the matrix obtained by skipping rows $0,\ldots,k-1$ and columns
$n_0,\ldots,n_{k-1}$ of the matrix $H_n-xI$, with $n=n_k$. We write this
determinant as a signed sum over all permutations $\sigma$ of length $n-k$:
\begin{equation}\label{pattern:perm}
P^{(n_0,n_1,\ldots,n_k)}(x) = \sum_{\sigma\in
S_{n-k}}(-1)^{\mathrm{sign}(\sigma)}\left(\prod_{i=k}^{n-1}
(H_n-xI)_{i,\sigma(i)}\right)
\end{equation}
where $(H_n-xI)_{i,j}$ denotes the $(i,j)$ entry of $H_n-xI$ and we view
$\sigma$ as a map $\sigma: [k:n-1]\to
[0:n-1]\setminus\{n_0,n_1,\ldots,n_{k-1}\}$, in the natural way. We will often
find it convenient to use the inverse map $\sigma^{-1}$.

Since $H_n-xI$ has only three nonvanishing diagonals, most of the terms in the
sum \eqref{pattern:perm} will be zero. To have a nonzero term we must have
$\sigma^{-1}(i)\in\{i-1,i,i+p\}$ for all $i$. For such a $\sigma$, we define a
sequence $(s_j)_{j=0}^{n-p-1}$ by $s_j=1$ if $\sigma^{-1}(j)=j+p$ (meaning that
the permutation $\sigma$ selects the entry $(H_n-xI)_{j+p,j}=a_j$) and $s_j=0$
otherwise.
We define the boundary values $(s_j)_{j=n-p}^{n-1}$ as in
\eqref{pattern:boundary2}. We claim that this sets up a bijection between the
permutations $\sigma$ leading to a nonzero term in \eqref{pattern:perm}, and
the patterns $s\in\mathcal S$.

To prove this assertion, consider the matrix obtained by skipping rows
$0,\ldots,k-1$ of $H_n-xI$. Its leading principal submatrix of size $2p-k+1$ by
$p+1$ can be partitioned in blocks as follows:

\begin{equation}\label{pattern:partition}
\begin{array}{cc} & \begin{array}{cc} \hphantom{ddd}k& p-k+1 \end{array}\\
\begin{array}{c} p-k\\ k\\ p-k+1\end{array} & \left(\begin{array}{cc} 0 & X \\
A & \hphantom{dd}Y\hphantom{dd} \\ 0 & B\end{array}\right)\end{array},\qquad
\end{equation}
where $A=\diag(a_0,\ldots,a_{k-1})$, $B=\diag(a_k,\ldots,a_{p})$,
$X=\begin{pmatrix} -x & 1 \\ & \ddots & \ddots \\ & & -x & 1\end{pmatrix}$, and
$Y$ has its top right entry equal to $-x$ and all its other entries equal to
zero.

Observe that in each of the first $k$ columns of \eqref{pattern:partition}
there is only one nonzero entry, being of the form $a_j$, $j\in [0:k-1]$ in the
block $A$. The permutation $\sigma$ has to pick these entries. Hence
$s_0=\ldots=s_{k-1}=1$, consistent with \eqref{pattern:boundary1}. In
particular, since we have to pick $a_0$, we are not allowed to choose the entry
$-x$ in the top right corner of the block $Y$.

Now in each of the first $p-k$ rows of \eqref{pattern:partition}, $\sigma$ has
to pick either the entry $-x$ or the entry $1$ from the corresponding row of
the block $X$. Since $X$ is rectangular with one more column than row, there
will be one of the chosen entries in each of the columns of $X$ except one.
This means in turn that $\sigma$ has to pick exactly one of the entries
$a_k,\ldots,a_p$ of the block $B$ in \eqref{pattern:partition}. So exactly one
of the numbers $s_k,\ldots,s_p$ equals 1 and the others equal zero.


\smallskip Now let $0\leq j<i<n-p$ be two integers with $s_j=s_i=1$ and $s_{j+1}=\ldots=s_{i-1}=0$.
Assume (by induction) that exactly $k$ out of the $p$ numbers
$s_{j+1},\ldots,s_{j+p}$ equal $1$. By the above paragraphs, this holds if
$j=0$. We will prove that exactly $k$ out of the $p$ numbers
$s_{i+1},\ldots,s_{i+p}$ equal $1$. Applying this argument iteratively, we will
then obtain that $(s_j)_{j=0}^{n-1}$ satisfies the pattern rule
(Def.~\ref{def:pattern}).

By assumption we have that exactly $k$ of the numbers $s_{j+1},\ldots,s_{j+p}$
equal $1$. By the definition of $i$, this implies that exactly $k-1$ of the
numbers $s_{i+1},\ldots,s_{j+p}$ equal $1$. So it will be enough to show that
exactly one of the numbers $s_{j+p+1},\ldots,s_{i+p}$ equals $1$.

Consider the submatrix of $H_n-xI$ obtained by extracting rows
$j+p,\ldots,i+p$:
\begin{equation}\label{pattern:submx}\left(\begin{array}{lllllllllll} a_j & & & & & -x & 1& & &  \\
\hline & a_{j+1}& & && &-x & 1 \\ && \ddots & &  & && \ddots & \ddots \\ && & a_{i-1}&&& & & -x & 1&\\
\hline & & & & a_{i} & & & & & -x & 1\end{array}\right).\end{equation} Note
that the entry $-x$ in the topmost row lies either in the same column or in a
column to the right of $a_i$. This follows from our assumptions that exactly
$k\geq 1$ of the numbers $s_{j+1},\ldots,s_{j+p}$ equal $1$, and
$s_{j+1}=\cdots=s_{i-1}=0$. Now since $\sigma$ picks the entries $a_j$ and
$a_i$, the entries $-x$ and $1$ in the first and last row of
\eqref{pattern:submx} cannot be chosen. On the other hand, $\sigma$ has to
choose one of the entries $-x$ or $1$ from each of the rows containing
$a_{j+1},\ldots,a_{i-1}$ in \eqref{pattern:submx}. Since the block formed by
the entries $-x$ and $1$ lying between the two horizontal lines in
\eqref{pattern:submx} is rectangular with one more column than row, there will
be one of the chosen entries in each of its columns (i.e., the columns
$[j+p+1:i+p]$), except one. This implies in turn that $\sigma$ has to pick
exactly one of the entries $a_{j+p+1},\ldots,a_{i+p}$. Thus exactly one of the
numbers $s_{j+p+1},\ldots,s_{i+p}$ equals $1$, proving the claim of the above
paragraph.

In the above argument we were tacitly assuming that $[j+p+1:i+p]$ is disjoint
from the set of skipped columns $\{n_0,n_1,\ldots,n_{k-1}\}$ in the definition
of $P^{(n_0,n_1,\ldots,n_k)}$. If this disjointness fails, then similar
arguments as above show that $[j+p+1:i+p]$ must contain exactly one of the
indices $n_0,n_1,\ldots,n_{k-1}$, and again, exactly one of the numbers
$s_{j+p+1},\ldots,s_{i+p}$ equals $1$ (recall \eqref{pattern:boundary2}).

\smallskip Summarizing, we have proved that each permutation $\sigma$ leading to
a nonzero term in \eqref{pattern:perm} defines a pattern $s\in\mathcal S$ with
\begin{equation}\label{pattern:sigma}\textrm{$s_j=1$
if and only if $\sigma^{-1}(j)=j+p$,\quad $j\in [0:n-p-1]$}.
\end{equation}
Conversely, we claim that each pattern $s\in\mathcal S$ leads to a unique
permutation $\sigma$ satisfying \eqref{pattern:sigma} and associated to a
nonzero term in \eqref{pattern:perm}. We call $\sigma$ the \emph{permutation
induced by $s\in\mathcal S$}. To prove its existence and uniqueness, let again
$j<i$ be two numbers with $s_j=s_i=1$, $s_{j+1}=\ldots=s_{i-1}=0$. As observed
before, exactly one of the numbers $s_{j+p+1},\ldots,s_{i+p}$ equals $1$;
denote this number by $s_{l+p}$, for suitable $l$. Skipping the corresponding
column in \eqref{pattern:submx}, the block formed by the entries $-x$ and $1$
lying between the two horizontal lines in \eqref{pattern:submx} then takes the
form $\diag(C,D)$ with
$$
C=\begin{pmatrix}
-x & 1 \\
& \ddots & \ddots \\
& & \ddots & 1 \\
&&& -x \end{pmatrix}_{(l-j-1)\times (l-j-1)},\qquad
D=\begin{pmatrix}1 \\
-x & \ddots \\
& \ddots & \ddots \\
& & -x & 1
\end{pmatrix}_{(i-l)\times (i-l)}.
$$
Note that both matrices $C$ and $D$ are square and triangular with nonzero
diagonal entries. Hence if $\sigma$ is a permutation induced by $s$, then we
should have $\sigma^{-1}(m)=m$, for $m\in [j+p+1:l+p-1]$ (so that $\sigma$
picks the diagonal entries of $C$), and $\sigma^{-1}(m)=m-1$, for $m\in
[l+p+1:i+p]$ (so that $\sigma$ picks the diagonal entries of $D$). If we follow
this rule consequently for all $j<i$ with $s_j=s_i=1$ and
$s_{j+1}=\ldots=s_{i-1}=0$, and use a similar reasoning near the top left
matrix corner \eqref{pattern:partition}, and near the bottom right matrix
corner, then we will end up with the unique permutation $\sigma$ induced by
$s\in\mathcal S$.
This proves the existence and uniqueness of $\sigma$.


\smallskip
Now let $s\in\mathcal S$ be a pattern with induced permutation $\sigma$. We
claim that $\mathrm{sign}(\sigma)=(-1)^{(p-k)|s|}$. To see this, recall that
$\mathrm{sign}(\sigma)=(-1)^K$ where $K$ is the number of pairs of column
indices $(j,i)$ in $[0:n-1]\setminus\{n_0,\ldots,n_{k-1}\}$ with $j<i$ and
$\sigma^{-1}(j)>\sigma^{-1}(i)$. Since $\sigma^{-1}(i)\in\{i-1,i,i+p\}$ for all
$i$, in our case $K$ is the number of pairs of column indices $(j,i)$ with
$j\in [0:n-p-1]$, $i\in [j:j+p]$, $\sigma^{-1}(j)=j+p$ (i.e., $s_j=1$) and
$\sigma^{-1}(i)\in\{i-1,i\}$ (i.e., $s_i=0$). But if $j\in [0:n-p-1]$ is such
that $s_j=1$ then exactly $p-k$ of the numbers $s_{j+1},\ldots,s_{j+p}$ are
$0$, by Def.~\ref{def:pattern}. Thus $K=(p-k)|s|$ and
$\mathrm{sign}(\sigma)=(-1)^{(p-k)|s|}$, proving our claim.

\smallskip Let again $s\in\mathcal S$ be a pattern with induced permutation $\sigma$.
Let $a,b,c$ be the number of indices $i\in [k:n-1]$ with $\sigma(i)=i-p$,
$\sigma(i)=i$ and $\sigma(i)=i+1$, respectively. Thus $a,b,c$ denote the number
of entries of $H_n-xI$ of the form $a_j$, $-x$ and $1$, respectively, that are
picked by $\sigma$. We have the two relations
\begin{equation}\label{pattern:two:relations}
a+b+c=n-k,\qquad pa-c+ \sum_{i=0}^{k-1} (i-n_i) = 0.
\end{equation}
The first relation is obvious. The second one follows from $\sum_{i=k}^{n-1}
(i-\sigma(i))+\sum_{i=0}^{k-1} (i-n_i)=0$, due to the facts that $\sigma$ is a
permutation and we are skipping the rows $0,1,\ldots,k-1$ and columns
$n_0,n_1,\ldots,n_{k-1}$ of $H_n-xI$. Now by adding the two relations in
\eqref{pattern:two:relations}, we obtain $b=(k+1)(n-k)-(p+1)a-q$ with $q$ as in
the statement of the proposition. This yields the exponent of $-x$ in
\eqref{pattern:expansion}. Putting together all the above observations, we
obtain \eqref{pattern:expansion}.

\end{proof}

Next we state a technical lemma on the existence of patterns with prescribed
initial part.

\begin{lemma}\label{lemma:pattern:tech}
Let $p,k,n$ and $K\geq (p+1)(k+1)+pk$ be positive integers and let
$(s_j)_{j=0}^{n-K}$ satisfy \eqref{pattern:boundary1}, with $s_j\in\{0,1\}$ for
all $j$, and such that the pattern rule holds for all $j\in [0:n-K-p]$. Then
for any indices $(n_j)_{j=0}^{k}$ with $n-p\leq n_0<n_1<\ldots<n_k=n$ one can
assign the numbers $(s_j)_{j=n-K+1}^{n-1}$ such that $(s_j)_{j=0}^{n-1}$ is a
pattern with respect to these indices (Def.~\ref{def:pattern}).
\end{lemma}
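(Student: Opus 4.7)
My plan is to construct the extension explicitly in three stages, using a canonical periodic template as scaffolding. The first observation is that condition \eqref{pattern:boundary2} completely determines the last $p$ entries $s_{n-p},\ldots,s_{n-1}$: ones at the prescribed positions $n_0,\ldots,n_{k-1}$ and zeros elsewhere. So only the buffer entries $s_{n-K+1},\ldots,s_{n-p-1}$ need to be assigned, subject to the pattern rule at every $j\in [0:n-p-1]$.

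As scaffolding I would introduce the canonical periodic pattern $\bar s$ defined by $\bar s_j = 1$ iff $j \bmod (p+1) \in [0:k]$. A direct check verifies that $\bar s$ satisfies the pattern rule at every index. The construction then proceeds as follows: $(i)$ starting from index $n-K$, extend forward one entry at a time, each time choosing a value consistent with all currently active pattern-rule constraints, until the sequence coincides (after a shift) with $\bar s$; $(ii)$ coast along the template for a while; and $(iii)$ transition from the template into the prescribed terminal block by extending \emph{backward} from index $n-p$, using the same pattern rule read in reverse. Stage $(i)$ requires at most $(p+1)k$ buffer positions, because by Remark~\ref{remark:pattern:p} every window of $p$ consecutive entries of a pattern contains $k$ or $k+1$ ones, so the admissible states form a finite set and the sequence must align with one of the $p+1$ shifts of $\bar s$ within a bounded number of steps.

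Stage $(iii)$ is the main obstacle and accounts for the extra $pk$ slack in the hypothesis $K\geq (p+1)(k+1)+pk$. Here one must realize any prescribed $k$-subset $\{n_0,\ldots,n_{k-1}\}$ of $[n-p:n-1]$ as the terminal ones of a pattern that eventually agrees with $\bar s$ further to the left. I would use the equivalent formulation of the pattern rule in terms of the gaps $d_i = t_{i+1}-t_i$ between successive ones, namely $\sum_{j=i}^{i+k-1} d_j \leq p$ and $\sum_{j=i}^{i+k} d_j \geq p+1$. With this formulation, one can shift any single anchor by one position by redistributing mass between two adjacent gaps, and such an operation preserves the pattern rule outside a window of width at most $p+1$. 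Iterating these local shifts, any terminal configuration can be reached from the template in at most $pk$ backward positions, since the prescribed $k$-subset differs from the template's last $k$ anchors by a total displacement of at most $pk$. Combining the three stages and allowing a safety margin of $p+1$ for alignment, the total buffer used is bounded by $(p+1)k + (p+1) + pk = (p+1)(k+1) + pk$, which matches the hypothesis on $K$. The delicate point is verifying that the backward reachability in stage $(iii)$ can indeed be carried out within the prescribed $pk$ positions for every admissible target; this reduces to a case analysis on how far the prescribed $k$-subset deviates from a contiguous block.
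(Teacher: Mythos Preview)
Your three-stage architecture (normalize to a canonical periodic template, coast, then transition to the prescribed boundary) is exactly the shape of the paper's argument, and the budget you arrive at matches. However, the justification you give for the bound in stage~$(i)$ contains a genuine error.

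You argue that since the admissible $p$-windows form a finite set, ``the sequence must align with one of the $p+1$ shifts of $\bar s$ within a bounded number of steps.'' This is false: there are periodic patterns that are not shifts of $\bar s$. For instance, with $p=3$ and $k=1$, the sequence $1,0,1,0,1,0,\ldots$ (constant gap $2$) satisfies the pattern rule at every index, yet it is not a shift of $\bar s = 1,1,0,0,1,1,0,0,\ldots$. So a pigeonhole or finite-state argument cannot force alignment with your particular template; at best it forces the $p$-window to repeat \emph{some} state, which need not be one of the shifts of~$\bar s$, and it certainly does not yield the specific bound $(p+1)k$. What is needed is an explicit steering procedure. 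The paper provides one: starting from the current $p$-window, it fills in the next $p$ entries so that exactly one of the $1$'s is moved to a prescribed position (the leftmost available slot), and repeats this $k$ times until all $k$ ones sit at the leftmost positions of the window. This is what makes the count $(k+1)(p+1)$ work, and it is precisely the ``specific construction'' your argument is missing.

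Stage~$(iii)$ has a parallel issue, which you yourself flag as the delicate point. The displacement heuristic (``total displacement at most $pk$'') suggests the right order of magnitude but is not a construction: you need to exhibit, for an arbitrary target $k$-subset of $[n-p:n-1]$, a backward extension of length at most $pk$ that reaches the template while respecting the pattern rule at every index. The paper again does this explicitly, moving one anchor per $p$-block from the canonical configuration into its target position $n_j$, in $k$ blocks of length~$p$. Once you replace the finite-state/displacement heuristics by these two explicit shifting procedures, your proof becomes complete and essentially identical to the paper's.
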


\begin{proof}
We will assume that $K=\wtil K:=(p+1)(k+1)+pk$; the case where $K>\wtil K$ is
discussed at the end of the proof.

Consider the group of $p$ consecutive numbers $(s_j)_{m-p+1}^{m}$ with $m=n-K$.
Remark~\ref{remark:pattern:p} implies that it has exactly $k$ or $k+1$ entries
equal to $1$. Assume these entries are at the positions $m-p+i_j$, $j\in
[l:k]$, with $l\in\{0,1\}$ and $1\leq i_l<\ldots<i_{k}\leq p$. We define the
next group of $p$ consecutive numbers $(s_j)_{m+1}^{m+p}$ such that it has
precisely $k$ entries equal to 1, standing at the positions $m+i_j$, $j\in
[1:k]$. This definition is valid since it satisfies the pattern rule.

Next, we define $(s_j)_{m+p+1}^{m+2p}$ such that it has 1's precisely at the
positions $m+p+\wtil i_j$ with $\wtil i_1=1$ and $\wtil i_j=i_j$ for $j\in
[2:k]$. In the next group $(s_j)_{m+2p+1}^{m+3p}$ we put 1's at the indices
$m+2p+\what i_j$ with $\what i_1=1$, $\what i_2=2$ and $\what i_j=i_{j}$, $j\in
[3:k]$. We repeat this procedure until we arrive at $(s_j)_{m+kp+1}^{m+(k+1)p}$
with 1's at its first $k$ positions and zeros elsewhere. We also define each of
the numbers $(s_j)_{m+(k+1)p+1}^{m+(k+1)p+k}$ as 1. These definitions are
compatible with the pattern rule.

Next, we use a similar strategy to arrive at the prescribed boundary conditions
\eqref{pattern:boundary2}. Setting $\wtil m:=m+(k+1)(p+1)$, we know from the
last paragraph that the group $(s_j)_{\wtil m-p}^{\wtil m-1}$ has $1$'s at its
last $k$ positions and zeros elsewhere. Then we define $(s_j)_{\wtil m}^{\wtil
m+p-1}$ with 1's at the position $\wtil m+p+n_0-n$ and at its last $k-1$
positions. Next, we define $(s_j)_{\wtil m+p}^{\wtil m+2p-1}$ with 1's at the
positions $\wtil m+2p+n_0-n$, $\wtil m+2p+n_1-n$ and at its last $k-2$
positions. Repeating this process, we end up with $(s_j)_{\wtil
m+(k-1)p}^{\wtil m+kp-1}$ having 1's at the positions $\wtil m+kp+n_j-n$, $j\in
[0:k-1]$, and zeros elsewhere. These definitions are compatible with the
pattern rule. Moreover, one checks that
$$\wtil m+kp+n_j-n=
n_j,\qquad \wtil m+kp-1 = n-1.$$ So
we obtain the desired boundary condition \eqref{pattern:boundary2}.

Finally, if $K>\wtil K:=(p+1)(k+1)+pk$ then we arbitrarily assign the numbers
$(s_j)_{j=n-K+1}^{n-\wtil K}$ so that the pattern rule is satisfied. We then
use the extended sequence $(s_j)_{j=0}^{n-\wtil K}$ and proceed in exactly the
same way as before.
\end{proof}

\subsection{Proof of Lemma~\ref{lemma:normalfamily}}
\label{subsection:proof:normal}

We write \eqref{pattern:expansion} in the form
\begin{equation}\label{pattern:expansion:bis}
P^{(n_0,n_1,\ldots,n_k)}(x) = (-x)^{(k+1)(n-k)-q}\sum_{s\in\mathcal
S}(-1)^{(k+1)|s|}\left(\prod_{j=0}^{n-p-1} a_j^{s_j}\right)y^{-|s|},
\end{equation}
with $y:=x^{p+1}$. Suppose that $k$ is odd and $y=x^{p+1}\in\er_+$. Then in the
above sum, each term is real and positive and hence no cancelation can occur.
The same happens if $k$ is even and $y=x^{p+1}\in\er_-$.

Consider the ratio of polynomials in \eqref{normal:family:2}. Both the
numerator and denominator can be written as in \eqref{pattern:expansion:bis}.
Let $s=(s_j)_{j=0}^{n+i_k-1}$ be a pattern corresponding to the indices
$n+i_0,\ldots,n+i_{k}$. Lemma~\ref{lemma:pattern:tech} implies that there
exists a pattern $\wtil s=(\wtil s_j)_{j=0}^{n-1}$ corresponding to the indices
$[n-k+1:n]$ such that $\wtil s_j=s_j$ for all $j\in [0:n-K]$, with
$K=\max\{(k+1)(p+1)+kp,-i_k+1\}$. Clearly there are only finitely many such
patterns $\wtil s$ (or $s$) with prescribed initial part $(\wtil
s_j)_{j=0}^{n-K}$ (or $(s_j)_{j=0}^{n-K}$). Now for each fixed
$y\in\cee\setminus\{0\}$ there exists a constant $M>0$ so that
\begin{equation}\label{pattern:termwise}
M^{-1}<\left|\left(y^{-|s|}\prod_{j=0}^{n+i_k-p-1}
a_j^{s_j}\right)/\left(y^{-|\til s|}\prod_{j=0}^{n-p-1} a_j^{\til
s_j}\right)\right|<M,\end{equation} uniformly in $n$. This is because the
products in the numerator and denominator are equal except for at most a finite number
(independent of $n$) of factors $a_j$ and $y$, and in view of \eqref{normal:family:cond}. We conclude
that for each fixed $x\in S_+\setminus\{0\}$ (if $k$ is odd) or $x\in
S_-\setminus\{0\}$ (if $k$ is even) there is a (new) constant $M>0$ so that
\begin{equation}\label{normal:family:weak} M^{-1} <
\left|P^{(n+i_0,n+i_1,\ldots,n+i_{k})}(x)/P_{k,n}(x)\right| < M,
\end{equation}
uniformly in $n$. This is due to the termwise estimate \eqref{pattern:termwise}
and our earlier observation that the terms in \eqref{pattern:expansion:bis} are
all real with fixed sign. This already gives us a normal family estimate on
compact sets of $S_+\setminus\{0\}$ (if $k$ is odd) or $S_-\setminus\{0\}$ (if
$k$ is even).

To obtain the full statement of Lemma~\ref{lemma:normalfamily}, we recall the
interlacing relation for the generalized eigenvalues in
Theorem~\ref{theorem:interlace:gen}. With the notations $\mathbf{n}_1$,
$\mathbf{n}_2$ as in \eqref{vecn12}, Theorem~\ref{theorem:interlace:gen} yields
the partial fraction decomposition
\begin{equation}\label{parfrac:proof}
\wtil P^{\mathbf{n}_2}(x)/ \wtil P^{\mathbf{n}_1}(x) =
\alpha_0+\sum_{i=1,2,3,\ldots}\alpha_i/(x-y_i),
\end{equation}
where the numbers $\alpha_1,\alpha_2,\ldots$ all have the same sign, which is
also the sign of $\alpha_0$ if $k$ is odd, or minus the sign of $\alpha_0$ if
$k$ is even. For convenience we will assume that $k$ is odd. In view of
\eqref{normal:family:weak} we already know that the left hand side of
\eqref{parfrac:proof} is uniformly bounded in $n$ for each fixed point
$x\in\er_+\setminus\{0\}$. Fix such an $x$. In view of the above observations
we have
$$ |\alpha_0|<M,\qquad \sum_{i=1,2,3,\ldots}|\alpha_i|/\mathrm{dist}(x,y_i)<M,
$$
uniformly in $n$, where $\mathrm{dist}$ is the Euclidean distance and we used
that $x>0$, $y_i\leq 0$ and all terms in \eqref{parfrac:proof} have positive
sign. But now for any compact set $\mathcal{K}'\subset\cee\setminus \er_-$
there exists $R>0$ so that
$$R^{-1}<\left|\mathrm{dist}(x,y_i)/\mathrm{dist}(t,y_i)\right|<R,\qquad \textrm{for all $t\in \mathcal{K}'$ and $y_i\in \er_-$}. $$
This now easily implies that
\[
\left|\wtil P^{\mathbf{n}_2}(t)/ \wtil P^{\mathbf{n}_1}(t)\right|<M,
\]
for a new $M>0$ and for all $t\in \mathcal{K}'$, uniformly in $n$. This already
shows that the family of ratios \eqref{parfrac:proof} is normal on
$\cee\setminus\er_-$. Applying \eqref{normal:family:weak} two times for
appropriate indices, we know that for each $t>0$ there exists a constant $M'>0$
such that
\[
\left|\wtil P^{\mathbf{n}_2}(t)/ \wtil P^{\mathbf{n}_1}(t)\right|>M',
\]
for all $n$. This observation and Hurwitz' theorem imply that for any compact
set $\mathcal{K}'\subset\cee\setminus\er_-$, the functions
\eqref{parfrac:proof} are also uniformly bounded from below on $\mathcal{K}'$
by a positive constant. This proves \eqref{normal:family:2} for the ratios
$P^{\mathbf{n}_2}(t)/ P^{\mathbf{n}_1}(t)$.
Similarly, using
Theorem~\ref{theorem:interlace}(b) we obtain
\begin{equation}\label{normal:family:4}
M^{-1}<\left|P_{k,n}(t)/P_{k,n+1}(t)\right|<M,
\end{equation}
for all $t$ in a compact $\mathcal{K}\subset\cee\setminus S_{-}$, uniformly in
$n$. But now the ratio of polynomials in \eqref{normal:family:2} can be written
as a product of finitely many ratios of the form \eqref{normal:family:4} or
$P^{\mathbf{n}_2}(t)/ P^{\mathbf{n}_1}(t)$, or their inverses, recall
\eqref{vecn12}. This yields Lemma~\ref{lemma:normalfamily}. $\bol$

\begin{remark}
The proof of Lemma~\ref{lemma:normalfamily} simplifies considerably when $k=0$
or $k=p$. This is because in the former case we deal with the polynomials
$Q_{n}(x)$, whose zeros are uniformly bounded on $S_{+}$, while in the latter
case the numerator and denominator in \eqref{normal:family:2} are simply
constants.
\end{remark}

\section{Proof of the Widom-type formula}
\label{section:widom}

In this section we prove Theorem~\ref{theorem:Widomformula2}. For ease
of exposition let us assume for the moment that the period $r$ is sufficiently
large: $r\geq p$. The condition \eqref{periodic:exact} implies that $H$ is a
tridiagonal block Toeplitz operator
\begin{equation}\label{H:blockToeplitz}
H=\begin{pmatrix} B_{0} & B_{-1}\\ B_{1} & B_{0} & B_{-1} \\
  & B_{1} & B_{0} & \ddots \\
  & & \ddots & \ddots
\end{pmatrix},
\end{equation}
where the blocks $B_{k}$ are of size $r\times r$ and given by
\begin{equation}\label{blocks:B01}
B_{0}=\begin{pmatrix}
b_0^{(0)} &  1  & & &  0 \\
\vdots & \ddots & \ddots & &   \\
b_0^{(p)} &  & \ddots & \ddots &  \\
 & \ddots & & \ddots & 1 \\
0   & & b_{r-p-1}^{(p)} & \ldots & b_{r-1}^{(0)}
\end{pmatrix},\quad
B_{1}=\begin{pmatrix}
0 & \ldots &  b_{r-p}^{(p)} & \ldots & b_{r-1}^{(1)} \\
\vdots & \ddots &  & \ddots & \vdots \\
\vdots &  & \ddots &  & b_{r-1}^{(p)} \\
\vdots &  &  & \ddots & \vdots \\
0 & \ldots & \ldots & \ldots & 0
\end{pmatrix},
\end{equation}
\begin{equation}\label{blocks:Bminus1} B_{-1}=\begin{pmatrix}
\mathbf{0} & 0 \\ 1 & \mathbf{0}
\end{pmatrix}_{r\times r},
\end{equation}
where $\mathbf{0}$ denotes either a row or a column vector and the $0$ in the
top right corner is a square matrix of size $r-1$.
The \emph{symbol} $F(z,x)$ of the block Toeplitz matrix \eqref{H:blockToeplitz}
is defined as \cite{BS2,Widom1}
\begin{equation*}
F(z,x) = B_{-1}z^{-1}+B_0+B_1 z-xI_r,
\end{equation*}
where $I_r$ is the identity matrix of size $r$. One checks that this definition
coincides with \eqref{symbol:Hess}. In fact, a similar reasoning can be used
also if $r<p$ \cite[Sec.~4]{Del}.

The determinant of a banded block Toeplitz matrix is given by the next result.

\begin{lemma}\label{lemma:Widomformula} (Widom's determinant identity:)
Under the assumptions of Theorem~\ref{theorem:Widomformula2}, we have for all
$n$ sufficiently large that
\begin{equation}\label{Widom:1} Q_{rn}(x) := \det
(x I_{rn}-H_{rn}) = \sum_{k=0}^{p} C_{k}(x)(z_k(x))^{-n-1},
\end{equation}
with
\begin{equation}\label{def:CS}
C_k(x) = \det\left( \frac{1}{2\pi \ir}\int_{\sigma} F(z,x)^{-1}\frac{dz}{z}
\right),
\end{equation}
where $F(z,x)$ is the symbol \eqref{symbol:Hess} and $\sigma$ is an arbitrary,
clockwise oriented, closed Jordan curve enclosing $z=0$ and the point $z_k(x)$,
but none of the other points $z_j(x)$, $j\in [0:p]$, $j\neq k$.
\end{lemma}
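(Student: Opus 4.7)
The plan is to apply Widom's exact determinant identity for banded block Toeplitz matrices \cite[Section 6]{Widom1}. The first step is to recognize that, under the exactly periodic assumption \eqref{periodic:exact}, the matrix $xI_{rn}-H_{rn}$ coincides with the $n$-fold truncation $T_n(-F(\cdot,x))$ of the tridiagonal block Toeplitz operator with matrix symbol $-F(z,x)=-B_{-1}z^{-1}+(xI_r-B_0)-B_1z$, where $F(z,x)$ is the symbol in \eqref{symbol:Hess} and the blocks $B_{-1},B_0,B_1$ are as in \eqref{blocks:B01}--\eqref{blocks:Bminus1}. In particular, $Q_{rn}(x)=\det T_n(-F(\cdot,x))$.

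Next, Widom's identity expresses the determinant of such a truncated block Toeplitz matrix as a finite sum indexed by the roots of $\det F(z,x)=0$. By \eqref{f:series}, this algebraic equation has exactly $p+1$ roots $z_0(x),\ldots,z_p(x)$ in $\cee\setminus\{0\}$; by hypothesis these are pairwise distinct, so that $F(z,x)^{-1}$ has simple poles at each of them and an additional simple pole at $z=0$ coming from the $B_{-1}z^{-1}$ term in $F$. Widom's formula then yields a representation as a sum of $p+1$ contributions, each of the form $C_k(x)\,z_k(x)^{-n-1}$, where $C_k(x)$ collects the residue contributions at $z=0$ and at $z=z_k(x)$.

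The residue data are packaged neatly as a contour integral: if $\sigma$ is a clockwise oriented closed Jordan curve enclosing both $z=0$ and $z_k(x)$ and none of the other $z_j(x)$, then
\[
\frac{1}{2\pi\ir}\int_{\sigma}F(z,x)^{-1}\frac{dz}{z}
\]
is an $r\times r$ matrix combining precisely those residues, and $C_k(x)$ is identified with its determinant, matching \eqref{def:CS}. The extra factor $1/z$ under the integral, together with the clockwise orientation, are what produce the exponent $-n-1$ on $z_k(x)$ in \eqref{Widom:1} rather than a naive $-n$.

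The main obstacle is verifying that Widom's general block-Toeplitz determinant identity specializes to the clean single-sum form stated in the lemma. In its most general formulation the identity involves a sum over admissible \emph{subsets} of roots of $\det F(z,x)=0$; for the tridiagonal symbol $F(z,x)$ at hand, the band structure together with the degree pattern \eqref{f:series}--\eqref{fp} and the rank-one structure of $B_{-1}$ in \eqref{blocks:Bminus1} force each admissible subset to consist of exactly one root $z_k(x)$, so that the subset sum collapses to the $(p+1)$-term sum in \eqref{Widom:1}. Carrying out this reduction amounts to a careful bookkeeping of the $r\times r$ block dimensions and of the winding numbers of $\sigma$, after which the assumption that $n$ be sufficiently large ensures that the denominators appearing in the combinatorial version of Widom's formula do not vanish and that the stated expression holds as an exact equality.
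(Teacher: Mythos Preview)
Your proposal is correct and takes essentially the same approach as the paper: both simply invoke Widom's block Toeplitz determinant identity \cite[Theorem~6.2]{Widom1} and specialize it to the present tridiagonal symbol. The paper in fact gives only a one-line proof (``follows by specializing Widom's result \cite[Theorem~6.2]{Widom1} to the present setting''), whereas you have helpfully spelled out why the specialization produces a sum over the $p+1$ single roots $z_k(x)$ and how the contour-integral form of $C_k(x)$ arises; this extra detail is consistent with, and a useful elaboration of, the paper's citation.
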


Lemma.~\ref{lemma:Widomformula} follows by specializing Widom's result
\cite[Theorem 6.2]{Widom1} to the present setting. We now find a more
explicit form for the coefficients $C_k(x)$.


\begin{lemma}
\label{lemma:widom:explicit} Under the assumptions of
Lemma.~\ref{lemma:Widomformula}, we have
\begin{equation}\label{widom:explicit} C_k(x) =
\frac{(-1)^r}{\mathsf{f}_p}\frac{\det F^{r-1,0}(z_k(x),x)}{\prod_{j\neq
k}(z_k(x)-z_j(x))},\qquad k\in [0:p].
\end{equation}
\end{lemma}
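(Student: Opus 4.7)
The plan is to evaluate $C_k=\det M$ with $M:=\frac{1}{2\pi\ir}\int_\sigma F(z,x)^{-1}\,dz/z$ by the residue theorem and then reduce the resulting determinant to the claimed form. Since $zf(z,x)=\mathsf{f}_p\prod_{l=0}^p(z-z_l(x))$ is analytic and nonzero at $z=0$, the integrand $F(z,x)^{-1}/z=\mathrm{adj}(F(z,x))/[\mathsf{f}_p\prod_l(z-z_l)]$ is meromorphic with singularities inside $\sigma$ only at $z=0$ and $z=z_k$. Accounting for the clockwise orientation,
\[ M = -R_0-R_k,\qquad R_k = \frac{\mathrm{adj}(F(z_k,x))}{\mathsf{f}_p\prod_{l\neq k}(z_k-z_l)} =: \frac{ab^T}{D}, \]
with $R_k$ of rank one because $F(z_k,x)$ has rank $r-1$. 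Moreover the only entry of $F(z,x)$ carrying a $1/z$ pole at $z=0$ is the $(r-1,0)$ entry inherited from $Z^{-1}$; hence $\mathrm{adj}(F(z,x))_{ij}=(-1)^{i+j}\det F^{j,i}(z,x)$ is analytic at $z=0$ as soon as $i=0$ or $j=r-1$, which forces $R_0$ to have zero row $0$ and zero column $r-1$.

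Assuming $b_{r-1}\neq 0$ (otherwise column $r-1$ of $M$ vanishes and both sides of \eqref{widom:explicit} are zero), subtract $(b_j/b_{r-1})$ times column $r-1$ from column $j$ of $M$ for each $j\in[0:r-2]$. The rank-one contribution of $R_k$ in columns $[0:r-2]$ cancels exactly, and the resulting matrix has row $0$ supported only at column $r-1$. Cofactor expansion along row $0$ gives
\[ \det M = -\frac{a_0b_{r-1}}{D}\,\det\tilde R = -\frac{\mathrm{adj}(F(z_k,x))_{0,r-1}}{D}\,\det\tilde R, \]
where $\tilde R$ denotes the $(r-1)\times(r-1)$ submatrix of $R_0$ indexed by rows $[1:r-1]$ and columns $[0:r-2]$.

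The main step (and the main obstacle) is to prove $\det\tilde R=1$. A second application of the matrix determinant lemma to $F^{j,i}(z,x)$ for $j\neq r-1$ and $i\neq 0$ (whose sole $1/z$ entry now sits at position $(r-2,0)$) expresses $\mathrm{Res}_{z=0}\mathrm{adj}(F(z,x))_{ij}$ as a signed second-level cofactor of the constant Laurent coefficient $F_0:=B_0-xI_r$. Carrying the signs through identifies $\tilde R=\mathrm{adj}(N)$, where $N:=(B_0-xI_r)^{r-1,0}$ is obtained from $B_0-xI_r$ by deleting row $r-1$ and column $0$. Direct inspection of the banded Hessenberg form \eqref{blocks:B01} of $B_0$ then shows that $N$ is lower triangular with $1$'s on its diagonal: the $1$'s on the superdiagonal of $B_0$ become the diagonal of $N$ after the column shift, the diagonal $b_i^{(0)}-x$ of $B_0-xI_r$ lands on the first subdiagonal of $N$, and the $k$-th subdiagonal entries $b_j^{(k)}$ of $B_0$ (for $k\in[1:p]$) land on the $(k+1)$-th subdiagonal of $N$. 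Hence $\det N=1$ and $\det\tilde R=(\det N)^{r-2}=1$.

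Combining the three steps gives
\[ \det M = -\frac{\mathrm{adj}(F(z_k,x))_{0,r-1}}{D} = -\frac{(-1)^{r-1}\det F^{r-1,0}(z_k,x)}{D} = \frac{(-1)^r\det F^{r-1,0}(z_k,x)}{\mathsf{f}_p\prod_{l\neq k}(z_k-z_l)}, \]
which is exactly \eqref{widom:explicit}.
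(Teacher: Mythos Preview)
Your proof is correct and follows essentially the same route as the paper: apply the residue theorem at $z=0$ and $z=z_k$, use that the residue at $z_k$ is rank one (the adjugate of the singular matrix $F(z_k,x)$), and exploit the lower-Hessenberg structure of the constant part of $F$ at $z=0$ to show the relevant $(r-1)\times(r-1)$ block has determinant $1$. The only cosmetic difference is that the paper argues directly that this block is unit-triangular, while you identify it (up to a harmless row relabeling) with $\mathrm{adj}(N)$ for the unit lower-triangular $N=(B_0-xI_r)^{r-1,0}$; your explicit column operations are precisely what the paper abbreviates as ``simple linear algebra''.
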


\begin{proof} We start from formula \eqref{def:CS}. Note that this formula involves the
matrix $$\wtil F(z,x):= F(z,x)^{-1},$$ which can be written in entrywise form
as $\wtil F(z,x)=(\wtil F_{i,j}(z,x))_{i,j=0}^{r-1}$ with
\begin{equation}\label{proof:Widomexp1} \wtil F_{i,j}(z,x)= (-1)^{i+j}\frac{\det
F^{j,i}(z,x)}{\det  F(z,x)},\qquad i,j\in [0:r-1],
\end{equation}
thanks to the well-known cofactor formula for the inverse of a matrix. 

Now we consider in more detail the numerator and denominator of
\eqref{proof:Widomexp1}. For the denominator we have
$$ \det F(z,x)\equiv f(z,x)=(-1)^{r-1}z^{-1} + O(1),\qquad z\to 0,$$ by virtue of
\eqref{def:algcurve}--\eqref{f:series} and \eqref{symbol:Hess}. For the
numerator we have
$$ (\det  F^{i,j}(z,x))_{i,j=0}^{r-1} = \frac{(-1)^{r}}{z}\begin{pmatrix} \mathbf{0} &
R
\\
0 & \mathbf{0}
 \end{pmatrix}+ O(1),\qquad z\to 0,
$$
where each $\mathbf{0}$ is a row or column vector and where $R$ is an upper
triangular matrix with $1$'s on the diagonal. Indeed, this follows due to the
particular form of \eqref{symbol:Hess}, \eqref{def:Sshift}. Observe that in the
matrix $F(z,x)$, the entries with index $(i,j)$ with $j\geq i+2$ are of order
$O(z)$ as $z\rightarrow 0$. This explains the upper triangularity of $R$.
Secondly, the presence of $1$'s in the first super-diagonal of $F(z,x)$ and of
$1/z$ in the bottom left corner of the same matrix implies that for $j=i+1$,
$\det F^{i,j}(z,x)=(-1)^{r}/z+O(1)$ as $z\rightarrow 0$.

Inserting the above expressions in \eqref{proof:Widomexp1}, we obtain
\begin{equation}\label{proof:Widomexp2} \wtil F(z,x) = \begin{pmatrix} \mathbf{0} &
\wtil R
\\
0 & \mathbf{0}
 \end{pmatrix}^T+ O(z),\qquad z\to 0,
\end{equation}
for a new upper triangular matrix $\wtil R$ with $1$'s on the diagonal, where
${}^T$ denotes the transpose.

We also need the behavior of $\wtil F(z,x)$ for $z\to z_k(x)$. Note that
$$ f(z,x) = \frac{\mathsf{f}_p}{z}\prod_{t=0}^{p}(z-z_t(x)),
$$
see \eqref{f:series}--\eqref{fp}. Hence from \eqref{proof:Widomexp1} we have
\begin{equation}\label{proof:Widomexp3} \wtil F_{i,j}(z,x)= \frac{(-1)^{i+j}}{\mathsf{f}_p}\frac{z}{z-z_k(x)}\frac{\det
F^{j,i}(z,x)}{\prod_{t\neq k}(z-z_t(x))},
\end{equation}
for $i,j\in [0:r-1]$. The factor $z-z_k(x)$ in the denominator of
\eqref{proof:Widomexp3} shows that $\wtil F_{i,j}(z,x)$ can have a simple pole
at $z=z_k(x)$. Widom \cite[Sec.~6]{Widom1} observed that the matrix with the
residues,
\begin{equation}\label{proof:Widomexp4}
\left(\mathrm{Res}_{z=z_k(x)}\ \wtil F_{i,j}(z,x)\right)_{i,j=0}^{r-1},
\end{equation}
is a rank-one matrix.

Now we can finish the proof of the lemma. With the contour $\sigma$ as in the
statement of Lemma~\ref{lemma:Widomformula}, we find from
\eqref{proof:Widomexp2} and the residue theorem that
\begin{equation*} \frac{1}{2\pi \ir}\int_{\sigma} \wtil F(z,x)\frac{dz}{z}
= -\begin{pmatrix} \mathbf{0} & \wtil R
\\
0 & \mathbf{0}
 \end{pmatrix}^T - \frac{1}{z_k(x)}\left(\mathrm{Res}_{z=z_k(x)}\
 \wtil F_{i,j}(z,x)\right)_{i,j=0}^{r-1}.
\end{equation*}
Since \eqref{proof:Widomexp4} is a rank-one matrix, simple linear algebra then
shows that
\begin{equation*} \det\left(\frac{1}{2\pi \ir}\int_{\sigma} \wtil
F(z,x)\frac{dz}{z}\right) = -\frac{1}{z_k(x)}\mathrm{Res}_{z=z_k(x)}\
 \wtil F_{0,r-1}(z,x).
\end{equation*}
Using \eqref{proof:Widomexp3}, we conclude
that
\begin{equation*} \det\left(\frac{1}{2\pi \ir}\int_{\sigma} \wtil
F(z,x)\frac{dz}{z}\right) = \frac{(-1)^r}{\mathsf{f}_p} \frac{\det F^{r-1,0}(z_k(x),x)}{\prod_{j\neq k}(z_k(x)-z_j(x))},
\end{equation*}
Comparing this with \eqref{def:CS}, we obtain the
desired formula \eqref{widom:explicit}. 
\end{proof}

\begin{proof}[Proof of Theorem~\ref{theorem:Widomformula2}]
For ease of reference in the next section, we will give the proof for the case
of a two-diagonal Hessenberg matrix \eqref{H:entries:twodiag}. It will be clear
that the same proof also works for a general Hessenberg matrix
\eqref{H:entries}. From \eqref{recurrence:mxvector} we have
\begin{equation}\label{recurrence:block}
\left(\begin{array}{ccccccccccc}
0 & \ldots &  0 & a_{rn-1-p} & \ldots & -x & 1 & & & & \\
\vdots &  & \vdots &  & \ddots & & \ddots & \ddots & & & \\
0 &  & 0 &  &  & a_{rn-1} & \ldots & -x & 1 & & \\
\vdots &  & \vdots &  &  & & \ddots & & \ddots & \ddots & \\
0 & \ldots & 0 & & & & & a_{rn+r-2-p} & \ldots & -x & 1
\end{array}\right)
\left(\begin{array}{c}
Q_{rn-r}(x)\\
\vdots\\
Q_{rn-1}(x)\\
\vdots\\
Q_{rn+r-1}(x)
\end{array}
\right)=\mathbf{0},
\end{equation}
where the matrix multiplying the column vector, which we call $M(x)$, is of
size $r\times 2r$. Let us denote by $B_n(x)$ and $C_n(x)$ the matrices formed
by the first $r$ columns and last $r$ columns of $M(x)$, respectively, i.e.
\begin{equation}\label{recurrence:block2}
B_n(x)=\left(\begin{array}{c | ccc}
  & a_{rn-p-1} &  & -x \\
 0 &  & \ddots &  \\
 &  &  & a_{rn-1} \\ \hline
0 &  & 0 &
\end{array}\right),\quad C_n(x)=\left(\begin{array}{cccccc}
1 & & & & &\\
-x & \ddots &  & & & \\
 & \ddots & \ddots & & & \\
a_{rn} &  & \ddots & \ddots & & \\
 & \ddots &  & \ddots & \ddots & \\
 & & a_{rn+r-p-2} & & -x & 1
\end{array}\right),
\end{equation}
where $0$ denotes zero blocks of appropriate sizes, such that the last $r-p-1$
rows and the first $r-p-1$ columns of $B_n(x)$ are zero. Here we are
assuming that $r\geq p+1$; the case $r\leq
p$ will be discussed in Remark~\ref{remark:smallr}. Using the vectorial
notation
\begin{equation}\label{def:vecQ}
\vecQ_n(x) :=\left( Q_{rn}(x), \ldots, Q_{rn+r-1}(x) \right)^T,
\end{equation}
we then write the recurrence \eqref{recurrence:block} as
\begin{equation}\label{recurrence:vecQ}
\vecQ_{n}(x) = A_n(x) \vecQ_{n-1}(x),\qquad \textrm{with }A_n(x) :=
-C_n^{-1}(x)B_n(x),
\end{equation}
for $n\geq 1$. Now the periodicity assumption $a_{rn+j}\equiv b_j$ implies that
$B_n(x)=:B(x)$, $C_n(x)=:C(x)$ and $A_n(x)=:A(x)$ are all independent of $n$.
By repeatedly using \eqref{recurrence:vecQ}, this yields
\begin{equation}\label{matrixform}
\vecQ_{n}(x)=A(x)^{n}\vecQ_{0}(x).
\end{equation}

Assume that $\lambda$ is a non-zero eigenvalue of $A(x)$. Then
$\det(B(x)+\lambda C(x))=0$. But now
\[
B+\lambda C=\begin{pmatrix}
\lambda &  &  & b_{r-p-1} &  &-x \\
-\lambda x & \ddots &  &  & \ddots & \\
 & \ddots & \ddots &  &  &  b_{r-1}\\
\lambda b_{0} &  & \ddots & \ddots & & \\
 & \ddots & & \ddots & \ddots & \\
 & & \lambda b_{r-p-2} & & -\lam x & \lambda
\end{pmatrix}.
\]
If we perform the following operations to $B(x)+\lambda C(x)$: divide rows $2$
to $r$ by $\lambda$, move row $1$ to the bottom and move rows $2$ to $r$ one
level up, the resulting matrix is exactly $F(1/\lambda,x)$. Therefore $\det
F(1/\lambda,x)=0$ and $\lambda=1/z_{k}(x)$ for some $k\in [0:p]$. In
conclusion, the non-zero eigenvalues of $A(x)$ are given by $1/z_{k}(x)$, $k\in
[0:p]$. Since the first $r-p-1$ columns of $A(x)$ are zero, $0$ is also an
eigenvalue of $A(x)$, with multiplicity $r-p-1$.


The eigenspace of $A(x)$ associated with the eigenvalue $1/z_{k}(x)$ is
one-dimensional and coincides with the nullspace of $F(z_{k}(x),x)$. By
Cramer's rule it is easy to see that this subspace is spanned by the vector
\begin{equation}\label{eigenvector}
\vecv_{k}(x)=\left(\det F^{r-1,0}(z_{k}(x),x), -\det F^{r-1,1}(z_{k}(x),x),
\ldots, (-1)^{r-1}\det F^{r-1,r-1}(z_{k}(x),x)\right)^T,
\end{equation}
for any $k\in [0:p]$, whenever the vector \eqref{eigenvector} is nonzero.

Let us show that the first component of
$\vecv_{k}(x)$ is zero for only finitely many $x$. Let $\mathcal R$ be the
compact Riemann surface associated to the algebraic equation $f(z,x)=0$ whose
roots are the functions $z_{k}(x)$. The collection of functions $\det
F^{r-1,0}(z_{k}(x),x)$, $k\in[0:p]$, can be seen as a single meromorphic
function defined on $\mathcal R$. (It has poles at infinity, see also
\cite{vMM}). Now \cite[Lemma 5.5]{Del} (see also \cite[Lemma 5.6]{Del}) shows
that $\mathcal R$ is connected. Hence the above meromorphic function cannot be
identically zero since this would imply by \eqref{Widom:1} and
\eqref{widom:explicit} that $Q_{rn}\equiv 0$, clearly contradictory. Hence,
each function $\det F^{r-1,0}(z_{k}(x),x)$ has only finitely many zeros in
$\mathbb{C}$.



Now define the matrices
\begin{align*}
D(x):=&\diag(z_{0}(x)^{-1},z_{1}(x)^{-1},\ldots,
z_{p}(x)^{-1},0,\ldots,0)_{r\times r},\\
V(x):=&\left(\vecv_{0}(x),\vecv_{1}(x),\ldots,\vecv_{p}(x),\mathbf{e}_{1},\ldots,\mathbf{e}_{r-p-1}\right)_{r\times
r},
\end{align*}
where $\mathbf{e}_{i}$ denotes the standard column unit vector of index $i$.
Then $A(x)=V(x)D(x)V^{-1}(x)$, and so \eqref{matrixform} gives
\begin{equation}\label{matrixform2}
\vecQ_{n}(x) = V(x)D(x)^{n}V^{-1}(x)\vecQ_{0}(x).
\end{equation}

We already know the expression of $Q_{rn}$, see \eqref{Widom:1} and
\eqref{widom:explicit}. This allows us to find that the first $p+1$ components
of the vector $V^{-1}(x)\vecQ_{0}(x)$ are
\[
\frac{(-1)^{r}}{\mathsf{f}_{p}}\frac{1}{\prod_{i\neq
0}(z_{0}(x)-z_{i}(x))}\,z_{0}(x)^{-1},\ldots,\frac{(-1)^{r}}{\mathsf{f}_{p}}\frac{1}{\prod_{i\neq
p}(z_{p}(x)-z_{i}(x))}\,z_{p}(x)^{-1}.
\]
From this observation and \eqref{matrixform2}, the desired formula
\eqref{Widom:2} follows immediately.

We have actually shown that \eqref{Widom:2} is valid for all points $x\in\cee$
satisfying two conditions, namely that the roots $z_{k}(x)$, $k\in[0:p]$ are
pairwise distinct, and the vectors $\vecv_{k}(x)$, $k\in[0:p]$, are all
nonzero. The collection of points in $\cee$ for which the first condition holds
but the second fails is finite, as we have already seen. By continuity it is
clear that formula \eqref{Widom:2} is also valid for the exceptional points in
this finite set.

With \eqref{Widom:2} at our disposal, we can prove as before that the functions
$\det F^{r-1,j}(z_{k}(x),x)$ are zero for only a finite set of $x\in\cee$. Finally, to see
that the same holds for each function $\det F^{i,j}(z_{k}(x),x)$, apply
formula \eqref{Widom:2} for the monic polynomials associated to the cyclically
permuted symbol $Z^{-i-1} F(z,x)Z^{i+1}$.
\end{proof}


\begin{remark}\label{redef:vk}
Let $x\in\cee$ be such that the values $z_{k}(x)$, $k\in [0:p],$ are pairwise
distinct. We already observed that there are at most finitely many such $x$
with the property that the vector \eqref{eigenvector} is zero. For such $x$,
$\mathbf{v}_{k}(x)$ will always denote in the next section an eigenvector of
$A(x)$ associated with the eigenvalue $1/z_{k}(x)$.
\end{remark}

\begin{remark}\label{remark:smallr} To obtain \eqref{recurrence:vecQ} we assumed that $r\geq p+1$.
If $r\leq p$ we proceed as follows. Let $m\in\enn$ be large enough so that
$\til r:=mr\geq p+1$. The matrix $H$, which is periodic of period $r$, can also
be viewed as a periodic matrix of period $\til r$. Let $\wtil F(z,x)$ be the
associated symbol. Linear algebra shows that the roots $\wtil z_k(x)$ of $\det
\wtil F(z,x)=0$ are given by $\wtil z_k(x):=z_k(x)^m$, $k\in [0:p]$. Moreover,
the null space vector $\wtil\vecv_k(x)$ such that $\wtil F(\wtil
z_k(x),x)\wtil\vecv_k(x)=\mathbf{0}$ can be constructed as follows. With
$\vecv_k$ denoting the vector of length $r$ in \eqref{eigenvector}, we define
the vector $\wtil\vecv_k$ of length $\til r$ by
\begin{equation}\label{eigenvector:comp}
\wtil\vecv_{k}(x)=\left(\vecv_k(x)^T,z_k(x)^{-1}\vecv_k(x)^T,\ldots,z_k(x)^{-m+1}\vecv_k(x)^T\right)^T.
\end{equation}
With this vector \eqref{eigenvector:comp} playing the role that was played
before by $\vecv_k(x)$, the above proof goes through in exactly the same way as
before. This leads again to the same formula \eqref{Widom:2}.
\end{remark}

\section{Ratio and weak asymptotics of Riemann-Hilbert minors}
\label{section:Poincare}

\subsection{Generalized Poincar\'e theorem}
\label{subsection:Poincare}

The following result is contained in \cite{MN}, see also \cite{Simon2}. It is
closely related to the theory of Krylov subspaces and subspace iteration in
numerical linear algebra.

\begin{lemma}\label{lemma:poincaretheorem}(Generalized Poincar\'e theorem:)
Assume that $(A_{n})_{n=1}^{\infty}, A$ are nonsingular matrices of size
$m\times m$, $m\in\enn$, and $A=\lim_{n\rightarrow\infty} A_{n}$. Suppose that
$A$ is diagonalizable with eigenvalues $\{\lambda_{i}\}_{i=1}^{m}$ satisfying
\[
|\lambda_{1}|>|\lambda_{2}|>\cdots>|\lambda_{m}|>0.
\]
Let $\vecv_{1},\ldots,\vecv_{m}$ be eigenvectors associated to the eigenvalues
$\lambda_{1},\ldots,\lambda_{m}$, respectively. Let
$(\mathbf{u}_{n})_{n=0}^{\infty}$ be a sequence of column vectors with
$\mathbf{u}_0\neq 0$, generated by the recurrence
\[
\mathbf{u}_{n}=A_{n} \mathbf{u}_{n-1},\qquad n\geq 1.
\]
Then there exists a sequence of complex numbers $(c_{n})_n$ such that $c_{n}
\mathbf{u}_{n}\rightarrow \vecv_{j}$, for some $j\in [1:m]$.
\end{lemma}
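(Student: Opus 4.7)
The plan is to diagonalize $A$ and then track the iterate component by component, exploiting the spectral gap between eigenvalue moduli.

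First I would reduce to the diagonal case. With $P := [\vecv_{1}\,|\,\cdots\,|\,\vecv_{m}]$ and $D := \diag(\lambda_{1},\ldots,\lambda_{m})$, set $\mathbf{w}_{n} := P^{-1}\mathbf{u}_{n}$. The recurrence becomes $\mathbf{w}_{n} = B_{n}\mathbf{w}_{n-1}$ with $B_{n} := P^{-1}A_{n}P \to D$; and since $P\vece_{i}=\vecv_{i}$, it suffices to find an index $j\in[1:m]$ and scalars $c_{n}$ such that $c_{n}\mathbf{w}_{n}\to\vece_{j}$. Writing $B_{n}=D+E_{n}$ with $\|E_{n}\|\to 0$, each coordinate satisfies
$$(\mathbf{w}_{n})_{i}=\lambda_{i}(\mathbf{w}_{n-1})_{i}+(E_{n}\mathbf{w}_{n-1})_{i},\qquad i\in[1:m].$$

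Next I would identify the dominant index. Define $\rho := \limsup_{n\to\infty}\|\mathbf{w}_{n}\|^{1/n}$. The submultiplicative bound $\|\mathbf{w}_{n}\|\leq\prod_{k=1}^{n}\|B_{k}\|\cdot\|\mathbf{w}_{0}\|$, together with $\|B_{k}\|\to\|D\|=|\lambda_{1}|$, gives $\rho\leq|\lambda_{1}|$; the analogous lower estimate using the invertibility of $B_{n}$ (with $B_{n}^{-1}\to D^{-1}$) gives $\rho\geq|\lambda_{m}|>0$. A finer coordinate-wise analysis, using the strict gaps $|\lambda_{1}|>|\lambda_{2}|>\cdots>|\lambda_{m}|$, then forces $\rho$ to equal one of the $|\lambda_{j}|$; I take this value to be the $j$ appearing in the conclusion.

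With $j$ fixed, the goal becomes
$$\lim_{n\to\infty}\frac{(\mathbf{w}_{n})_{i}}{(\mathbf{w}_{n})_{j}}=0,\qquad i\neq j,$$
after which $c_{n}:=1/(\mathbf{w}_{n})_{j}$ (well defined for large $n$) gives the desired convergence. For $i>j$, since $|\lambda_{i}|<|\lambda_{j}|=\rho$, the pure-diagonal part of the coordinate recurrence decays geometrically in ratio to $(\mathbf{w}_{n})_{j}$ like $(|\lambda_{i}|/|\lambda_{j}|)^{n}$, and the off-diagonal perturbation contributes at most a factor $\|E_{n}\|$ per step, which is absorbed because $\|E_{n}\|\to 0$. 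For $i<j$, I would argue inductively downward from $i=j-1$: the definition of $\rho$ already forbids $(\mathbf{w}_{n})_{i}$ from growing like $|\lambda_{i}|^{n}$, so $(\mathbf{w}_{n})_{i}$ grows at rate at most $|\lambda_{j}|^{n}$, and then a Gr\"onwall-type estimate using the spectral gap $|\lambda_{i}|>|\lambda_{j}|$ shows that the ratio in fact tends to zero.

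The hard part is precisely this last step. Even when $\mathbf{u}_{0}$ has no component along some $\vecv_{i}$ with $i<j$, the perturbations $E_{n}$ can inject one, and a priori such an injection could grow at the rate $|\lambda_{i}|^{n}$, exceeding $|\lambda_{j}|^{n}$. What prevents this is that the choice of $j$ is made \emph{after} looking at the actual sequence $\mathbf{w}_{n}$, so by construction no coordinate outgrows $|\lambda_{j}|^{n}$; the feedback bound is then uniform in $n$ using only the qualitative hypothesis $\|E_{n}\|\to 0$, with no summability required. This is the classical Perron--Poincar\'e argument, carried out in detail in \cite{MN} and \cite{Simon2}.
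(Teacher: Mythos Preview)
The paper does not actually prove this lemma: it states the result and refers the reader to \cite{MN} and \cite{Simon2}. So there is no ``paper's own proof'' to compare your proposal against; you end at exactly the same citations the paper invokes.

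Regarding your sketch itself: the reduction to diagonal $D$ and the bracketing $|\lambda_m|\le\rho\le|\lambda_1|$ are fine, and your candid identification of the hard step is accurate. But note that the sketch as written is not yet a proof. Two points in particular are left as assertions: (i) that $\rho$ must coincide with some $|\lambda_j|$ (this does not follow merely from the bracketing and requires a genuine argument using the coordinate recursions and the strict gaps), and (ii) the ``Gr\"onwall-type estimate'' for $i<j$, which you correctly flag as delicate since the perturbation can inject components along faster-growing directions. Both are handled in \cite{MN}, so your final deferral is appropriate; just be aware that without those details what you have is an outline, not a proof.
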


We need a multi-column version of Lemma \ref{lemma:poincaretheorem}.

\begin{lemma}\label{lemma:poincaretheorem2}
Under the same assumptions of Lemma \ref{lemma:poincaretheorem}, let
$(U_{n})_{n=0}^{\infty}$ be a sequence of matrices of size $m\times l$, $l\in
[1:m]$, with $U_{0}$ having linearly independent columns, such that
\[
U_{n}=A_{n} U_{n-1},\qquad n\geq 1.
\]
Then there exists a sequence $(C_{n})_{n=0}^{\infty}$ of invertible, upper
triangular matrices of size $l\times l$ such that
\[
\lim_{n\rightarrow\infty} U_n
C_{n}=(\vecv_{j_1},\vecv_{j_2},\ldots,\vecv_{j_l}),
\]
the matrix with columns $\vecv_{j_1},\ldots,\vecv_{j_l}$, where
$j_1,\ldots,j_l$ are $l$ distinct indices in $[1:m]$. Here the limit is defined
entrywise.
\end{lemma}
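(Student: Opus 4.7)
The plan is to reduce the multi-column statement to the single-column Lemma~\ref{lemma:poincaretheorem} by performing an upper triangular column reduction of the initial matrix $U_0$ so that each resulting column has a prescribed dominant mode in the eigenbasis of $A$.

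First I would pass to the diagonal case $A=\Lambda=\diag(\lambda_1,\ldots,\lambda_m)$ by conjugation. Setting $U_n':=P^{-1}U_n$ and $A_n':=P^{-1}A_nP$ with $P=(\vecv_1,\ldots,\vecv_m)$, one has $U_n'=A_n'U_{n-1}'$, $A_n'\to\Lambda$, and the eigenvectors of $\Lambda$ are the standard basis vectors. If upper triangular invertible $C_n$ can be found with $U_n'C_n\to(e_{j_1},\ldots,e_{j_l})$, then $U_nC_n\to(\vecv_{j_1},\ldots,\vecv_{j_l})$. So we may assume $A=\Lambda$ and $\vecv_i=e_i$.

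Next I would construct by induction on $k$ constants $\tau^{(s,k)}$ ($1\leq s\leq k\leq l$, with $\tau^{(k,k)}=1$) and distinct indices $j_1,\ldots,j_l\in[1:m]$ as follows. For $k=1$, let $j_1$ be the smallest index with $e_{j_1}^T\mathbf{u}_0^{(1)}\neq 0$. For $k\geq 2$, choose $\tau^{(s,k)}$, $s<k$, by back-substitution so that $\tilde{\mathbf{u}}_0^{(k)}:=\sum_{s=1}^k\tau^{(s,k)}\mathbf{u}_0^{(s)}$ satisfies $e_{j_s}^T\tilde{\mathbf{u}}_0^{(k)}=0$ for all $s<k$; then let $j_k$ be the smallest index with $e_{j_k}^T\tilde{\mathbf{u}}_0^{(k)}\neq 0$. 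Linear independence of the columns of $U_0$ guarantees $\tilde{\mathbf{u}}_0^{(k)}\neq 0$, and the prescribed vanishing forces $j_k\notin\{j_1,\ldots,j_{k-1}\}$. Propagating the same combination through the recursion, $\tilde{\mathbf{u}}_n^{(k)}:=\sum_{s=1}^k\tau^{(s,k)}\mathbf{u}_n^{(s)}$ satisfies $\tilde{\mathbf{u}}_n^{(k)}=A_n\tilde{\mathbf{u}}_{n-1}^{(k)}$, so Lemma~\ref{lemma:poincaretheorem} yields scalars $c_n^{(k)}\neq 0$ with $c_n^{(k)}\tilde{\mathbf{u}}_n^{(k)}\to e_{j_k'}$ for some $j_k'$. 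The matrix $C_n$ whose $k$-th column is $c_n^{(k)}(\tau^{(1,k)},\ldots,\tau^{(k,k)},0,\ldots,0)^T$ is then upper triangular and invertible, and the $k$-th column of $U_nC_n$ equals $c_n^{(k)}\tilde{\mathbf{u}}_n^{(k)}\to e_{j_k'}$.

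The main obstacle is showing $j_k'=j_k$. This requires a refinement of Lemma~\ref{lemma:poincaretheorem}: the limit index $j$ is precisely the smallest $i$ with $e_i^T\mathbf{u}_0\neq 0$, i.e., the \emph{dominant mode} of the initial data in the eigenbasis of $A$. This refinement is implicit in the proof given in \cite{MN}, and with it our construction of $\tilde{\mathbf{u}}_0^{(k)}$ gives $j_k'=j_k$ by design; undoing the change of basis then completes the proof.
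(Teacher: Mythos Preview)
Your approach has a genuine gap: the refinement of Lemma~\ref{lemma:poincaretheorem} you invoke is false. The limiting index $j$ in the generalized Poincar\'e theorem is \emph{not} determined by the initial data $\mathbf{u}_0$ alone; it depends on the full sequence $(A_n)$. A counterexample: take $m=2$, $A=\diag(2,1)$, $A_1=\left(\begin{smallmatrix}2&\epsilon\\0&1\end{smallmatrix}\right)$ with $\epsilon>0$, and $A_n=A$ for $n\geq 2$. Then $\mathbf{u}_0=e_2$ has dominant mode $j=2$ in your sense, but $\mathbf{u}_1=(\epsilon,1)^T$ and $\mathbf{u}_n=(2^{n-1}\epsilon,1)^T$ for $n\geq 1$, so $c_n\mathbf{u}_n\to e_1$, giving $j'=1\neq 2$. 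Applied to your construction with $U_0=I_2$, one gets $\tilde{\mathbf{u}}_0^{(1)}=e_1$, $\tilde{\mathbf{u}}_0^{(2)}=e_2$, and both $c_n^{(1)}\tilde{\mathbf{u}}_n^{(1)}$ and $c_n^{(2)}\tilde{\mathbf{u}}_n^{(2)}$ converge to $e_1$: the indices $j_1',j_2'$ are not distinct and the conclusion fails.

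The paper's proof circumvents this by projecting \emph{at every step $n$}, not just at $n=0$. Having obtained by induction $M_n:=(\mathbf{u}_n^{(1)},\ldots,\mathbf{u}_n^{(l-1)})C_n\to(\vecv_{j_1},\ldots,\vecv_{j_{l-1}})$, it sets $\mathbf{w}_n:=\mathbf{u}_n^{(l)}+M_n\mathbf{d}_n$ with an $n$-dependent $\mathbf{d}_n$ chosen so that $\mathbf{w}_n$ has no component along $\vecv_{j_1},\ldots,\vecv_{j_{l-1}}$. The crucial estimate is that $A_n\mathbf{w}_{n-1}=\mathbf{w}_n+M_n\mathbf{f}_n$ with $|\mathbf{f}_n|\leq c\|A_n-A\|\,|\mathbf{w}_{n-1}|$, so the rank-one correction $B_n:=A_n-M_n\mathbf{f}_n\mathbf{w}_{n-1}^H/|\mathbf{w}_{n-1}|^2$ still satisfies $B_n\to A$ and $B_n\mathbf{w}_{n-1}=\mathbf{w}_n$. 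Lemma~\ref{lemma:poincaretheorem} then applies to $(B_n,\mathbf{w}_n)$, and since each $\mathbf{w}_n$ lies in the span of $\{\vecv_i:i\neq j_1,\ldots,j_{l-1}\}$ by construction, the limit must be a new eigenvector. Your fixed coefficients $\tau^{(s,k)}$ cannot enforce this: the perturbations $A_n-A$ may leak mass back into the modes $e_{j_1},\ldots,e_{j_{k-1}}$, and once present those modes dominate.
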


\begin{proof}
We prove this lemma by induction on $l$. For $l=1$ it reduces to Lemma
\ref{lemma:poincaretheorem}. Let us assume as induction hypothesis that the
result holds for the index $l-1$. Thus there exists a sequence of upper
triangular, invertible matrices $C_{n}$ of size $l-1$ such that, if we write
\begin{equation}\label{Poincare:Mn} M_n :=
(\mathbf{u}_{n}^{(1)},\ldots,\mathbf{u}_{n}^{(l-1)})\,C_{n},
\end{equation}
with $\mathbf{u}_n^{(i)}$ denoting the $i$th column of $U_n$, then
\begin{equation}\label{Poincare:induction}
\lim_{n\rightarrow\infty} M_n
=(\vecv_{j_{1}},\vecv_{j_{2}},\ldots,\vecv_{j_{l-1}}),
\end{equation}
where $j_1,\ldots,j_{l-1}$ are distinct indices in $[1:m]$.

For $n$ large enough, there exists a unique column vector $\mathbf{d}_n$ such
that the vector
\begin{equation}\label{Poincare:wn}
\mathbf{w}_{n}:=\mathbf{u}_{n}^{(l)}+M_n \mathbf{d}_{n}\in\cee^m
\end{equation}
does not have a contribution from the vectors $\vecv_{j_{1}},
\vecv_{j_{2}},\ldots,\vecv_{j_{l-1}}$, i.e., if we write $\mathbf{w}_{n}$ in
terms of the basis $\{\vecv_{i}\}_{i=1}^{m}$ of $\cee^m$, then the coefficients
multiplying $\vecv_{j_{1}},\ldots,\vecv_{j_{l-1}}$ are zero. To see this,
observe that finding the coefficients of $\mathbf{d}_{n}$ amounts to solve a
non-homogeneous linear system whose coefficient matrix tends to the identity
matrix, thanks to \eqref{Poincare:induction}. Observe that $\mathbf{w}_{n}\neq
\mathbf{0}$ for all $n$, because otherwise we would have a linear dependency
between the columns of $U_n$ and therefore (by the recursion $U_{n}=A_n
U_{n-1}$ with $A_n$ nonsingular) between the columns of $U_0$, contrary to the
assumptions of the lemma.

From \eqref{Poincare:Mn}, \eqref{Poincare:wn} and the recursion $U_{n}=A_n
U_{n-1}$ we have
\begin{eqnarray}
\nonumber A_n\mathbf{w}_{n-1} &=& \mathbf{u}_{n}^{(l)}+ (\mathbf{u}_{n}^{(1)},\ldots,\mathbf{u}_{n}^{(l-1)})\,C_{n-1}\mathbf{d}_{n-1} \\
\nonumber  &=& \mathbf{u}_{n}^{(l)}+ M_{n}C_{n}^{-1}C_{n-1}\mathbf{d}_{n-1} \\
\nonumber &=& \mathbf{w}_{n}+ M_{n}\left(C_{n}^{-1}C_{n-1}\mathbf{d}_{n-1}-\mathbf{d}_{n}\right) \\
\label{Poincare:fn}&=:& \mathbf{w}_{n}+ M_{n}\mathbf{f}_{n}.
\end{eqnarray}
Note that for $n$ sufficiently large, $\mathbf{f}_{n}$ is the unique column
vector for which $A_n\mathbf{w}_{n-1}-M_{n}\mathbf{f}_{n}$  has no contribution
from the vectors $\vecv_{j_{1}}, \vecv_{j_{2}},\ldots,\vecv_{j_{l-1}}$.
Equivalently, since the $\{\vecv_{i}\}_{i=1}^{m}$ are eigenvectors for $A$,
$\mathbf{f}_{n}$ is the unique column vector for which
$(A_n-A)\mathbf{w}_{n-1}-M_{n}\mathbf{f}_{n}$  has no contribution from the
vectors $\vecv_{j_{1}}, \vecv_{j_{2}},\ldots,\vecv_{j_{l-1}}$. This yields the
estimate
\begin{equation}\label{Poincare:fn:bound} |\mathbf{f}_{n}|
\leq c||A_n-A||\ |\mathbf{w}_{n-1}|
\end{equation}
for a suitable constant $c$ and for all $n$ sufficiently large, on account of
\eqref{Poincare:induction}. Here we write $|\cdot|$ for the Euclidean norm of a
vector and $||\cdot||$ for the induced matrix norm.

Define
$$ B_n :=
A_n-M_{n}\frac{\mathbf{f}_{n}\mathbf{w}_{n-1}^{H}}{\mathbf{w}_{n-1}^{H}\mathbf{w}_{n-1}},
$$
with $\mathbf{w}_{n-1}^{H}$ denoting the conjugate transpose of
$\mathbf{w}_{n-1}$. From \eqref{Poincare:fn} we get
$$ B_n \mathbf{w}_{n-1} = \mathbf{w}_{n}
$$
while \eqref{Poincare:induction}, \eqref{Poincare:fn:bound} and the fact that
$A_n\to A$ imply that
$$ ||B_n-A_n||\to 0,\qquad n\to\infty.
$$
We can now apply Lemma~\ref{lemma:poincaretheorem} to the matrices $(B_n)_n$
and the vectors $(\mathbf{w}_n)_n$. This yields a sequence of nonzero constants
$(c_n)_n$ such that $c_n\mathbf{w}_n\to\vecv_{j_l}$ for a certain index $j_l\in
[1:m]$. By the very construction of $\mathbf{w}_n$ we have that
$j_l\not\in\{j_1,\ldots,j_{l-1}\}$. The definition of the new sequence of upper
triangular matrices of size $l\times l$ is obvious.
\end{proof}

\subsection{Ratio and weak asymptotics of Riemann-Hilbert minors}

We will apply Lemma~\ref{lemma:poincaretheorem2} to the polynomials $Q_n$
generated by the three-term recurrence \eqref{recurrencerel},
\begin{equation}\label{recurrencerel:copy} xQ_n(x)=Q_{n+1}(x)+a_{n-p}
Q_{n-p}(x),\qquad n\geq p.\end{equation} We will assume that the recurrence coefficients $a_n$ are asymptotically periodic with
period $r\in\enn$, where we may assume without loss of generality that $r\geq
p+1$. The case $r\leq p$ can be handled by enlarging the period and/or by using
Remark~\ref{remark:smallr}.

We write the recurrence relation in matrix-vector form as
\eqref{recurrence:vecQ}, recalling \eqref{recurrence:block2}--\eqref{def:vecQ}.
Since the recurrence coefficients $a_n$ are asymptotically periodic with period
$r$, we have
\begin{equation}\label{poincare:limits} \lim_{n\to\infty} (A_n(x),B_n(x),C_n(x)) =
(A(x),B(x),C(x)),\quad
\end{equation}
with $A(x):=-C(x)^{-1}B(x)$ and $B(x),C(x)$ as in \eqref{recurrence:block2} but
with $a_{rn+j}$ replaced by $b_j$, $j\in [0:r-1]$, and where the limits of the
matrices are taken entrywise.

In the proof of Theorem~\ref{theorem:Widomformula2} we observed that the matrix
$A(x)$ is closely related to the block Toeplitz symbol $F(z,x)$. More
precisely, we showed that the non-zero eigenvalues of $A(x)$ are the inverted
roots $1/z_k(x)$, $k\in [0:p]$ and the corresponding eigenvectors $\vecv_k(x)$
are given by \eqref{eigenvector} (see also Remark~\ref{redef:vk}). In addition
the matrix $A(x)$ has zero as an eigenvalue of multiplicity $r-p-1$.

The second kind functions $\Psi_n^{(k)}$ are defined in \eqref{secondkind:def}.
They satisfy the same recurrence relation \eqref{recurrencerel:copy} for all
$n\geq p$. In analogy with \eqref{def:vecQ} we set
\begin{equation}\label{vecPsi}
\vecPsi_n^{(k)}(x) :=\left(\Psi_{rn}^{(k)}(x),\ldots, \Psi_{rn+r-1}^{(k)}(x)
\right)^T
\end{equation}
and we define the RH-type matrix
\begin{equation}\label{U:RHtype}
U_n(x):=\begin{pmatrix} \vecQ_{n}(x) & \vecPsi_{n}^{(1)}(x) & \ldots &
\vecPsi_{n}^{(p)}(x)
\end{pmatrix}.
\end{equation}
Then the recurrence \eqref{recurrence:vecQ} extends to
\begin{equation}\label{recurrence:RH}
U_n(x) = A_n(x) U_{n-1}(x),\qquad n\geq 1.\end{equation} Note the similarity
with Lemma~\ref{lemma:poincaretheorem2}. Hence the next result should not come
as a surprise.

\begin{proposition}\label{prop:ratio}
Let $U_n(x)$ be the RH type matrix in \eqref{U:RHtype}. For any fixed
$x\in\cee\setminus \bigcup_{j}\Gamma_{j}$, there exists a sequence of
invertible upper triangular matrices $C_n(x)$ of size $p+1$ such that
\begin{equation}\label{Poincare:RH} \lim_{n\to\infty}
\begin{pmatrix}U_{n-1}(x)\\ U_n(x)\end{pmatrix} C_n(x)
= \begin{pmatrix} \vecv_{j_{0}}(x) & \ldots & \vecv_{j_{p}}(x)\\
z_{j_0}^{-1}(x)\vecv_{j_{0}}(x) & \ldots & z_{j_{p}}^{-1}(x)\vecv_{j_{p}}(x)
\end{pmatrix}
\end{equation}
where $(j_{0},\ldots,j_{p})$ is a permutation of $[0:p]$ (depending on $x$),
and with $\vecv_k(x)$ defined in \eqref{eigenvector}, see also
Remark~\ref{redef:vk}.
\end{proposition}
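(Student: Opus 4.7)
The plan is to apply the multi-column generalized Poincar\'e theorem (Lemma~\ref{lemma:poincaretheorem2}), after an appropriate reduction, to the matrix recursion $U_n(x)=A_n(x)U_{n-1}(x)$ from \eqref{recurrence:RH}. From the proof of Theorem~\ref{theorem:Widomformula2} we know that for any fixed $x\in\cee\setminus\bigcup_j\Gamma_j$, the limit matrix $A(x)$ is diagonalizable with nonzero eigenvalues $1/z_0(x),\ldots,1/z_p(x)$ of pairwise distinct moduli (by the defining property \eqref{Gammak} of $\Gamma_k$), attached to the eigenvectors $\vecv_k(x)$ in \eqref{eigenvector}, and with $0$ as an additional eigenvalue of multiplicity $r-p-1$.

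If $r=p+1$ then $A(x)$ is already nonsingular with eigenvalues of pairwise distinct moduli, and Lemma~\ref{lemma:poincaretheorem2} applies directly to the shifted sequence $W_n:=U_{n-1}$ (which satisfies $W_n=A_{n-1}W_{n-1}$ and, by Prop.~\ref{prop:geneigRH:bis} combined with Lemma~\ref{lemma:normalfamily}, has linearly independent columns for any sufficiently large starting index). This produces upper-triangular $C_n(x)$ of size $p+1$ and a permutation $(j_0,\ldots,j_p)$ of $[0:p]$ with $U_{n-1}(x)C_n(x)\to(\vecv_{j_0}(x),\ldots,\vecv_{j_p}(x))$. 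The recursion together with $A_n\to A$ then gives
\[
U_n(x)C_n(x)=A_n(x)\,U_{n-1}(x)C_n(x)\longrightarrow A(x)(\vecv_{j_0},\ldots,\vecv_{j_p})=(z_{j_0}^{-1}\vecv_{j_0},\ldots,z_{j_p}^{-1}\vecv_{j_p}),
\]
and stacking the two limits yields \eqref{Poincare:RH}.

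The main obstacle is the general case $r>p+1$, where both $A(x)$ and in fact every $A_n(x)$ are singular (their first $r-p-1$ columns vanish), so the hypothesis of Lemma~\ref{lemma:poincaretheorem2} is violated. I would handle this by restricting to an invariant subspace. Let $V(x):=\mathrm{span}(\vecv_0(x),\ldots,\vecv_p(x))$ and let $\Pi(x)$ denote the spectral projection of $A(x)$ onto $V(x)$ along $\ker A(x)$, so that $A(x)\Pi(x)=\Pi(x)A(x)=A(x)$. From the identity $(I-\Pi)U_n=(I-\Pi)(A_n-A)U_{n-1}$ one sees that, after any normalization that keeps a nonzero eigenvector contribution in $U_n$ of order unity, the transverse component of $U_n$ decays to $0$ because $\|A_n-A\|\to 0$; thus asymptotically $U_n$ lives in $V(x)$. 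The iteration then effectively reduces to $\Pi U_n\approx A|_{V(x)}\,\Pi U_{n-1}$ on the $(p+1)$-dimensional space $V(x)$, on which $A|_{V(x)}$ is nonsingular with eigenvalues of pairwise distinct moduli. Applying Lemma~\ref{lemma:poincaretheorem2} to this reduced recursion (with linearly independent initial data in $V(x)$, again guaranteed by Prop.~\ref{prop:geneigRH:bis} and Lemma~\ref{lemma:normalfamily}) and then propagating back through $U_n=A_nU_{n-1}$ completes the argument exactly as in the $r=p+1$ case; in particular, since $\dim V(x)=p+1$ all of $\vecv_0,\ldots,\vecv_p$ appear in the limit, so $(j_0,\ldots,j_p)$ is indeed a full permutation of $[0:p]$.
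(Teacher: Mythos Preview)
Your plan for $r=p+1$ is fine and matches the paper. The gap is in your reduction for $r>p+1$. Projecting with the spectral projector $\Pi(x)$ onto $V(x)=\mathrm{span}(\vecv_0,\ldots,\vecv_p)$ does \emph{not} decouple the recursion: writing $P_n=\Pi U_n$ and $Q_n=(I-\Pi)U_n$ you get
\[
P_n=\Pi A_n P_{n-1}+\Pi A_n Q_{n-1},\qquad Q_n=(I-\Pi)(A_n-A)(P_{n-1}+Q_{n-1}),
\]
so $P_n$ is driven by $Q_{n-1}$, and your sentence ``the iteration effectively reduces to $\Pi U_n\approx A|_{V}\Pi U_{n-1}$'' hides a genuine estimate that you have not supplied (you would need a priori control on $U_{n-1}$, which is exactly what you are trying to prove). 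Likewise, your appeal to Prop.~\ref{prop:geneigRH:bis} and Lemma~\ref{lemma:normalfamily} gives nonsingularity of the bottom $(p+1)\times(p+1)$ block $\what U_n$ of $U_n$, not of $\Pi U_n$; these are different projections since $V(x)$ is not a coordinate subspace.

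The paper avoids all of this by exploiting the explicit block structure rather than a spectral projection. Because the first $r-p-1$ columns of every $A_n(x)$ vanish (not just those of $A(x)$), one has
\[
A_n=\begin{pmatrix}0&\wtil A_n\\ 0&\what A_n\end{pmatrix},\qquad
U_n=\begin{pmatrix}\wtil U_n\\ \what U_n\end{pmatrix},
\]
and hence an \emph{exact} decoupled recursion $\what U_n=\what A_n\,\what U_{n-1}$ on the bottom $p+1$ rows. The eigenvalue equation $(A-z_j^{-1}I)\vecv_j=0$ in block form shows that $\what A$ has eigenvalues $z_j^{-1}$ with eigenvectors $\what\vecv_j$, and $\det\what U_n$ is a nonzero constant by Prop.~\ref{prop:geneigRH}, so $\what A_n$ is nonsingular for all $n$. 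Now Lemma~\ref{lemma:poincaretheorem2} applies verbatim to $(\what A_n,\what U_n)$, giving $C_n$ with $\what U_{n-1}C_n\to(\what\vecv_{j_0},\ldots,\what\vecv_{j_p})$. Finally $\wtil U_nC_n=\wtil A_n\what U_{n-1}C_n\to\wtil A(\what\vecv_{j_0},\ldots,\what\vecv_{j_p})=(z_{j_0}^{-1}\wtil\vecv_{j_0},\ldots,z_{j_p}^{-1}\wtil\vecv_{j_p})$ and similarly for $\what U_nC_n$ and $\wtil U_{n-1}C_n$, which assembles into \eqref{Poincare:RH}. Replacing your spectral projection by this coordinate-block reduction closes the gap with no asymptotic hand-waving.
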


\begin{proof}
Throughout the proof we will drop the $x$-dependence for convenience. We want
to apply the generalized Poincar\'e theorem
(Lemma~\ref{lemma:poincaretheorem2}) to the recurrence \eqref{recurrence:RH}.
Recall that only the last $p+1$ columns of $A_n$ are nonzero. So the matrix
$A_n$ could have zero as an eigenvalue (of multiplicity $r-p-1$), contrary to
the assumptions of Lemma~\ref{lemma:poincaretheorem2}. To resolve this issue,
we partition
$$
U_n =: \begin{pmatrix} \wtil U_{n}\\ \what U_{n} \end{pmatrix},\qquad
\vecv_j =: \begin{pmatrix}\wtil\vecv_{j}\\
\what\vecv_{j}\end{pmatrix},
$$
with $\wtil U_{n}$ and $\what U_{n}$ having $r-p-1$ and $p+1$ rows
respectively, and similarly for $\wtil\vecv_{j}$ and $\what\vecv_{j}$. We also
partition
$$ A_n=:\begin{pmatrix}
0 & \wtil A_n \\ 0 & \what A_n
\end{pmatrix},\qquad A=:\begin{pmatrix}
0 & \wtil A \\ 0 & \what A
\end{pmatrix},
$$
with $\what A_n$ and $\what A$ square matrices of size $p+1$.

Recall that the nonzero eigenvalues of $A$ are $z_j^{-1}$, $j\in [0:p]$, and
the corresponding eigenvectors are $\vecv_j$. With the above partitions, this
yields
\begin{equation}\label{partition:eig} \mathbf{0}=
(A-z_j^{-1}I)\vecv_j =\begin{pmatrix} -z_j^{-1}I & \wtil A\\ 0 & \what
A-z_j^{-1}I
\end{pmatrix}\begin{pmatrix}\wtil\vecv_{j}\\ \what\vecv_{j}
\end{pmatrix},
\end{equation}
for all $j\in [0:p]$. In particular, the matrix $\what A$ is diagonalizable
with $z_j^{-1}$ as eigenvalues, $j\in [0:p]$, and the corresponding
eigenvectors are $\what\vecv_{j}$ (note also that \eqref{partition:eig} implies
$\what\vecv_{j}\neq \mathbf{0}$).

The recursion \eqref{recurrence:RH} becomes
\begin{equation}\label{partition:rec}
\begin{pmatrix} \wtil U_{n}\\
\what U_{n} \end{pmatrix} = \begin{pmatrix} 0 & \wtil A_n \\ 0 & \what A_n
\end{pmatrix}\begin{pmatrix} \wtil U_{n-1}\\ \what U_{n-1} \end{pmatrix}.
\end{equation}
In particular, \begin{equation}\label{partition:rec2} \what U_{n} = \what
A_n\what U_{n-1},\qquad n\geq 1.
\end{equation}
We can now apply Lemma~\ref{lemma:poincaretheorem2} to the matrices $(\what
A_n)_n$ and $(\what U_{n})_n$. Observe that the matrices $\what U_{n}$ are all
nonsingular (and so the matrices $\what A_{n}$); in fact, $\det \what U_{n}(x)$
is a nonzero constant (independent of $x$), as it follows from
Prop.~\ref{prop:geneigRH}. Lemma~\ref{lemma:poincaretheorem2} yields a sequence
of invertible upper triangular matrices $C_n$ such that
\begin{equation}\label{asymp1}
\what U_{n-1}C_n\to
\begin{pmatrix} \what \vecv_{j_{0}} & \ldots & \what \vecv_{j_{p}}
\end{pmatrix}
\end{equation}
as $n\to\infty$, for a certain permutation $(j_{0},\ldots,j_{p})$ of $[0:p]$.
(Note that we write $C_n$ instead of $C_{n-1}$.) Applying
\eqref{partition:rec2} and \eqref{asymp1} we then get
$$
\what U_{n}C_{n} = \what A_n \what U_{n-1} C_{n}\to \begin{pmatrix}
z_{j_{0}}^{-1}\what\vecv_{j_{0}} & \ldots & z_{j_{p}}^{-1}\what\vecv_{j_{p}}
\end{pmatrix},
$$
as $n\to\infty$, where we used that $\what A_n\to \what A$
and $\what\vecv_{j}$ is an eigenvector of $\what A$ for the eigenvalue
$z_j^{-1}$. On the other hand, from the first block row of
\eqref{partition:rec} we have
\begin{equation*}
\wtil U_{n}C_{n} = \wtil A_n\what U_{n-1}C_{n} \to \wtil A\begin{pmatrix}
\what\vecv_{j_{0}} & \ldots & \what\vecv_{j_{p}}
\end{pmatrix}=\begin{pmatrix} z_{j_0}^{-1}\wtil\vecv_{j_{0}} & \ldots &
z_{j_{p}}^{-1}\wtil\vecv_{j_{p}}
\end{pmatrix},
\end{equation*}
as $n\to\infty$, where the equality follows from the first block row of
\eqref{partition:eig}. Finally,
$$
\wtil U_{n-1}C_{n}=\wtil A_{n-1}\what U_{n-2} C_{n}=\wtil A_{n-1} (\what{A}_{n-1})^{-1}\what U_{n-1}C_{n}\to
\begin{pmatrix} \wtil\vecv_{j_{0}} & \ldots &
\wtil\vecv_{j_{p}}
\end{pmatrix}.
$$
Combining the above limits, the proposition is proved.
\end{proof}

In principle, the indices $j_{0},\ldots,j_{p}$ in Prop.~\ref{prop:ratio} could
depend on $x$. We will see further that this is not the case; in fact we have
$j_0=0$, $j_1=1$, and so on.

Fix $k\in [0:p]$. Taking determinants of suitable $(k+1)\times (k+1)$ minors of
\eqref{Poincare:RH} and using the fact that $C_n$ is upper triangular, we find
that
\begin{align}
\lim_{n\to\infty} B_{k,rn-1}(x)\,c_n(x) &
=(-1)^{k(k+1)/2}\det\left(\vecv_{j_{0}}'(x),\ldots,
\vecv_{j_{k}}'(x)\right),\label{ratio:minor1}\\
\lim_{n\to\infty} B_{k,rn+r-1}(x)\,c_n(x) & =
(-1)^{k(k+1)/2}\left(z_{j_0}^{-1}\ldots z_{j_{k}}^{-1}\right)\det\left(
\vecv_{j_{0}}'(x),\ldots,\vecv_{j_{k}}'(x)\right),\label{ratio:minor2}
\end{align}
for any fixed $x\in\cee\setminus \bigcup_{j}\Gamma_{j}$, where $c_n$ denotes
the determinant of the principal $(k+1)\times (k+1)$ submatrix of $C_n$, and
where the vector $\vecv_j'$ consists of the last $k+1$ entries of $\vecv_j$.
For convenience we introduce the following notation:
\begin{equation}\label{def:Sk}
S_{k}:=\left\{\begin{array}{ll}
S_{+}, & \mbox{for}\,\,k\,\, \mbox{even},\,\,k\in [0:p],\\[0.3em]
S_{-}, & \mbox{for}\,\,k\,\, \mbox{odd},\,\,k\in [0:p].
\end{array}
\right.
\end{equation}

\begin{lemma}\label{lemma:nonzero:det}
Let $k\in [0:p]$, and let $x$ be a fixed point in
$\cee\setminus(\bigcup_{j}\Gamma_{j}\cup S_{k})$. Then in
\eqref{ratio:minor1}--\eqref{ratio:minor2} we have
\begin{equation} \label{determinant:nonzero:vj}\det
\left(\vecv_{j_{0}}'(x),\ldots,\vecv_{j_{k}}'(x)\right)\neq 0.\end{equation}
\end{lemma}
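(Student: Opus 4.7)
The plan is to argue by contradiction. Suppose
\[
D(x):=\det\bigl(\vecv_{j_{0}}'(x),\ldots,\vecv_{j_{k}}'(x)\bigr)=0
\]
at some $x\in\cee\setminus(\bigcup_{j}\Gamma_{j}\cup S_{k})$. Then \eqref{ratio:minor1} forces $c_{n}(x)\,B_{k,rn-1}(x)\to 0$ as $n\to\infty$.

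Next I would invoke Lemma~\ref{lemma:normalfamily}, using Prop.~\ref{prop:geneigRH} to pass freely between $P$-polynomials and $B$-minors. Applied with base index $N=rn-1$ and shifts $i_{l}=-p+l_{l}$, where $0\le l_{0}<l_{1}<\cdots<l_{k}\le p$, the requirements $i_{k}-i_{0}\le p$ and $x\in\cee\setminus S_{k}$ are both satisfied. Hence each ratio
\[
\bigl|B^{(rn-1-p+l_{0},\ldots,rn-1-p+l_{k})}(x)\big/B_{k,rn-1}(x)\bigr|
\]
is uniformly bounded in $n$, and so the assumption $c_{n}(x)B_{k,rn-1}(x)\to 0$ propagates to
\[
c_{n}(x)\,B^{(rn-1-p+l_{0},\ldots,rn-1-p+l_{k})}(x)\longrightarrow 0
\]
for every admissible tuple $(l_{0},\ldots,l_{k})$.

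On the other hand, each such generalized RH minor equals, up to the sign $(-1)^{k(k+1)/2}$ coming from row reversal, the determinant of the submatrix of $U_{n-1}(x)$ with rows $r-1-p+l_{0},\ldots,r-1-p+l_{k}$ and columns $0,\ldots,k$. Since $C_{n}(x)$ is upper triangular with $c_{n}(x)$ equal to its principal $(k+1)\times(k+1)$ minor, the bookkeeping that produced \eqref{ratio:minor1}, combined with $U_{n-1}(x)\,C_{n}(x)\to V(x):=(\vecv_{j_{0}}(x),\ldots,\vecv_{j_{p}}(x))$ from Prop.~\ref{prop:ratio}, yields the limit $(-1)^{k(k+1)/2}$ times the determinant of the corresponding $(k+1)\times(k+1)$ submatrix of $V(x)$. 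Comparing the two displayed limits forces the simultaneous vanishing of \emph{every} $(k+1)\times(k+1)$ minor of the $(p+1)\times(k+1)$ block of $V(x)$ formed from rows $r-1-p,\ldots,r-1$ and columns $0,\ldots,k$; this block is precisely $(\hat\vecv_{j_{0}}(x),\ldots,\hat\vecv_{j_{k}}(x))$, so it has rank strictly less than $k+1$.

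This yields the contradiction. Since $x\notin\bigcup_{j}\Gamma_{j}$, the eigenvalues $(z_{j}^{-1}(x))_{j=0}^{p}$ of $\hat A(x)$ are pairwise distinct, so the $(p+1)\times(p+1)$ matrix $\hat V(x)$ is invertible and in particular its first $k+1$ columns $(\hat\vecv_{j_{0}}(x),\ldots,\hat\vecv_{j_{k}}(x))$ are linearly independent. The main obstacle---and the reason the argument closes---is the precise matching between the window size in Lemma~\ref{lemma:normalfamily} and the size of $\hat V(x)$: both are equal to $p+1$, which is exactly what lets the hypothetical vanishing of the single minor $D$ propagate, via the normal family bound, to a genuine rank obstruction on an explicit $(p+1)\times(k+1)$ submatrix of the invertible matrix $\hat V(x)$.
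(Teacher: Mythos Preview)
Your proof is correct and follows essentially the same strategy as the paper: both combine the convergence in Prop.~\ref{prop:ratio}, the normal family bound of Lemma~\ref{lemma:normalfamily}, and the linear independence of $\hat\vecv_{j_0}(x),\ldots,\hat\vecv_{j_k}(x)$ (eigenvectors of $\hat A(x)$ for distinct eigenvalues). The only cosmetic difference is in direction: the paper first selects \emph{one} row set for which the minor $\kappa(x)\neq 0$, then forms the ratio $B_{k,rn-1}(x)/B^{(n_0,\ldots,n_k)}(x)\to \pm D(x)/\kappa(x)$ and invokes the \emph{lower} bound in Lemma~\ref{lemma:normalfamily}; you instead use the \emph{upper} bound to propagate the hypothetical vanishing $c_n(x)B_{k,rn-1}(x)\to 0$ to \emph{all} $(k+1)\times(k+1)$ minors of $(\hat\vecv_{j_0},\ldots,\hat\vecv_{j_k})$, obtaining the same contradiction.
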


\begin{proof} Let $\what \vecv_j$
consist of the last $p+1$ rows of $\vecv_j$, as in the proof of
Prop.~\ref{prop:ratio}. Recall that the columns of the matrix
\begin{equation}\label{determinant:nonzero:proof}
\left(\what \vecv_{j_0}(x),\ldots,\what \vecv_{j_{k}}(x)\right)
\end{equation}
are linearly independent, since they are eigenvectors corresponding to distinct
eigenvalues of the matrix $\what A$ (see the proof of Prop.~\ref{prop:ratio}).
In particular, there exist $k+1$ row indices such that the minor obtained by
selecting these rows in \eqref{determinant:nonzero:proof} is nonzero. Denoting
the value of this minor with $\kappa(x)\neq 0$ and taking the determinant of
the corresponding $(k+1)\times (k+1)$ minor in \eqref{Poincare:RH}, we get
\begin{equation*} \lim_{n\to\infty}
B^{(n_0,n_1,\ldots,n_k)}(x)\,c_n(x) = \pm \kappa(x) \neq 0,
\end{equation*}
for suitable indices $n_i$ with $rn-p-1\leq n_0<n_1<\ldots<n_k\leq rn-1$, with
again $c_n$ the determinant of the principal $(k+1)\times (k+1)$ submatrix of
$C_n$. Comparing this to \eqref{ratio:minor1}, we get
$$ \lim_{n\to\infty}
B_{k,rn-1}(x)/B^{(n_0,n_1,\ldots,n_k)}(x)=\pm\det
\left(\vecv_{j_{0}}'(x),\ldots, \vecv_{j_{k}}'(x)\right)/\kappa(x).
$$
Now if \eqref{determinant:nonzero:vj} fails, then this limit would be zero,
thereby contradicting Lemma \ref{lemma:normalfamily} (see also \eqref{geneig:RH}).
\end{proof}

\begin{remark}\label{remark:detnonzero:gen}
The above proof shows that \emph{any} $(k+1)\times
(k+1)$ minor of
$$\begin{pmatrix} \vecv_{j_{0}}(x) & \ldots & \vecv_{j_{k}}(x)\\
z_{j_0}^{-1}(x)\vecv_{j_{0}}(x) & \ldots & z_{j_{k}}^{-1}(x)\vecv_{j_{k}}(x)
\end{pmatrix},$$ 
obtained by selecting $k+1$ rows with the difference between the smallest and
largest row index not exceeding $p$, is nonzero if
$x\in\cee\setminus(\bigcup_{j}\Gamma_{j}\cup S_{k})$. Also
recall Remark~\ref{redef:vk}.
\end{remark}

By Lemma~\ref{lemma:nonzero:det}, we can take the ratio of \eqref{ratio:minor1}
and \eqref{ratio:minor2} and get the pointwise limit
\begin{equation*}
\lim_{n\to\infty} B_{k,rn-1}(x)/B_{k,rn+r-1}(x) = z_{j_0}(x)\ldots
z_{j_{k}}(x),\qquad x\in\cee\setminus \big(\bigcup_j \Gamma_j\cup S_{k}\big).
\end{equation*}
Similar arguments can be applied for the other residue classes modulo $r$,
showing that
\begin{equation}\label{ratioasy:prodSk:bis}
\lim_{n\to\infty} B_{k,n}(x)/B_{k,n+r}(x) = z_{j_0}(x)\ldots
z_{j_{k}}(x),\qquad x\in\cee\setminus\big(\bigcup_j \Gamma_j \cup S_{k}\big).
\end{equation}

\begin{proposition}\label{prop:jk}
In Prop.~\ref{prop:ratio} we have for any fixed $x\in\cee\setminus(S_{+}\cup S_{-})$,
\begin{equation}\label{Sk:principle} (j_{0},\ldots,j_{p})=(0,\ldots,p).
\end{equation}
\end{proposition}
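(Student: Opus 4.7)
The strategy is to invoke the Widom-type strong asymptotics of Theorem~\ref{theorem:Widomformula2} (and Corollary~\ref{cor:strong:exact}) to identify the eigenvectors picked up by the subspace iteration behind Prop.~\ref{prop:ratio}, and then proceed by induction on the column index. For the first column: Corollary~\ref{cor:strong:exact} gives $\vecQ_n(x)\,z_0(x)^{n+1} \to (\text{nonzero const})\cdot\vecv_0(x)$ uniformly on compacta away from $\Gamma_0$ plus a finite set. Since Prop.~\ref{prop:ratio} asserts that the first column of $U_nC_n$ converges to $\vecv_{j_0}(x)$ (the matrix $C_n$ being upper triangular, its first column multiplies $\vecQ_n$ by a scalar only), and since eigenvectors of $\hat A(x)$ corresponding to the distinct eigenvalues $z_i(x)^{-1}$ are linearly independent off $\bigcup_j\Gamma_j\supset S_+\cup S_-$, we must have $j_0=0$.

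For the inductive step, assume $j_0=0,\ldots,j_{k-1}=k-1$. Since the second-kind functions $\Psi_n^{(j)}$ obey the same recurrence as $Q_n$ (cf.~\eqref{secondkind:rec}), repeating the block-Toeplitz analysis of Section~\ref{section:widom} gives expansions
\begin{equation*}
\vecPsi_n^{(j)}(x) \;=\; \sum_{i=0}^{p}\alpha_i^{(j)}(x)\,z_i(x)^{-\lfloor n/r\rfloor-1}\,\vecv_i(x)\;+\;(\text{lower order}),
\end{equation*}
with coefficients $\alpha_i^{(j)}(x)$ determined by the initial conditions of $\Psi_n^{(j)}$, together with the analogous expansion for $\vecQ_n$. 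The $k$-th column of $U_nC_n$ is a specific linear combination of $\vecQ_n,\vecPsi_n^{(1)},\ldots,\vecPsi_n^{(k)}$ dictated by the upper triangular structure of $C_n$. By the induction hypothesis this column converges to $\vecv_{j_k}(x)$ with $j_k\in\{k,k+1,\ldots,p\}$. For such a limit to exist as an eigenvector distinct from $\vecv_0,\ldots,\vecv_{k-1}$, the implicit Gaussian elimination carried out by $C_n$ must annihilate all contributions proportional to $z_0^{-n},\ldots,z_{k-1}^{-n}$. The next surviving term is then of order $z_k^{-n}\vecv_k(x)$, which, since $|z_k(x)|<|z_{k+1}(x)|<\cdots<|z_p(x)|$ strictly off $\bigcup_j\Gamma_j$, dominates and forces $j_k=k$.

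The main obstacle is verifying that the $z_k^{-n}$ contribution is not itself killed by the elimination procedure, i.e., that the coefficient of $\vecv_k(x)$ in the $k$-th column of $U_nC_n$ (after the cancellations) is nonzero. This reduces to the nonvanishing of the leading $(k+1)\times(k+1)$ minor of the coefficient matrix $[\alpha_i^{(j)}(x)]_{0\leq i,j\leq k}$, which in turn is equivalent to the nonvanishing of the corresponding $(k+1)\times(k+1)$ minor of the eigenvector matrix $[\vecv_0(x),\ldots,\vecv_k(x)]$ built from \eqref{eigenvector}. For the specific minors entering the expansion of $B_{k,n}$, this is precisely the content of Lemma~\ref{lemma:nonzero:det} and Remark~\ref{remark:detnonzero:gen} once the induction hypothesis has identified $\vecv_{j_0},\ldots,\vecv_{j_{k-1}}$ with $\vecv_0,\ldots,\vecv_{k-1}$; the inductive identification of $\vecv_{j_k}$ with $\vecv_k$ therefore closes the bootstrap. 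Equivalently, the identification can be recast as
\begin{equation*}
\lim_{n\to\infty}\frac{B_{k,n}(x)}{B_{k,n+r}(x)}\;=\;z_0(x)\,z_1(x)\cdots z_k(x),\qquad k\in[0:p],\ x\in\cee\setminus(S_+\cup S_-),
\end{equation*}
which, combined with \eqref{ratioasy:prodSk:bis} and the strict ordering $|z_0(x)|<\cdots<|z_p(x)|$, yields $\{j_0,\ldots,j_k\}=\{0,1,\ldots,k\}$ as sets for every $k$; taking set differences between consecutive values of $k$ then gives $j_k=k$ as required.
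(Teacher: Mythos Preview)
Your argument has a genuine gap at its core. The Widom formula (Theorem~\ref{theorem:Widomformula2}) and Corollary~\ref{cor:strong:exact} are proved only for \emph{exactly} periodic coefficients, whereas Proposition~\ref{prop:jk} is stated and used (e.g.\ in the proof of Theorem~\ref{theorem:muk:RHminor}) for \emph{asymptotically} periodic coefficients. In that generality there is no Widom-type expansion of $\vecQ_n$ or $\vecPsi_n^{(j)}$ available, so your base case $j_0=0$ and your inductive expansions are not justified. More seriously, even in the exactly periodic case your inductive step is circular: the generalized Poincar\'e theorem (Lemma~\ref{lemma:poincaretheorem2}) only guarantees convergence to \emph{some} eigenvector $\vecv_{j_k}$; it does not select the dominant one. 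To force $j_k=k$ you would need to show that, after eliminating the $z_0^{-n},\ldots,z_{k-1}^{-n}$ contributions, the coefficient of $z_k^{-n}\vecv_k$ is nonzero. But Lemma~\ref{lemma:nonzero:det} and Remark~\ref{remark:detnonzero:gen} assert nonvanishing of minors built from $\vecv_{j_0},\ldots,\vecv_{j_k}$ \emph{as outputs of} Prop.~\ref{prop:ratio}, which holds whatever $j_k$ happens to be; they do not tell you that the corresponding minor with $\vecv_k$ in the last slot is nonzero. Your final ``equivalently'' paragraph simply restates the conclusion \eqref{ratioasy:Bkn} and compares it with \eqref{ratioasy:prodSk:bis}, which is precisely what has to be proved.

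The paper's route is quite different and avoids any direct inspection of initial data. From \eqref{ratioasy:prodSk:bis} and the normal family estimate (Lemma~\ref{lemma:normalfamily}) one knows only that the \emph{a priori $x$-dependent} products $\wtil z_0\cdots\wtil z_k:=z_{j_0}\cdots z_{j_k}$ extend to analytic functions on $\cee\setminus S_k$; the asymptotics at infinity then pin down $\wtil z_0=z_0$ for large $|x|$. The identification $\wtil z_k=z_k$ globally is obtained simultaneously with Theorem~\ref{theorem:Gammak:star}(a) (i.e.\ $\Gamma_k\subset S_k$) via a potential-theoretic argument: one introduces auxiliary signed measures $s_k$ and the positive measures $\sigma_k$, compares their total masses using the Cauchy--Riemann pairing \eqref{pairing:1}--\eqref{pairing:2}, and deduces that clusters of length $\ge 3$ cannot occur and that $\wtil\Gamma_k\subset\er_k$. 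This forces $j_k=k$ by induction on $k$. The two statements ($j_k=k$ and $\Gamma_k\subset S_k$) are genuinely intertwined; your approach tries to decouple them, which is why the nonvanishing step cannot be closed.
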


Prop.~\ref{prop:jk} will be proved in Section~\ref{subsection:Gammak:star}. In
the latter section we also prove Theorem~\ref{theorem:Gammak:star}, in
particular we show that $\Gamma_{k}\subset S_{k}$ for all $k\in[0:p]$. Note
that we did not use Theorem~\ref{theorem:Gammak:star} so far.

Lemma~\ref{lemma:normalfamily} and
\eqref{ratioasy:prodSk:bis}--\eqref{Sk:principle} imply that, uniformly on
compact subsets of $\cee\setminus S_{k}$,
\begin{equation}\label{ratioasy:Bkn}
\lim_{n\to\infty} B_{k,n}(x)/B_{k,n+r}(x) = z_{0}(x)\ldots z_{k}(x).
\end{equation}

We are now ready for the \begin{proof}[Proof of
Theorem~\ref{theorem:muk:RHminor}] For any measure $\mu$ in $\cee$ denote its
logarithmic potential $U^{\mu}(x)$ as
\begin{equation}\label{def:logpot}
U^{\mu}(x) = -\int\log|x-s|\ \ud\mu(s).
\end{equation}
We will prove formula \eqref{weakcvg} for each sequence $\mu_{k,n}$ with $n$ of
the form $rm+l$, $l\in [0:r-1]$ fixed. Denote with $\kappa_{n}$ the leading
coefficient of the polynomial $B_{k,n}(x)$. We have uniformly for $x$ in
compact subsets of $\cee\setminus S_k$ that
\begin{multline}\label{eq:weakconv1}
\lim_{n\to\infty} \left(U^{\mu_{k,n}}(x)-\frac{1}{n}\log|\kappa_{n}|\right) =
-\lim_{n\to\infty}\frac{1}{n} \log
|B_{k,n}(x)| \\
= -\lim_{m\to\infty}\frac{1}{rm+l} \sum_{j=1}^m \log
|B_{k,rj+l}(x)/B_{k,r(j-1)+l}(x)| = \frac{1}{r}\log \prod_{j=0}^{k} |z_j(x)|=
U^{\mu_{k}}(x)+c,
\end{multline}
with $c$ a constant independent of $x$, and $\mu_{k}$ the measure in
\eqref{measure:k}. Here the first equality is obvious from the definitions
\eqref{def:logpot} and \eqref{counting:measure}, the second one follows by
telescopic cancelation, the third one follows by \eqref{ratioasy:Bkn}
and the fourth one by \cite[Prop.~5.10]{Del}. It is easy to see that
$\log|\kappa_{n}|/n$ is bounded from below as a function of $n$, due to
\eqref{normal:family:cond} and Prop.~\ref{prop:comb}.

Let $C_{0}(S_{k})$ denote the space of continuous functions on the star $S_{k}$
that vanish at infinity. Since $\|\mu_{k,n}\|\leq (p-k)/p$ for all $n$ (cf.
Lemma~\ref{lemma:RHminor:degree}), it follows from the Banach-Alaoglu theorem
that we can extract a subsequence from $\mu_{k,n}$ that converges in the
weak-star topology to a finite measure $\nu$ supported on $S_{k}$.
Let $x_{0}\in\cee\setminus S_{k}$ be a fixed point. From the weak-star
convergence and \eqref{eq:weakconv1} we deduce that for every
$x\in\cee\setminus S_{k}$,
\begin{equation}\label{eq:weakconv2}
\int \log \left|\frac{x_{0}-s}{x-s}\right|\ \ud
\nu(s)=U^{\mu_{k}}(x)-U^{\mu_{k}}(x_{0})=\frac{1}{r}\, \Re \Big(\log
\prod_{j=0}^{k}z_{j}(x)\Big)+\wtil c,
\end{equation}
where $\log \prod_{j=0}^{k}z_{j}(x)$ is a holomorphic branch of the logarithm
of $\prod_{j=0}^{k}z_{j}(x)$ and $\wtil c$ is some constant. Note that
$\phi(s):=\log \left|\frac{x_{0}-s}{x-s}\right|\in C_{0}(S_{k})$, so the
weak-star convergence indeed applies.

We claim that
\begin{equation}\label{eq:weakconv3}
\int_{S_{k}} \log (1+|s|)\,\ud \nu(s)<\infty.
\end{equation}
This will be justified at the end of the proof and now we complete the argument
as follows. From \eqref{eq:weakconv3} we obtain that $U^{\nu}$ is well-defined
and superharmonic in $\cee$, and in particular we can replace the first
integral in \eqref{eq:weakconv2} by $U^{\nu}(x)-U^{\nu}(x_{0})$. Note also that
the last relation in \eqref{eq:weakconv1}, which is in fact valid for all
$x\in\cee$, implies that $U^{\mu_{k}}$ is continuous everywhere in the complex
plane. This in turn implies, using \eqref{eq:weakconv2} and the
superharmonicity of $U^{\nu}$, that $U^{\nu}$ is bounded on every compact
segment of $S_{k}$. Now we are in a position to apply Theorem II.1.4 from
\cite{SaffTotik}, which gives $\mu_{k}=\nu$.

Now we justify \eqref{eq:weakconv3}. This is equivalent to say that
$U^{\nu}(x)>-\infty$ for any fixed $x\in \cee\setminus S_{k}$. It is clear that
we can construct a non-increasing sequence of functions $(k_{m}(y))_{m\in\enn}$
in $C_{0}(S_{k})$ satisfying $k_{m}(y)=\log\,(1/|x-y|)$ whenever $\log
(1/|x-y|)\geq -m$ and $k_{m}(y)\geq -m$ for all $y\in S_{k}$. Applying a
standard monotone convergence theorem argument to this sequence $k_{m}$
together with \eqref{eq:weakconv1} and the weak-star convergence to $\nu$, it
is easy to deduce that $U^{\nu}(x)\geq U^{\mu_{k}}(x)+c$, for some other
constant $c$.

Summarizing, we obtain that \eqref{weakcvg} is valid for every $\phi\in
C_{0}(S_{k})$. Since $\|\mu_{k,n}\|\leq (p-k)/p$ and $\|\mu_{k}\|=(p-k)/p$, the
convergence in the weak-star topology of $\mu_{k,n}$ to $\mu_{k}$ implies that
the sequence $\mu_{k,n}$ is tight. This implies again by a standard argument
that \eqref{weakcvg} is also valid for bounded continuous functions on $S_{k}$.
\end{proof}

\begin{remark}\label{remark:weak:RHminors}
Due to the interlacing properties described in
Theorem~\ref{theorem:interlace:gen}, it is easy to see that the conclusion of
Theorem~\ref{theorem:muk:RHminor} remains valid for the zeros of general
Riemann-Hilbert minors $P^{(n+i_{0},n+i_{1},\ldots,n+i_{k})}$, with
$i_{j}\in\mathbb{Z}$, $j\in[0:k]$, a fixed set of indices
satisfying~\eqref{cond:indices}.
\end{remark}

For later use, we state the next lemma.

\begin{lemma}\label{lemma:PklnPk}
Fix $0\leq k<l\leq p$. Uniformly for $x$ in compact
subsets of $\cee\setminus S_{k}$, we have
\begin{equation}\label{geneig:Nikhier}
\lim_{n\to\infty}\frac{B_{k,l,rn}(x)}{B_{k,rn}(x)} =
\begin{vmatrix}
\til{f}_{0}(z_0(x),x) & \ldots & \til{f}_{0}(z_{k}(x),x) \\
\vdots & & \vdots \\
\til{f}_{k-1}(z_{0}(x),x) & \ldots & \til{f}_{k-1}(z_k(x),x)\\
\til{f}_{l}(z_{0}(x),x) & \ldots & \til{f}_{l}(z_k(x),x)
\end{vmatrix}/\begin{vmatrix}
\til{f}_{0}(z_0(x),x) & \ldots & \til{f}_{0}(z_k(x),x) \\
\vdots & & \vdots \\
\til{f}_{k-1}(z_{0}(x),x) & \ldots & \til{f}_{k-1}(z_k(x),x)\\
\til{f}_{k}(z_{0}(x),x) & \ldots & \til{f}_{k}(z_k(x),x)
\end{vmatrix},
\end{equation}
where the functions $\til{f}_{j}(z,x)$ are defined in \eqref{def:fktilde}, and
$B_{k,l,n}(x):=B^{(n-l,n-k+1,\ldots,n-1,n)}(x)$. Denoting with $\wtil{\mathcal
A}_k$ the set of $x\in\cee$ for which the denominator in \eqref{geneig:Nikhier}
is zero, then the set $\wtil{\mathcal A}_k$ is finite. For any $x\in
\wtil{\mathcal A}_k\setminus S_{k}$ the right hand side of
\eqref{geneig:Nikhier} has a removable pole at $x$.
\end{lemma}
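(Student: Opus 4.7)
The plan is to derive this limit as an immediate consequence of the ratio asymptotics of the Riemann--Hilbert matrix obtained in Prop.~\ref{prop:ratio} and Prop.~\ref{prop:jk}, followed by a normal-family argument to secure uniformity and the two finiteness/regularity statements. Since both $B_{k,rn}$ and $B_{k,l,rn}$ are $(k+1)\times(k+1)$ minors of $Y_{rn}(x)$ built exclusively from its first $k+1$ columns, I would begin by truncating \eqref{Poincare:RH} to its first $k+1$ columns, obtaining
$$\lim_{n\to\infty}\begin{pmatrix} U_{n-1}(x)\\ U_n(x)\end{pmatrix}\bigl[\,\cdot\,\bigr]\, C_n'(x) = \begin{pmatrix}\vecv_0(x)&\cdots&\vecv_k(x)\\ z_0^{-1}(x)\vecv_0(x)&\cdots&z_k^{-1}(x)\vecv_k(x)\end{pmatrix},$$
where $C_n'(x)$ is the leading principal $(k+1)\times(k+1)$ upper-triangular submatrix of $C_n(x)$ (use Prop.~\ref{prop:jk} to replace the permutation by the identity), valid for $x\in\cee\setminus(S_+\cup S_-)$.

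Next, $B_{k,rn}(x)$ is the $(k+1)\times(k+1)$ minor whose rows correspond to positions $r-k,r-k+1,\ldots,r-1$ of the block $U_{n-1}$ together with position $0$ of the block $U_n$; the minor $B_{k,l,rn}(x)$ uses exactly the same rows except the first one is taken from position $r-l$ of $U_{n-1}$ instead of $r-k$. Taking determinants and using that $\det C_n'(x)$ is the same scalar in numerator and denominator, I would conclude
$$\lim_{n\to\infty}\frac{B_{k,l,rn}(x)}{B_{k,rn}(x)} = \frac{\det M_l(x)}{\det M_k(x)},$$
where $M_l(x)$ and $M_k(x)$ are the $(k+1)\times(k+1)$ matrices whose $j$-th column ($j\in[0:k]$) is built from the prescribed entries of $z_j^{-1}(x)\vecv_j(x)$ (top row) and $\vecv_j(x)$ (remaining rows) using the cofactor formula \eqref{eigenvector} for $\vecv_j$. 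At this stage the identification with \eqref{geneig:Nikhier} is a purely algebraic matter of matching the selected cofactors of $F(z,x)$ with the functions $\til f_j(z,x)$ of \eqref{def:fktilde}; this is the step that carries the bookkeeping weight, but it is essentially forced by the row pattern chosen above.

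For uniformity, Lemma~\ref{lemma:normalfamily} combined with Prop.~\ref{prop:geneigRH} shows that the family $\{B_{k,l,rn}/B_{k,rn}\}_n$ is normal on each compact subset of $\cee\setminus S_k$; pointwise convergence on $\cee\setminus(S_+\cup S_-)$ then upgrades to locally uniform convergence on $\cee\setminus S_k$ by a Vitali/Stieltjes--Osgood argument, and the limit is holomorphic there.

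Finally, for the finiteness of $\wtil{\mathcal A}_k$ and the removable poles: the denominator in \eqref{geneig:Nikhier} is a symmetric function of $z_0(x),\ldots,z_k(x)$ and hence extends to an algebraic function of $x$ with only finitely many zeros unless it is identically zero; the latter possibility is excluded because the limit ratio is locally bounded (by the normal family estimate), whereas an identically vanishing denominator against a nontrivial numerator would force the ratio to blow up on an open set. For $x\in\wtil{\mathcal A}_k\setminus S_k$ the pole of the right-hand side must be removable since the left-hand side, being a locally uniform limit of analytic functions away from $S_k$, remains holomorphic at $x$. The main obstacle is the algebraic step of matching the cofactors appearing in \eqref{eigenvector} with $\til f_j(z,x)$; everything else is routine once the generalized Poincar\'e framework and the normal-family bounds are in place.
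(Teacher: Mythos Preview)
Your overall route---extract the pointwise limit from Prop.~\ref{prop:ratio} with the permutation fixed by Prop.~\ref{prop:jk}, take $(k+1)\times(k+1)$ minors, cancel $\det C_n'$, identify the entries via \eqref{eigenvector} and \eqref{def:fktilde}, then upgrade to locally uniform convergence on $\cee\setminus S_k$ via Lemma~\ref{lemma:normalfamily}---is exactly what the paper does, and the removable-pole argument is fine.

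The gap is in your finiteness argument for $\wtil{\mathcal A}_k$. The denominator determinant is \emph{alternating}, not symmetric, in $z_0(x),\ldots,z_k(x)$; and in any case, symmetry in only $k+1$ of the $p+1$ roots would not make it a single-valued function of $x$, since the value still depends on \emph{which} $k+1$ branches are selected. So the inference ``symmetric $\Rightarrow$ algebraic function of $x$ $\Rightarrow$ finitely many zeros'' does not go through as stated. The paper resolves this by building a compact Riemann surface whose $(k+1)!\binom{p+1}{k+1}$ sheets are labeled by ordered $(k+1)$-tuples $(i_0,\ldots,i_k)$ in $[0:p]$, glued along the cuts $\Gamma_{i_j}$ and $\Gamma_{i_j-1}$; the collection of determinants $\det(\til f_m(z_{i_j}))$ then assembles into a single meromorphic function on this compact surface, hence with finitely many zeros once non-triviality is known. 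For non-triviality the paper invokes Remark~\ref{remark:detnonzero:gen}, which gives directly that both numerator and denominator are nonzero for every $x\in\cee\setminus(S_+\cup S_-)$; your boundedness argument alone does not exclude the possibility that numerator and denominator vanish together on an open set.
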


\begin{proof}
Remark~\ref{remark:detnonzero:gen} shows that the determinants in the numerator
and denominator in \eqref{geneig:Nikhier} are both nonzero if
$x\in\cee\setminus(S_+\cup S_-)$. For any fixed such $x$, we obtain
\eqref{geneig:Nikhier} by taking determinants of suitable submatrices in
Prop.~\ref{prop:ratio} (with $j_i=i$ for all $i$).
Lemma~\ref{lemma:normalfamily} shows the convergence holds uniformly on compact
subsets of $\cee\setminus S_{k}$.

Finally, let us prove that $\wtil{\mathcal A}_k$ is finite. Let $\mathcal R$ be
the compact Riemann surface with $(k+1)!\binom{p+1}{k+1}$ sheets which are
labeled by the ordered $(k+1)$-tuples $(i_0,i_1,\ldots,i_k)$ in $[0:p]$. On the
sheet $(i_0,i_1,\ldots,i_k)$ we cut away all the sets $\Gamma_{i_j}$ and
$\Gamma_{i_{j}-1}$, $j\in [0:k]$. If we cross such a cut, then we move to the
sheet labeled by $(\til i_0,\til i_1,\ldots,\til i_k)$ where $z_{\wtil i_j}$ is
the analytic continuation of $z_{i_j}$ through the cut. Now in the denominator
of \eqref{geneig:Nikhier} we can replace the role of $z_0,\ldots,z_k$ by
$z_{i_0},\ldots,z_{i_k}$. The collection of all these functions yields a
meromorphic function on $\mathcal R$. Since we already know that this function
is not identically zero on the sheet $(0,1,\ldots,k)$, it can indeed have only
finitely many zeros on that sheet. (Note that $\mathcal R$ can be disconnected;
in that case we restrict ourselves to the connected component(s) involving the
sheet $(0,1,\ldots,k)$).
\end{proof}

\subsection{Proofs of Proposition~\ref{prop:jk} and Theorem~\ref{theorem:Gammak:star}}
\label{subsection:Gammak:star}

In this section we prove Prop.~\ref{prop:jk} and
Theorem~\ref{theorem:Gammak:star}. Note that we did not use
Theorem~\ref{theorem:Gammak:star} prior to the statement of
Prop.~\ref{prop:jk}. Moreover, it is a general fact \cite[Prop.~1.1]{Del} that
each set $\Gamma_k$ associated to a Hessenberg matrix $H$ consists of a finite
union of analytic arcs.

For each $k\in [0:p],$ the sequence $\left(B_{k,n}(x)/B_{k,n+r}(x)\right)_{n}$
for $n$ tending to infinity converges pointwise for
$x\in\cee\setminus(\bigcup_j\Gamma_j\cup S_{k})$, by
\eqref{ratioasy:prodSk:bis}, and it is a normal family in $\cee\setminus S_{k}$
by Lemma~\ref{lemma:normalfamily}, recall the definition of $S_{k}$
in~\eqref{def:Sk}. Therefore, the convergence in \eqref{ratioasy:prodSk:bis} is
in fact uniform on compact subsets of $\cee\setminus S_{k}$ and the limit
function $z_{j_0}(x)\ldots z_{j_{k}}(x)$ is analytic there. Applying this
observation subsequently for $k\in [0:p]$, we see that for each $x\in\cee$,
there exists a permutation $(\til z_j(x))_{j=0}^{p}$ of the set
$(z_j(x))_{j=0}^{p}$ so that $\til z_0$ is analytic in $\cee\setminus S_+$,
$\til z_0\til z_1$ is analytic in $\cee\setminus S_-$, $\til z_0\til z_1\til
z_2$ is analytic in $\cee\setminus S_+$, $\til z_0\til z_1\til z_2\til z_3$ is
analytic in $\cee\setminus S_-$, and so on (alternatingly with $S_+$ and
$S_-$). In fact $\til z_i:=z_{j_i}$, $i\in [0:p]$. We also deduce
from~\eqref{zk:infty:intro} that as $x\rightarrow\infty$, $\til
z_0(x)=x^{-r}+O(x^{-r-1})$ and $\til z_j(x)=O(x^{r/p})$, $j\in [1:p]$, since it
is clear that for $x$ sufficiently large, $\til z_{0}(x)=z_{0}(x)$.

\subsubsection{Proofs of Prop.~\ref{prop:jk} and
Theorem~\ref{theorem:Gammak:star}(a)}

The proof will proceed in a very similar way to the one in \cite[Section
4]{DLL}. For convenience, we will list the main highlights of the proof but we
will sometimes refer to \cite{DLL} for the details. We will assume without loss
of generality that $r$ is a multiple of $p+1$, see also
Remark~\ref{remark:rmultipleofplpusone} below.

We already observed that for each $k\in [0:p]$,
\begin{equation}\label{ratioasy:prodSk:bisbis}
\lim_{n\to\infty} B_{k,n}(x)/B_{k,n+r}(x)=\pm\lim_{n\to\infty}
P_{k,n}(x)/P_{k,n+r}(x)=\til z_{0}(x)\ldots \til z_{k}(x),\qquad
x\in\cee\setminus S_{k}.
\end{equation}
It follows from \eqref{ratioasy:prodSk:bisbis} and
Theorem~\ref{theorem:interlace}(a) that the functions $\til z_{i}$ satisfy the
symmetry property $\til z_{i}(\omega x)=\til z_{i}(x)$, where $\omega=\exp(2\pi
\ir/(p+1))$; recall that $r$ is a multiple of $p+1$. In accordance to this
symmetry property, we define the functions
\[
\til y_{k}(x)=\til z_{k}(x^{1/(p+1)}),\qquad k\in [0:p],
\]
where we take the principal branch of $x^{1/(p+1)}$. (By the symmetry property,
the choice of the branch is irrelevant.) Note
that $\til y_{k}(x)$ is analytic in $\cee\setminus\er$.

Let us introduce the notation
\[
\er_{k}:= (-1)^{k}\er_{+}, \qquad k\in [0:p].
\]
We now define a measure $s_k$ on $\er_{k}$ with density
\begin{equation}\label{rho:0} \ud s_k(x)=\frac{1}{2\pi \ir}\frac{p+1}{r}\left( \frac{\til
y_{k,+}'(x)}{\til y_{k,+}(x)}- \frac{\til y_{k,-}'(x)}{\til y_{k,-}(x)} \right)
\ud x,\qquad x\in \er_{k},
\end{equation}
$k\in [0:p-1]$, where the prime denotes the derivative with respect to $x$, and
where the $+$ and $-$ subscripts stand for the boundary values obtained from
the upper or lower half of the complex plane, respectively. Note that the
measure \eqref{rho:0} is well-defined except for finitely many $x$, and its
density is integrable near each endpoint of its support \cite{DK}. We claim
that $s_k$ is a real-valued (possibly signed) measure on $\er_{k}$ with total
mass
\begin{equation}\label{rho:1}
s_k(\er_{k}):=\int_{\er_{k}} \ud s_k(x) = \frac{p-k}{p},\qquad k\in [0:p-1].
\end{equation}
Indeed, since the polynomials $P_{k,n}$~\eqref{geneig:Pkn} have real
coefficients, it follows from \eqref{ratioasy:prodSk:bisbis} that $\til
z_i(\bar x)=\overline{\til z_i(x)}$, where the bar denotes complex conjugation.
This shows that $s_{k}$ is a real-valued measure. For
$x\in\cee\setminus\er_{k}$, we have
\begin{equation}\label{contourdef:1}
\int_{\er_{k}}\frac{\ud s_{k}(t)}{x-t}=\frac{1}{2\pi
\ir}\frac{p+1}{r}\sum_{j=0}^{k}\int_{\er_{k}}\frac{1}{x-t}\,\left(\frac{\til
y_{j,+}'(t)}{\til y_{j,+}(t)}- \frac{\til y_{j,-}'(t)}{\til
y_{j,-}(t)}\right)\ud t =-\frac{p+1}{r}\sum_{j=0}^{k}\frac{\til y'_{j}(x)}{\til
y_{j}(x)},
\end{equation}
where in the first equality we used the fact that $\sum_{j=0}^{k-1} \til
y_{j}'(x)/\til y_{j}(x)=(\log\prod_{j=0}^{k-1} \til y_{j}(x))'$ is analytic
across $\er_{k}$, and the second equality follows by contour deformation and
the residue theorem. From the behavior of the
functions $\til y_{j}(x)$ near infinity we see that the right hand side of
\eqref{contourdef:1} behaves as $\frac{p-k}{p}x^{-1}+o(x^{-1})$ as
$x\to\infty$. This implies \eqref{rho:1}.

We then obtain from \eqref{rho:0}--\eqref{rho:1} that
\begin{equation}\label{totalmass:1} \frac{1}{\pi}\frac{p+1}{r}\int_{\er_{k}}
\Im\left(\frac{\til y_{k,+}'(x)}{\til y_{k,+}(x)}\right) \ud x =
\frac{p-k}{p},\qquad k\in [0:p-1],
\end{equation}
with $\Im$ denoting the imaginary part of a complex number.

As in \cite{DLL}, we now turn to the construction of a second collection of
auxiliary measures. The functions $z_{k}(x)$ are unambiguously defined in the
complement of $\bigcup_{k=0}^{p-1}\Gamma_{k}$, which is a finite union of
analytic arcs. Consider the functions
\[
y_{k}(x):=z_{k}(x^{1/(p+1)}),\qquad k\in [0:p],
\]
where we take the principal branch of $x^{1/(p+1)}$. The $\pm$-boundary values
of $y_{k}$ and $y_{k}'$ are well-defined at almost every point $x\in
\wtil\Gamma_k:=\Gamma_k^{p+1}$. This allows us to introduce the measures
\begin{equation}\label{rho:3}
\ud\sigma_{k}(x):=\frac{1}{2\pi \ir}\frac{p+1}{r}\sum_{j=0}^{k}\left(
\frac{y_{j,+}'(x)}{y_{j,+}(x)}-\frac{y_{j,-}'(x)}{y_{j,-}(x)}\right)\ud
x,\qquad x\in\wtil\Gamma_k.
\end{equation}
The measure $\sigma_k$ is closely related to the measure $\mu_k$ in
\eqref{measure:k}. In fact, for any Borel set $B$,
\begin{equation}\label{measuresk:root}
\sigma_{k}(B) = \mu_k(h^{-1}(B))
\end{equation}
where $h$ is the map $x\mapsto x^{p+1}$. In particular, $\sigma_k$ is a
positive measure and
\begin{equation}\label{contourdef:2}
\sigma_{k}(\wtil\Gamma_k) = \frac{p-k}{p},\qquad k\in [0:p-1].
\end{equation}
Alternatively, \eqref{contourdef:2} could be proved directly by using the same
argument as in \eqref{rho:1}.


Now let us take a fixed open interval $J\subset\er$ that does not contain any
intersection points or endpoints of the analytic arcs constituting
$\wtil\Gamma_k$, for every $k$. We also ask $J$ not to contain isolated
intersection points of the sets $\wtil\Gamma_{k}$ with the real axis. Thus
there exists an open connected set $U\subset\cee$ such that $U\cap \er=J$ and
moreover $U\cap\wtil\Gamma_k$ is either empty or equal to $J$, for any $k\in
[0:p-1]$. The boundary values $y_{k,+}(x)$ for $x\in J$ are then uniquely
defined and they vary analytically with $x$.

On the interval $J$, there exist indices $ 0\leq m_1<m_2<\ldots <m_L< p$ such
that
\begin{multline}\label{clusters} |y_{0,+}(x)|=\ldots =
|y_{m_1,+}(x)|<|y_{m_1+1,+}(x)|=\ldots = |y_{m_2,+}(x)|<\ldots
\\ <|y_{m_L+1,+}(x)|=\ldots = |y_{p,+}(x)|,
\end{multline}
for all $x\in J$. We define $m_0:=-1$ and $m_{L+1}:=p$.

We will see later that $m_{k+1}-m_k\in\{1,2\}$ for all $k$, i.e., each
``cluster" $|y_{m_k+1,+}(x)|=\ldots = |y_{m_{k+1},+}(x)|$ in \eqref{clusters}
can only have length 1 or 2. The Cauchy-Riemann equations imply \cite{DLL}
\begin{equation}\label{pairing:1} \Im\left( \frac{y_{m_k+1,+}'(x)}{y_{m_k+1,+}(x)} \right)\geq \ldots \geq
\Im\left( \frac{y_{m_{k+1},+}'(x)}{y_{m_{k+1},+}(x)} \right),\qquad x\in J,
\quad k\in [0:L],
\end{equation}
and the numbers in \eqref{pairing:1} satisfy the pairing
\begin{equation}\label{pairing:2} \Im\left(
\frac{y_{m_k+j,+}'(x)}{y_{m_k+j,+}(x)} \right) = -\Im\left(
\frac{y_{m_{k+1}+1-j,+}'(x)}{y_{m_{k+1}+1-j,+}(x)} \right),\qquad
j=1,\ldots,m_{k+1}-m_k.
\end{equation}
The underlying
 reason for \eqref{pairing:2} is that for any $x\in\er$, the
numbers $y_{k,+}(x)$, $k\in [0:p]$, are either real or they come in complex
conjugate pairs. This is trivial if $x>0$. If $x<0$ it can be seen
e.g.\ with the help of
Lemma~\ref{lemma:rotsym}, taking into account that $r$ is a multiple of $p+1$.


Now we get
\begin{multline*}
\sum_{k=0}^{p-1} \frac{p-k}{p}\ \ \geq\ \ \sum_{k=0}^{p-1}\sigma_k(\er)\ \
\geq\ \ \frac{1}{2\pi}\frac{p+1}{r}\int_{\er}\sum_{k=0}^{p} \left|\Im\left(
\frac{y_{k,+}'(x)}{y_{k,+}(x)} \right)\right|\ud x\\ =
\frac{1}{2\pi}\frac{p+1}{r}\int_{\er}\sum_{k=0}^{p} \left|\Im\left( \frac{\til
y_{k,+}'(x)}{\til y_{k,+}(x)} \right)\right|\ud x \\ =
\frac{1}{\pi}\frac{p+1}{r}\sum_{k=0}^{p-1} \int_{\er_{k}}\left|\Im\left(
\frac{\til y_{k,+}'(x)}{\til y_{k,+}(x)} \right)\right|\ud x\geq
\sum_{k=0}^{p-1} \frac{p-k}{p},
\end{multline*}
where the first relation uses \eqref{contourdef:2} and the positivity of
$\sigma_k$, the second relation follows exactly like in \cite[Sec.~4]{DLL},
the third one follows since the numbers $\til y_k$ form a permutation of the
$y_k$, and the fifth relation is a consequence of \eqref{totalmass:1}. Finally,
the fourth relation uses that on $\er_+$ we have $\Im\left( \frac{\til
y_{2k,+}'(x)}{\til y_{2k,+}(x)} \right)=-\Im\left( \frac{\til
y_{2k+1,+}'(x)}{\til y_{2k+1,+}(x)} \right)$ and on $\er_-$ we have $\Im\left(
\frac{\til y_{2k,+}'(x)}{\til y_{2k,+}(x)} \right)=-\Im\left( \frac{\til
y_{2k-1,+}'(x)}{\til y_{2k-1,+}(x)} \right)$; for $\er_+$ this follows since
$$\sum_{j=2k}^{2k+1} \Im\left( \frac{\til y_{j,+}'(x)}{\til y_{j,+}(x)}
\right)= \frac{1}{2\ir}\sum_{j=2k}^{2k+1}\left( \frac{\til y_{j,+}'(x)}{\til
y_{j,+}(x)}-\frac{\til y_{j,-}'(x)}{\til y_{j,-}(x)} \right) =0,\qquad
x\in\er_+,
$$
due to the fact that $(\log \wtil y_{2k}\wtil y_{2k+1})'$ is analytic on
$\er_+$.

From the above chain of inequalities we obtain:

\begin{lemma}\label{lemma:chain}
\begin{enumerate}
\item[$(a)$] We have
\begin{equation}\label{Gammak:Delta}\wtil\Gamma_k\subset\er,\qquad
k\in [0:p-1].\end{equation}
\item[$(b)$] Clusters of length $\geq 3$ in \eqref{clusters} cannot occur.
\end{enumerate}
\end{lemma}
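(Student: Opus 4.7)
The plan is to deduce both statements from the crucial observation that the chain of inequalities displayed just before the lemma opens and closes with the same quantity $\sum_{k=0}^{p-1}(p-k)/p$, so every intermediate inequality is forced to be an equality. Each link in the chain will then yield a rigid structural fact about the $\sigma_k$ and the branches $y_{k,+}$.

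To obtain (a), I would first focus on the leftmost inequality $\sum_k \sigma_k(\er) \leq \sum_k (p-k)/p$, which held because each $\sigma_k$ is positive with total mass $(p-k)/p$ by \eqref{contourdef:2}. Equality forces $\sigma_k(\er) = \sigma_k(\wtil\Gamma_k)$, i.e. $\sigma_k$ is concentrated on $\wtil\Gamma_k \cap \er$ for every $k$. To upgrade this to the set inclusion $\wtil\Gamma_k\subset\er$, I would argue that the density in \eqref{rho:3} cannot vanish identically on any analytic arc of $\wtil\Gamma_k$: at a generic interior point of such an arc, only $y_k$ and $y_{k+1}$ are exchanged across the cut (the lower-indexed branches remain single-valued there), so the density collapses to $\tfrac{p+1}{2\pi\ir\,r}(y_{k,+}'/y_{k,+}-y_{k+1,+}'/y_{k+1,+})$; its identical vanishing on a sub-arc would force $y_{k,+}/y_{k+1,+}$ to be constant in a neighborhood, which would make two branches of the algebraic curve $f(z,x)=0$ merge, a contradiction. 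Hence any putative arc of $\wtil\Gamma_k$ lying off the real axis would carry positive $\sigma_k$-mass, violating the equality just established.

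For (b) I would turn to the second inequality in the chain, which majorized $\sum_k \sigma_k(\er)$ by $\tfrac{p+1}{2\pi r}\int_\er \sum_{k=0}^{p}|\Im(y_{k,+}'/y_{k,+})|\,dx$ by a triangle-inequality argument combined with the pairing \eqref{pairing:2}. On a cluster of length $2$ in \eqref{clusters}, the two paired imaginary parts are exact negatives of one another, and their absolute values recombine precisely into the density of a single $\sigma_k$, so the triangle inequality is tight. On a hypothetical cluster of length $m\geq 3$, the pairing \eqref{pairing:1}--\eqref{pairing:2} still matches boundary indices symmetrically, but the ``interior'' indices of the cluster generate cross-terms whose absolute values do not coincide with the jump of $\sum_{j=0}^{k}y_j'/y_j$ across any single arc of some $\wtil\Gamma_{k'}$. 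The triangle inequality would therefore be strict on a subinterval of the open $J\subset\er$ where such a cluster persists, strictly widening the chain and contradicting the equalities deduced above.

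The main obstacle will be the bookkeeping in this last step: showing precisely that a length-$\geq 3$ cluster introduces at least one contribution $|\Im(y_{j,+}'/y_{j,+})|$ that cannot be absorbed into the density of any $\sigma_{k'}$. The cleanest route I see is to track the permutation of branches induced by analytic continuation across $J$ within the cluster, and to combine the chain of strict orderings in \eqref{pairing:1} with the involutive pairing in \eqref{pairing:2} to exhibit at least one unmatched absolute value; this produces the required strict gap and closes the argument.
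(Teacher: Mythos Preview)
Your proposal is correct and follows essentially the same approach as the paper, which simply defers the details to \cite{DLL}: both parts are extracted from the fact that the displayed chain of inequalities closes on itself, so every step must be an equality. Your non-degeneracy argument for (a) is exactly what the paper needs (and later invokes), and your identification of equality in the second step as the source of the cluster-length bound is the right mechanism; the ``bookkeeping'' you flag for (b) is indeed the only nontrivial content, and it is handled in \cite{DLL} precisely via the pairing \eqref{pairing:1}--\eqref{pairing:2} and the observation that a cluster of length $\geq 3$ forces a strict triangle inequality in the estimate $\sum_k\sigma_k(\er)\geq \tfrac{p+1}{2\pi r}\int_\er\sum_k|\Im(y_{k,+}'/y_{k,+})|\,dx$.
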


The proof is exactly as in \cite{DLL}.

From Lemma~\ref{lemma:chain}(a)--(b) we see that
$\wtil\Gamma_k\cap\wtil\Gamma_{k-1}$ contains at most finitely many points. We
also have that $(\log y_0\ldots y_{k-1})'$ is analytic in
$\cee\setminus\wtil\Gamma_{k-1}$, so in particular this holds on the interior
of each interval of $\wtil\Gamma_k$. Then \eqref{rho:3}--\eqref{contourdef:2}
imply that
\begin{equation}\label{totalmass:a:1}
\frac{1}{\pi}\frac{p+1}{r}\int_{\wtil\Gamma_{k}}
\Im\left(\frac{y_{k,+}'(x)}{y_{k,+}(x)}\right) \ud x =
\sigma_k(\wtil\Gamma_k)=\frac{p-k}{p},\qquad k\in [0:p-1].
\end{equation}
The measure $\sigma_{k}$ is non-trivial on each subarc of $\wtil \Gamma_{k}$ (see the proof of Lemma~\ref{lemma:chain}(a)).
From the positivity of $\sigma_k$ we also have
\begin{equation}\label{totalmass:a:3}
\Im\left(\frac{y_{k,+}'(x)}{y_{k,+}(x)}\right)\, \left\{\begin{array}{ll} >
0,&\qquad
x\in\inter(\wtil\Gamma_k),\\=-\Im\left(\frac{y_{k-1,+}'(x)}{y_{k-1,+}(x)}\right)\leq
0,& \qquad x\in\wtil\Gamma_{k-1},\\ =0,& \qquad
x\in\er\setminus\left(\wtil\Gamma_k\cup
\wtil\Gamma_{k-1}\right),\end{array}\right.
\end{equation}
where $\inter(\wtil\Gamma_k)$ denotes the interior of $\wtil \Gamma_{k}$ in the
topology of $\mathbb{R}$, where the first equality uses \eqref{pairing:2} and
Lemma~\ref{lemma:chain}(b).

Recall that $\wtil y_k(x)$ is analytic for $x\in\cee\setminus\er$. By
Lemma~\ref{lemma:chain}(a) the same holds for the function $y_k(x)$. Thus for
each fixed $k\in [0:p]$ we have that $\wtil y_k(x)=y_{j_k}(x)$ for all
$x\in\cee\setminus\er$ and for a certain $j_k$ which is independent of $x$.
From \eqref{totalmass:a:1}--\eqref{totalmass:a:3} and \eqref{totalmass:1} we
now easily find by induction on $k=0,1,\ldots$ that $j_k=k$ and moreover
$\wtil\Gamma_k\subset\er_k$. This proves Proposition~\ref{prop:jk} and
Theorem~\ref{theorem:Gammak:star}(a). $\bol$

\begin{remark}\label{remark:rmultipleofplpusone} In the above proof
we assumed that $r$ is a multiple of $p+1$. For general $r$, the symmetry
properties take the form $\wtil z_k(\omega x)=\omega^{-r} \wtil z_k(x)$ and
$z_k(\omega x)=\omega^{-r} z_k(x)$ (Recall Lemma~\ref{lemma:rotsym}). Then the
functions $\wtil y_k$ and $y_k$ have a jump on the whole of $\er_-$. But the
logarithmic derivatives do not have such a jump, therefore the proof goes
through in exactly the same way as above.
\end{remark}

\subsubsection{Proof of Theorem~\ref{theorem:Gammak:star}(b)--(c)}

Fix $k\in [0:p-1]$ and let $r$ be arbitrary. Let $I$ be an interval of
$\wtil\Gamma_k\subset\er_k$. We will assume that $\wtil\Gamma_k$ is not the
whole set $\er_+$ or $\er_-$ since otherwise there is nothing to prove. We
claim that
\begin{equation}\label{totalmass:interval}
\frac{r}{p+1}\sigma_k(I) = \left\{\begin{array}{ll} \in\enn,& \textrm{if
}I\cap\{0,\infty\}=\emptyset,\\
\in\enn/(p+1),& \textrm{if
}0\in I,\\
\in\enn/p,& \textrm{if }\infty\in I.
\end{array}\right.
\end{equation}
Let us assume this for the moment. By breaking each interval $I$ of
$\wtil\Gamma_k$ in smaller subintervals if necessary, in such a way that
\eqref{totalmass:interval} remains valid, we may assume that the left hand side
of \eqref{totalmass:interval} always lies in the range $(0,1]$. Then we have
from the total mass of $\sigma_k$ in \eqref{contourdef:2} that
\begin{equation}\label{intervals:1} a+\frac{b}{p+1}+ \frac{c}{p}\ =\
\frac{r}{p+1}\frac{p-k}{p}\ =\ \frac{k+1}{p+1}r-\frac{kr}{p},
\end{equation}
where $a$ denotes the number of intervals $I$ of $\wtil\Gamma_k$ for which the
left hand side of \eqref{totalmass:interval} equals $1$, and where $b\in [0:p]$
and $c\in [0:p-1]$ are nonzero \emph{only if} there is an interval $I\subset
\wtil\Gamma_k$ containing $0$ or $\infty$ respectively and with the left hand
side of \eqref{totalmass:interval} being $<1$. Since $p$ and $p+1$ are coprime,
from \eqref{intervals:1} we deduce that
$$\frac{b}{p+1}=
\frac{k+1}{p+1}r-\left\lfloor\frac{k+1}{p+1}r\right\rfloor,\qquad\textrm{and }
\frac{c}{p}= \left\lceil\frac{kr}{p}\right\rceil-\frac{kr}{p}.$$ Inserting this
in \eqref{intervals:1} we get
$$ a= \left\lfloor
\frac{k+1}{p+1}r\right\rfloor-\left\lceil\frac{kr}{p}\right\rceil.
$$
We then find for the total number $n_k$ of intervals of $\wtil\Gamma_k$ that
$$ n_k = \left\lfloor \frac{k+1}{p+1}r \right\rfloor -\left\lceil
\frac{kr}{p} \right\rceil+\mathbf{1}_{b\neq 0} +\mathbf{1}_{c\neq 0}=
\left\lceil \frac{k+1}{p+1}r \right\rceil -\left\lfloor \frac{kr}{p}
\right\rfloor,
$$
where the indicator function $\mathbf{1}_{x\neq 0}$ equals 1 if $x\neq 0$ and
zero otherwise, and where the second equality uses that $b\neq 0$ if and only
if $(k+1)r/(p+1)\not\in\enn$ and similarly $c\neq 0$ if and only if
$kr/p\not\in\enn\cup\{0\}$. This proves
Theorem~\ref{theorem:Gammak:star}(b)--(c).

Finally we prove \eqref{totalmass:interval}. Due to \eqref{measuresk:root} it
will be enough to prove that
\begin{equation}\label{totalmass:interval:star}
r\mu_k(J) = \left\{\begin{array}{ll} \in\enn,& \textrm{if
}J\cap\{\infty\}=\emptyset,\\
\in\enn/p,& \textrm{if }\infty\in J,
\end{array}\right.
\end{equation}
for any connected component $J$ of $\Gamma_k\subset S_k$. Thus $J$ is either a
line segment on $S_k\setminus\{0\}$ (there are $p+1$ rotations of such a
segment), or it is a set of the form $J=\{x\in S_k\mid |x|\leq a\}$ for some
$a>0$.

The first statement of \eqref{totalmass:interval:star} follows from
\cite[Prop~2.10]{Del}. Let us check it directly if $J\subset S_k$ is a line
segment of the form $[a,b]$ with $a,b\not\in\{0,\infty\}$. From the definition
\eqref{measure:k} of $\mu_k$ it is easy to see that
\begin{equation}\label{totalmass:interval:1}r\mu_k(J) = \frac{1}{2\pi}\lim_{x\to b,x\in J}
\left(\arg \prod_{j=0}^k z_{j,+}(x)-\arg \prod_{j=0}^k z_{j,-}(x)\right)
\end{equation}
where we take the argument function $\arg$ so that $\arg \prod_{j=0}^k
z_{j}(x)$ is continuous in $U\setminus J$ with $U$ a complex neighborhood of
$[a,b)$ ($U$ excludes $b$). This is possible since $\prod_{j=0}^k z_{j}(x)$ is analytic and
nonzero in $\cee\setminus \Gamma_k$. But then the expression between brackets
in \eqref{totalmass:interval:1} is an integral multiple of $2\pi$, yielding the
first statement in \eqref{totalmass:interval:star}.

To prove the second statement of \eqref{totalmass:interval:star}, note that
\eqref{totalmass:interval:1} remains valid if $b$ lies at $\infty$. In that
case, the behavior of the functions $z_k(x)$ near infinity in
\eqref{zk:infty:intro:bis} implies that the expression between brackets in \eqref{totalmass:interval:1} is
an integral multiple of $2\pi/p$. This proves \eqref{totalmass:interval:star}.
$\bol$

\section{Nikishin system}
\label{section:Nikishin}

In this section we prove Theorem~\ref{theorem:Nik:property} on the connection
with Nikishin systems. We start by recalling some ideas in \cite{AKVI}.

\subsection{Multiple orthogonality relations}

For any $l\in [0:p]$, we define the sequence of monic polynomials
$(Q_{n,l}(x))_{n=l}^{\infty}$ by the recurrence relation
\begin{equation}\label{recurrencerel:k} xQ_{n,l}(x)=Q_{n+1,l}(x)+a_{n-p}\,
Q_{n-p,l}(x),\qquad n\geq l,\end{equation} with initial conditions
\begin{equation}\label{initialcond:k}
Q_{l,l}(x)\equiv 1,\qquad Q_{l-1,l}(x)\equiv\cdots\equiv Q_{l-p,l}(x)\equiv
0.\end{equation} Note that $\deg Q_{n,l}=n-l$ and that 
$Q_{n,0}(x)\equiv Q_n(x)$. Moreover, the $p+1$ sequences
$(Q_{n,l}(x))_{n=0}^{\infty}$ form a basis for the space of all solutions
$(q_{n})_{n=0}^{\infty}$ to the difference equation
\[
x q_{n}=q_{n+1}+a_{n-p}\,q_{n-p},\qquad n\geq p.
\]

\begin{lemma}\label{lemma:nuk} (The measures $\nu_1,\ldots,\nu_p$; see \cite{AKVI}:)
Suppose that $a_n>0$ for all $n$ and the numbers $a_n$ are uniformly bounded.
There exists an increasing sequence of positive integers $(n_j)_{j=0}^{\infty}$
such that for any fixed $l\in [1:p]$, we have
\begin{itemize}
\item[(a)]
\begin{equation}\label{nuk:Helly}
\lim_{j\to\infty} \frac{Q_{(p+1)n_j,l}(x)}{Q_{(p+1)n_j}(x)} = \int
\frac{\ud\nu_l(t)}{x-t},
\end{equation}
uniformly for $x$ in compact subsets of $\cee\setminus S_+$, where $\nu_l$ is a
compactly supported measure on $S_+$.
\item[(b)] The moments of $\nu_l$ are uniquely determined from the condition
\eqref{nuk:Helly}, independently of the choice of the sequence
$(n_j)_{j=0}^{\infty}$.
\item[(c)]
The measure $\nu_l$ can be written as
\begin{equation}\label{nuk:symmetry}
\ud\nu_l(t) = t^{1-l} \ud\til\nu_l(t^{p+1}),
\end{equation}
for a compactly supported, positive measure $\til\nu_l$ supported on $S_+^{p+1}
= \er_+$. Thus for $l=1$ the measure $\nu_l$ is rotationally invariant under
rotations over $2\pi/(p+1)$ while for $l>1$ it is rotationally invariant up to
a monomial factor.\end{itemize}
\end{lemma}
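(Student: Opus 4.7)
The plan is to follow the classical Helly-selection approach of \cite{AKVI}. The three parts are addressed largely independently after a normal-family step. For (a), I would first establish that $\bigl(Q_{n,l}(x)/Q_n(x)\bigr)_n$ is a normal family on compact subsets of $\cee\setminus S_+$. The zeros of $Q_n$ lie on a bounded subset of $S_+$ (Eiermann-Varga together with the uniform boundedness of $(a_n)$), and $Q_{n,l}$ admits a combinatorial upper bound of the form $|Q_{n,l}(x)|\leq C^n$ in the spirit of Prop.~\ref{prop:comb}; combined with a matching lower bound on $|Q_n(x)|$ away from $S_+$, this yields a uniform bound on the ratio. Degree considerations ($\deg Q_{n,l}=n-l$ versus $\deg Q_n=n$) give $Q_{n,l}(x)/Q_n(x)=O(x^{-l})$ as $x\to\infty$. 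Montel's theorem and a diagonal extraction over $l\in[1:p]$ produce a single subsequence $(n_j)$ along which each ratio converges locally uniformly. Writing the approximant as the Cauchy transform of the atomic measure $\sum_{\zeta:Q_n(\zeta)=0}\frac{Q_{n,l}(\zeta)}{Q_n'(\zeta)}\delta_\zeta$ on $S_+$ and passing to a weak-$\ast$ subsequential limit (supports lie in a fixed compact) realises the limit as the Cauchy transform of a compactly supported finite complex measure $\nu_l$.

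Part (b) is a short check: the first $N$ coefficients of the Laurent expansion of $Q_{n,l}(x)/Q_n(x)$ at $x=\infty$ are polynomial expressions in $a_0,\ldots,a_{n-1}$ and stabilize for $n$ large, so the moments $\int t^k\ud\nu_l(t)$ are intrinsic to the recurrence data and do not depend on the choice of subsequence.

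Part (c) is the substantial step. Because the recurrence \eqref{recurrencerel:k} couples only indices differing by $p+1$, an induction on $n$ starting from $Q_{l,l}\equiv 1$ gives $Q_{n,l}(\om x)=\om^{n-l}Q_{n,l}(x)$ with $\om:=\exp(2\pi\ir/(p+1))$. Specializing to $n=(p+1)n_j$ and passing to the limit in \eqref{nuk:Helly} yields $F_l(\om x)=\om^{-l}F_l(x)$ for the limit function $F_l$; a change of variables in the Cauchy integral then forces the rotational equivariance $\ud\nu_l(\om t)=\om^{1-l}\ud\nu_l(t)$ on $S_+$, which is equivalent to the existence of a complex measure $\til\nu_l$ on $\er_+$ with $\ud\nu_l(t)=t^{1-l}\ud\til\nu_l(t^{p+1})$. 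The hard part is to show that $\til\nu_l$ is \emph{positive}. My plan is to exploit the rotational symmetry at the polynomial level: writing $Q_{(p+1)m}(x)=R_m(y)$ and $Q_{(p+1)m,l}(x)=x^{p+1-l}R_{m,l}(y)$ with $y=x^{p+1}$, the polynomials $R_m$ and $R_{m,l}$ have degrees $m$ and $m-1$ in $y$, and $R_m$ has simple positive zeros by the totally-positive permuted-matrix argument of Section~\ref{subsection:EV}. A parallel total-positivity argument in the spirit of Cor.~\ref{prop:geneigTNN} then shows that the zeros of $R_{m,l}$ strictly interlace those of $R_m$, so that the residues in the partial fraction decomposition $R_{m,l}(y)/R_m(y)=\sum_i\alpha_{i,m}/(y-\eta_{i,m})$ all share the same positive sign. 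Passing to the limit $m\to\infty$ exhibits $\til\nu_l$ as a weak-$\ast$ limit of positive atomic measures on $\er_+$, giving the required positivity.
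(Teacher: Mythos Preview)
Your ingredients are right but assembled in the wrong order, and this creates a genuine gap in part~(a). The crude bounds you invoke do \emph{not} give a uniform estimate on $|Q_{n,l}(x)/Q_n(x)|$: a combinatorial upper bound $|Q_{n,l}(x)|\le C^n$ together with the product lower bound $|Q_n(x)|=\prod_i|x-\zeta_i|\ge d^n$ (where $d$ is the distance from $x$ to the zero set) yields only $(C/d)^n$, which in general blows up. Likewise, your atomic measures $\sum_\zeta Q_{n,l}(\zeta)\,Q_n'(\zeta)^{-1}\delta_\zeta$ carry no a~priori bound on their total variation (the residues are complex and of uncontrolled modulus), so neither Helly nor Banach--Alaoglu is available to extract a weak-$\ast$ limit.

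The fix is exactly the interlacing you postpone to~(c), and this is how the paper (following \cite{AKVI}) proceeds: interlacing is the \emph{first} step, not the last. Once you know that, in the variable $y=x^{p+1}$, the zeros of $R_{m,l}$ and $R_m$ interlace on $\er_+$, the partial-fraction decomposition
\[
\frac{R_{m,l}(y)}{R_m(y)}=\sum_i\frac{\alpha_{i,m}}{y-\eta_{i,m}},\qquad \alpha_{i,m}>0,\quad \sum_i\alpha_{i,m}=1,
\]
already exhibits $Q_{(p+1)m,l}(x)/Q_{(p+1)m}(x)=x^{p+1-l}\int(y-t)^{-1}\,\ud\tau_{m,l}(t)$ with $\tau_{m,l}$ a \emph{probability} measure supported in a fixed compact subset of $\er_+$. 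This gives the uniform bound on compact subsets of $\cee\setminus S_+$ for free, and Helly's selection theorem applied to the family $(\tau_{m,l})_m$ produces the subsequence $(n_j)$ and the limit measure $\til\nu_l$ simultaneously, establishing (a) and the positivity in~(c) in one stroke. Your total-positivity sketch for the interlacing is reasonable; just move it to the beginning. Part~(b) is fine as stated.
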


Lemma~\ref{lemma:nuk} was shown by Aptekarev-Kalyagin-Van Iseghem \cite{AKVI}.
The key fact for \eqref{nuk:Helly} is that for any $n\in\enn$ and $l\in [1:p]$
the zeros of $Q_{(p+1)n,l}(x)$ and $Q_{(p+1)n}(x)$ interlace (in a suitable
sense) on the star-like set $S_+$. The existence of a sequence
$(n_j)_{j=0}^{\infty}$ such that \eqref{nuk:Helly} holds then follows from the
Helly selection theorem, see \cite{AKVI}.

Recall from Theorem~\ref{theorem:Qn:as:mop:0} that the polynomials $Q_n(x)$ are
multiple orthogonal with respect to the measures $\nu_1,\ldots,\nu_p$ defined
in \eqref{nuk:Helly}, in the sense of \eqref{Qn:MOP:0}.

\smallskip

We will assume throughout this section that we are in the exactly periodic
case~\eqref{periodic:exact:twodiag} and that~\eqref{ordering:as} holds. We can
assume that the sequence $n_j$ in \eqref{nuk:Helly} is such that each
$(p+1)n_j$ is a multiple of $r$; this follows directly from the freedom in
choosing a convergent subsequence in the proof of Lemma \ref{lemma:nuk} in
\cite{AKVI}. For a fixed $l\in [0:p]$, let
\[
T_{n}(x):=Q_{n+l,l}(x),\qquad n\geq 0.
\]
By \eqref{recurrencerel:k} and \eqref{initialcond:k}, this sequence satisfies
\begin{equation}\label{recurrenceT}
x T_{n}(x)=T_{n+1}(x)+a_{n+l-p}T_{n-p}(x),\qquad n\geq 0,
\end{equation}
with initial conditions
\begin{equation}\label{initialcondT}
T_{0}(x)\equiv 1,\qquad T_{-1}(x)\equiv\cdots\equiv T_{-p}(x)\equiv 0.
\end{equation}
It follows that the block Toeplitz symbol associated with
$(T_{n})_{n=0}^{\infty}$ is $Z^{-l} F(z,x) Z^{l}$, with $Z$ and $F$ given by
\eqref{def:Sshift} and \eqref{symbol}, respectively.

Using Theorem~\ref{theorem:Widomformula2} and simple considerations, we deduce
that for any index sequence $(n_j)_{j=0}^{\infty}$ as described above, we have
\begin{equation}\label{def:fk1}
\lim_{j\to\infty} \frac{Q_{(p+1)n_j,l}(x)}{Q_{(p+1)n_j}(x)}
=\lim_{j\rightarrow\infty}\frac{T_{(p+1)n_{j}-l}(x)}{Q_{(p+1)n_j}(x)}=
\frac{f_l(z_0(x),x)}{f_0(z_0(x),x)},\qquad l\in [0:p],
\end{equation}
uniformly on compact subsets of $\cee\setminus S_{+}$,
where the functions $f_l$ are the following minors of the block Toeplitz
symbol:
\begin{equation}\label{fk:def}
\begin{array}{l}
f_0(z,x)=(-1)^{r} z^{-1}\det F^{r-1,0}(z,x),\\[0.1cm]
f_{l}(z,x)=(-1)^{l} \det F^{l-1,0}(z,x),\qquad l\in [1:r].
\end{array}
\end{equation}
Note that the functions $f_{l}(z_{0}(x),x)$, $l\in[0:p]$, are analytic in
$\mathbb{C}\setminus\Gamma_{0}$, and that $f_0(z,x)$ has an extra factor
$z^{-1}$ in comparison to the other functions $f_l(z,x)$. Let
$$
\mathcal{A}_{0}:=\{x\in\cee\setminus\Gamma_{0}:
\frac{f_{l}(z_{0}(x),x)}{f_{0}(z_{0}(x),x)}\,\,\mbox{has a non-removable pole
at}\,\,x\mbox{ for some $l\in [1:p]$}\}.
$$
From the statement of Theorem~\ref{theorem:Widomformula2} we know that the set
$\mathcal{A}_{0}$ is finite. Moreover, since the functions
$Q_{n,l}(x)/Q_{n}(x)$ are analytic on $\mathbb{C}\setminus S_{+}$, we deduce
from \eqref{def:fk1} that $\mathcal{A}_{0}\subset S_{+}$.


\subsection{Formal Nikishin system}

In this section we will introduce a hierarchy of functions $f_{l,k}$, $0\leq k<
l\leq p$, which will be identified later as the Cauchy transforms of certain
measures that form the different layers of a Nikishin system on
$(\Gamma_{0},\ldots,\Gamma_{p-1})$.

\smallskip

From \eqref{nuk:Helly} and \eqref{def:fk1} we obtain
\begin{equation}\label{Cauchytransfnu}
\int \frac{\ud\nu_l(t)}{x-t} =
\frac{f_l(z_0(x),x)}{f_0(z_0(x),x)}=:f_{l,0}(z_0(x),x),\qquad l\in [1:p],
\end{equation}
for $x\in\cee\setminus (\Gamma_{0}\cup\mathcal{A}_{0})$. The measures $\nu_{l}$
and the functions $f_{l,0}$ will form layer $0$ of the Nikishin hierarchy, as
we will show later in this section. We also deduce that the measures $\nu_{l}$
are supported on $\Gamma_{0}\cup\mathcal{A}_{0}$.

We consider the functions $$
\frac{f_{l,0}(z_{0,+}(x),x)-f_{l,0}(z_{0,-}(x),x)}{f_{1,0}(z_{0,+}(x),x)-f_{1,0}(z_{0,-}(x),x)},\qquad
x\in\Gamma_0,$$ $l\in [2:p]$. (These expressions will be well-defined as we
will show in the next section.) The relations $z_{0,\pm}=z_{1,\mp}$ on
$\Gamma_0$ imply that these functions can be meromorphically extended to
$\cee\setminus\Gamma_1$; we denote the resulting functions by
\begin{equation}\label{cauchy:hier2}
f_{l,1}(z_0(x),z_1(x),x) :=
\frac{f_{l,0}(z_{1}(x),x)-f_{l,0}(z_{0}(x),x)}{f_{1,0}(z_{1}(x),x)-f_{1,0}(z_{0}(x),x)},
\end{equation}
and we observe that $f_{l,1}$ is a symmetric function of its two arguments
$z_0$ and $z_1$. This will form layer $1$ of the Nikishin hierarchy.

Next we consider the functions
$$\frac{f_{l,1}(z_{0}(x),z_{1,+}(x),x)-f_{l,1}(z_{0}(x),z_{1,-}(x),x)}{f_{2,1}(z_{0}(x),z_{1,+}(x),x)
-f_{2,1}(z_{0}(x),z_{1,-}(x),x)},\qquad x\in\Gamma_1,$$ $l\in [3:p]$. They can
be extended to $\cee\setminus\Gamma_2$ by the functions
\begin{equation}\label{cauchy:hier3}
f_{l,2}(z_0(x),z_1(x),z_2(x),x) :=
\frac{f_{l,1}(z_{0}(x),z_{2}(x),x)-f_{l,1}(z_{0}(x),z_{1}(x),x)}{f_{2,1}(z_{0}(x),z_{2}(x),x)
-f_{2,1}(z_{0}(x),z_{1}(x),x)},
\end{equation}
and we observe that $f_{l,2}$ is a symmetric function of its three arguments
$z_0,z_1,z_2$ (this is a bit harder to see now). This will form layer $2$ of
the Nikishin hierarchy.

We can continue this procedure and set \begin{multline*}
f_{l,k}(z_0(x),\ldots,z_{k}(x),x) \\ =
\frac{f_{l,k-1}(z_0(x),\ldots,z_{k-2}(x),z_k(x),x)-f_{l,k-1}(z_0(x),\ldots,z_{k-2}(x),z_{k-1}(x),x)}
{f_{k,k-1}(z_0(x),\ldots,z_{k-2}(x),z_k(x),x)-f_{k,k-1}(z_0(x),\ldots,z_{k-2}(x),z_{k-1}(x),x)},
\end{multline*}
for $l\in [k+1:p]$ and $k\in [1:p-1]$, using induction on $k$. It allows a
determinantal formula:

\begin{lemma}\label{lemma:linalgdet} Consider the functions $f_l$,
$l\in [0:p]$ in \eqref{fk:def}. Define the hierarchy of functions $f_{l,k}$,
$0\leq k< l\leq p$, as explained above. Abbreviating $f_l(z_k(x)) :=
f_l(z_k(x),x)$ for each $l$, we have
\begin{equation}\label{cauchy:hierl:det}
f_{l,k}(z_0(x),\ldots,z_k(x),x) = \left|\begin{array}{@{\hspace{0cm}} c
@{\hspace{0.2cm}} c @{\hspace{0.2cm}} c @{\hspace{0cm}}}
f_{0}(z_0(x)) & \ldots & f_{0}(z_{k}(x)) \\
\vdots & & \vdots \\
f_{k-1}(z_{0}(x)) & \ldots & f_{k-1}(z_k(x))\\
f_{l}(z_{0}(x)) & \ldots & f_{l}(z_k(x))
\end{array}\right|/\left|\begin{array}{@{\hspace{0cm}} c @{\hspace{0.2cm}} c @{\hspace{0.2cm}} c @{\hspace{0cm}}}
f_{0}(z_0(x)) & \ldots & f_{0}(z_k(x)) \\
\vdots & & \vdots \\
f_{k-1}(z_{0}(x)) & \ldots & f_{k-1}(z_k(x))\\
f_{k}(z_{0}(x)) & \ldots & f_{k}(z_k(x))
\end{array}\right|.
\end{equation}
We also have
\begin{multline}\label{cauchy:hierl:detbis}
f_{l,k,+}(z_0(x),\ldots,z_k(x),x)-f_{l,k,-}(z_0(x),\ldots,z_k(x),x) =
\\ -\frac{\left|\begin{array}{@{\hspace{0cm}} c @{\hspace{0.1cm}} c @{\hspace{0.1cm}} c @{\hspace{0cm}}}
f_{0}(z_0(x)) & \ldots & f_{0}(z_{k-1}(x)) \\
\vdots & & \vdots \\
f_{k-1}(z_{0}(x)) & \ldots & f_{k-1}(z_{k-1}(x))\\
\end{array}\right|\left|\begin{array}{@{\hspace{0cm}} c @{\hspace{0.1cm}} c @{\hspace{0.1cm}} c @{\hspace{0cm}}}
f_{0}(z_0(x)) & \ldots & f_{0}(z_{k+1}(x)) \\
\vdots & & \vdots \\
f_{k}(z_{0}(x)) & \ldots & f_{k}(z_{k+1}(x))\\
f_{l}(z_{0}(x)) & \ldots & f_{l}(z_{k+1}(x))
\end{array}\right|}{\left|\begin{array}{@{\hspace{0cm}} c @{\hspace{0.1cm}} c @{\hspace{0.1cm}} c @{\hspace{0cm}}}
f_{0}(z_0(x)) & \ldots & f_{0}(z_{k}(x)) \\
\vdots & & \vdots \\
f_{k}(z_{0}(x)) & \ldots & f_{k}(z_{k}(x))
\end{array}\right|\left|\begin{array}{@{\hspace{0cm}} c @{\hspace{0.1cm}} c @{\hspace{0.1cm}} c @{\hspace{0.1cm}} c @{\hspace{0cm}}}
f_{0}(z_0(x)) & \ldots & f_{0}(z_{k-1}(x)) & f_{0}(z_{k+1}(x)) \\
\vdots & & \vdots & \vdots \\
f_{k}(z_{0}(x)) & \ldots & f_{k}(z_{k-1}(x)) & f_{k}(z_{k+1}(x))
\end{array}\right|},
\end{multline}
for $x\in\Gamma_{k}$, where in the right hand side of
\eqref{cauchy:hierl:detbis} we define the values $z_j(x)$ as the limiting
values obtained from the $+$-side of $\Gamma_{k}$ (picking another labeling of
the $z_j(x)$ so that \eqref{ordering:rootszk} holds can only change the sign in
\eqref{cauchy:hierl:detbis}), and where we set the determinant of an empty
matrix as~$1$.
\end{lemma}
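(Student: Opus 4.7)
The plan is to prove \eqref{cauchy:hierl:det} by induction on $k$ and then deduce \eqref{cauchy:hierl:detbis} from it. The key algebraic tool throughout is the Desnanot--Jacobi (Sylvester) identity: for an $n\times n$ matrix $A$ with $n\geq 2$ and indices $i_1<i_2$, $j_1<j_2$,
\begin{equation*}
\det(A)\,\det A[\{i_1,i_2\};\{j_1,j_2\}] = \det A[\{i_1\};\{j_1\}]\det A[\{i_2\};\{j_2\}] - \det A[\{i_1\};\{j_2\}]\det A[\{i_2\};\{j_1\}],
\end{equation*}
where $A[I;J]$ is the submatrix of $A$ with the rows in $I$ and columns in $J$ deleted. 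It is convenient to write $\Delta_{S}(y_0,\ldots,y_m)$ for the determinant $\det(f_{i_s}(y_j))_{s,j=0}^m$ associated to an ordered tuple $S=(i_0,\ldots,i_m)$ of row-labels; the quantities on both sides of \eqref{cauchy:hierl:det} are of this form.

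For \eqref{cauchy:hierl:det} the base case $k=0$ is the defining identity $f_{l,0}(z_0,x)=f_l(z_0,x)/f_0(z_0,x)$ from \eqref{Cauchytransfnu}. For the inductive step, set $\xi:=(z_0,\ldots,z_{k-2})$ and apply the induction hypothesis to the four instances of $f_{l,k-1}$ and $f_{k,k-1}$ appearing in the recursion that defines $f_{l,k}$. Clearing common denominators gives
\begin{equation*}
f_{l,k}(z_0,\ldots,z_k) = \frac{\Delta_{(0,\ldots,k-2,l)}(\xi,z_k)\,\Delta_{(0,\ldots,k-1)}(\xi,z_{k-1}) - \Delta_{(0,\ldots,k-2,l)}(\xi,z_{k-1})\,\Delta_{(0,\ldots,k-1)}(\xi,z_k)}{\Delta_{(0,\ldots,k-2,k)}(\xi,z_k)\,\Delta_{(0,\ldots,k-1)}(\xi,z_{k-1}) - \Delta_{(0,\ldots,k-2,k)}(\xi,z_{k-1})\,\Delta_{(0,\ldots,k-1)}(\xi,z_k)}.
\end{equation*}
Applying Desnanot--Jacobi to the $(k+1)\times(k+1)$ matrix with row-tuple $(f_0,\ldots,f_{k-2},f_{k-1},f_l)$ and column-tuple $(\xi,z_{k-1},z_k)$, with $i_1=k-1,i_2=k,j_1=k-1,j_2=k$, rewrites the numerator as $\Delta_{(0,\ldots,k-2)}(\xi)\,\Delta_{(0,\ldots,k-1,l)}(\xi,z_{k-1},z_k)$; the same identity with $f_l$ replaced by $f_k$ in the row-tuple factorises the denominator analogously. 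The common factor $\Delta_{(0,\ldots,k-2)}(\xi)$ cancels and we obtain \eqref{cauchy:hierl:det}.

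For \eqref{cauchy:hierl:detbis}, observe that $z_0,\ldots,z_{k-1}$ and all the $f_i$ extend analytically across $\Gamma_k$, while $z_k$ and $z_{k+1}$ satisfy $|z_k|=|z_{k+1}|$ on $\Gamma_k$ and exchange boundary values there. With the convention stated in the lemma, $f_{l,k,-}$ is therefore obtained from the right-hand side of \eqref{cauchy:hierl:det} by replacing $z_k$ with $z_{k+1}$ in the last column of both determinants. Setting $\xi':=(z_0,\ldots,z_{k-1})$ and writing $f_{l,k,+}-f_{l,k,-}$ over the common denominator $\Delta_{(0,\ldots,k)}(\xi',z_k)\Delta_{(0,\ldots,k)}(\xi',z_{k+1})$ (which is exactly the denominator in \eqref{cauchy:hierl:detbis}), the numerator becomes
\begin{equation*}
\Delta_{(0,\ldots,k-1,l)}(\xi',z_k)\,\Delta_{(0,\ldots,k)}(\xi',z_{k+1}) - \Delta_{(0,\ldots,k-1,l)}(\xi',z_{k+1})\,\Delta_{(0,\ldots,k)}(\xi',z_k).
\end{equation*}
A second application of Desnanot--Jacobi to the $(k+2)\times(k+2)$ matrix with row-tuple $(f_0,\ldots,f_{k-1},f_k,f_l)$ and column-tuple $(\xi',z_k,z_{k+1})$, taking $i_1=k,i_2=k+1,j_1=k,j_2=k+1$, identifies this numerator with $-\Delta_{(0,\ldots,k-1)}(\xi')\,\Delta_{(0,\ldots,k,l)}(\xi',z_k,z_{k+1})$, yielding exactly \eqref{cauchy:hierl:detbis}. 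The main obstacle will be careful bookkeeping of these signs and verification that the $\pm$-identification $z_{k,\pm}\leftrightarrow z_{k+1,\mp}$ is compatible with the ordering convention \eqref{ordering:rootszk}; by contrast, the symmetry of $f_{l,k}$ in its arguments noted after \eqref{cauchy:hier3} is a transparent consequence of the determinantal representation \eqref{cauchy:hierl:det}.
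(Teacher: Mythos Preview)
Your proof is correct and follows precisely the approach the paper sketches: induction on $k$ using Sylvester's determinant identity (which is the same as your Desnanot--Jacobi identity). The paper's own proof is a one-line reference to this method, and you have simply written out the details that the paper leaves implicit; there is no substantive difference in strategy.
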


\begin{proof} The determinantal formulas follow by induction on $k=0,1,2,\ldots$ by means of
a basic linear algebra calculation using Sylvester's determinant identity
\cite{Gant}. See also \cite[Sec.~8]{AKS}.
\end{proof}

\begin{remark}\label{analyticity:flk}
As in the proof of Lemma~\ref{lemma:PklnPk}, we see that the
denominator in the right-hand side
of~\eqref{cauchy:hierl:det} vanishes only for finitely many $x\in\cee$. Taking
into account the relations $z_{i,\pm}=z_{i+1,\mp}$ on $\Gamma_{i}$,
$i\in[0:k-1]$, we see that the ratio in~\eqref{cauchy:hierl:det} is in fact
analytic in $\mathbb{C}\setminus(\Gamma_{k}\cup\mathcal{A}_{k})$, where
$\mathcal{A}_{k}$ is a finite set in $\mathbb{C}\setminus\Gamma_{k}$.
\end{remark}

\begin{lemma}\label{cor:degree} Let $r$ be a multiple of $p$ and assume the ordering \eqref{ordering:as}.
For any $0\leq k<l\leq p$ there exists $C\neq 0$ such that the following
asymptotics hold for $x\to\infty$:
$$ \begin{vmatrix}
f_{0}(z_0(x)) & \ldots & f_{0}(z_{k}(x)) \\
\vdots & & \vdots \\
f_{k-1}(z_{0}(x)) & \ldots & f_{k-1}(z_k(x))\\
f_{l}(z_{0}(x)) & \ldots & f_{l}(z_k(x))
\end{vmatrix}/\begin{vmatrix}
f_{0}(z_0(x)) & \ldots & f_{0}(z_k(x)) \\
\vdots & & \vdots \\
f_{k-1}(z_{0}(x)) & \ldots & f_{k-1}(z_k(x))\\
f_{k}(z_{0}(x)) & \ldots & f_{k}(z_k(x))
\end{vmatrix}
= C x^{k-l}(1+O(x^{-p-1})).
$$
\end{lemma}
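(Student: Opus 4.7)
The plan is to compute the Laurent expansion at $x = \infty$ of $f_{l, k}(z_0(x), \ldots, z_k(x), x)$ directly from the determinantal formula~\eqref{cauchy:hierl:det}, using the root asymptotics and a permutation expansion of the two $(k+1) \times (k+1)$ determinants. By Remark~\ref{analyticity:flk} together with Theorem~\ref{theorem:Gammak:star}(b) (which ensures that $\Gamma_k$ is bounded under the ordering assumption~\eqref{ordering:as}), $f_{l, k}$ is analytic in a deleted neighborhood of $\infty$, so its leading asymptotic is well-defined.

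First, I would pin down the root asymptotics. Since $r$ is a multiple of $p$, one has $d = p$ in~\eqref{zk:infty:intro:bis}, and combined with~\eqref{ordering:as} and~\eqref{ordering:rootszk} this gives $z_0(x) = x^{-r}(1 + O(x^{-p-1}))$ and $z_i(x) = c_i\, x^{r/p}(1 + O(x^{-p-1}))$ for $i \in [1:p]$, with $c_i = (\prod_{n=0}^{r/p-1} b_{pn + i - 1})^{-1}$ pairwise distinct and strictly increasing. The sharp order $O(x^{-p-1})$ of the relative error in each $z_i$ follows from substituting a Laurent ansatz into $f(z_i, x) = 0$ and using~\eqref{f:series}--\eqref{fp}. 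Next, I would compute the leading $x$-orders of the entries $f_j(z_i(x), x)$. Substituting $z_0 \sim x^{-r}$ directly into the polynomial $f_j(z, x) = (-1)^j \det F^{j-1, 0}(z, x)$ (or $f_0 = z^{-1} f_r$) yields $f_j(z_0, x) = (-1)^r x^{r - j}(1 + O(x^{-p-1}))$. For $i \geq 1$ the analysis uses $f(z_i, x) = 0$ to identify the ``resonant'' factor in each minor's product form; the cleanest model is the case $r = p$, where $f_l(z, x) = (-1)^r \prod_{m = l}^{r-1}(x - b_m z)$ combined with $\prod_{m=0}^{r-1}(x - b_m z_i) = z_i^{-1}$ yields the dichotomy ``$f_j(z_i, x) \sim x^{r - j}$ with nonzero coefficient when $j \geq i$, and of strictly smaller order otherwise''; this dichotomy then extends to general $r = mp$ by case analysis over wrap and non-wrap subdiagonal contributions.

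Third, expanding both determinants as signed sums over permutations of $\{0, 1, \ldots, k\}$, the identity permutation dominates: its contribution to the denominator has $x$-exponent $(k+1)r - k(k+1)/2$, and to the numerator $(k+1)r - k(k-1)/2 - l$, giving ratio exactly $x^{k - l}$ with a nonzero constant $C$ expressible in closed form via the leading-entry coefficients and the $c_i$'s (nonvanishing is guaranteed by the distinctness of the $c_i$). A transposition, by converting one ``$j \geq i$'' entry of order $x^{r-j}$ into a ``$j < i$'' entry of strictly smaller order, costs at least $p + 1$ in the $x$-exponent of the product; hence all non-identity permutations are suppressed by $O(x^{-p-1})$, yielding the desired $f_{l, k} = C\, x^{k - l}(1 + O(x^{-p-1}))$ with $C \neq 0$.

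The main obstacle is establishing the uniform bound ``loss of at least $p + 1$ per transposition'' in Step~3 when $r > p$, since for those $r$ the entry $f_0(z_i, x)$ does not follow the clean $x^{-j - 1}$ pattern of the $r = p$ case (for instance, $f_0(z_i, x) \sim x^{r/p - 1}$ when $r = 2p$), so the precise ``smaller order'' in the dichotomy of the previous step varies with $r/p$ and must be tracked carefully. A uniform resolution bypassing case-by-case computation exploits the rank-one structure of $\operatorname{adj} F(z_i, x)$ (since $F(z_i, x)$ has a one-dimensional kernel): the cofactors $\det F^{j - 1, 0}(z_i, x)$ factor as products of a left null-vector component $w^{(i)}_0$ and a right null-vector component $u^{(i)}_{j-1}$, so the entry ratios $f_j(z_i, x)/f_{j'}(z_i, x)$ depend only on the components of the right null vector, which can be analyzed asymptotically from the structure of $F(z_i, x) = Z^{-1} + Z^p\diag(b) - xI_r$ as $x \to \infty$.
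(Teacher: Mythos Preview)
Your overall strategy coincides with the paper's: both reduce Lemma~\ref{cor:degree} to entrywise asymptotics of $f_j(z_i(x),x)$, from which the determinant ratio follows because the $(k+1)\times(k+1)$ matrix is lower triangular with nonzero diagonal after factoring out $\diag(x^{r},x^{r-1},\ldots,x^{r-k+1},x^{r-l})$. Your ``identity permutation dominates'' argument and the paper's factorization~\eqref{det:estimate} are the same observation.

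The substantive difference is in how the entry asymptotics are obtained. The paper states and proves a separate Lemma~\ref{lemma:degree} giving, for $k\in[1:p]$ and $j\in[1:p]$, the precise dichotomy $f_j(z_k(x),x)=C_{j,k}x^{r-j}+O(x^{r-j-p-1})$ with $C_{j,k}\neq 0$ if $k\leq j$, and $f_j(z_k(x),x)=O(x^{r-j-p-1})$ if $k>j$. Its method for general $r$ (a multiple of $p$) is not a direct case analysis but a conjugation trick: one rewrites $PDF(z,x)D^{-1}P^{-1}$ in a block cyclic form with $p$ diagonal blocks $A_0,\ldots,A_{p-1}$ of size $r/p$, reduces $\det F^{j-1,0}$ by Gaussian elimination to a product of block minors, and then applies Cauchy--Binet. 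The non-vanishing of the leading constants $C_{j,k}$ (equivalently, of the constant $C$ in the ratio) comes from a sign lemma for two-diagonal cyclic matrices (Lemma~\ref{lemma:basic:twodiag}), which rules out cancellation in the Cauchy--Binet sum. The $z_0$ column is handled not by direct substitution but via the Widom-type formula of Theorem~\ref{theorem:Widomformula2}.

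Your proposed alternative for the $z_k$, $k\geq 1$, columns --- exploiting that $\operatorname{adj}F(z_k,x)$ has rank one so that the cofactors $\det F^{j-1,0}(z_k,x)$ factor through null-vector components --- is a legitimate route and would indeed make the ratios $f_j(z_k)/f_{j'}(z_k)$ depend only on components of a single null vector. However, as you note yourself, this is the crux and you have not carried it out: you would still need to compute the asymptotics of those null-vector components as $x\to\infty$ and, crucially, prove the relevant components are nonzero to leading order (which is exactly what the paper's sign lemma does in its framework). Your appeal to ``distinctness of the $c_i$'' suffices for $r=p$ but does not obviously extend; for general $r$ some structural argument replacing Lemma~\ref{lemma:basic:twodiag} is needed. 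So the proposal is a sound outline matching the paper's architecture, but the step you flag as the main obstacle is precisely the one the paper resolves with a concrete technique you have not supplied.
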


The proof of Lemma~\ref{cor:degree} is postponed to
Section~\ref{subsection:proof:degree}.

For convenience, we define a new symbol:
\begin{equation}\label{def:Fhat}
\what{F}(z,x):=P_{r} F(z,x)^{T} P_{r},
\end{equation}
where $P_{r}$ is the $r\times r$ permutation matrix that consists of $1$'s in
the main antidiagonal and $0$'s elsewhere, i.e., the $(i,j)$ entry of $P_r$
equals $\delta_{i+j-r+1}$, for $i,j\in [0:r-1]$.
Note that $\what{F}(z,x)$ is the reflection of $F(z,x)$ with respect to its
main antidiagonal. We can rewrite \eqref{fk:def} as
\begin{equation}\label{fk:def2}
\begin{array}{l}
f_0(z,x)=(-1)^{r} z^{-1}\det \what{F}^{r-1,0}(z,x),\\[0.1cm]
f_{l}(z,x)=(-1)^{l} \det \what{F}^{r-1,r-l}(z,x),\qquad l\in [1:r].
\end{array}
\end{equation}
We also define the functions
\begin{equation}\label{def:fktilde}
\begin{array}{l}
\til{f}_{0}(z,x)=(-1)^{r}z^{-1}\det F^{r-1,0}(z,x),\\[0.1cm]
\til{f}_{l}(z,x)=(-1)^{l}\det F^{r-1,r-l}(z,x),\qquad l\in [1:r].
\end{array}
\end{equation}

\subsection{Proof of Theorem~\ref{theorem:Nik:property}}
\label{subsection:Nikproof}

Lemma~\ref{lemma:linalgdet} asserts that the measures $\nu_j$ form a
\emph{formal} Nikishin system in the sense of \cite{AKS}. To prove
Theorem~\ref{theorem:Nik:property}, we will now show that they are a
\emph{true} Nikishin system.

\begin{theorem}\label{theorem:mukl} (Nikishin property.)
Let $H$ be the two-diagonal Hessenberg matrix~\eqref{H:entries:twodiag}, with
entries $a_{n}>0$ that satisfy
\eqref{periodic:exact:twodiag}--\eqref{ordering:as}. For each pair of indices
$k,l$ with $0\leq k< l\leq p$,
\begin{equation}\label{Cauchytransf:property}
f_{l,k}(z_0(x),\ldots,z_k(x),x)=\int\frac{\ud\nu_{l,k}(t)}{x-t},
\end{equation}
for a measure $\nu_{l,k}$ supported on $\Gamma_k\cup\mathcal{A}_{k}$, where
$\mathcal{A}_{k}$ is a finite subset of $S_{k}\setminus\Gamma_{k}$
($S_{-}\setminus\Gamma_{k}$) if $k$ is even (odd). The measure $\nu_{l,k}$
takes the form \eqref{nukl:symmetry}, for a measure $\til\nu_{l,k}$ with
constant sign supported on $\er_{+}$ ($\er_{-}$) if $k$ is even (odd).
\end{theorem}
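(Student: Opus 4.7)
The plan is to identify $f_{l,k}$ with a limit of ratios of generalized eigenvalue determinants, and then extract both the Cauchy-transform representation and the constant-sign property of the limiting measure from the interlacing of the zeros of those determinants.

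First, I would rewrite the right-hand side of \eqref{Cauchytransf:property} as a limit of ratios of polynomials. Lemma~\ref{lemma:PklnPk} already gives that, uniformly on compact subsets of $\cee\setminus S_{k}$, the ratio $B_{k,l,rn}(x)/B_{k,rn}(x)$ converges to a determinantal expression built from the functions $\til f_{j}$ of \eqref{def:fktilde}, while \eqref{cauchy:hierl:det} expresses $f_{l,k}$ as an analogous determinantal ratio built from the functions $f_{j}$ of \eqref{fk:def}. The remaining algebraic step is to check that these two determinantal expressions coincide up to a nonzero constant; this follows from the cofactor identity $\det\what F^{i,j}=\det F^{r-1-i,r-1-j}$ combined with a direct row/column symmetry of the block Toeplitz symbol $F(z,x)$. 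Composing with Prop.~\ref{prop:geneigRH:bis}, this identifies $f_{l,k}$ (up to a constant) with $\lim_{n\to\infty} P_{k,l,rn}(x)/P_{k,rn}(x)$ on $\cee\setminus(S_{k}\cup\mathcal{A}_{k})$.

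Next, I would apply Theorem~\ref{theorem:interlace:kl}. Part~(a) yields, after absorbing common powers of $x$,
\[
\frac{P_{k,l,rn}(x)}{P_{k,rn}(x)} = x^{k-l}\,\frac{\wtil P_{k,l,rn}(x^{p+1})}{\wtil P_{k,rn}(x^{p+1})},
\]
where $\wtil P_{k,l,rn}$ and $\wtil P_{k,rn}$ are polynomials in $y=x^{p+1}$ with real coefficients and with all zeros lying in $\er_{+}$ or $\er_{-}$ according to the parity of $k$. Part~(b) provides the weak interlacing of those zeros. By a classical fact about real polynomials with interlacing zeros, the partial-fraction decomposition
\[
\frac{\wtil P_{k,l,rn}(y)}{\wtil P_{k,rn}(y)} = \alpha_{0}^{(n)} + \sum_{i}\frac{\alpha_{i}^{(n)}}{y-y_{i}^{(n)}}
\]
has residues $\alpha_{i}^{(n)}$ of a single sign; that sign is determined by $k$ together with the leading-coefficient ratio, which can be read off from the combinatorial expansion of Prop.~\ref{prop:comb}. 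Passing to the limit $n\to\infty$ using the normal-family estimate of Lemma~\ref{lemma:normalfamily} (and its variant for $P_{k,l,n}/P_{k,n}$) together with a Helly-type selection argument, the signed atomic measures $\sum_{i}\alpha_{i}^{(n)}\delta_{y_{i}^{(n)}}$ converge, at the level of their Cauchy transforms, to the Cauchy transform of a measure $\til\nu_{l,k}$ on $\er_{\pm}$ of constant sign. The factor $x^{k-l}$, pulled through the substitution $y=x^{p+1}$, produces exactly the monomial prefactor in \eqref{nukl:symmetry} and gives \eqref{Cauchytransf:property}. The ordering assumption \eqref{ordering:as} enters here through Lemma~\ref{cor:degree}, which fixes the correct normalization at infinity and rules out extra polynomial terms in the Cauchy transform.

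The main obstacle will be the precise identification of the support of $\nu_{l,k}$ and, in particular, the finiteness of the exceptional set $\mathcal{A}_{k}$. The weak-convergence statement of Theorem~\ref{theorem:muk:RHminor}, extended to general Riemann–Hilbert minors via Remark~\ref{remark:weak:RHminors}, ensures that the bulk of the poles $y_{i}^{(n)}$ accumulates on $\wtil\Gamma_{k}=\Gamma_{k}^{p+1}$ according to the push-forward of $\mu_{k}$, yielding the $\Gamma_{k}$-component of $\supp\nu_{l,k}$. The remaining, escaped poles can only accumulate at those finitely many $x\in\cee\setminus\Gamma_{k}$ where the denominator in \eqref{cauchy:hierl:det} vanishes, which by Remark~\ref{analyticity:flk} is a finite set; these accumulation points provide the atoms in $\mathcal{A}_{k}$. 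Making this dichotomy rigorous while simultaneously propagating the sign from the atomic approximants to both the continuous and the discrete components of the limit measure is the most delicate point of the argument, and is the step that genuinely uses the full strength of Theorem~\ref{theorem:interlace:kl} rather than merely the weaker star-like location of the zeros.
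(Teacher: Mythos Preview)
Your overall strategy is the same as the paper's: identify $f_{l,k}$ with a limit of ratios $P_{k,l,rn}/P_{k,rn}$, use Theorem~\ref{theorem:interlace:kl} to get same-sign partial fractions, and then invoke Lemma~\ref{cor:degree} to kill the constant term. There is, however, a genuine gap in your first step.

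Lemma~\ref{lemma:PklnPk} gives the limit of $B_{k,l,rn}/B_{k,rn}$ as a determinantal ratio built from the functions $\til f_{j}(z,x)=(-1)^{j}\det F^{r-1,r-j}(z,x)$, whereas $f_{l,k}$ in \eqref{cauchy:hierl:det} is built from $f_{j}(z,x)=(-1)^{j}\det F^{j-1,0}(z,x)$. These are cofactors taken along a \emph{row} of $F$ versus along a \emph{column}; they are components of the right, respectively left, null vector of the rank-$(r-1)$ matrix $F(z_{k}(x),x)$. Your proposed ``row/column symmetry of $F(z,x)$'' does not exist: the two-diagonal symbol $F=Z^{-1}+Z^{p}\diag(b_{0},\dots,b_{r-1})-xI$ is \emph{not} persymmetric unless the $b_{j}$ are arranged palindromically, so there is no reason for the $\til f$-ratio and the $f$-ratio to agree. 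Already for $k=0$ this would require $\det F^{l-1,0}(z_{0},x)=\det F^{r-1,r-l}(z_{0},x)$ for all $l$, which fails generically.

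The paper resolves this not by an algebraic identity but by \emph{changing the operator}: it passes to the reflected Hessenberg matrix $\what H$ whose symbol is $\what F=P_{r}F^{T}P_{r}$, and applies Theorem~\ref{theorem:interlace:kl} and Lemma~\ref{lemma:PklnPk} to the $P_{k,l,n}$ of $\what H$. For $\what H$ the functions $\til f_{j}$ of \eqref{def:fktilde} are, by \eqref{fk:def2}, exactly the $f_{j}$ of the original $H$, so \eqref{geneig:Nikhier} now produces the ratio \eqref{cauchy:hierl:det} directly. After this switch the rest of your argument (interlacing $\Rightarrow$ constant-sign residues $\Rightarrow$ Cauchy transform of a signed measure; $\alpha=0$ via Lemma~\ref{cor:degree}; support contained in $\Gamma_{k}\cup\mathcal A_{k}$ by analyticity, cf.\ Remark~\ref{analyticity:flk}) is exactly what the paper does. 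Incidentally, the support identification you call ``the most delicate point'' is in fact a one-liner: analyticity of the Cauchy transform off $\Gamma_{k}\cup\mathcal A_{k}$ forces the support there, and finiteness of $\mathcal A_{k}$ is Remark~\ref{analyticity:flk}.
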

\begin{proof}
By \eqref{Cauchytransfnu} we know that \eqref{Cauchytransf:property} is valid
for $k=0$ and $l\in [1:p]$. We also know already that the measures $\nu_{l,0}$
are supported on $\Gamma_{0}\cup\mathcal{A}_{0}$, with $\mathcal{A}_{0}$ a
finite subset of $S_{+}\setminus\Gamma_{0}$, and that \eqref{nukl:symmetry}
holds for $k=0$.

In what follows we are going to work with the polynomials associated with the
symbol $\what{F}$ introduced in \eqref{def:Fhat}. That is, we consider now the
two-diagonal Hessenberg operator $\what{H}$ with periodic structure whose first
$r$ coefficients are given in the following order:
\[
a_{r-p-1}, a_{r-p-2},\ldots, a_{1}, a_{0}, a_{r-1},\ldots, a_{r-p}.
\]
We associate with the new operator $\what{H}$ the polynomials $P_{k,l,n}$ as
defined in Section~\ref{subsection:Nik}. These are the polynomials we will
employ below.

Let $1\leq k< l\leq p$. Applying Theorem~\ref{theorem:interlace:kl},
\begin{equation*}\label{proofNik:a}
\frac{P_{k,l,n}(x)}{P_{k,n}(x)}=x^{k-l}\,\frac{\wtil P_{k,l,n}(x^{p+1})}{\wtil
P_{k,n}(x^{p+1})},
\end{equation*}
where the zeros of $\wtil P_{k,n}$ and $\wtil P_{k,l,n}$ lie in $\er_{+}$
($\er_{-}$) if $k$ is even (odd), and are weakly interlacing.

Let us denote by $d_{n}$ the degree of $\wtil P_{k,n}$. Thanks to the weak
interlacing property, we know that either $\deg \wtil P_{k,l,n}=d_{n}$ or $\deg
\wtil P_{k,l,n}=d_{n}+1$. In any case, we can write
\[
\frac{\wtil P_{k,l,n}(z)}{z \wtil
P_{k,n}(z)}=\alpha_{-1,n}+\sum_{i=0}^{d_{n}}\frac{\alpha_{i,n}}{z-x_{i,n}},
\]
where $x_{0,n}:=0$, $\{x_{i,n}\}_{i=1}^{d_{n}}$ denotes the zeros of $\wtil
P_{k,n}$, and for $i\geq 0$, we set $\alpha_{i,n}=0$ if $z-x_{i,n}$ is a common
factor of the numerator and denominator. It also follows from the interlacing
property that all the coefficients $\{\alpha_{i,n}\}_{i=0}^{d_{n}}$ have the
same sign.


Assume for the moment that $k$ is even, so the zeros of $\wtil P_{k,n}$ lie in
$\er_{+}$. Let $\nu_{l,k,n}$ be the discrete measure supported on
$\{x_{i,n}\}_{i=0}^{d_{n}}$ with mass $\alpha_{i,n}$ at $x_{i,n}$. Hence
\begin{equation}\label{Cauchy:discrete}
\frac{\wtil P_{k,l,n}(z)}{z \wtil
P_{k,n}(z)}=\alpha_{-1,n}+\int\frac{\ud\nu_{l,k,n}(t)}{z-t}.
\end{equation}
It is easy to check that $\alpha_{-1,n}\leq 0$ if $\nu_{l,k,n}\geq 0$, and
$\alpha_{-1,n}\geq 0$ if $\nu_{l,k,n}\leq 0$. Therefore the function
\eqref{Cauchy:discrete} maps $(-\infty,0)$ into $(-\infty,0)$ if $\nu_{l,k,n}$
is positive, and maps $(-\infty,0)$ into $(0,\infty)$ if $\nu_{l,k,n}$ is
negative. Moreover, it maps the upper half plane into the lower half plane
(upper half plane) if $\nu_{l,k,n}$ is positive (negative).

An important ingredient in our proof is formula \eqref{geneig:Nikhier}, which
certainly applies in our situation. We should apply this formula for the
$B$-polynomials associated with the operator $\what{H}$ (or the symbol
$\what{F}$). Therefore, taking into account
\eqref{fk:def2}--\eqref{def:fktilde}, the determinants in the right-hand side
of \eqref{geneig:Nikhier} are in this situation constructed with the functions
$f_{k}(z,x)$.

We know by Lemma~\ref{lemma:PklnPk} that
\[
\lim_{n\rightarrow\infty}\frac{\wtil P_{k,l,n}(z)}{z \wtil P_{k,n}(z)}=:G(z),
\]
uniformly on compact subsets of $\cee\setminus[0,\infty)$. Since $G\not\equiv
0$, the measures $\nu_{l,k,n}$ are all positive for $n$ sufficiently large, or
they are all negative for $n$ sufficiently large. Therefore $G$ is an analytic
function in $\cee\setminus[0,\infty)$ that satisfies one of the following
properties:
\begin{itemize}
\item[$(a)$] $G(x)<0$ for all $x\in (-\infty,0)$ and $G$ maps the upper half plane into the lower half plane,
\item[$(b)$] $G(x)>0$ for all $x\in (-\infty,0)$ and $G$ maps the upper half plane into the upper half plane.
\end{itemize}
Applying Theorem A.4 from \cite{KreinNud}, we deduce that
\[
G(z)=\alpha+\int\frac{\ud \hat{\nu}_{l,k}(t)}{z-t}, \qquad
z\in\cee\setminus[0,\infty),
\]
where $\alpha\in\er$ and $\hat{\nu}_{l,k}$ is a measure with constant sign
supported on $\er_{+}$.

In particular, applying the above equations together with
\eqref{geneig:Nikhier}, \eqref{geneig:RH}, and \eqref{cauchy:hierl:det}, we
obtain
\begin{equation}\label{limit}
\lim_{n\rightarrow\infty}\frac{P_{k,l,n}(x)}{P_{k,n}(x)}
=x^{k-l+p+1}\Big(\alpha+\int\frac{\ud\hat{\nu}_{l,k}(t)}{x^{p+1}-t}\Big)=(-1)^{l-k}f_{l,k}(z_{0}(x),\ldots,z_{k}(x),x),
\end{equation}
uniformly on compact subsets of $\cee\setminus S_{+}$. Now,
Lemma~\ref{cor:degree} implies that $\alpha=0$. Finally, using
\[
\frac{x^{k-l+p+1}}{x^{p+1}-t}=\frac{1}{p+1}\sum_{m=0}^{p}\frac{(e^{\frac{2\pi
\ir m}{p+1}}s)^{k-l+1}}{x-e^{\frac{2\pi \ir m}{p+1}}s},\qquad t=s^{p+1},
\]
we deduce
\[
x^{k-l+p+1}\int_{\er_{+}}\frac{\ud \hat{\nu}_{l,k}(t)}{x^{p+1}-t}
=\frac{1}{p+1}\int_{S_{+}}\frac{s^{k-l+1}}{x-s}\,\ud\hat{\nu}_{l,k}(s^{p+1}).
\]
This justifies \eqref{Cauchytransf:property} and \eqref{nukl:symmetry} with
$\til{\nu}_{l,k}:=\frac{(-1)^{l-k}}{p+1}\,\hat{\nu}_{l,k}$. and we see from
\eqref{limit} that the function $f_{l,k}(z_{0}(x),\ldots,z_{k}(x),x)$ has no
singularities outside $S_{+}$. The proof is analogous for odd values of $k$. It
is clear from the analyticity of $f_{l,k}(z_{0}(x),\ldots,z_{k}(x),x)$ on
$\cee\setminus(\Gamma_{k}\cup\mathcal{A}_{k})$, see
Remark~\ref{analyticity:flk}, that the measure $\nu_{l,k}$ is supported on
$\Gamma_{k}\cup\mathcal{A}_{k}$.
\end{proof}

\noindent\textit{Proof of Theorem~\ref{theorem:Nik:property}:} We have
precisely shown in Theorem~\ref{theorem:mukl} that if
$\ud\nu_{l,k}(x)=g_{l,k}(x) \ud x+\ud \nu_{l,k}^{(s)}(x)$ denotes the Lebesgue
decomposition of $\nu_{l,k}$, then for $l\in [k+2:p]$,
\[
\frac{g_{l,k}(x)}{g_{k+1,k}(x)}=\frac{f_{l,k,+}(z_{0}(x),\ldots,z_{k}(x),x)-f_{l,k,-}(z_{0}(x),\ldots,z_{k}(x),x)}
{f_{k+1,k,+}(z_{0}(x),\ldots,z_{k}(x),x)-f_{k+1,k,-}(z_{0}(x),\ldots,z_{k}(x),x)},\qquad
x\in\Gamma_{k},
\]
is expressible as the Cauchy transform of a measure $\nu_{l,k+1}$ supported on
the star complementary to $\Gamma_{k}$. \hfill $\square$



\subsection{Proof of Lemma~\ref{cor:degree}}
\label{subsection:proof:degree}

In this section we will prove Lemma~\ref{cor:degree}. First we establish the
following result.

\begin{lemma}\label{lemma:degree} (Asymptotics of $f_j(z_k,x)$:)
Let $r$ be a multiple of $p$ and assume the ordering \eqref{ordering:as}. The
functions $f_j(z_k(x),x)$ in \eqref{fk:def} behave for $x\to\infty$ as
\begin{equation}\label{degree:f0}
\begin{array}{ll}
f_0(z_0(x),x) =(-1)^{r} x^{r}+O(x^{r-p-1}),& \\ f_0(z_k(x),x)=O(x^{r-p-1}),&
k\in [1:p],
\end{array}
\end{equation}
and
\begin{equation}\label{degree:fj}
\begin{array}{ll}
f_j(z_0(x),x) =(-1)^{r} x^{r-j}+O(x^{r-j-p-1}),& \\
f_j(z_k(x),x)=C_{j,k}\,x^{r-j}+O(x^{r-j-p-1}),& k\in [1:j],\\
f_j(z_k(x),x)=O(x^{r-j-p-1}),& k\in [j+1:p],
\end{array}
\end{equation}
for $j\in [1:p]$, for certain constants $C_{j,k}\neq 0$, $k\leq j$.
\end{lemma}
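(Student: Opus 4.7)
My plan is to compute the asymptotic behavior of each minor $\det F^{l-1,0}(z,x)$ by treating it as a polynomial in $z$ with polynomial $x$-coefficients, and then substituting the known asymptotics of the roots $z_k(x)$. Write
$$
\det F^{l-1,0}(z,x) = \sum_{j=0}^{N_l} g_j^{(l)}(x)\, z^j,
$$
where $N_l \leq p-1$ for $l\in[1:r-1]$ and $N_r \leq p$, reflecting how many of the wrap-around entries $zb_{k+r-p}$ of $F$ survive after removing row $l-1$.

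The pivotal preliminary computation is of $g_0^{(l)}(x) = \det F^{l-1,0}(0,x)$. At $z=0$ the wrap-around terms vanish, and rows $0,1,\ldots,l-2$ of the submatrix each carry a single nonzero entry, namely the $1$ from the super-diagonal of $F$; expanding successively along those rows reduces the determinant to $\det(\til H_{r-l} - xI)$, where $\til H_{r-l}$ is the truncated two-diagonal Hessenberg matrix with shifted coefficients $b_l,b_{l+1},\ldots,b_{r-p-1}$. Hence $g_0^{(l)}(x) = (-1)^{r-l}\til Q_{r-l}(x)$ for the monic polynomial $\til Q_n$ obeying the shifted $(p+2)$-term recurrence $x\til Q_n = \til Q_{n+1} + b_{n+l-p}\til Q_{n-p}$. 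Since consecutive nontrivial terms of $\til Q_n$ differ in degree by at least $p+1$, this yields $g_0^{(l)}(x) = (-1)^{r-l}x^{r-l} + O(x^{r-l-p-1})$ for $l\in[1:p]$, while the reduction collapses entirely for $l=r$ to give $g_0^{(r)}(x) = 1$.

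Substituting $z_0(x) = x^{-r}(1+O(x^{-p-1}))$ (obtained by balancing $(-1)^{r-1}z^{-1}$ against $\mathsf{f}_0(x) = (-1)^r x^r + O(x^{r-p-1})$ in $f(z,x)=0$), only $g_0^{(l)}$ contributes at leading order; the remaining terms $g_j^{(l)}(x) z_0^j$ with $j\geq 1$ are controlled by the trivial bound $\deg g_j^{(l)} \leq r-1$ together with $z_0^j = O(x^{-rj})$ and $r\geq l+p$, giving a total error $O(x^{r-l-p-1})$. Hence $f_l(z_0,x) = (-1)^r x^{r-l} + O(x^{r-l-p-1})$ for $l\in[1:p]$, and multiplying $g_0^{(r)}=1$ by the prefactor $(-1)^r z_0^{-1}$ in \eqref{fk:def} produces $f_0(z_0,x) = (-1)^r x^r + O(x^{r-p-1})$. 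For the roots $z_k(x) = c_k x^{r/p}(1+O(x^{-p-1}))$, $k\in[1:p]$, with $c_k = \bigl(\prod_{n=0}^{r/p-1} b_{pn+k-1}\bigr)^{-1}$ and $c_1<\cdots<c_p$ under \eqref{ordering:as}, several terms $g_j^{(l)}(x) z_k^j$ can contribute simultaneously at order $x^{r-l}$, yielding a leading coefficient $C_{l,k}$ which is a polynomial in the $b_i$'s. For $k\in[1:l]$ the nonvanishing $C_{l,k}\neq 0$ follows from the strict ordering; for $k\in[l+1:p]$ the cancellation $C_{l,k}=0$ must be established.

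\textbf{Main obstacle.} The hardest step is proving the cancellation $C_{l,k}=0$ for $k\in[l+1:p]$. My preferred route is a careful combinatorial analysis of the balanced leading contributions: each such contribution is, up to sign, a monomial in $c_k$ multiplied by a product of specific $b_i$'s, and the cancellation emerges from the elementary identity $c_k \prod_{n=0}^{r/p-1} b_{pn+k-1} = 1$ combined with the precise pattern of wrap-around indices that appear in $g_{p-1}^{(l)}$. An alternative, more conceptual route exploits the rank-one structure of the adjugate of $F(z_k,x)$ at a simple root: writing $\det F^{l-1,0}(z_k,x) \propto (\vecv_k)_0\, (\mathbf{w}_k)_{l-1}$ for right/left null vectors of $F(z_k,x)$ and then tracking the degrees of their components. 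The same arguments cover $f_0(z_k,x) = O(x^{r-p-1})$ for $k\in[1:p]$, with the $z^{-1}$ prefactor absorbed into the asymptotic analysis.
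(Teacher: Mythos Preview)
Your treatment of the $z_0$ case is correct and more direct than the paper's: you expand $\det F^{l-1,0}(z,x)$ in powers of $z$ and observe that only the constant term $g_0^{(l)}(x)=(-1)^{r-l}x^{r-l}+O(x^{r-l-p-1})$ survives at leading order when $z_0(x)\sim x^{-r}$, whereas the paper invokes the Widom-type formula~\eqref{Widom:2} for the reflected symbol. (Minor caveat: your claim ``$r\geq l+p$'' fails when $r=p$; a slightly sharper degree bound on $g_j^{(l)}$, or a separate one-line computation in that case, is needed.)

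For $z_k$ with $k\in[1:p]$ there is a genuine gap. You correctly flag the crux --- showing $C_{l,k}=0$ for $k>l$ and $C_{l,k}\neq 0$ for $k\leq l$ --- but neither proposed route is carried out. The combinatorial route would require computing the leading $x$-term of each $g_j^{(l)}(x)\,c^j x^{jr/p}$ and proving that their sum, as a polynomial in $c$, vanishes exactly at $c=c_{l+1},\ldots,c_p$; the single relation $c_k\prod_n b_{pn+k-1}=1$ is far from sufficient, and your remark about ``the precise pattern of wrap-around indices in $g_{p-1}^{(l)}$'' overlooks that all of $g_0^{(l)},\ldots,g_{p-1}^{(l)}$ contribute at the same order. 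The adjugate route merely shifts the problem to the asymptotics of the left-null-vector components of $F(z_k,x)$, which is the same difficulty in different clothing. And ``nonvanishing follows from the strict ordering'' is unsubstantiated without an explicit formula for $C_{l,k}$. The paper's decisive idea is a conjugation $PDF(z,x)D^{-1}P^{-1}$ --- sorting row/column indices by residue $\bmod\ p$ and rescaling by powers of $z^{p/r}$ --- which recasts $F$ as a $p\times p$ block-cyclic matrix with diagonal blocks $A_0,\ldots,A_{p-1}$ of size $r/p$. After Gaussian elimination (see~\eqref{symbol:permuted:modp:skipped}), $\det F^{l-1,0}(z_k,x)$ factors to leading order as $\det\bigl(\wtil A_{l-1}\cdots A_1\what A_0\bigr)\cdot\prod_{i=l}^{p-1}\det A_i(z_k,x)$. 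Since $\det A_i(z_k,x)$ drops by a full factor $x^{-p-1}$ precisely when $i=k-1$ (this is exactly where $c_k\prod_n b_{pn+k-1}=1$ enters), the second factor collapses iff $k\in[l+1:p]$; the first factor is nonzero by Cauchy--Binet together with a sign-coherence argument (Lemma~\ref{lemma:basic:twodiag}). This block factorization is the structural insight your proposal is missing.
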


Note that the $O$-terms jump with powers of $x^{-p-1}$ rather than $x^{-1}$.
This is due to the rotational symmetry under rotations with $\exp(2\pi
\ir/(p+1))$.

Lemma~\ref{lemma:degree} implies that for $l\geq k$,
\begin{multline*}
\begin{pmatrix}
f_{0}(z_0(x)) & \ldots & f_{0}(z_{k}(x)) \\
\vdots & & \vdots  \\
f_{k-1}(z_{0}(x)) & \ldots & f_{k-1}(z_k(x))\\
f_{l}(z_{0}(x)) & \ldots & f_{l}(z_k(x))
\end{pmatrix} = \diag(x^{r},x^{r-1},\ldots,x^{r-k+1},x^{r-l})\\ \times\begin{pmatrix}
1 & O(x^{-p-1}) & O(x^{-p-1}) & \ldots & O(x^{-p-1}) & O(x^{-p-1}) \\
1 & C_{1,1} & O(x^{-p-1}) & \ldots & O(x^{-p-1})& O(x^{-p-1}) \\
1 & C_{2,1} & C_{2,2} & \ldots & O(x^{-p-1})& O(x^{-p-1})\\
\vdots & \vdots & \vdots & & \vdots & \vdots\\
1 & C_{k-1,1} & C_{k-1,2} & \ldots & C_{k-1,k-1} & O(x^{-p-1})\\
1 & C_{l,1} & C_{l,2} & \ldots & C_{l,k-1} & C_{l,k}
\end{pmatrix}
\end{multline*}
where each $C_{j,k}$ is a non-zero constant. Therefore,
\begin{equation}\label{det:estimate}
\begin{vmatrix}
f_{0}(z_0(x)) & \ldots & f_{0}(z_{k}(x)) \\
\vdots & & \vdots  \\
f_{k-1}(z_{0}(x)) & \ldots & f_{k-1}(z_k(x))\\
f_{l}(z_{0}(x)) & \ldots & f_{l}(z_k(x))
\end{vmatrix} = Cx^{r+(r-1)+\ldots+(r-k+1)+(r-l)}(1+O(x^{-p-1})),
\end{equation}
for some constant $C\neq 0$. Taking ratios of such determinants, we then get the desired
Lemma~\ref{cor:degree}.

In the rest of this section we prove \eqref{degree:f0}--\eqref{degree:fj}.
First of all, the statements involving $z_0(x)$ follow easily from the
Widom-type formula \eqref{Widom:2} (applied to the antidiagonal reflected
symbol \eqref{def:Fhat}) taking
into account that $z_0(x)\sim x^{-r}$ and $z_1(x),\ldots,z_{p}(x)=O(x^{r/p})$
for $x\to\infty$, and that $z_0(x)\ldots z_{p}(x)=(-1)^{r+p}/\mathsf{f}_p$.

Next, we prove \eqref{degree:f0}--\eqref{degree:fj} for the functions $z_k(x)$
with $k\geq 1$.  Let $\vece_j\in\cee^r$ be the standard basis vector which has
all its entries equal to zero except for the entry in position $j$, which is
equal to $1$. Let $P$ be the permutation matrix of size $r\times r$ which acts
on the vectors $\vece_j$ by the rule
$$P\vece_{ap+b}=\vece_{br/p+a},$$ for any $a\in [0:r/p-1]$ and
$b\in [0:p-1]$. Let $D$ be the $r\times r$ diagonal matrix
\begin{equation}\label{D:degree}D:=\diag\left(I_{p}, z^{\frac pr}I_{p},
z^{\frac{2p}{r}}I_{p},\ldots,z^{\frac{r-p}{r}}I_{p}\right).\end{equation} We
conjugate the block Toeplitz symbol $F(z,x)$ by the matrices $D$ and $P$. This
results in the following matrix:
\begin{equation}\label{symbol:permuted:modp}
PDF(z,x)D^{-1}P^{-1}
= \begin{pmatrix} A_0 & I & 0 & \ldots & 0 & 0 \\
0 & A_1 & I & \ddots & 0 & 0 \\
\vdots & \ddots & \ddots & \ddots & \ddots & \vdots \\
0 & 0 & 0 & A_{p-3} & I & 0
\\
0 & 0 & 0 & 0 & A_{p-2} & I
\\ Z & 0 & 0 & 0 & 0 & A_{p-1}
\end{pmatrix},
\end{equation}
with $Z := z^{-\frac pr}\begin{pmatrix} 0 & I_{r/p-1} \\ 1 & 0
\end{pmatrix},$
and \begin{equation}\label{permuted:p} A_j = \begin{pmatrix}
-x & 0 & 0 & \ldots & 0 & a_{r-p+j}z^{p/r} \\
a_{j}z^{p/r} & -x & 0 & \ddots & 0 & 0 \\
0 & a_{p+j}z^{p/r} & -x & \ddots & \ddots & 0 \\
\vdots & \ddots & \ddots & \ddots & \ddots & \vdots \\
0 & 0 & 0 & a_{r-3p+j}z^{p/r} & -x & 0 \\
0 & 0 & 0 & 0 & a_{r-2p+j}z^{p/r} & -x
\end{pmatrix},
\end{equation}
for $j\in [0:p-1]$. Note that each of the blocks in
\eqref{symbol:permuted:modp} is a square matrix of size $r/p$ by $r/p$.

Fix $k\in [1:p]$. We already know that $z_k(x)\sim C_k x^{r/p}$ as
$x\to\infty$. Hence $z_k^{p/r}(x)\sim C_k^{p/r} x$. From
\eqref{symbol:permuted:modp}--\eqref{permuted:p} and the fact that $\det
F(z_k(x),x)=0$ we see that (see also \eqref{zk:infty:0})
$$ C_k\in\left\{\prod_{n=0}^{r/p-1}
a_{pn}^{-1},\prod_{n=0}^{r/p-1} a_{pn+1}^{-1},\ldots,\prod_{n=0}^{r/p-1}
a_{pn+(p-1)}^{-1}\right\}.
$$
Combining this with \eqref{ordering:as} and \eqref{ordering:rootszk}, we obtain
that for $x\to\infty$,
\begin{equation}\label{zk:infty}
\begin{array}{l} z_1(x) = \left(\prod_{n=0}^{r/p-1}a_{pn}\right)^{-1}
x^{r/p}(1+O(x^{-p-1})),\\
\qquad\vdots \\
z_{p}(x) = \left(\prod_{n=0}^{r/p-1}a_{pn+p-1}\right)^{-1}
x^{r/p}(1+O(x^{-p-1})).\end{array}
\end{equation}

Now we consider $\det F^{j-1,0}(z,x)$, i.e., the determinant obtained by
skipping the $j$th row and the first column of $F(z,x)$, $j\in [1:p]$. Clearly,
this determinant is not influenced by the conjugation with the diagonal matrix
$D$ in \eqref{D:degree}, in the sense that $\det F^{j-1,0}(z,x) = \det
(DFD^{-1})^{j-1,0}(z,x)$. For a matrix $A$ denote with $\wtil A$ the matrix
obtained by skipping the first row of $A$ and with $\what A$ the matrix
obtained by skipping the first column of $A$. Then from
\eqref{symbol:permuted:modp} we obtain
\begin{equation*} \det F^{j-1,0}(z,x)=
\pm\det\begin{pmatrix} \what A_0 & I \\
& A_1 & I \\
& & \ddots & \ddots \\
& & & \wtil A_{j-1} & \wtil I &  &   &  \\
& & &  & A_{j} & I &  &  \\
& & &  & & \ddots &  \ddots &  \\
& & &  &  &  & A_{p-2} & I
\\ \what Z & & &  &   &  & & A_{p-1}
\end{pmatrix}.
\end{equation*}
Now by repeated Gaussian elimination with the identity matrices $I$ as pivots,
the above determinant can be brought to the form
\begin{equation}\label{symbol:permuted:modp:skipped} \det F^{j-1,0}(z,x)=\pm\det\begin{pmatrix}
\pm\wtil A_{j-1}\ldots A_1\what A_0 & \wtil  I \\
\what Z & \pm A_{p-1}A_{p-2}\ldots A_{j}
\end{pmatrix}.
\end{equation}

Fix $k\in [1:p]$. To obtain the dominant behavior of
\eqref{symbol:permuted:modp:skipped} for $z=z_k(x)$ as $x\to\infty$, we should
only use the $(1,1)$ and the $(2,2)$ blocks in
\eqref{symbol:permuted:modp:skipped}. Note that both blocks are square. The
determinant of the $(2,2)$ block can be simply factored as $(\det A_j)(\det
A_{j+1})\ldots(\det A_{p-1})$ with
\begin{equation}\label{degree:est1} \det A_i(z=z_k(x)) =
\left\{\begin{array}{ll}
C_{i,k}\, x^{r/p}+O(x^{r/p-p-1}), & \textrm{if }k\neq i+1,\\
O(x^{r/p-p-1}), & \textrm{otherwise},
\end{array}\right.
\end{equation} for some $C_{i,k}\neq 0$, thanks to \eqref{zk:infty}. The determinant of
the $(1,1)$ block can be expanded by means of the Cauchy-Binet formula:
\begin{equation}\label{cauchy:binet} \det\left(\wtil A_{j-1}\ldots A_1\what A_{0}\right) =
\sum_{m_1,\ldots,m_{j-1}=0}^{r/p-1} (\det A_{j-1}^{m_j,m_{j-1}})\ldots (\det
A_{1}^{m_2,m_1})(\det A_{0}^{m_{1},m_0}),
\end{equation}
where the sum runs over all $(j-1)$-tuples of integers $(m_1,\ldots,m_{j-1})$,
each of them ranging between $0$ and $r/p-1$, with boundary conditions
$m_0=m_j:=0$. We remind the reader that $A^{i,j}$ denotes the submatrix of $A$
obtained by deleting row $i$ and column $j$. Clearly,
$$
\det A_{i}^{m_{i+1},m_i}(z=z_k(x)) = \widetilde{C}_{i,k}\,
x^{r/p-1}+O(x^{r/p-p-2}),\qquad \widetilde{C}_{i,k}\neq 0,
$$
for all $i\in [0:j-1]$. Using this in \eqref{cauchy:binet} we get
\begin{equation}\label{degree:est2} \det\left( \wtil A_{j-1} \ldots
A_1\what A_{0}\right)(z=z_k(x)) = C_{j,k}\,x^{\frac{jr}{p}-j}(1+O(x^{-p-1})),
\end{equation}
as $x\to\infty$, for a new constant $C_{j,k}$. This constant $C_{j,k}$ is
nonzero, since cancelation of the leading order terms in the sum in
\eqref{cauchy:binet} cannot occur. This is due to
Lemma~\ref{lemma:basic:twodiag} below.

By combining \eqref{symbol:permuted:modp:skipped}, \eqref{degree:est1} and
\eqref{degree:est2}, we obtain the desired asymptotics in \eqref{degree:fj} for
$z=z_k(x)$ with $k\in [1:p]$.

The asymptotics in \eqref{degree:f0} can be proved by a similar argument. We
now have the relation $z^{-1}\det F^{r-1,0}(z,x) = z^{-p/r}\det (DFD^{-1})^{r-1,0}(z,x)$.
For a matrix $A$ denote now with $\wtil A$ the matrix obtained by skipping the
last (rather than the first) row of $A$ and denote again with $\what A$ the
matrix obtained by skipping the first column of $A$. Then by Gaussian
elimination we get the following analogue of
\eqref{symbol:permuted:modp:skipped}:
$$ z^{-1}\det F^{r-1,0}(z,x) = \pm z^{-p/r}\det(z^{-p/r}I\pm\wtil A_{p-1}\ldots A_1\what
A_0),
$$
where $z^{-p/r}I$ arises as the submatrix obtained by skipping the last row and
the first column of $Z$. Clearly, the dominant behavior for $z=z_k(x)$ as
$x\to\infty$ comes from $\pm z^{-p/r}\det(\wtil A_{p-1}\ldots A_1\what A_0)$.
This determinant can be evaluated using Cauchy-Binet in the same way as before.
Then we easily get the asymptotics in \eqref{degree:f0} for $z=z_k(x)$ with
$k\in [1:p]$. This ends the proof of Lemma~\ref{lemma:degree}. $\bol$

To conclude this section, we state the following lemma which was used above.

\begin{lemma}\label{lemma:basic:twodiag} Let $A$ be an $n\times n$ matrix of the form
$$A=\begin{pmatrix}-b_0 & & & & a_{n-1} \\
a_0 & -b_1 \\
& a_1 & \ddots \\
& & \ddots & -b_{n-2} \\
& & & a_{n-2} & -b_{n-1}
\end{pmatrix},$$ with $a_k,b_k>0$ for all $k\in [0:n-1]$. Denote with
$A^{k,l}$ the submatrix obtained by skipping the $k$th row and the $l$th column
of $A$. Then
$$ (-1)^{n+k+l+1}\det A^{k,l}>0.
$$
\end{lemma}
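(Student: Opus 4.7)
The plan is to evaluate $\det A^{k,l}$ by identifying the unique non-vanishing term in its Leibniz expansion. The structural observation that makes this possible is that each row $i$ of $A$ has exactly two non-zero entries: $-b_i$ at column $i$, and either $a_{i-1}$ at column $i-1$ (for $i\geq 1$) or the wrap-around $a_{n-1}$ at column $n-1$ (for $i=0$). Consequently the bipartite support graph $G$ with row vertices $R_0,\ldots,R_{n-1}$ and column vertices $C_0,\ldots,C_{n-1}$ is a single cycle of length $2n$:
\[ R_0-C_0-R_1-C_1-\cdots-R_{n-1}-C_{n-1}-R_0. \]

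The first step is to note that a non-vanishing Leibniz term in $\det A^{k,l}$ corresponds to a perfect matching in $G$ between the remaining rows $\{R_i:i\neq k\}$ and the remaining columns $\{C_j:j\neq l\}$. Removing the two vertices $R_k$ and $C_l$ from the $2n$-cycle yields either one path (when $k=l$ or $l=k-1\bmod n$, so the deleted vertices are adjacent) or two vertex-disjoint paths (otherwise). Each resulting path alternates between row and column vertices and has one endpoint of each type, so it admits a unique perfect matching, obtained by taking every other edge starting from one endpoint. Hence $\det A^{k,l}$ collapses to a single signed product.

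The second step is to compute this product and its sign by a short case analysis on the relative order of $k$ and $l$. Tracing which edges of $G$ are picked, the selected entries are the subdiagonal values $a_i$ along one arc $I_{k,l}$ of the cycle and the diagonal values $-b_j$ along the complementary arc $J_{k,l}$, with $I_{k,l}\sqcup J_{k,l}=[0{:}n-1]\setminus\{l\}$ in every case. Re-indexing the rows $[0{:}n-1]\setminus\{k\}$ and columns $[0{:}n-1]\setminus\{l\}$ order-preservingly by $[0{:}n-2]$, one checks by inspection that the underlying permutation $\tau$ is the identity when $k\leq l$, and a single cyclic shift of $[0{:}n-2]$ (of length $n-1$, hence with sign $(-1)^{n-2}=(-1)^n$) when $k>l$. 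Combining $\mathrm{sign}(\tau)$ with the factor $(-1)^{|J_{k,l}|}$ coming from the $-b_j$'s and with the prescribed $(-1)^{n+k+l+1}$, a direct arithmetic verification in each of the three cases $k<l$, $k=l$, $k>l$ gives
\[ (-1)^{n+k+l+1}\det A^{k,l}=\prod_{i\in I_{k,l}}a_i\cdot\prod_{j\in J_{k,l}}b_j>0, \]
which is the desired inequality.

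The main obstacle is the sign bookkeeping in the wrap-around case $k>l$, where the unique matching necessarily traverses the edge $R_0-C_{n-1}$ corresponding to the wrap-around entry $a_{n-1}$; this forces the contributing permutation to be a non-trivial cyclic shift of $[0{:}n-2]$ rather than the identity, and its extra factor $(-1)^n$ must conspire precisely with the $(-1)$'s collected along the ``long'' arc around the cycle to produce the uniform positivity statement. Once this sign is handled carefully, the remaining verifications reduce to elementary case-checking.
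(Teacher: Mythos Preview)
Your argument is correct. The key observation that the bipartite support graph of $A$ is a single $2n$-cycle, so that deleting one row vertex and one column vertex leaves one or two alternating paths each admitting a unique perfect matching, indeed shows that $\det A^{k,l}$ has exactly one nonzero Leibniz term. Your case analysis of the permutation sign (identity when $k\le l$, a full $(n{-}1)$-cycle when $k>l$) and the count of diagonal factors $-b_j$ checks out and combines to give $(-1)^{n+k+l+1}\det A^{k,l}=\prod_{i\in I_{k,l}}a_i\cdot\prod_{j\in J_{k,l}}b_j>0$.

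For comparison: the paper's own proof reads, in its entirety, ``Straightforward verification.'' So you are supplying a complete argument where the paper gives none. Your graph-matching viewpoint is a clean way to organize the case analysis; an equally direct alternative would be to note that after deleting row $k$ and column $l$ the resulting $(n{-}1)\times(n{-}1)$ matrix is, up to a cyclic row permutation in the case $k>l$, lower triangular with nonzero diagonal, from which the sign and the explicit product can be read off immediately. Both routes amount to the same elementary computation.
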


\begin{proof} Straightforward verification.
\end{proof}

\end{document}